\numberwithin{equation}{section}
\theoremstyle{plain}
\newtheorem{theorem}{Theorem}[section]
\newtheorem*{theorem2}{Theorem}
\newtheorem{lemma}[theorem]{Lemma}
\newtheorem{proposition}[theorem]{Proposition}
\theoremstyle{remark}
\newtheorem{remark}[theorem]{Remark}
\newtheoremstyle{conditionstyle}
{\smallskipamount}
{\smallskipamount}
{\normalfont}
{0 pt}
{\normalfont}
{ }
{ }
{(C\thmnumber{#2})}
\theoremstyle{conditionstyle}
\newtheorem{condition}{}
\crefname{condition}{}{}
\Crefname{condition}{}{}
\DeclareMathAlphabet{\mathpzc}{OT1}{pzc}{m}{it}
\DeclareMathOperator*{\argmin}{argmin}
\def\too{\longrightarrow}
 \renewcommand{\epsilon}{\varepsilon}
 \let\hat\widehat
 \renewcommand{\theta}{\vartheta}
 \let\tilde\widetilde
\newcommand{\E}{\mathbb{E}}
\newcommand{\N}{\mathbb{N}}
\renewcommand{\P}{\mathbb{P}}
\newcommand{\R}{\mathbb{R}}
\newcommand{\Z}{\mathbb{Z}}
\newcommand{\1}{\mathbbm{1}}
\newcommand{\arsinh}{\ensuremath{\mathop{\mathrm{arsinh}}}}
\newcommand{\cB}{\mathcal{B}}
\newcommand{\cF}{\mathcal{F}}
\newcommand{\cN}{\mathcal{N}}
\newcommand{\cO}{\mathcal{O}}
\newcommand{\co}{\begin{smallmatrix}\!{\mathcal {O}}\!\end{smallmatrix}}
\newcommand{\cU}{\mathcal{U}}
\newcommand{\ddT}[1]{{\frac{\mathrm{d}^{#1}}{\mathrm{d}t^{#1}}}}
\newcommand{\ddt}{\ddT{}}
\newcommand{\ddtt}{\ddT{2}}
\newcommand{\ddttt}{\ddT{3}}
\newcommand{\ddta}{\ddT{a}}
\newcommand{\ddtk}{\ddT{k}}
\newcommand{\ddtl}{\ddT{l}}
\newcommand{\ddtr}{\ddT{r}}
\newcommand{\ddU}[1]{{\frac{\mathrm{d}^{#1}}{\mathrm{d}u^{#1}}}}
\newcommand{\ddur}{\ddU{r}}
\newcommand{\ddX}[1]{{\frac{\mathrm{d}^{#1}}{\mathrm{d}x^{#1}}}}
\newcommand{\ddx}{\ddX{}}
\newcommand{\dd}{\,\mathrm{d}}
\newcommand{\dmu}{\,\mathrm{d}\mu}
\newcommand{\ds}{\,\mathrm{d}s}
\newcommand{\dt}{\,\mathrm{d}t}
\newcommand{\du}{\,\mathrm{d}u}
\newcommand{\dv}{\,\mathrm{d}v}
\newcommand{\dw}{\,\mathrm{d}w}
\newcommand{\dx}{\,\mathrm{d}x}
\newcommand{\dy}{\,\mathrm{d}y}
\newcommand{\dz}{\,\mathrm{d}z}
\newcommand{\bP}{\mathbf{P}}
\newcommand{\Cov}{\ensuremath{\mathop{\mathrm{Cov}}}}
\newcommand\lbullet{	\raisebox{-0.5ex}{\scalebox{2}{$\cdot$}}}
\newcommand{\tE}{\tilde{\mathbb{E}}}
\newcommand{\tF}{\tilde{F}}
\newcommand{\tf}{\tilde{f}}
\newcommand{\tk}{\tilde{\kappa}}
\newcommand{\tlambda}{\tilde{\lambda}}
\newcommand{\tL}{\tilde{L}}
\newcommand{\tmu}{\tilde{\mu}}
\newcommand{\tP}{\tilde{P}}
\newcommand{\tiP}{\tilde{\P}}
\newcommand{\tp}{\tilde{p}}
\newcommand{\tphi}{\tilde{\varphi}}
\newcommand{\tU}{\tilde{U}}
\newcommand{\TV}{\textup{TV}}
\newcommand{\Var}{\ensuremath{\mathop{\mathrm{Var}}}}
\begin{document}

\begin{frontmatter}
\title{
	Non-uniform bounds and Edgeworth expansions in self-normalized limit theorems
}
\runtitle{
	Edgeworth expansions in self-normalized limit theorems
}

\begin{aug}
\author[A]{\fnms{Pascal}~\snm{Beckedorf}\ead[label=e1]{pascal.beckedorf@stochastik.uni-freiburg.de}}
\and
\author[A]{\fnms{Angelika}~\snm{Rohde}\ead[label=e2]{angelika.rohde@stochastik.uni-freiburg.de}}

\address[A]{Albert-Ludwigs-Universit\"at Freiburg \\ \printead[presep={ }]{e1,e2}}
\end{aug}

\begin{abstract}
	We study Edgeworth expansions in limit theorems for self-normalized sums. Non-uniform bounds for expansions in the central limit theorem are established while only imposing minimal moment conditions. Within this result, we address the case of non-integer moments leading to a reduced remainder. Furthermore, we provide non-uniform bounds for expansions in local limit theorems. The enhanced tail-accuracy of our non-uniform bounds allows for deriving an Edgeworth-type expansion in the entropic central limit theorem as well as a central limit theorem in total variation distance for self-normalized sums.
\end{abstract}

\begin{keyword}[class=MSC]
\kwd[Primary ]{60F05}
\kwd[; secondary ]{62E20}
\end{keyword}

\begin{keyword}
	\kwd{Edgeworth expansion}
	\kwd{non-uniform bounds}
	\kwd{central limit theorem}
	\kwd{local limit theorem}
	\kwd{entropy}
	\kwd{total variation distance}
	\kwd{self-normalized sums}
	\kwd{rate of convergence}
\end{keyword}

\end{frontmatter}
\tableofcontents

\section{Introduction and main results}\label{ch.intro}


Let $X_1,\dots,X_n$ be independent, identically distributed random variables with mean $\E X_1=0$ and variance $\Var X_1=1$. Here and throughout this article, 
\[
S_n=\sum_{j=1}^{n} X_j,
\qquad
V_n=\sqrt{\sum_{j=1}^{n} X_j^2},
\qquad
T_n=\begin{cases}
S_n/V_n& \text{ if }V_n>0\\
0 & \text{ otherwise.}
\end{cases}
\]
The self-normalized sum $T_n$ was shown to be asymptotically normal if and only if $X_1$ belongs to the domain of attraction of the normal law in \cite{GGM97}. This result has been generalized in \cite{CG04}.

The distance of the distribution of $T_n$ to a normal distribution has been investigated in various forms such as Berry--Esseen bounds \cite{BG96,Fri89,Hal88,vZw84} and non-uniform Berry--Esseen bounds \cite{CG03,JSW03,RW05, WJ99}, to mention just a few. For an extensive survey on further self-normalized limit theorems we refer to \cite{SW13Survey}.

Concerning asymptotic expansions of the distribution of self-normalized sums, Chung \cite{Chu46} was the first to prove an Edgeworth expansion for the Student $t$-statistic. Bhattacharya and Ghosh \cite{BG78} provided an approach of Edgeworth expansion including self-normalized sums, relaxing Chung's moment conditions. The moment and smoothness assumptions of \cite{BG78} were further weakened in \cite{BR91,BG88}. \cite{Hal87} attained an expansion for the $t$-statistic under minimal moment conditions. One-term Edgeworth expansions were treated in \cite{BGvZ97,BP99,BP03,HW04,JW10,PvZ98}. Recently, \cite{BG22} used the first two expansion terms to derive a sharper bound in a Berry--Esseen type result. Although not explicitly stated for self-normalized sums, Edgeworth expansions for densities of more general statistics are given in \cite[Section 2.8]{Hal92edgeworth}.

For normalized sums $Z_n=S_n/\sqrt{n}$, Edgeworth expansions with non-integer moment conditions and non-uniform bounds have been investigated for a long time in the CLT and LLT (see e.g. \cite{BR76normal,Pet75sums}). In case of non-integer moments, a CLT for $Z_n$ has been derived in \cite{BCG11} using fractional calculus. These stronger results lay the foundation for limit theorems in more powerful metrics such as expansions in the entropic CLT \cite{BCG13} or related results for other information-theoretic distances \cite{BCG14-2,BCG19,BCK15,BM19}.

For \textit{self-normalized sums} however, non-uniform bounds for Edgeworth expansions have not been studied so far. The main aim of this article is to derive such non-uniform bounds in the CLT and in the LLT for an Edgeworth expansion. These provide substantially stronger accuracy in the tails, enabling us to prove an entropic CLT and a CLT in total variation for self-normalized sums. As compared to normalized sums, the crucial obstacle here is the missing product form of the statistic's characteristic function. Besides, we also treat the case of non-integer moments in the CLT. Subsequently, we give an overview on our results.

Let $\Phi$ and $\phi$ denote the standard normal distribution function and density function, respectively. Write $\mu_k=\E X_1^k$ and $F_n(x)=\P(T_n\le x)$ for $k,n\in\N$ and $x\in\R$. Let $\lfloor x \rfloor$ denote the integer part of $x$. With $\Phi^Q_{m,n}$ we denote the Edgeworth expansion
\begin{equation}\label{eq.Phi^Q-def}
\Phi^Q_{m,n}(x)=\Phi(x)+\sum_{r=1}^{m-2}Q_{r}(x)n^{-r/2}, \quad x\in\R,
\end{equation}
with $m-2$ expansion terms of the distribution of $T_n$. Throughout this article, $X_1$ is assumed to be symmetric such that all uneven moments vanish and thus $Q_{r}=0$ for uneven $r\in\N$. In contrast to the approximation functions of $Z_n$, the polynomials $Q_{r}$ generally do not have a closed form. However, for even $r$, $Q_{r}$ provide a useful form which consist of $\phi$ multiplied with an uneven polynomial of degree $2r-1$ in $x$. The coefficients of $Q_{r}$ are functions of the moments $\mu_3,\dots,\mu_{r+2}$. The first approximating functions for the distribution function have the form
\begin{equation}\label{eq.Q-2+4}
\begin{split}
Q_{2}(x)&=-\phi(x) H_{3}(x) \big(- \tfrac{1}{12}\big) \mu_4,\\
Q_{4}(x)&=-\phi(x) \Big( H_{7}(x) \tfrac{1}{288} \mu_4^2
+ H_{5}(x) \tfrac{1}{45} \mu_6
+ H_{3}(x) \tfrac{1}{12} \big(2\mu_6 +\mu_4 - 3 \mu_4^2\big) \Big),
\end{split}
\end{equation}
where $H_k$ is the $k$-th Hermite polynomial.

Unless stated otherwise, all orders of convergence or divergence in $n$ are understood for $n\to\infty$. Our first result is

\begin{theorem}\label{t.clt}
	Assume that $X_1$ is symmetric, the distribution of $X_1$ is non-singular and $\E|X_1|^{s}<\infty$ for some ${s}\ge2$. Then for $m=\lfloor {s} \rfloor$,
	\begin{equation}\label{eq.t-clt}
	\sup_{x\in\R} \, (1+|x|)^{m}|F_n(x)-\Phi^{Q}_{m,n}(x)|= \co\big( n^{-({s}-2)/2} \, (\log n)^{({s}+2m)/2} \big).
	\end{equation}
\end{theorem}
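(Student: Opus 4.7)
Proof plan: The key overarching idea is to split the supremum according to the size of $|x|$, using the a.s.\ bound $|T_n|\le\sqrt n$ (Cauchy--Schwarz on $|S_n|\le\sqrt n\,V_n$), and then apply different tools in each regime. The outer range $|x|>\sqrt n$ is trivial: $F_n(x)\in\{0,1\}$, and the tail of $\Phi^Q_{m,n}$ is super-polynomially small there.

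On the moderate range $|x|\le C\sqrt{\log n}$, the non-uniform weight $(1+|x|)^m$ contributes only a power of $\log n$, so it suffices to obtain a uniform Edgeworth bound of order $n^{-(s-2)/2}$ up to polylog factors. I would linearize via $T_n=g(U_n,W_n/\sqrt n)$ with $U_n=S_n/\sqrt n$, $W_n=\sum_{j=1}^n(X_j^2-1)/\sqrt n$, and $g(u,v)=u/\sqrt{1+v}$, and Taylor-expand $g$ in $v$ to order $m$. This reduces the problem to a multivariate Edgeworth expansion for the bivariate i.i.d.\ sum $\sum_j(X_j,X_j^2-1)$, producing the polynomials $Q_r$ via Hermite-polynomial identifications, consistent with the explicit $Q_2,Q_4$ in \eqref{eq.Q-2+4}. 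Because only non-singularity of $X_1$ is assumed (not Cramér's condition), the characteristic-function smoothing step must employ the concentration-function device of Hall (1987) / Bhattacharya--Ghosh (1988): truncate the summands, then exploit a quantitative bound on the modulus of the characteristic function of a two- or three-fold convolution of the truncated law. The Taylor remainder from the linearization is controlled by the tail behavior of $W_n$.

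On the intermediate range $C\sqrt{\log n}<|x|\le\sqrt n$, I would bound $(1+|x|)^m|F_n(x)-\Phi^Q_{m,n}(x)|$ by the triangle inequality, splitting off $|F_n(x)-\1\{x\ge 0\}|=\P(|T_n|>|x|)$ (up to a sign) from the Gaussian-type tail of $\Phi^Q_{m,n}$. The first piece is handled by a non-uniform self-normalized moment bound of the form $\P(|T_n|>x)\le Cn^{-(s-2)/2}(\log n)^c(1+|x|)^{-s}$, obtained by a Rosenthal-type inequality for $S_n$ restricted to $\{V_n\ge c\sqrt n\}$ combined with a concentration estimate for $V_n^2/n$; since $m\le s$, the factor $(1+|x|)^{m-s}$ is bounded and the $(\log n)^c$ is absorbed by $(\log n)^{(s+2m)/2}$. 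The second piece is a direct Gaussian tail computation on $Q_r(x)\phi(x)$, whose degree-$(2r-1)$ polynomial in $x$ is dominated by the Gaussian factor. For non-integer $s\notin\N$, the integer-moment version of the bound is applied to the truncated variables $X_j\1\{|X_j|\le\lambda_n\}$ with $\lambda_n=\sqrt n/(\log n)^\gamma$; the distance between the original and truncated self-normalized sums is controlled via Markov's inequality with the $s$-th moment, yielding the improvement from $n^{-(m-2)/2}$ to $n^{-(s-2)/2}$ at the cost of the $(\log n)^{(s+2m)/2}$ factor.

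The main obstacle, in my view, is propagating the non-uniform weight $(1+|x|)^m$ through the smoothing step in the moderate range: standard Esseen smoothing is intrinsically uniform, whereas the non-uniform variant requires estimating integrals of $|\hat F_n(t)-\hat\Phi^Q_{m,n}(t)|/|t|^{m+1}$, which in turn forces an expansion of the joint characteristic function of $(U_n,W_n)$ to order $m$ with controlled derivatives and a tailor-made smoothing lemma for the Edgeworth kernel. Coordinating this with the Taylor remainder from the linearization and with the truncation used for non-integer $s$, all under only the non-singularity assumption, is the technical heart of the proof.
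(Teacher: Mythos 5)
Your route is genuinely different from the paper's: you propose the delta-method/smooth-function-model reduction $T_n=g(U_n,W_n/\sqrt n)$ with $g(u,v)=u/\sqrt{1+v}$ and a bivariate Edgeworth expansion for $\sum_j(X_j,X_j^2-1)$, whereas the paper conditions on $\cF_n=\sigma(|X_1|,\dots,|X_n|)$ as in Hall (1987), exploits the resulting product form $\prod\cos(tV_n^{-1}|X_j|)$ of the conditional characteristic function, proves a conditional non-uniform expansion, and then takes expectations (the logarithmic factors arise precisely from the random integration endpoint $B_n=V_n/M_n$ in that last step). The decisive problem with your route is the moment count: an Edgeworth expansion of the bivariate sum to order $m$ (equivalently, controlling the Taylor remainder of $g$ and the tails of $W_n=n^{-1/2}\sum(X_j^2-1)$ to that order) requires $\E\|(X_1,X_1^2)\|^{m}<\infty$, i.e.\ roughly $\E|X_1|^{2m}<\infty$ — about twice the moments assumed in the theorem. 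For instance, with $s=4$ the theorem delivers a one-term expansion with remainder $\co(n^{-1}(\log n)^6)$ under $\E X_1^4<\infty$ alone, while your reduction would already need $\E X_1^8<\infty$ just to expand the second coordinate. This is exactly the obstruction that motivated Hall's conditioning device, and it is why the paper reserves the bivariate/smooth-function-model machinery for the LLT results in Section 4, where $\E|X_1|^{2m}<\infty$ is explicitly assumed. Your truncation at $\lambda_n=\sqrt n/(\log n)^\gamma$ does not repair this: truncation perturbs the cumulants entering $Q_2,\dots,Q_{m-2}$ by amounts that must be shown negligible at level $n^{-(s-2)/2}$, and it cannot supply the missing higher moments of $X_1^2$ without degrading the rate.

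Two secondary points. First, your smoothing step implicitly needs the Cramér condition for the vector $(X_1,X_1^2)$ (the law of this vector lives on a parabola and is singular in $\R^2$, so this is not the Riemann--Lebesgue lemma); deducing it from non-singularity of $X_1$ requires an oscillatory-integral argument that you do not supply, and the paper avoids the issue entirely because after conditioning the relevant quantity is $\E|\cos(u|X_1|)|$, handled in \cref{p.E.I} via the Lebesgue decomposition. Second, your intermediate-range bound $\P(|T_n|>x)\le Cn^{-(s-2)/2}(\log n)^c(1+|x|)^{-s}$ is false for $x$ near the threshold $C\sqrt{\log n}$ unless $C\ge\sqrt{s-2}$ (the Gaussian tail alone already exceeds it otherwise), and for $s>4$ such a non-uniform tail estimate at rate $n^{-(s-2)/2}$ is not an off-the-shelf Rosenthal-type consequence but essentially the theorem restricted to that range; you would need an exponential non-uniform bound of the type $e^{-cx^2}+n^{-(s-2)/2}(1+x)^{-s}$ and a careful choice of the splitting constant.
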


Note that only the moments used in the expansion have to be finite, thus we only pose minimal moment conditions. The distribution of $X_1$ being non-singular (with respect to the Lebesgue measure) is only slightly stronger than Cram\'er's condition
\begin{equation}\label{eq.cramer}
\limsup_{|t|\to\infty}|\E\exp(itX_1)|<1
\end{equation}
which is usually imposed when working with $Z_n$ (see e.g. \cite{BR76normal,Pet75sums}).
Due to the denominator $V_n$, the characteristic function of $T_n$ lacks the typical product form, required for an Edgeworth expansion. To overcome this problem, we first condition on $\cF_n=\sigma(|X_1|,\dots,|X_n|)$ as in \cite{Hal87}. On the one hand, the independence of the factors then provides a desired structure as a product. On the other hand, the random variables $X_1,\dots,X_n$ conditional on $\cF_n$ are discrete (non-lattice) random variables that are not identically distributed any longer. Therefore, the necessary Cram\'er condition \labelcref{eq.cramer} is not fulfilled and the standard theorems for Edgeworth expansions (such as from \cite{BR76normal,Pet75sums}) do not apply. These properties necessitate adjustments in the derivation of an Edgeworth expansion, but lead to rather involved integral remainder terms which have to be bounded thoroughly. At this point, the non-singularity is getting essential.

If $T_n$ has a density, then by taking derivatives in \labelcref{eq.Phi^Q-def}, the Edgeworth expansion of the density of $T_n$ has the form
\begin{equation}\label{eq.phi^q-def}
\phi^q_{m,n}(x)=\phi(x)+\sum_{r=1}^{m-2}q_{r}(x)n^{-r/2}
\end{equation}
for all $x\in\R$. By differentiating \labelcref{eq.Q-2+4}, the first approximating functions are
\begin{equation}\label{eq.q-2}
\begin{split}
q_{2}(x)&=\phi(x) H_{4}(x) \big(- \tfrac{1}{12}\big) \mu_4,\\
q_{4}(x)&=\phi(x) \Big( H_{8}(x) \tfrac{1}{288} \mu_4^2
+ H_{6}(x) \tfrac{1}{45} \mu_6
+ H_{4}(x) \tfrac{1}{12} \big(2\mu_6 +\mu_4 - 3 \mu_4^2\big) \Big).
\end{split}
\end{equation}
In the case that $r$ is even, $q_{r}$ provides a useful form which constitutes of $\phi$ multiplied with an even polynomial of degree $2r$ in $x$. The coefficients of $q_{r}$ are functions of the moments $\mu_3,\dots,\mu_{r+2}$. As above, $q_{r}=0$ for uneven $r\in\N$.

Our second result is an application of our \cref{t.llt} which has weaker but more technical conditions. Further LLTs with more formal but less stringent conditions are provided in \cref{ch.density}. For the classical normalized statistic $Z_n$, such a  result has been derived in \cite{BCG11}.

\begin{theorem}\label{t.llt-red}
	Assume that $X_1$ is symmetric, has a bounded density and $\E|X_1|^{2m}<\infty$ for some $m\in\N$, $m\ge3$. Then there exists $N\in\N$ such that for all $n\ge N$, $T_{n}$ have densities $f_{n}$ that satisfy
	\begin{equation*}
	\sup_{x\in\R} \, (1+|x|)^{m}|f_n(x)-\phi^{q}_{m,n}(x)|= \co\big(n^{-(m-3)/2}\big).
	\end{equation*}
	Moreover, if $\E|X_1|^{2m+2}<\infty$ for some $m\in\N, m\ge2$, the order of convergence reduces to $\co\big(n^{-(m-1)/2} \, \log n \big)$.
\end{theorem}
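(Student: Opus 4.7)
The plan is to derive \cref{t.llt-red} as a consequence of the more technical \cref{t.llt} stated in \cref{ch.density}, whose hypotheses are phrased in terms of characteristic-function bounds rather than distributional smoothness; the task is therefore to verify those hypotheses from the concrete assumptions that $X_1$ is symmetric with a bounded density and finite $2m$-th moment. The first step is to establish the existence of densities $f_n$ for $n$ sufficiently large. Using the conditional decomposition on $\cF_n=\sigma(|X_1|,\dots,|X_n|)$ introduced for \cref{t.clt}, one writes $T_n=\sum_j\epsilon_j|X_j|/V_n$ with i.i.d.\ Rademacher signs $\epsilon_j$, so
\begin{equation*}
\psi_n(t)\;=\;\E e^{itT_n}\;=\;\E\prod_{j=1}^n\cos\!\big(t|X_j|/V_n\big).
\end{equation*}
The bounded density of $X_1$, via Riemann--Lebesgue, gives $|\E e^{it|X_1|}|\to 0$ at infinity, which propagates through the cosine product on the high-probability event $\{|V_n^2/n-1|\le\delta\}$ to yield absolute integrability of $\psi_n$ and hence $f_n$ by Fourier inversion.

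Next, I would translate the bounded-density and moment hypotheses into the quantitative characteristic-function bounds required by \cref{t.llt}. This is done in three frequency regimes. For small $|t|$, Taylor expansion of $\log\cos(t|X_j|/V_n)$ together with the moment assumption produces the Edgeworth polynomials $q_r$ of \cref{eq.phi^q-def} with a remainder of the claimed order. For $|t|$ up to a constant multiple of $\sqrt{n}$, a sub-Gaussian bound $\prod_j|\cos(t|X_j|/V_n)|\le\exp(-ct^2)$ holds on the event that the truncated empirical second moment $\sum_j|X_j|^2\mathbf{1}\{|X_j|\le\sqrt n/|t|\}/V_n^2$ is bounded away from zero, an event of high probability by the finite variance assumption. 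For large $|t|$, where classical expansions invoke Cram\'er's condition, the bounded density of $X_1$ replaces it: conditioning on $\{|V_n^2/n-1|\le\delta\}$ reduces $\psi_n(t)$ to a product of characteristic functions of $|X_j|$ at frequencies of order $|t|/\sqrt n$, and Riemann--Lebesgue provides decay faster than any polynomial in $n$. The non-uniform factor $(1+|x|)^m$ is recovered by $m$ integrations by parts in the Fourier inversion, which turn the factor $x^m$ into $m$-th derivatives of $\psi_n$ and of $\widehat{\phi^q_{m,n}}$, costing exactly the moments $\E|X_1|^{2m}<\infty$ that are assumed.

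The main obstacle will be step three, the large-frequency control, because the random denominator $V_n$ prevents a clean decoupling of the frequencies $t|X_j|/V_n$. The resolution is to split the integral over $|t|\ge\varepsilon\sqrt n$ according to the event $\{|V_n^2/n-1|\le\delta\}$ and its complement, using Markov's inequality with $\E|X_1|^{2m}<\infty$ on the complement and the uniform decay $\sup_{|u|\ge\varepsilon}|\E e^{iu|X_1|}|<1$ (available thanks to the bounded density) on the main event. Once all these bounds are assembled, they constitute exactly the preconditions of \cref{t.llt}, whose application then yields the announced rate $\cO(n^{-(m-3)/2})$; the $m\ge 3$ threshold arises because two expansion levels are consumed by density differentiation of the distribution-function expansion of \cref{t.clt}. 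The sharper bound under $\E|X_1|^{2m+2}<\infty$ follows by retaining one additional term in the small-frequency Taylor expansion and absorbing the extra factor into the $\log n$ via a truncation at scale $\sqrt n$, invoking the correspondingly stronger case of \cref{t.llt}.
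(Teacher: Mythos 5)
Your opening sentence states the right strategy---deduce \cref{t.llt-red} from \cref{t.llt}---but you then never verify the hypothesis that actually needs verifying. The technical condition in \cref{t.llt} is \cref{c.density-cf}: integrability of a power of the \emph{joint} characteristic function of the bivariate vector $(X_1,X_1^2)$, equivalently (via \cite[Theorem 19.1]{BR76normal}) boundedness of the density of $\big(\sum_j X_j,\sum_j X_j^2\big)$ for large $n$. This is the non-trivial content of the reduction, and the paper proves it in \cref{p.prop} by an explicit computation showing that the three-fold convolution of the law of $(X_1,X_1^2)$ (which is concentrated on a parabola, so no single copy has a planar density) already admits a bounded Lebesgue density on $\R^2$. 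Your proposal works entirely with the one-dimensional characteristic function of $X_1$ and never touches the bivariate object, so the reduction to \cref{t.llt} is not actually carried out. (The other hypothesis, non-singularity, is indeed immediate from the bounded density.)

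The from-scratch argument you sketch in its place does not survive the central obstacle of the paper, namely that $\psi_n(t)=\E\prod_j\cos(t|X_j|/V_n)$ is \emph{not} a product because of the common random denominator $V_n$, and that conditional on $\cF_n$ the statistic $T_n$ is discrete, so the conditional characteristic function $\prod_j\cos(t|X_j|/V_n)$ is not integrable and Fourier inversion is unavailable. Riemann--Lebesgue decay of $\E e^{it|X_1|}$ does not ``propagate through the cosine product'' to give $\psi_n\in L^1$: the expectation of the product cannot be factored, and the paper's treatment of exactly this term (\cref{p.E.I}, the integral of $\E\big[\big|\prod_j\cos(uV_n^{-1}|X_j|)\big|\big]$ over $B_n\le|u|<\nu_n$) is a long case analysis that yields only a polynomial rate with logarithmic factors, not the super-polynomial decay you assert. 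This is why the paper instead establishes existence and smoothness of $f_n$ through the bivariate expansion \labelcref{eq.BR-19.2} and the curve-integral representation \labelcref{eq.fn-int-gn}, combined with the Gaussian perturbation and deconvolution in \cref{p.dichte-general}. Your accounts of where the threshold $m\ge3$ and the $\log n$ in the second statement come from are also inaccurate: they arise from the $K_3$ remainder estimate \labelcref{eq.k3} (via \cref{l.int-a/2} with exponents $w-m\in\{0,1\}$), not from ``differentiating the distribution-function expansion'' or from an extra Taylor term at small frequencies.
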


The route of the proof of \cref{t.llt-red} is related to the proof of \cref{t.clt}. However, conditional on $\cF_n$, the self-normalized sum $T_n$ does not have a density as it is discrete. In particular, the Fourier inversion formula is not available any longer. The idea is to tackle this severe obstacle  by perturbing $T_n$ with an independent normal random variable with suitably chosen variance. Besides enjoying the properties that tails of density and characteristic function decay in the same magnitude, a Gaussian perturbation does not hinder convergence to the normal distribution. This perturbation trick allows to work with densities and deriving an Edgeworth expansion but comes at the cost of a deconvolution after returning to the unconditional setting. For performing a tight approximation in the final deconvolution step, verifiable conditions have to be imposed that guarantee sufficient smoothness of the density of $T_n$.

Based on \cref{t.llt-red}, we prove an entropic CLT and its rates of convergence. For a random variable $X$ with density $p$, mean $\mu$ and variance $\sigma^2$ we define its entropy (with $0\log 0:=0$)
\begin{align*}
h(X)=-\int_{-\infty}^{\infty}p(x)\log p(x)\dx,
\end{align*}
and its relative entropy
\begin{align}\label{eq.def-rel-entropy}
D(X)=D(X\|Z)=h(Z)-h(X)=\int_{-\infty}^{\infty}p(x)\log \frac{p(x)}{\phi_{\mu,\sigma^2}(x)}\dx,
\end{align}
where $Z$ is $\cN(\mu,\sigma^2)$-distributed with density $\phi_{\mu,\sigma^2}$. As the normal distribution maximizes the entropy for given mean and variance, we know $D(X)\ge0$.

Barron \cite{Bar86} was the first to prove an entropic CLT for the classical statistic $Z_n$, i.e. $D(Z_n)\to 0$. Many more results on entropic CLTs can be found in \cite{Johnson04}. Moreover, in \cite{BCG13}, an Edgeworth expansion was used to determine the rates of convergence in the entropic CLT for the classical statistic.
We prove such type of result for the self-normalized sum.

\begin{theorem}\label{t.entropy}
	Assume that $X_1$ is symmetric, has a bounded density and $\E|X_1|^{2m}<\infty$ for some $m\in\N$, $m\ge3$. Then
	\begin{equation*}
	D(T_n)=\frac{c_2}{n^2}+\dots+\frac{c_{\lfloor (m-2)/2 \rfloor}}{n^{\lfloor (m-2)/2 \rfloor}}+ \co\big((n\log n)^{-(m-2)/2}\,\log n\big),
	\end{equation*}
	where
	\begin{align*}
	c_l=\sum_{k=2}^{2l} \frac{(-1)^k}{k(k-1)} \sum \int_{\R} \frac{q_{r_1}(x)\dots q_{r_k}(x)}{\phi(x)^{k-1}} \dx, \quad l\in\N,
	\end{align*}
	with the inner sum running over all positive integers $r_1,\dots,r_k$ such that $r_1+\dots+r_k=2l$.
\end{theorem}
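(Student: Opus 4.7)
The plan follows the strategy of Bobkov, Chistyakov and G\"otze \cite{BCG13} for classical normalized sums, with \cref{t.llt-red} as the key local input. By the symmetry of $X_1$, $T_n$ is symmetric, so $\E T_n=0$; the discrepancy $\Var T_n - 1$ contributes only terms absorbed into the remainder, so we may identify $D(T_n)$ with $\int f_n\log(f_n/\phi)\,dx$. Starting from the identity $(1+u)\log(1+u)-u=\sum_{k\ge 2}(-1)^k u^k/(k(k-1))$ valid for $|u|<1$, setting $u(x)=(f_n-\phi)/\phi$, and using $\int(f_n-\phi)\,dx=0$, one obtains formally
\begin{align*}
D(T_n)=\sum_{k\ge 2}\frac{(-1)^k}{k(k-1)}\int_{\R}\frac{(f_n-\phi)^k}{\phi^{k-1}}\,dx.
\end{align*}
Substituting $f_n-\phi=\sum_{r=1}^{m-2}q_r n^{-r/2}+R_n$ from \cref{t.llt-red}, expanding the $k$-th power multinomially, and grouping the contributions with $r_1+\cdots+r_k=2l$ produces exactly the coefficient $c_l/n^l$ of the statement; the truncation $k\le 2l$ reflects $r_i\ge 1$, while odd indices vanish because $q_r=0$ for odd $r$.

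The real work is to turn this formal expansion into a rigorous estimate with a remainder of order $\cO((n\log n)^{-(m-2)/2}\log n)$. The main difficulty is that $u(x)$ is not uniformly small on $\R$: $\phi$ decays Gaussian while the non-uniform bound on $R_n$ and the polynomial envelopes of the $q_r$ only decay polynomially, so $|u|$ may be arbitrarily large in the extreme tails. I would fix a cutoff $\tau_n=c\sqrt{\log n}$ and split $\R=\{|x|\le\tau_n\}\cup\{|x|>\tau_n\}$. On the central region, the bounds $|R_n(x)|\lesssim n^{-(m-3)/2}(1+|x|)^{-m}$ and $\phi(x)\gtrsim n^{-c^2/2}$ yield $|u(x)|\le 1/2$ for a sufficiently small $c$, which justifies truncating the Taylor series of $\log(1+u)$ at an order depending on $m$. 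Each remaining integral over $\{|x|\le\tau_n\}$ can then either be extended to all of $\R$ with negligible Gaussian-tail loss, recovering the $c_l/n^l$ coefficients, or bounded explicitly by estimating $|q_{r_1}\cdots q_{r_k}/\phi^{k-1}|$ term by term; the powers of $\log n$ in the final rate arise from volume contributions on this central region together with the truncation depth.

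On the tail $\{|x|>\tau_n\}$ one bounds $\int f_n\,|\log(f_n/\phi)|\,dx$ directly via $|\log(f_n/\phi)|\le|\log f_n|+|\log \phi|$. The $|\log\phi|$ contribution, essentially $\int_{|x|>\tau_n} x^2 f_n(x)\,dx$, is controlled by Markov's inequality using the uniform boundedness of $\E T_n^{2m}$ inherited from $\E|X_1|^{2m}<\infty$. The main obstacle is the estimate of $\int_{|x|>\tau_n} f_n |\log f_n| \,dx$, which demands a pointwise upper bound on $f_n$ with polynomial tail decay: this is precisely what \cref{t.llt-red} supplies through $f_n(x)\lesssim \phi(x)+Cn^{-(m-3)/2}(1+|x|)^{-m}$, and the non-uniformity of this bound -- as opposed to a merely uniform LLT -- is essential here, since without it the tail integrals would dominate the target rate. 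Assembling the central and tail estimates and optimizing the constant $c$ in $\tau_n$ then delivers the advertised remainder.
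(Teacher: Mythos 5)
Your overall architecture is the right one (the BCG13 route: identify $D(T_n)$ with $\int f_n\log(f_n/\phi)$, split $\R$ at a $\sqrt{\log n}$-scale cutoff, Taylor-expand $L(1+u)$ on the central region, and control the tail separately), but two of your quantitative steps fail, and they are exactly the steps where the work lies.

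First, the cutoff calibration is backwards. You take $\tau_n=c\sqrt{\log n}$ with $c$ \emph{sufficiently small} so that $\phi(x)^{-1}\lesssim n^{c^2/2}$ keeps $|u|\le 1/2$ on $\{|x|\le\tau_n\}$. But then the purely Gaussian tail contributions — e.g.\ $\P(|Z|>\tau_n)\asymp \phi(\tau_n)/\tau_n\asymp n^{-c^2/2}(\log n)^{-1/2}$, and likewise $\int_{|x|>\tau_n}|q_r|\,dx$ — are of order $n^{-c^2/2}$ up to logarithms, which is far larger than the target remainder $n^{-(m-2)/2}$ unless $c^2\ge m-2$. So $c$ must be \emph{large}, and at $A_n^2=(m-2)\log n+(m-3)\log\log n+\rho_n$ the factor $\phi(x)^{-1}$ is of size $n^{(m-2)/2}(\log n)^{(m-3)/2}$, which exactly eats the $n^{-(m-2)/2}\log n$ remainder of the LLT. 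Smallness of $v_n=(f_n-\phi^{q}_{m-1,n})/\phi$ there survives only because of the polynomial weight $(1+|x|)^{-(m-1)}\le A_n^{-(m-1)}$ in the non-uniform bound and a carefully chosen $\rho_n\to\infty$; this delicate compensation is the reason the non-uniform LLT is needed, and your ``sufficiently small $c$'' sidesteps rather than resolves it. (Relatedly, your infinite series for $L(1+u)$ requires $|u|<1$ globally and must instead be replaced by a finite Taylor expansion with an explicit remainder, valid only on the central region; and one should split $u=u_m+v_n$ with $u_m=(\phi^{q}_{m-1,n}-\phi)/\phi$ deterministic, since the two pieces need entirely different treatments.)

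Second, your tail estimate does not close. Bounding $\int_{|x|>\tau_n}x^2f_n\,dx$ by Markov's inequality with $\E T_n^{2m}$ at a cutoff of order $\sqrt{\log n}$ yields only $O\big((\log n)^{-(m-1)}\big)$, which is nowhere near $o\big(n^{-(m-2)/2}\big)$. The paper instead integrates by parts and invokes the non-uniform CLT expansion (\cref{t.clt} with $s=m+1$), so that $1-F_n(x)$ is replaced by $1-\Phi^{Q}_{m+1,n}(x)$ plus a remainder weighted by $(1+|x|)^{-(m+1)}$; the Gaussian tail at $A_n$ is then of size $n^{-(m-2)/2}(\log n)^{-(m-3)/2}$ by construction of $A_n$, and the CLT remainder is smaller still. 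Note that the $(1+|x|)^{-(m+1)}$ weight from the CLT (rather than the $(1+|x|)^{-(m-1)}$ weight from the LLT) is needed to make $\int_{|x|>A_n}x^2(1+|x|)^{-(\cdot)}\,dx$ converge for small $m$. Without some such device, your tail term dominates the claimed remainder and the proof does not go through.
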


\cref{t.entropy} also holds under the weaker but more technical conditions of \cref{t.llt}, see \cref{t.entropy-general}.

\begin{remark}Comparing the relative entropy of $T_n$ and $Z_n$ under the conditions of \cref{t.entropy} with $m=6$, we obtain
\begin{align*}
D(T_n)
&= n^{-2} \tfrac{1}{12} \mu_4^2 + \co\big((n\log n)^{-2}\,\log n\big)
\end{align*}
and (from \cite{BCG13})
\begin{align*}
D(Z_n)
= n^{-2} \tfrac{1}{48}(\mu_4-3)^2 + \co\big((n\log n)^{-2}\big).
\end{align*}
Note that under these conditions, $\mu_4^2/12 > (\mu_4-3)^2 /48$. The relative entropies are always of the same magnitude and particularly close if $\mu_4$ is close to 1. The limit case $\mu_4=1$ is excluded because $X_1$ has a density.
\end{remark}

Our final goal is prove a bound on the total variation. On the linear space of finite signed measures on $(\R, \mathcal{B}(\R))$, the total variation norm is defined as 
$$\Arrowvert\mu\Arrowvert_{TV}=\sup_{B\in\mathcal{B}(\R)}\Big(\arrowvert \mu(B)\arrowvert +\arrowvert \mu(B^c)\arrowvert\Big)$$  To circumvent unnecessary notational complexity, we identify finite (signed) measures with their corresponding cumulative distribution functions.
If $F_P$ and $F_Q$ have densities, bounding their distance in total variation is equivalent to bounding the $L^1$ norm of the difference of the corresponding densities
\begin{equation}\label{eq.TV-L1}
\|F_n-\Phi^{Q}_{m,n}\|_{\TV}=\|f_n-\phi^{q}_{m,n}\|_{L^1}.
\end{equation}

Assume that the conditions of \cref{t.llt-red} are satisfied, in particular $\E|X_1|^{2m}<\infty$ for some $m\in\N$, $m\ge3$. Then the second statement of \cref{t.llt-red} applied with $m-1$ implies
\begin{align*}
\|f_n-\phi^{q}_{m-1,n}\|_{L^p}^p
=\int_\R\big|f_n(x)-\phi^{q}_{m-1,n}(x)\big|^p\dx
\le\int_{\R}\big|(1+|x|)^{-(m-1)} r_n\big|^p\dx
\le c \, r_n^p
\end{align*}
for $p\in[1,\infty)$ and an absolute constant $c>0$. Here, $r_n=\co\big(n^{-(m-2)/2} \, \log n \big)$ and thus
\begin{equation}\label{eq.fn-phi-Lp}
\|f_n-\phi^{q}_{m-1,n}\|_{L^p}
= \co\big(n^{-(m-2)/2} \log n\big).
\end{equation}

The following theorem shows that we can achieve a stronger order of convergence by methods that are particularly suitable for optimizing the $\TV$ distance.

\begin{theorem}\label{t.TV}
	Assume that $X_1$ is symmetric, has a bounded density and $\E|X_1|^{2m}<\infty$ for some $m\in\N$, $m\ge3$. Then
	\begin{equation*}
	\|F_n-\Phi^{Q}_{m,n}\|_{\TV} = \co\big(n^{-(m-2)/2}\big).
	\end{equation*}
\end{theorem}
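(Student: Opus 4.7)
By~\eqref{eq.TV-L1} the theorem reduces to the $L^1$-estimate $\|f_n-\phi^q_{m,n}\|_{L^1}=\co\big(n^{-(m-2)/2}\big)$. Since $\phi^q_{m,n}-\phi^q_{m-1,n}=q_{m-2}(x)\,n^{-(m-2)/2}$ is a deterministic correction of $L^1$-norm exactly of order $n^{-(m-2)/2}$ (and identically zero when $m$ is odd), and since~\eqref{eq.fn-phi-Lp} already supplies $\|f_n-\phi^q_{m-1,n}\|_{L^1}=\co\big(n^{-(m-2)/2}\log n\big)$, the essential task is to remove the logarithmic factor. This $\log n$ arises in the non-uniform LLT of Theorem~\ref{t.llt-red} from taking a pointwise supremum over a $\log n$-sized window in Fourier inversion; the plan is to carry out the Fourier inversion at the $L^2$-level instead, so that averaging in $x$ absorbs the logarithm.

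Concretely, the argument revisits the characteristic-function analysis underlying Theorem~\ref{t.llt-red}. Conditioning on $\cF_n=\sigma(|X_1|,\dots,|X_n|)$, the symmetry of $X_1$ gives $\E[e^{itT_n}\mid\cF_n]=\prod_{j=1}^n\cos(t|X_j|/V_n)$, from which an Edgeworth expansion of $\varphi_{T_n}(t)=\E e^{itT_n}$ is obtained. To enable Fourier inversion at the density level, introduce a small Gaussian perturbation $T_n^\epsilon=T_n+\epsilon Y$ with $Y\sim N(0,1)$ independent of the $X_j$, whose density is $f_n^\epsilon=f_n\ast\phi_\epsilon$ where $\phi_\epsilon$ denotes the density of $N(0,\epsilon^2)$. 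Inserting and removing this smoothing gives
\[
f_n-\phi^q_{m,n}=(f_n-f_n^\epsilon)+(f_n-\phi^q_{m,n})\ast\phi_\epsilon+(\phi^q_{m,n}\ast\phi_\epsilon-\phi^q_{m,n}),
\]
and the outer two summands are standard smoothing defects controlled by powers of $\epsilon$: the first uses the smoothness of $f_n$ inherited from the bounded density of $X_1$, and the third uses the Schwartz character of $\phi^q_{m,n}$.

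For the middle summand, the weighted $L^2$-to-$L^1$ embedding $\|h\|_{L^1}\le C_k\|(1+|x|)^k h\|_{L^2}$ (valid for $k>1/2$), combined with Plancherel, reduces matters to $L^2$-bounds on $(\varphi_{T_n}(t)-\widehat{\phi^q_{m,n}}(t))\,e^{-\epsilon^2 t^2/2}$ and its first $k$ derivatives in $t$. The Gaussian damping restricts the effective range to $|t|\lesssim\epsilon^{-1}\ll\sqrt n$, avoiding the $\log n$-sized window that is the source of the logarithmic loss. On that range, the conditional Edgeworth expansion yields a pointwise bound of the form $\co(n^{-(m-2)/2})\,\psi(t)$ with $\psi\in L^2$ uniformly in $n$; on the complement the bounded density of $X_1$ produces Cram\'er-type exponential decay of $|\varphi_{T_n}|$, making that contribution super-polynomially small. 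Balancing $\epsilon$ as a suitable power of $n$ then delivers the rate $\co(n^{-(m-2)/2})$. The main obstacle is producing the $L^2$-sharp characteristic-function bounds — including those for the derivatives required by the weighted $L^2\to L^1$ step — free of the logarithmic loss inherent to pointwise LLT arguments, and this demands a careful refinement of the conditional Edgeworth expansion already developed for Theorems~\ref{t.clt} and~\ref{t.llt-red}.
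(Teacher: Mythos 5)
Your reduction to the $L^1$ statement via \labelcref{eq.TV-L1}, the Gaussian smoothing $f_n^\epsilon=f_n\ast\phi_\epsilon$, and the three-term decomposition are all in the spirit of the paper's proof (\cref{t.dichte-L1} via \cref{p.dichte-Lp,p.L1-modulus}). However, your mechanism for removing the $\log n$ does not work as described. The logarithm in \labelcref{eq.fn-phi-Lp} does not originate from "a pointwise supremum over a $\log n$-sized window in Fourier inversion" in the $x$-variable; it originates in \cref{p.E.I} from taking the \emph{expectation} of the $t$-integral over the random interval $[B_n,\nu_n)$ with $B_n=V_n/M_n$: the split at $u\asymp\sqrt{\log(n)/n}$ and the indicator $\1\{M_n\ge\sqrt{n/(\eta\log n)}\}$ produce the $(\log n)^{s/2}$ factors via \cref{l.E.mom}. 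Passing to $L^2_x$ by Plancherel changes nothing about this, since after conditioning on $\cF_n$ one still faces $\E\int_{B_n}^{\nu_n}|\prod\cos(tV_n^{-1}|X_j|)|^2\dt$ with the same random lower endpoint, and the same splitting and the same indicator reappear. The paper kills the logarithm by a different and simpler device: it runs the smoothed non-uniform bound (\cref{p.dichte-p}) with the \emph{fractional} moment order $s=m+1/2$ (finite because $\E|X_1|^{2m}<\infty$), so the rate becomes $n^{-(m-2)/2-1/4}(\log n)^{C}=\co(n^{-(m-2)/2})$; the extra polynomial factor $n^{-1/4}$, not an $L^2$ averaging, absorbs the logarithm. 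Your plan contains no substitute for this step.

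The second gap is the first summand $f_n-f_n^\epsilon$, which you dismiss as a "standard smoothing defect controlled by the smoothness of $f_n$ inherited from the bounded density of $X_1$". Boundedness of the density of $X_1$ does not directly yield quantitative regularity of $f_n$; the paper has to establish an $L^1$ modulus of continuity bound (\cref{p.L1-modulus}) via the bivariate Edgeworth expansion \labelcref{eq.BR-19.2} for $(\sum X_j,\sum X_j^2)$ and the integral representation \labelcref{eq.fn-int-gn}. Two features of that argument are essential and absent from your plan: the Lipschitz-type constant is of size $n^{4m-8}$, which forces the smoothing bandwidth to be polynomially small (of order $n^{-(9m-17)/2}$), and the remainder of the bivariate expansion admits no bound in $h$ at all and must instead be integrated in $x$ against the curve $\gamma_{x,n}$ to yield $\co(n^{-(m-2)/2})$ — this is precisely where working with the \emph{integral} modulus of continuity rather than a pointwise Hölder bound saves the final $\log n$. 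Without these two ingredients (the fractional-moment trick and the quantitative $L^1$ modulus of continuity), your outline would at best reproduce the bound \labelcref{eq.fn-phi-Lp} with its logarithmic factor, not the claimed $\co\big(n^{-(m-2)/2}\big)$.
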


\cref{t.TV} also holds under the weaker but more technical conditions of \cref{t.llt}, see \cref{t.dichte-L1}.

The remainder of this article is organized as follows. In \cref{ch.pre}, we introduce further notation and the formal Edgeworth expansion for self-normalized sums. The CLT-type result (\cref{t.clt}) is proven in \cref{ch.distr}. \cref{ch.density} provides more general LLTs that lead to \cref{t.llt-red}. In \cref{ch.entropy,ch.TV}, the prior results are used to attain the entropic CLT (\cref{t.entropy}) and the CLT in total variation distance (\cref{t.TV}), respectively.

\section{Notation and preliminaries}\label{ch.pre}
The notation $\varphi$ is used for the characteristic functions of random variables or the Fourier transforms of functions. Their dependence is indicated by a subscript, e.g. $\varphi_{T_n}$ or $\varphi_{\Phi^{\tP}_{m,n}}$, while $\varphi_{X_j}$ is abbreviated as $\varphi_j$. For $k\ge0$, $c(k)$ represents different positive constants in different (or even in the same) formulas, only depending on $k$ and the distribution of $X_1$. $\lceil x \rceil$ denotes the smallest integer larger than or equal to $x$.

If a sum $\sum$, product $\prod$ or a maximum $\max$ does not have an index, it is always meant to run over $j=1,\dots,n$. The sum $\sum_{*(k_\cdot,r,u)}$ is carried out over all non-negative integer solutions $(k_1,k_2,\dots,k_r)$ of the equalities \mbox{$k_1+2k_2+\dots+rk_r=r$} and $k_1+k_2+\dots + k_r=u$. In many occurrences, only the variable $r$ is fixed which will be denoted by $\sum_{*(k_\cdot,r,\cdot)}$. Then the $k_i$ only have to satisfy the equation $k_1+2k_2+\dots+rk_r=r$ and $u=u(k_\cdot)$ is defined by $u(k_\cdot)=k_1+k_2+\dots +k_r$. In some situations only the variable $u$ is fixed. In this case we need information on the maximal index of $k_\cdot$. This is denoted by $\sum_{*(k_\cdot,\cdot, u, t)}$ meaning the summation over all non-negative integer solutions $(k_1,k_2,\dots,k_t)$ of the equation $k_1+k_2+\dots +k_t=u$ and $r=r(k_\cdot)$ is defined by $r(k_\cdot)=k_1+2k_2+\dots+tk_t$. Clearly $u\le r$ holds.

We set
\[
M_n=\max |X_j|
\qquad \text{and} \qquad
B_n=V_n/M_n.
\]
As $V_n=0$ and $M_n=0$ are equivalent, we define $B_n=1$ in this case, whence  $1 \le B_n \le \sqrt{n}$ is always satisfied. For $l\ge0$ the $l$-th cumulant of $X_1$ will be denoted by $\kappa_{l}$ and is defined by
\begin{equation}\label{eq.cumulant-abl}
\kappa_{l} = i^{-l}\ddtl \log\varphi_1(t)\Big|_{t=0} \, .
\end{equation}

The conditional expected value with respect to $\mathcal{F}_n$ is shortened as $\tE[\ \cdot\ ]=\E[\ \cdot\ |\cF_n]$. Correspondingly, the $\sim$-notation is extended to $\P, \mu, \kappa, F, f, p, P, U, \varphi$ for describing the variants conditional on $\mathcal{F}_n$. We define
\[
\tL_{k,n}
= V_n^{-k}\sum|X_j|^{k}
\]
for $k\ge2$ integer. Clearly $V_n=0$ and $\sum|X_j|^k=0$ are equivalent. In this case, we define $\tL_{k,n}=0$. Note that for all $k\ge2$,
\begin{equation}\label{eq.Lle1}
\tL_{k,n}
= V_n^{-k}\sum|X_j|^{k}
\le V_n^{-2}\,\frac{M_n^{k-2}}{V_n^{k-2}}\,\sum|X_j|^{2}
\le V_n^{-2}\,\sum|X_j|^{2}
=1.
\end{equation}

When we state that the sequence $(A_n)$ increases (or decreases, respectively) in polynomial order, we mean that there exist $a,b>0$ (or $a,b<0$, respectively) such that
\begin{equation*}
\limsup_{n\to\infty}\Big|\frac{n^a}{A_n}\Big|<\infty 
\quad\text{and}\quad 
\limsup_{n\to\infty}\Big|\frac{A_n}{n^b}\Big|<\infty.
\end{equation*}

When writing (probability) densities, we exclusively mean Lebesgue densities. Densities with respect to the counting measure will be denoted as probability mass functions. When mentioning a density (or a function in general), we always refer to the continuous version of it in case it exists.





\subsection{Edgeworth expansions for self-normalized sums}\label{s.pre.self-normalized}

In this subsection, we introduce the Edgeworth expansion for $T_n$ conditional on $\cF_n$ and establish a connection to \labelcref{eq.Phi^Q-def,eq.phi^q-def}. In particular, the explicit form of the approximating polynomials is derived for later purposes. To this end, we assume that $X_1$ is symmetric such that $\tiP(X_j= \pm |X_j|)=1/2$, $j=1,\dots,n$. For $k\in\N$, we denote by $\tmu_{k,j}=\tE[X_j^k]$ the $k$-th conditional moment of $X_j$. Accordingly, we extend the notation to $\tk_{k,j}$.
Note that
\begin{equation}\label{eq.cumulants-bed-ug}
\tk_{2k-1,j}=\tmu_{2k-1,j}=0 \qquad \text{and} \qquad \tmu_{2k,j}=|X_j|^{2k}.
\end{equation}
Although not necessary, we state the absolute value for even powers of $X_j$ to point out that we operate in the conditional setting where the absolute values are deterministic. By explicitly computing the values of the Bell polynomials, the $k$-th cumulant can be expressed in terms of moments up to degree $k$
which yields
\begin{equation}\label{eq.tk-2r}
\begin{split}
\tk_{2r,j}
=|X_j|^{2r} \sum_{*(k_\cdot,r,\cdot)}(-1)^{u(k_\cdot)-1} (u(k_\cdot)-1)! (2r)! \prod_{l=1}^{r} \frac{1}{k_l!(2l)!^{k_l}} 
\end{split}
\end{equation}
for all $r\in\N$ because all summands with one or more $k_{2l-1}>0$ vanish due to \labelcref{eq.cumulants-bed-ug}. In particular,
\begin{equation*}
\tk_{2,j}=|X_j|^{2},
\qquad
\tk_{4,j}
=-2 |X_j|^{4},
\qquad
\tk_{6,j}
=16 |X_j|^{6}.
\end{equation*}


Below, we outline the characteristics of the Edgeworth expansion of $T_n$, conditioned on $\cF_n$. Clearly, the classical Edgeworth expansion (see e.g. \cite[Section VI.1]{Pet75sums}) needs to be adjusted in terms of approximating polynomials and functions. In particular, for $l,r,m\in\N$ and $t\in\R$, we define
\begin{align}
\tlambda_{l,n}\label{eq.tlambda}
&=\Big(\sum\tmu_{2,j}\Big)^{-l/2}\sum\tk_{l,j}
=V_n^{-l}\sum\tk_{l,j}
\quad (\tlambda_{l,n}=0 \text{ if } V_n=0),\\
\tU_{r,n}(it)\label{eq.tU}
&=\sum_{*(k_\cdot,r,\cdot)} (it)^{r+2u(k_\cdot)} \prod_{l=1}^{r} \frac{1}{k_l!} \Big(\frac{\tlambda_{l+2,n}}{(l+2)!}\Big)^{k_l},\\
\tphi_{\Phi^{\tP}_{m,n}}(t)\label{eq.tphi}
&=e^{-t^2/2}\Big(1+\sum_{r=1}^{m-2} \tU_{r,n}(it)\Big).
\end{align}

Next, the Edgeworth expansion for the conditional distribution function of $T_n$ is
\begin{equation}\label{eq.Phi^tP}
\Phi^{\tP}_{m,n}(x)
=\Phi(x)+\sum_{r=1}^{m-2}\tP_{r,n}(x),
\end{equation}
where
\begin{equation}\label{eq.tP}
\tP_{r,n}(x)
=-\phi(x)\sum_{*(k_\cdot,r,\cdot)} H_{r+2u(k_\cdot)-1}(x) \prod_{l=1}^{r} \frac{1}{k_l!} \Big(\frac{\tlambda_{l+2,n}}{(l+2)!}\Big)^{k_l}
\end{equation}
for all $x\in\R$.
In \labelcref{eq.tU,eq.tP}, all summands with one or more $k_{2l-1}>0$ vanish due to \labelcref{eq.cumulants-bed-ug}. For uneven indices the polynomials only consist of such summands and therefore $\tU_{2r-1,n}=\tP_{2r-1,n}=0$ for all $r\in\N$. The even polynomials $\tP_{2r,n}$ reduce to 
\begin{align}\label{eq.tP-2+4}
\tP_{2r,n}(x)&=-\phi(x)\sum_{*(k_\cdot,r,\cdot)} H_{2r+2u(k_\cdot)-1}(x) \prod_{l=1}^{r} \frac{1}{k_l!} \Big(\frac{\tlambda_{2l+2,n}}{(2l+2)!}\Big)^{k_l},\nonumber\\
\tP_{2,n}(x)&=-\phi(x) H_{3}(x) \frac{\tlambda_{4,n}}{4!},\\
\tP_{4,n}(x)&=-\phi(x) \Big( H_{7}(x) \frac{1}{2} \Big(\frac{\tlambda_{4,n}}{4!}\Big)^2
+ H_{5}(x) \frac{\tlambda_{6,n}}{6!}\Big).\nonumber
\end{align}

Taking expectations of $\Phi^{\tP}_{m,n}$ and $\tP_{r,n}$ respectively returns us to the unconditional setting. In \cref{r.V_n-lambda}, the expectation of the $\tlambda_{l,n}$ appearing in $\tP_{2,n}$ and $\tP_{4,n}$ is evaluated -- the only source of randomness in \labelcref{eq.tP-2+4}. For now, let all moments be finite for clear illustration. The evaluation results in
\begin{equation}\label{eq.E-tlambda_2k}
\begin{split}
\E\bigg[\frac{\tlambda_{4,n}}{4!}\bigg]
&= n^{-1} \big(- \tfrac{1}{12}\big) \mu_4
+ n^{-2} \tfrac{1}{12} \big(2\mu_6 +\mu_4 - 3 \mu_4^2\big)
+ \cO\big(n^{-3}\big),\\
\E\bigg[\frac{\tlambda_{6,n}}{6!}\bigg]
&= n^{-2} \tfrac{1}{45} \mu_6
+ n^{-3} \big(- \tfrac{1}{45}\big) (3\mu_8+3\mu_6-6\mu_4\mu_6)
+ \cO\big(n^{-4}\big),\\
\E\bigg[\frac{1}{2} \Big(\frac{\tlambda_{4,n}}{4!}\Big)^2\bigg]
&=n^{-2} \tfrac{1}{288} \mu_4^2
+ n^{-3} \big(- \tfrac{1}{288}\big) (8\mu_6\mu_4-7\mu_4^2- \mu_8)
+ \cO\big(n^{-4}\big).
\end{split}
\end{equation}
As the expected value of $\tlambda_{l,n}$ (and its powers) admit an expansion in powers of $n^{-1}$, it becomes evident that $\E\big[\Phi^{\tP}_{m,n}\big]$ also admits an expansion in powers of $n^{-1}$. However, we will keep using the common notation of an expansion in powers of $n^{-1/2}$. For $r\in\N$, the approximating polynomial $Q_{r}$ is thus defined as the coefficient of $n^{-r/2}$ in the expansion of
\begin{equation}\label{eq.Q-def}
\E\big[\Phi^{\tP}_{m,n}(x)\big] = \Phi(x)+\sum_{r=1}^{m-2}Q_{r}(x)n^{-r/2} + \cO\big(n^{-(m-1)/2}\big)
\end{equation}
for all $x\in\R$. As a result, we may define the approximating functions for the distribution function of $T_n$ by \labelcref{eq.Phi^Q-def}.

Combining \labelcref{eq.tP-2+4,eq.Q-def,eq.E-tlambda_2k}, it is apparent  that the first polynomials have the form \labelcref{eq.Q-2+4}. As $\tP_{2r-1,n}=0$ and the expected value of the relevant $\tlambda$ and its powers do not provide terms of orders $n^{-(2r-1)/2}$ for any $r\in\N$, also $Q_{2r-1}=0$ for all $r\in\N$.
The approximating polynomials $Q_r$ can be directly calculated by Chung's method \cite{Chu46,FD10}, by the $\delta$-method or by the smooth function model which are both explained in detail in \cite[Chapter 2]{Hal92edgeworth}.

The Edgeworth expansion can also be applied to the density of a normalized sum (see e.g. \cite[Chapter 4]{BR76normal}, \cite{BCG11}, \cite[Section 2.8]{Hal92edgeworth}, \cite[Section VII.3]{Pet75sums}). Concerning this expansion for the density, we state the suitable density-related functions for our setting. By differentiating \labelcref{eq.tP}, for $r\in\N$ we get the approximating polynomials for the conditional density function
\begin{equation}\label{eq.tp}
\tp_{r,n}(x)=\phi(x)\sum_{*(k_\cdot,r,\cdot)} H_{r+2u(k_\cdot)}(x) \prod_{l=1}^{r} \frac{1}{k_l!} \Big(\frac{\tlambda_{l+2,n}}{(l+2)!}\Big)^{k_l},
\end{equation}
(where clearly $\tp_{2r-1,n}=0$) and thus differentiation \labelcref{eq.Phi^tP} yields the Edgeworth expansion for the density of $T_n$
\begin{equation}\label{eq.phi^tp}
\phi^{\tp}_{m,n}(x)=\phi(x)+\sum_{r=1}^{m-2}\tp_{r,n}(x)
\end{equation}
for all $x\in\R$.
As above, for $r\in\N$ we define the approximating polynomials $q_r$ as the coefficient of $n^{-r/2}$ in the expansion of
\begin{equation*}
\E\big[\phi^{\tp}_{m,n}(x)\big] = \phi(x)+\sum_{r=1}^{m-2}q_{r}(x)n^{-r/2} + \cO\big(n^{-(m-1)/2}\big)
\end{equation*}
for all $x\in\R$ and the approximating functions for the density of $T_n$ by \labelcref{eq.phi^q-def}. In consequence, the first approximating polynomials $q_r$ have the form \labelcref{eq.q-2}. Note
\begin{equation*} 
\phi^{q}_{m,n}(x)=\ddx \Phi^{Q}_{m,n}(x) \quad \text{and} \quad q_{r}(x)=\ddx Q_{r}(x).
\end{equation*}

As we will not assume all moments to be finite, the remainder terms have to be specified. In view of the non-uniform bounds, their dependence on the argument $x$ is crucial.

\begin{proposition}\label{p.E-tP-Q}
	Assume that $X_1$ is symmetric and $\E |X_1|^{{s}}<\infty$ for some ${s}\ge2$. Then for $m=\lfloor {s} \rfloor$,
	\begin{equation}\label{eq.E-tP-Q}
	\sup_{x\in\R} \,\exp(x^2/4)\, \big|\E\big[\Phi^{\tP}_{m,n}(x)\big] - \Phi^Q_{m,n}(x)\big| = \co\big(n^{-({s}-2)/2}\big)
	\end{equation}
	and
	\begin{equation}\label{eq.E-tp-q}
	\sup_{x\in\R} \,\exp(x^2/4)\, \big|\E\big[\phi^{\tp}_{m,n}(x)\big] - \phi^q_{m,n}(x)\big| = \co\big(n^{-({s}-2)/2}\big).
	\end{equation}
\end{proposition}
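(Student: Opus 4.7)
The equations \eqref{eq.Phi^tP}, \eqref{eq.tP} and \eqref{eq.Q-def} together reveal that the difference in \eqref{eq.E-tP-Q} is, after the $\Phi(x)$-terms cancel, a finite linear combination with coefficients $-\phi(x) H_{r+2u(k_\cdot)-1}(x)$ of the scalar remainders $R_{r,n}(k_\cdot)$ left over when $\E[\prod_{l}(\tlambda_{l+2,n}/(l+2)!)^{k_l}]$ is expanded in powers of $n^{-1}$ and truncated so as to extract $Q_r n^{-r/2}, \ldots, Q_{m-2} n^{-(m-2)/2}$. The plan is therefore to separate the deterministic $x$-profile from these scalar expansion remainders, then produce a sufficiently sharp expansion of the mixed moments of the $\tlambda_{2l,n}$ under only $\E|X_1|^s<\infty$. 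The advantage of this decoupling is that the prefactor $\phi(x)H_{r+2u-1}(x)$ can absorb the Gaussian weight $\exp(x^2/4)$ uniformly, since $\exp(x^2/4)\,\phi(x)|H_k(x)|=(2\pi)^{-1/2}\exp(-x^2/4)|H_k(x)| \le c(k)$. Thus the proposition reduces to showing $R_{r,n}(k_\cdot) = \co(n^{-(s-2)/2})$ for each admissible multi-index with $r\le m-2$.

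\textbf{Expansion of mixed $\tlambda$-moments.} By \eqref{eq.cumulants-bed-ug} and \eqref{eq.tk-2r}, only even indices contribute and $\tlambda_{2l,n} = c_l V_n^{-2l}\sum_j|X_j|^{2l}$ for explicit constants $c_l$. Setting $W_n:=V_n^2-n=\sum_j(X_j^2-1)$, I would write $V_n^{-2k}=n^{-k}(1+W_n/n)^{-k}$ and Taylor-expand in $W_n/n$, and similarly expand each $\sum_j|X_j|^{2l}$ around its mean $n\E|X_1|^{2l}$. Plugging these expansions into $\E[\prod_l\tlambda_{2l,n}^{k_l}]$ and evaluating mixed moments of the centered i.i.d.\ sums by Rosenthal/Marcinkiewicz--Zygmund inequalities yields an expansion in powers of $n^{-1}$ whose low-order terms reproduce \eqref{eq.E-tlambda_2k} and whose truncation error at order $n^{-(m-2)/2}$ defines $R_{r,n}(k_\cdot)$.

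\textbf{Fractional moments and the density case.} The delicate point is that for $s$ non-integer (and even for integer $s$ when $2l>s$), the higher-order terms arising from the above Taylor/centering expansion involve moments that may not be finite. I would handle this by truncation: decompose $X_j = X_j\1_{\{|X_j|\le n^{1/2}\}} + X_j\1_{\{|X_j|>n^{1/2}\}}$, justify the expansion on the truncated summands where all needed moments are bounded by a power of the truncation level, and control the complementary tail contribution by Markov's inequality at exponent $s$, which contributes $\P(|X_1|>n^{1/2})^{\text{something}} \le n^{-s/2}\E|X_1|^s$ up to combinatorial factors. Balancing the two contributions produces the sharp $\co(n^{-(s-2)/2})$ bound (as opposed to $\cO$), consistent with the proposition's $\co$-statement and with the fact that under fractional moments the ``next'' expansion term of order $n^{-m/2}$ is itself $\co(n^{-(s-2)/2})$ because $m>s-2$. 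The density case \eqref{eq.E-tp-q} is immediate: by \eqref{eq.tp}, $\tp_{r,n}$ differs from $\tP_{r,n}$ only in that $H_{r+2u-1}$ is replaced by $H_{r+2u}$, so the random factors $\prod_l\tlambda_{l+2,n}^{k_l}$ and their expansion remainders $R_{r,n}(k_\cdot)$ are identical, and the exponential weight is absorbed by the same estimate $\exp(x^2/4)\phi(x)|H_{r+2u}(x)|\le c$.

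The main obstacle is the fractional-moment step: every remainder arising from $V_n^{-2k}$, from the centering of the sums $\sum_j|X_j|^{2l}$, and from their combinatorial cross-terms, has to fit inside a single $\co(n^{-(s-2)/2})$ budget, even when some $2l$ exceed $s$. Careful bookkeeping of the truncation level and the orders of the moment inequalities, rather than any new probabilistic input, is what makes the estimate go through.
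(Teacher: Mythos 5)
Your skeleton matches the paper's: absorb the weight via $\sup_x\exp(x^2/4)\phi(x)|H_k(x)|\le c(k)$, reduce to scalar expansion remainders of mixed moments of the $\tlambda_{2l,n}$, Taylor-expand $V_n^{-2k}=n^{-k}(1+W_n/n)^{-k}$ in $W_n=\sum(X_j^2-1)$, and handle powers of $X_1$ exceeding $s$ by truncation plus an $s$-th-moment tail bound. Two points of comparison. First, the paper does not center $\sum_j|X_j|^{2l}$ and does not invoke Rosenthal/Marcinkiewicz--Zygmund; it expands $(\sum(X_j^2-1))^r\sum|X_j|^{2\alpha}$ by the multinomial theorem and counts which index patterns survive the expectation. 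Your route is workable (applied to the truncated variables), but note that Rosenthal only gives upper bounds on the remainders, so the identification of the low-order coefficients with \labelcref{eq.E-tlambda_2k} still requires the same explicit combinatorial computation; you do not actually save work there.

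Second, and this is the one substantive caveat: truncating at level $n^{1/2}$ and bounding retained high powers by $\E\big[\1_{\{|X_1|\le n^{1/2}\}}|X_1|^{2l}\big]\le n^{(2l-s)/2}\E|X_1|^s$ delivers only $\cO\big(n^{-(s-2)/2}\big)$ for those terms, not the claimed $\co$. The paper gets the little-$o$ by truncating at $\delta_n\sqrt n$ with $\delta_n\to0$ slowly enough that $\delta_n^{-s}\E\big[\1_{\{|X_1|\ge n^{1/4}\}}|X_1|^s\big]\to0$ (see \cref{r.moment}); the factor $\delta_n^{2l-s}\to0$ is precisely what upgrades $\cO$ to $\co$, while the slow decay keeps the complementary event $\{M_n\ge\delta_n\sqrt n\}$ of probability $\co(n^{-(s-2)/2})$. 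An equivalent fix within your setup is a secondary split of the truncated expectation at a fixed level $K\to\infty$ slowly. Also make explicit that on the bad event the integrand is bounded (via $|\tlambda_{l,n}|\le c(l)$ from \labelcref{eq.Lle1}), so that the tail contribution really is controlled by the probability of the event alone. With these repairs your argument goes through and is essentially the paper's proof.
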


Although this result could have been expected, its proof (see \cref{app.proofs.pre}) is quite elaborate. Since the expectation of $\tlambda_{l,n}$ cannot be explicitly evaluated, we study the expectation of its Taylor expansion around $n^{-l/2}\sum\tk_{l,j}$. Here, powers of $X_1$ that are higher than $s$ arise. As we need a bound in order of $n$ while controlling the powers of $X_1$,  tight bounds are developed on the critical summands. We show that by definition (see \labelcref{eq.Phi^tP} and \labelcref{eq.tP}),  $\E\big[\Phi^{\tP}_{m,n}(x)\big]$ possesses a similar expansion and similar bounds that result in \cref{p.E-tP-Q} when combined with \labelcref{eq.Q-def}.
Finally, we need some bounds on the approximation functions.

\begin{remark}
	All summands in \labelcref{eq.tU} with $k_1>0$ are 0 as $\tlambda_{3,n}=0$. Therefore, in all non-zero summands $k_1=0$ and thus $u(k_\cdot)\le r/2$. Additionally, note that the only $n$-dependence in the following expressions is in $\tlambda_{l,n}$, which satisfies $|\tlambda_{l,n}|\le c(l)$ for all $l=0,\dots,m$ due to \labelcref{eq.Lle1}. Then from \labelcref{eq.tphi},
	\begin{equation}\label{eq.ddtl-tphi}
	\Big|\ddtl\tphi_{\Phi^{\tp}_{m,n}}(t)\Big|
	=\Big|\ddtl \Big(e^{-t^2/2}\Big(1+\sum_{r=1}^{m-2} \tU_{r,n}(it)\Big)\Big)\Big|
	\le c(m) e^{-t^2/2}\big(1+|t|^{2m-4+l}\big)
	\end{equation}
	for $l=0,\dots,m$ and $t\in\R$. Equivalently, from \labelcref{eq.tP},
	\begin{equation}\label{eq.tP-bound}
	\big|\tP_{r,n}(x)\big|
	\le c(m) \phi(x) (1+|x|^{2r-1})
	\le c(m)
	\end{equation}
	and from \labelcref{eq.tp},
	\begin{equation}\label{eq.tp-bound}
	\big|\tp_{r,n}(x)\big|
	\le c(m) \phi(x) (1+|x|^{2r})
	\le c(m)
	\end{equation}
	for $r=0,\dots,m$ and $x\in\R$. Here, the constants $c(m)$ can be chosen uniformly for all $x\in\R$.
\end{remark}

\section{Non-uniform bounds for Edgeworth expansions in the central limit theorem for self-normalized sums}\label{ch.distr}

Recall $\Phi^Q_{m,n}(x)=\Phi(x)+\sum_{r=1}^{m-2}Q_{r}(x)n^{-r/2}$ from \labelcref{eq.Phi^Q-def}. For convenience, we restate \cref{t.clt} which is the goal of this section:

\begin{theorem2}
	Assume that $X_1$ is symmetric, the distribution of $X_1$ is non-singular and $\E|X_1|^{s}<\infty$ for some ${s}\ge2$. Then for $m=\lfloor {s} \rfloor$,
	\begin{equation*}
	\sup_{x\in\R} \, (1+|x|)^{m}|F_n(x)-\Phi^{Q}_{m,n}(x)|= \co\big( n^{-({s}-2)/2} \, (\log n)^{({s}+2m)/2} \big).
	\end{equation*}
\end{theorem2}

For $s\in[2,4)$, $\Phi^{Q}_{m,n}$ reduces to $\Phi$. Here, the non-uniform Berry--Esseen bounds in \cite{JSW03,RW05} allow for better bounds in view of the argument $x$ than \cref{t.clt}.
Without the factor $(1+|x|)^{m}$, this result has been derived in \cite{Hal87}, but only for integer ${s}$. For $Z_n$, non-uniform bound in the spirit of ours can be found in \cite[Theorem 2, Chapter VI]{Pet75sums}, for instance.

\paragraph*{Structure of the proof} First, we condition on $\cF_n$ and derive a result in the conditional setting (\cref{p.cond}) by showing a bound on the characteristic functions and their derivatives (\cref{p.lemma4}) and using an extension of the Fourier inversion formula. Afterwards we return to the unconditional setting by taking the expectation. On the left-hand side, we employ \cref{p.E-tP-Q} to move from $\E\Phi^{\tP}_{m,n}$ to $\Phi^Q_{m,n}$ while on the right-hand side, we evaluate the expectation of multiple rather involved random remainder terms (\cref{p.E.I,l.E.tL,l.exp.B_n,l.E.mom}), proving \cref{t.clt}.
However, when returning to the unconditional setting by taking expectations, we face similar problems as \cite{Chu46,Hal87} including the expectation of the absolute value of the conditional characteristic function. Same as our precursors, we solve this problem by demanding for non-singularity.

\begin{remark}[Non-singularity]\label{r.non-singularity}
	The condition of the distribution of $X_1$ being non-singular with respect to the Lebesgue measure is sometimes also denoted as the distribution of $X_1$ having a non-zero absolutely continuous component.
	For $Z_n$ the analogue condition is Cram\'er's condition \labelcref{eq.cramer} which we compare to non-singularity now.
	
	By the Lebesgue decomposition, for every distribution function there exists a unique decomposition
	\begin{equation}\label{eq.lebesgue-dec}
	F=p_{ac}F_{ac}+p_{d}F_{d}+p_{sc}F_{sc},
	\end{equation}
	with $p_{ac}+p_{d}+p_{sc}=1$, $p_\cdot\ge0$ where $F_\cdot$ are the distribution functions of an absolutely continuous distribution ($F_{ac}$), a discrete distribution ($F_{d}$) and a singular continuous distribution ($F_{sc}$) respectively. This corresponding decomposition holds for the characteristic function.
	
	By the Riemann--Lebesgue lemma \cite[Theorem 4.1 (iii)]{BR76normal},  $\limsup_{|t|\to\infty}|\varphi_{1,ac}(t)]|=0$ and by \cite{Sch41}, $\limsup_{|t|\to\infty}|\varphi_{1,d}(t)]|=1$ and $\limsup_{|t|\to\infty}|\varphi_{1,sc}(t)]|\in[0,1]$ (the $\limsup$ can in fact take all values in the interval).
	That is why non-singularity implies \labelcref{eq.cramer}. If $\limsup_{|t|\to\infty}|\varphi_{1,sc}(t)]|=1$ (for examples see \cite[p. 164]{Durrett05} or \cite{Sch41}) or $p_{sc}=0$, non-singularity and \labelcref{eq.cramer} are equivalent and both imply that the distribution of $X_1$ is not purely discrete.
	
	
	The reason for our need for the stronger condition of non-singularity is that we need a variant of Cram\'er's condition including the expectation of the absolute value of the conditional characteristic function in \labelcref{eq.cases} in the proof of \cref{p.E.I}. This requires the stronger assumption of non-singularity due to the absolute value within the expectation.
	
	For discrete lattice distributions (which are neither non-singular nor satisfy \labelcref{eq.cramer}), Edgeworth expansions for CLT and LLT of $Z_n$ can be found in \cite[Chapter 5]{BR76normal}. For $T_n$, the recent reference \cite{GvZ21} presents limit theorems and interesting occurring phenomena.
	
\end{remark}

\subsection{The conditional setting}\label{s.distr.cond}

Recall the definitions $M_n=\max |X_j|$, $B_n=V_n/M_n$, $\tL_{k,n}=V_n^{-k}\sum|X_j|^k$ and $\tphi_{\Phi^{\tP}_{m,n}}(t)
=e^{-t^2/2}\big(1+\sum_{r=1}^{m-2} \tU_{r,n}(it)\big)$, 
where
\[
\tU_{r,n}(it)
= \sum_{*(k_\cdot,r,\cdot)} (it)^{r+2u(k_\cdot)} \prod_{l=1}^{r} \frac{1}{k_l!} \Big(\frac{\tlambda_{l+2,n}}{(l+2)!}\Big)^{k_l}
\]
from \labelcref{eq.tU,eq.tphi} and the sum $\sum_{*(k_\cdot,r,\cdot)}$ is introduced in \cref{ch.pre}.

The following proposition examines differences of derivatives of the Fourier--Stieltjes transforms of $\tF_n$ and $\Phi^{\tP}_{m,n}$ around 0. Although versions of this proposition are a key part of every CLT or LLT including an Edgeworth expansion, the crucial point here is that the random variables $X_1,\dots,X_n$ conditional on $\cF_n$ are discrete and furthermore not identically distributed any longer.

\begin{proposition}\label{p.lemma4}
	Assume that $X_1$ is symmetric, $V_n^2>0$ and $m\ge2$ is an integer. Then
	\[
	\Big|\ddtk \Big(\tphi_{T_n}(t)-\tphi_{\Phi^{\tP}_{m,n}}(t)\Big)\Big|
	\le c(m)\tL_{m+1,n}e^{-t^2/6}\big(|t|^{m+1-k}+|t|^{3m-1+k}\big)
	\]
	holds for $k=0,\dots, m$ in the interval $|t|< B_n$.
\end{proposition}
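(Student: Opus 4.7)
The plan is to adapt the classical Cram\'er-style characteristic-function proof of an Edgeworth expansion (as in \cite{Pet75sums,BR76normal}) to the conditional, non-identically distributed setting. Conditional on $\cF_n$, symmetry of $X_1$ gives $X_j=\varepsilon_j|X_j|$ with independent Rademacher signs $\varepsilon_j$, so $\tphi_{T_n}(t)=\prod_{j=1}^n\cos(t|X_j|/V_n)$. For $|t|<B_n$ the arguments satisfy $|tX_j/V_n|<1$, so every factor is strictly positive and $\log\tphi_{T_n}$ is well-defined; moreover, the elementary inequality $\log\cos u\le -u^2/2$ on $(-\pi/2,\pi/2)$ yields the uniform crude bound $|\tphi_{T_n}(t)|\le e^{-t^2/2}$ throughout $|t|<B_n$, which will be needed in the large-$|t|$ subregime below.

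Next I perform a cumulant/Taylor expansion. The Taylor series of $\log\cos$ converges absolutely on $(-\pi/2,\pi/2)$; using the conditional cumulant identities \labelcref{eq.cumulants-bed-ug} together with the definition \labelcref{eq.tlambda}, summation over $j$ yields
\begin{equation*}
\log\tphi_{T_n}(t) = -\tfrac{t^2}{2} + A(t) + R(t),
\end{equation*}
where $A(t):=\sum_{l=1}^{m-2}(it)^{l+2}\tlambda_{l+2,n}/(l+2)!$ (only even $l$ contribute by symmetry) and $|R(t)|\le c(m)\tL_{m+1,n}|t|^{m+1}$, the remainder bound arising from Taylor's remainder together with $\sum_j|X_j|^{m+1}=V_n^{m+1}\tL_{m+1,n}$. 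Expanding $e^{A(t)}$ via the multinomial formula and regrouping by the index pairs $(r,u(k_\cdot))$ from $\sum_{*(k_\cdot,r,\cdot)}$ then identifies $P_m(t):=1+\sum_{r=1}^{m-2}\tU_{r,n}(it)$ as the partial sum of that expansion with $r\le m-2$. The bound \labelcref{eq.Lle1} on $\tL_{l,n}$ together with the explicit tracking of the extremal polynomial degrees appearing in the omitted terms yields $|e^{A(t)}-P_m(t)|\le c(m)\tL_{m+1,n}(|t|^{m+1}+|t|^{3m-1})$ on $|t|<B_n$.

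For $k=0$, I write
\begin{equation*}
\tphi_{T_n}(t)-\tphi_{\Phi^{\tP}_{m,n}}(t) = e^{-t^2/2}\bigl[e^{A(t)}(e^{R(t)}-1) + (e^{A(t)}-P_m(t))\bigr]
\end{equation*}
and split $|t|<B_n$ into two subregimes: where $|R(t)|\le 1$, the Taylor bounds on each bracket apply ($|e^{R}-1|\le c|R|$ and $|e^A|$ bounded); where $|R(t)|>1$, the crude bound $|\tphi_{T_n}|\le e^{-t^2/2}$ together with $|\tphi_{\Phi^{\tP}_{m,n}}|\le c(m)e^{-t^2/2}(1+|t|^{2m-4})$ and the inequality $\tL_{m+1,n}|t|^{m+1}\ge 1$ supplies the required factor $\tL_{m+1,n}|t|^{3m-1}$. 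For $k\ge 1$, Leibniz applied to $e^{-t^2/2}$ times the polynomial-valued error factor gives the claim: each derivative of $e^{-t^2/2}$ adds one power of $|t|$ (boosting $|t|^{3m-1}$ to $|t|^{3m-1+k}$), while differentiating the polynomial factor reduces $|t|^{m+1}$ to $|t|^{m+1-k}$; the residual Gaussian is absorbed into $e^{-t^2/6}$ via $e^{-t^2/3}|t|^N\le c(N)$.

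The main obstacles will be: (i) the combinatorial verification of the partial-sum identification between $e^{A(t)}$ and $P_m(t)$, and the explicit tracking of the two extremal polynomial degrees $m+1$ and $3m-1$ in the tail on $|t|<B_n$; (ii) the clean transition between the two $|t|$-subregimes so that the piecewise estimate assembles into a single uniform bound; and (iii) the Leibniz bookkeeping for all $k=0,\ldots,m$ simultaneously in the presence of both Gaussian and polynomial contributions.
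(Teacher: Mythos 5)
Your route is the same as the paper's: write $\tphi_{T_n}(t)=\prod\cos(tV_n^{-1}|X_j|)$, note that on $|t|<B_n$ every factor exceeds $\cos(1)$ so the logarithm is available, peel off $-t^2/2$, exponentiate the cumulant polynomial and identify the partial sum with $1+\sum_{r=1}^{m-2}\tU_{r,n}(it)$. Your multiplicative split $e^{A}e^{R}$ with the case distinction $|R|\le1$ versus $|R|>1$ is a cosmetic variant of the paper's additive decomposition $R_n=R_{n,1}+R_{n,2}$, and your observation $\log\cos u\le -u^2/2$ plays the role of the paper's estimate $|v_n(t,z)|\le t^2/3$ for the zeroth-order absorption into the Gaussian. (Two small caveats there: ``$|e^{A}|$ bounded'' needs an argument --- it is true, either because all Taylor coefficients of $\log\cos$ are negative so $A\le0$, or because $|A(t)|\le ct^2$ with $c<1/6$ via $|t|^{l}\tL_{l+2,n}\le1$ from \labelcref{eq.tL-k-1} --- and your claimed top degree $3m-1$ for $|e^{A}-P_m|$ requires recombining the infinitely many omitted multinomial terms using \cref{l.lemma2} and $|t|\tL_{m+1,n}^{1/(m-1)}\le1$, exactly as in the paper's treatment of $R_{n,1}$ and $R_{n,2}$.)

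The genuine gap is at $k\ge1$. The error factor is not ``polynomial-valued'': it contains $e^{R(t)}$, and the entire technical core of the proposition --- and of the paper's proof --- is the family of estimates $|\ddtl R(t)|\le c(m)\tL_{m+1,n}|t|^{m+1-l}$ for $l=0,\dots,m$, which is precisely where the decreasing exponent $m+1-k$ in the statement originates. Obtaining these requires Taylor-expanding each $\ddtl\log\tphi_j(tV_n^{-1})$ to order $m+1$ with explicit remainder, controlled by the chain-rule bound $|\ddtr\log\tphi_j|\le c(r)V_n^{-r}|X_j|^r$ (which uses $\tphi_j\ge\cos1>1/2$), and then a Fa\`a di Bruno argument for $\ddtk e^{R}$ producing terms $e^{R}\prod_l(\ddtl R)^{k_l}$ whose powers of $|t|$ and of the various $\tL_{l,n}$ must be traded against each other via \cref{l.lemma2} to land on $\tL_{m+1,n}\big(|t|^{m+1-k}+|t|^{3m-1+k}\big)$; an analogous Fa\`a di Bruno computation is needed for $\ddtk\big(e^{A}-P_m\big)$. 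Without these derivative estimates the concluding Leibniz step is vacuous --- in particular, one cannot fall back on differentiating $\prod\cos(tV_n^{-1}|X_j|)$ directly in the $|R|>1$ regime, since the undifferentiated cosine factors no longer supply Gaussian decay when $B_n$ is small. With them, your outline completes to the paper's proof.
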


Although there are many significant differences, the structure of our proof has its origins in the proof of \cite[Lemma 4, Chapter VI]{Pet75sums}. Prior to proving the proposition, we cite Lemma 2 from \cite[Chapter VI]{Pet75sums} in the conditional case. It gives an upper bound on $\tL_{a,n}$.

\begin{lemma}\label{l.lemma2}
	Let $X_1,\dots,X_n$ be independent random variables and $\E X_j=0, \E X_j^2<\infty$ for all $j=1,\dots,n$. If $3\le a\le b$, then
	\[
	\tL_{a,n}^{1/(a-2)}\le \tL_{b,n}^{1/(b-2)}.
	\]
\end{lemma}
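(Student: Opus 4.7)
The plan is to recognize $\tL_{k,n}$ as a $(k-2)$-th moment under a natural probability measure on $\{1,\dots,n\}$ and then conclude by a single application of Jensen's inequality. I would first dispose of the degenerate case $V_n=0$: by the paper's convention $\tL_{a,n}=\tL_{b,n}=0$ there, so the inequality holds trivially. From here on assume $V_n>0$.

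Set $y_j := |X_j|/V_n \in [0,1]$ and $p_j := y_j^2 = X_j^2/V_n^2$. Since $\sum_j p_j = V_n^{-2}\sum_j X_j^2 = 1$, the weights $(p_j)_{j=1}^n$ form a probability mass function on $\{1,\dots,n\}$, and
\[
\tL_{k,n} \;=\; \sum_j y_j^k \;=\; \sum_j p_j\, y_j^{k-2} \;=\; \E_p\!\left[Y^{k-2}\right],
\]
where $Y$ denotes the coordinate random variable $Y(j)=y_j$ under the law $p$. The exponent $k-2$ is nonnegative whenever $k\ge 3$, so $Y^{k-2}\ge 0$ is well-defined.

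With this reformulation the claim becomes a one-line Jensen inequality. The function $\psi(u) = u^{(b-2)/(a-2)}$ is convex on $[0,\infty)$ because its exponent satisfies $(b-2)/(a-2)\ge 1$ under the hypothesis $3\le a\le b$. Applying $\psi$ to the nonnegative random variable $Y^{a-2}$ yields
\[
\tL_{a,n}^{(b-2)/(a-2)}
= \psi\bigl(\E_p[Y^{a-2}]\bigr)
\le \E_p\bigl[\psi(Y^{a-2})\bigr]
= \E_p\!\left[Y^{b-2}\right]
= \tL_{b,n},
\]
and extracting the $(b-2)$-th root gives the stated inequality. No serious obstacle is anticipated: the content of the lemma is precisely the monotonicity of $L^r(p)$-norms in $r$, applied to the bounded random variable $Y$ under the discrete probability measure $p_j = X_j^2/V_n^2$, and the mean-zero/finite-variance assumptions on the $X_j$ enter only to make $V_n$ and $\tL_{k,n}$ well-defined in the first place.
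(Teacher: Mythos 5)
Your proof is correct. The paper does not prove this lemma itself but cites it from Petrov (Lemma 2, Chapter VI of \cite{Pet75sums}, transferred to the conditional setting), and your argument --- rewriting $\tL_{k,n}=\sum_j p_j y_j^{k-2}$ with the probability weights $p_j=X_j^2/V_n^2$ and applying Jensen with $u\mapsto u^{(b-2)/(a-2)}$ --- is precisely the standard Lyapunov-type argument behind that reference, including the correct handling of the degenerate case $V_n=0$.
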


\begin{proof}[Proof of \cref{p.lemma4}]
	$X_j$ has the conditional characteristic function
	\begin{equation*}
	\tphi_j(t)=\tE[\exp(itX_j)]=\frac{1}{2} \Big(\exp(it|X_j|)+\exp(-it|X_j|)\Big)
	=\cos(t|X_j|)
	\end{equation*}
	for $j=1,\dots,n$. Clearly $\cos(t|X_j|)=\cos(tX_j)$, but we keep the first notation to highlight that we are operating in the conditional setting where $|X_j|$ is deterministic. By conditional independence we can write for the conditional Fourier transform of $T_n$
	\begin{equation}\label{eq.phi_n}
	\tphi_{T_n}(t)
	=\prod\tE\big[\exp(itV_n^{-1}X_j)\big]
	=\prod\tphi_j(tV_n^{-1})
	=\prod \cos\big(tV_n^{-1}|X_j|\big).
	\end{equation}
	
	
	For applying the logarithm later, we need to bound $\tphi_j(tV_n^{-1})=\cos(tV_n^{-1}|X_j|)$ away from 0 for all $j=1,\dots,n$. So we restrict ourselves on the interval $|t|< B_n$ throughout the rest of the proof. Thus,
	\begin{equation}\label{eq.phi_j2}
	\tphi_j(tV_n^{-1})=\cos\big(tV_n^{-1}|X_j|\big)\ge \cos(1) > \tfrac12
	\end{equation}
	for all $j=1,\dots,n$.
	
	Since $X_j$ are symmetric, $\sum\tk_{1,j} = 0$ and $\sum\tk_{2,j} = V_n^2$. Recall $\tlambda_{l,n}=V_n^{-l}\sum\tk_{l,j}$ from \labelcref{eq.tlambda}. By taking the logarithm, we thus get (see \labelcref{eq.cumulant-abl})
	\begin{align*}
	\log \tphi_{T_n}(t)
	&=\sum\log\tphi_j(tV_n^{-1})
	=\sum\sum_{l=1}^{m}\tk_{l,j}\frac{(itV_n^{-1})^l}{l!}+\cO(t^{m+1})\\
	&=\sum_{l=1}^{m-2}\frac{\tlambda_{l+2,n}}{(l+2)!}(it)^{l+2}-\frac{t^2}{2}+\cO(t^{m+1}).
	\end{align*}
	The series expansion is valid as all conditional cumulants are finite. To eliminate the normal part (which is $-t^2/2$), we define
	\begin{equation}\label{eq.v_n}
	v_n(t,z):
	=\frac{t^2}{2}+\frac{1}{z^2}\log \tphi_{T_n}(tz)
	=\sum_{l=1}^{m-2}\frac{\tlambda_{l+2,n}}{(l+2)!}(it)^{l+2}z^l+\cO(t^{m+1},z^{m-1})
	\end{equation}
	for $0<z\le1$.
	Now we expand $e^{v_n(t,z)}$ as a series in $z$ (with fixed $t$)
	\begin{equation*}
	e^{v_n(t,z)}=1+\sum_{r=1}^{m-2}\tU_{r,n}(it)z^r+\cO(z^{m-1})
	\end{equation*}
	such that
	\begin{equation}\label{eq.phi_n^z}
	\tphi_{T_n}(tz)^{1/z^2}
	=\exp\Big(-\frac{t^2}{2}+v_n(t,z)\Big)
	=e^{-t^2/2}\Big(1+\sum_{r=1}^{m-2}\tU_{r,n}(it)z^r+R_n(t,z)\Big).
	\end{equation}
	Recalling $\tphi_{\Phi^{\tP}_{m,n}}(t) =e^{-t^2/2}\big(1+\sum_{r=1}^{m-2} \tU_{r,n}(it)\big)$, we now have to show that $R_n(t,z)$ and its derivatives are small for $z\to1$.
	
	First, we examine (see e.g. \cite[Theorem 3.3.18]{Durrett05})
	\begin{equation}\label{eq.dl-phi}
	\begin{split}
	\lefteqn{\Big|\ddtl \tphi_j(tzV_n^{-1})\Big|}\quad\\
	&=\Big|\tE\Big[\big(izV_n^{-1}X_j\big)^l\exp(itzV_n^{-1}X_j)\Big]\Big|\\
	&\le \tE\Big[z^lV_n^{-l}|X_j|^l\Big]
	= z^lV_n^{-l}|X_j|^l
	\end{split}
	\end{equation}
	for $j=1,\dots,n$ and $l\in\N$.
	
	According to the chain rule (see e.g. \cite[Lemma 1, Chapter VI]{Pet75sums}), we derive
	\begin{equation}\label{eq.dr-logphi}
	\begin{split}
	&\Big|\ddtr \log\tphi_j(tzV_n^{-1})\Big|\\
	&\quad=\Big|r!\sum_{*(k_\cdot,r,\cdot)} (-1)^{u(k_\cdot)-1}(u(k_\cdot)-1)!\tphi_j(tzV_n^{-1})^{-u(k_\cdot)} \prod_{l=1}^{r} \frac{1}{k_l!} \Big(\frac{1}{l!}\ddtl \tphi(tzV_n^{-1})\Big)^{k_l} \Big|\\
	&\quad\le c(r)\sum_{*(k_\cdot,r,\cdot)} \prod_{l=1}^{r}\big(z^lV_n^{-l}|X_j|^l\big)^{k_l}
	= c(r)z^rV_n^{-r}|X_j|^r
	\end{split}
	\end{equation}
	for $j=1,\dots,n$ and $r=1,\dots,m+1$, where we used \labelcref{eq.phi_j2,eq.dl-phi}.
	
	By Taylor expansion of $v_n(t,z)$ (at the point $0$, with an intermediate point $u\in\R$ between $0$ and $t$, depending on $v_n(\cdot,z)$ and $t$) using \labelcref{eq.cumulant-abl}, we get
	\begin{align*}
	v_n(t,z)&=\frac{t^2}{2}+\frac{1}{z^2}\sum\log\tphi_j(tzV_n^{-1})\\
	&=v_n(0,z)\\
	&\quad+t\Big(0+\frac{1}{z^2}\sum zV_n^{-1}\tk_{1,j}\,i\Big)\\
	&\quad+\frac{t^2}{2}\Big(1+\frac{1}{z^2}\sum (zV_n^{-1})^2\tk_{2,j}\,i^2\Big)\\
	&\quad+\frac{t^3}{6z^2}\sum\ddttt \log\tphi_j(tzV_n^{-1})\Big|_{t=u}\\
	&=\frac{t^3}{6z^2}\sum\ddttt \log\tphi_j(tzV_n^{-1})\Big|_{t=u}\, .
	\end{align*}
	%
	Now \labelcref{eq.dr-logphi} implies
	\begin{equation}\label{eq.v_n-1}
	|v_n(t,z)|
	\le\frac{|t|^3}{6z^2}\sum c(3)z^3V_n^{-3}|X_j|^3
	\le c(3)|t|^3V_n^{-3}\sum|X_j|^3
	=c(3)|t|^3\tL_{3,n}.
	\end{equation}
	
	By a Taylor expansion of $\ddt v_n(t,z)$ (at the point $0$, with an intermediate point $u\in\R$ between $0$ and $t$ and $u$ depending on $\ddt v_n(\cdot,z)$ and $t$), we get in the same manner
	\begin{align*}
	\ddt v_n(t,z)
	&=\frac{t^2}{2z^2}\sum\ddttt \log\tphi_j(tzV_n^{-1})\Big|_{t=u}
	\end{align*}
	which leads to
	\begin{equation}\label{eq.d1-v_n}
	\Big|\ddt v_n(t,z)\Big|
	\le\frac{t^2}{2z^2}\sum c(3)z^3V_n^{-3}|X_j|^3
	\le c(3)|t|^2V_n^{-3}\sum|X_j|^3
	=c(3)|t|^2\tL_{3,n}
	\end{equation}
	by \labelcref{eq.dr-logphi}. Similarly, 
	\begin{equation}\label{eq.d2-v_n}
	\Big|\ddtt v_n(t,z)\Big|\le c(3)|t|V_n^{-3}\sum|X_j|^3=c(3)|t|\tL_{3,n}.
	\end{equation}
	For $l=3,\dots,m$, the definition of $v_n(t,z)$ and \labelcref{eq.dr-logphi} directly give
	\begin{equation}\label{eq.dl-v_n}
	\Big|\ddtl v_n(t,z)\Big|
	\le\frac{1}{z^2}\sum c(l)z^lV_n^{-l}|X_j|^l
	= c(l)\tL_{l,n}\,z^{l-2}
	\le c(l)\tL_{l,n}.
	\end{equation}
	
	The necessity of bounding the next term will become evident in \labelcref{eq.R_n,eq.R_n^1,eq.R_n^2,eq.da-R_n^2}. For ${1\le r\le m \le h}$, the chain rule for powers of functions (see for example \cite[Lemma 3, Chapter VI]{Pet75sums}) and our derived estimates \labelcref{eq.v_n-1,eq.d1-v_n,eq.d2-v_n,eq.dl-v_n} gives
	\begin{align*}
	\lefteqn{\Big|\frac{1}{h!}\ddtr v_n^h(t,z)\Big|
	=\Big|\frac{r!\cdot h!}{h!} \sum_{u=1}^{r\land h} \sum_{*(k_\cdot,r,u)} \frac{v_n^{h-u}(t,z)}{(h-u)!} \prod_{l=1}^{r} \frac{1}{k_l!} \Big(\frac{1}{l!}\ddtl v_n(t,z)\Big)^{k_l}\Big|}\qquad\\
	&\le c(m)\sum_{u=1}^{r} \sum_{*(k_\cdot,r,u)} \frac{|v_n(t,z)|^{h-m}}{(h-m)!}|v_n(t,z)|^{m-u} \prod_{l=1}^{r} \Big|\ddtl v_n(t,z)\Big|^{k_l}\\
	&\le c(m) \frac{|v_n(t,z)|^{h-m}}{(h-m)!} \sum_{u=1}^{r} \sum_{*(k_\cdot,r,u)} 
	\big(|t|^3\tL_{3,n}\big)^{m-u} 
	\big(|t|^2\tL_{3,n}\big)^{k_1} 
	\big(|t|\tL_{3,n}\big)^{k_2} 
	\prod_{l=3}^{r} \tL_{l,n}^{k_l}\\
	&= c(m) \frac{|v_n(t,z)|^{h-m}}{(h-m)!} \sum_{u=1}^{r} \sum_{*(k_\cdot,r,u)} 
	|t|^{3m-3u+2k_1+k_2}
	\tL_{3,n}^{m-u+k_1+k_2} 
	\prod_{l=3}^{r} \tL_{l,n}^{k_l}.
	\end{align*}
	Now, \cref{l.lemma2} (with $b=m+1$) implies that this is bounded by
	\begin{align*}
	c(m) &\frac{|v_n(t,z)|^{h-m}}{(h-m)!} \sum_{u=1}^r \sum_{*(k_\cdot,r,u)} 
	|t|^{3m-3u+2k_1+k_2}
	\tL_{m+1,n}^{(m-u+k_1+k_2)/(m-1)} 
	\prod_{l=3}^{r} \tL_{m+1,n}^{(l-2)k_l/(m-1)}\\
	&= c(m) \frac{|v_n(t,z)|^{h-m}}{(h-m)!} \sum_{u=1}^r \sum_{*(k_\cdot,r,u)}
	|t|^{3m-3u+2k_1+k_2}
	\tL_{m+1,n}^{(m-3u+2k_1+k_2+r)/(m-1)}\\
	&= c(m) \frac{|v_n(t,z)|^{h-m}}{(h-m)!} \tL_{m+1,n} |t|^{3m-r-1}\sum_{u=1}^r \sum_{*(k_\cdot,r,u)}
	\big(|t|\tL_{m+1,n}^{1/(m-1)}\big)^{1-3u+2k_1+k_2+r},
	\end{align*}
	where the last exponent is greater than 0 due to
	\[2k_1+k_2+r=3k_1+3k_2+3k_3+4k_4+\dots+rk_r\ge3(k_1+k_2+\dots+k_r)=3u.\]
	For $k\ge3$, we can bound
	\begin{equation}\label{eq.tL-k-1}
	|t|^{k-2}\tL_{k,n}
	\le B_n^{k-2}\tL_{k,n}
	=\frac{V_n^{k-2}\sum|X_j|^k}{V_n^kM_n^{k-2}}
	\le\frac{\sum|X_j|^2}{V_n^2}
	=1
	\end{equation}
	and thus
	\begin{equation}\label{eq.tL-k-2}
	|t|\tL_{k,n}^{1/(k-2)}\le1
	\end{equation}
	which in case of $k=m+1$ leads to
	\begin{equation}\label{eq.dr-v_n^h}
	\Big|\frac{1}{h!}\ddtr v_n^h(t,z)\Big|\le c(m) \frac{|v_n(t,z)|^{h-m}}{(h-m)!} \tL_{m+1,n} |t|^{3m-r-1}
	\end{equation}
	for $1\le r\le m \le h$.
	
	We look back at \labelcref{eq.phi_n^z} and define
	\begin{equation}\label{eq.R_n}
	R_n(t,z)=R_{n,1}(t,z)+R_{n,2}(t,z),
	\end{equation}
	where
	\begin{align}\label{eq.R_n^1}
	R_{n,1}(t,z)
	&= \sum_{u=0}^{m-1}\frac{v_n^u(t,z)}{u!}-\Big(1+\sum_{r=1}^{m-2}\tU_{r,n}(it)z^r\Big),\\
	R_{n,2}(t,z)
	&= \sum_{u=m}^{\infty}\frac{v_n^u(t,z)}{u!}.\label{eq.R_n^2}
	\end{align}
	By \labelcref{eq.dr-v_n^h}, this implies (with $h=u$ and $r=a$)
	\begin{equation}\begin{split}\label{eq.da-R_n^2}
	\Big|\ddta R_{n,2}(t,z)\Big|
	&\le\sum_{u=m}^{\infty}\Big|\frac{1}{u!}\ddta v_n^u(t,z)\Big|\\
	&\le\sum_{u=m}^{\infty}c(m) \frac{|v_n(t,z)|^{u-m}}{(u-m)!} \tL_{m+1,n} |t|^{3m-a-1}\\
	&=c(m) e^{|v_n(t,z)|} \tL_{m+1,n} |t|^{3m-a-1}
	\end{split}\end{equation}
	for all $a=1,\dots, m$. By \labelcref{eq.v_n-1}, \labelcref{eq.tL-k-1} and \cref{l.lemma2},
	\begin{equation}\begin{split}\label{eq.d0-R_n^2}
	\Big|R_{n,2}(t,z)\Big|
	&\le\sum_{u=m}^{\infty}\Big|\frac{1}{u!} v_n^u(t,z)\Big|
	\\
	&\leq e^{|v_n(t,z)|}|v_n(t,z)|^{m}
	\\
	&\le c(m) e^{|v_n(t,z)|} \tL_{3,n}^{m-1}|t|^{3m-1}\\
	&
	\le c(m) e^{|v_n(t,z)|} \tL_{m+1,n}|t|^{3m-1},
	\end{split}\end{equation}
	so \labelcref{eq.da-R_n^2} also holds for $a=0$.
	
	
	Let $x>-1$. As
	\[
	\ddx \big(\log(1+x)-(x-x^2)\big) = \frac{1}{1+x}-1+2x
	\]
	is less than or equal to 0 for $x\in[-1/2,0]$ and greater than or equal to 0 for $x\ge 0$, the function $x\mapsto \log(1+x)-(x-x^2)$ is decreasing for $x\in[-1/2,0]$ and increasing for $x\ge 0$. Additionally, it is continuous and $\log(1+0)-(0-0^2)=0$. Hence, $\log(1+x)-(x-x^2)\ge0$ for $x\ge-1/2$ and therefore $\log(1+x)\ge x-x^2$ for $x\ge-1/2$.
	Additionally $\log(1+x)\le x$. Thus, for every $x\ge-1/2$ there exists a $\theta$ (depending on $x$) with $0 \le \theta \le 1$ such that $\log(1+x) = x-\theta x^2$.
	By \labelcref{eq.phi_j2}, $\tphi_j(tzV_n^{-1})>1/2$ for all $j=1,\dots,n$ and thus by inserting $x=\tphi_j(tzV_n^{-1})-1$ into the equation above,
	\begin{align*}
	|v_n(t,z)|
	&= \Big|\frac{t^2}{2}+\frac{1}{z^2}\sum\log\tphi_j(tzV_n^{-1})\Big|\\
	&= \Big|\frac{t^2}{2}+\frac{1}{z^2} \sum\Big(\big(\cos(tzV_n^{-1}|X_j|)-1\big)+\theta_j \big(\tphi_j(tzV_n^{-1})-1\big)^2\Big)\Big|
	\end{align*}
	for some $\theta_j$ (depending on $\tphi_j(tzV_n^{-1})$), where $|\theta_j|\le1$ for $j=1,\dots,n$. Additionally, there exists $\theta'$ (depending on $x$) with the properties $\cos(x)=1-x^2/2+\theta'x^4/(4!)$ and ${0\le\theta'\le1}$ and $|\cos(x)-1|\le x^2/2$ for $x\in\R$. By \labelcref{eq.tL-k-1}, we deduce
	\begin{align}
	|v_n(t,z)|
	&= \Big|\frac{t^2}{2}+\frac{1}{z^2} \sum\Big(\big(\cos(tzV_n^{-1}|X_j|)-1\big)+\theta_j \big(\tphi_j(tzV_n^{-1})-1\big)^2\Big)\Big|\nonumber\\
	&= \Big|\frac{1}{z^2} \sum\Big(\big(\theta_j'\tfrac{(tzV_n^{-1}|X_j|)^4}{24}\big)+\theta_j \big(\tphi_j(tzV_n^{-1})-1\big)^2\Big)\Big|\nonumber\\
	&\le \frac{t^4z^2}{24\,V_n^4} \sum|X_j|^4+\frac{1}{z^2}\sum \big|\tphi_j(tzV_n^{-1})-1\big|^2\label{eq.v_n-3}\\
	&\le \frac{t^4z^2}{24} \tL_{4,n}+\frac{1}{2z^2}\sum \big|\tphi_j(tzV_n^{-1})-1\big|\nonumber\\
	&\le \frac{t^2z^2}{24} (t^2\tL_{4,n})+\frac{1}{2z^2}\sum \frac{(tzV_n^{-1}|X_j|)^2}{2}
	\le \frac{t^2}{3}\nonumber
	\end{align}
	for some $\theta_j'$ (depending on $tzV_n^{-1}|X_j|$), where $0\le\theta_j'\le1$ for $j=1,\dots,n$.
	
	Putting together \labelcref{eq.da-R_n^2,eq.d0-R_n^2,eq.v_n-3} implies
	\begin{equation}\begin{split}\label{eq.da-R_n^2-2}
	\Big|\ddta R_{n,2}(t,z)\Big|
	\le c(m) e^{|v_n(t,z)|} \tL_{m+1,n} |t|^{3m-a-1}
	\le c(m) \tL_{m+1,n} |t|^{3m-a-1}e^{t^2/3}
	\end{split}\end{equation}
	for all $a=0,\dots,m$.
	
	Now to $R_{n,1}$. By \labelcref{eq.v_n}, we have
	\begin{equation}\label{eq.v_n-4}
	v_n(t,z)=\sum_{k=1}^{m-2}\frac{\tlambda_{k+2,n}}{(k+2)!}(it)^{k+2}z^k+r_n(t,z)z^{m-1},
	\end{equation}
	where $r_n$ is the remainder of a Taylor expansion (at the point $0$, with intermediate point $u\in\R$ between $0$ and $t$ and $u$ depending on $v_n(\cdot,z)$ and $t$)
	\begin{equation*}
	r_n(t,z)=\frac{t^{m+1}}{(m+1)!z^{m-1}}\;\ddT{m+1} v_n(t,z)\Big|_{t=u}.
	\end{equation*}
	
	For $l=0,\dots,m$ and $j=1,\dots,n$, by Taylor expansion of $\ddtl\log\tphi_j(tzV_n^{-1})$ (at $0$, with intermediate point $u_j\in\R$ between $0$ and $t$, depending on $\ddtl\log\tphi_j(\cdot \,zV_n^{-1})$ and $t$) and \labelcref{eq.cumulant-abl},
	\begin{align*}
	\lefteqn{\ddtl\log\tphi_j(tzV_n^{-1})}\quad\\*
	&=\sum_{k=0}^{m-l} \frac{t^{k}}{k!}\;\ddT{l+k}\log\tphi_j(tzV_n^{-1})\Big|_{t=0}
	+\frac{t^{m-l+1}}{(m-l+1)!}\;\ddT{m+1}\log\tphi_j(tzV_n^{-1})\Big|_{t=u_j}\\
	&=\sum_{k=l}^{m} \frac{\tk_{k,j}}{V_n^k}z^{k}i^{k} \frac{t^{k-l}}{(k-l)!}
	+\frac{t^{m-l+1}}{(m-l+1)!}\;\ddT{m+1}\log\tphi_j(tzV_n^{-1})\Big|_{t=u_j}
	\end{align*}
	for some $0\le u_j\le t$.
	For $l=0,\dots,m$, differentiating \labelcref{eq.v_n-4} and using \labelcref{eq.dr-logphi} implies
	\begin{align*}
	\lefteqn{\Big|\ddtl r_n(t,z)\Big|
	}\:\:\\
	&= z^{-(m-1)}\Big|\ddtl\Big(\frac{t^2}{2}+\frac{1}{z^2}\sum\log\tphi_j(tzV_n^{-1})\Big) -\ddtl\sum_{k=1}^{m-2}\frac{\tlambda_{k+2,n}}{(k+2)!}(it)^{k+2}z^k\Big|\\
	&= z^{-(m-1)}\Big|\Big(\ddtl\frac{t^2}{2}\Big)+\Big(\frac{1}{z^2}\sum\ddtl\log\tphi_j(tzV_n^{-1})\Big)
	-\sum_{k=3\vee l}^{m}\tlambda_{k,n}i^{k}\frac{t^{k-l}}{(k-l)!}z^{k-2} \Big|\\
	&= z^{-(m-1)}\bigg|\Big(\ddtl\frac{t^2}{2}\Big)\\
	&\qquad+\Big(\frac{1}{z^2}\sum\Big(\sum_{k=l}^{m} \frac{\tk_{k,j}}{V_n^k}z^{k}i^{k} \frac{t^{k-l}}{(k-l)!}
	+\Big(\frac{t^{m-l+1}}{(m-l+1)!}\;\ddT{m+1}\log\tphi_j(tzV_n^{-1})\Big|_{t=u_j}\Big)\Big)\Big)\\
	&\qquad-\sum_{k=l}^{m}\tlambda_{k,n}i^{k}\frac{t^{k-l}}{(k-l)!}z^{k-2} +\1_{\{l\le2\}}\tlambda_{2,n}i^{2}\frac{t^{2-l}}{(2-l)!}\,\bigg|\\
	&= z^{-(m-1)}\Big|\frac{1}{z^2}\sum\frac{t^{m-l+1}}{(m-l+1)!}\;\ddT{m+1}\log\tphi_j(tzV_n^{-1})\Big|_{t=u_j}\Big|\\
	&\le z^{-(m+1)}\frac{|t|^{m-l+1}}{(m-l+1)!}\sum c(m)z^{m+1}V_n^{-(m+1)}|X_j|^{m+1}\\
	&= c(m)|t|^{m-l+1}\tL_{m+1,n}.
	\end{align*}
	For $r,k=0,\dots,m$ by chain rule, this leads to
	\begin{align*}
	\Big|\ddtr r_n^k(t,z)\Big|
	&=\Big|r!\cdot k!\sum_{u=1}^{r\land k} \sum_{*(k_\cdot,r,u)} \frac{r_n^{k-u}(t,z)}{(k-u)!} \prod_{l=1}^{r} \frac{1}{k_l!} \Big(\frac{1}{l!}\ddtl r_n(t,z)\Big)^{k_l}\Big|\\
	&\le c(m)\sum_{u=1}^{r\land k} \sum_{*(k_\cdot,r,u)} \Big(|t|^{m+1}\tL_{m+1,n}\Big)^{k-u} \prod_{l=1}^{r} \Big(|t|^{m-l+1}\tL_{m+1,n}\Big)^{k_l}\\
	&= c(m)\sum_{u=1}^{r\land k} \sum_{*(k_\cdot,r,u)} |t|^{(m+1)(k-u)}\tL_{m+1,n}^{k-u} |t|^{(m+1)u-r}\tL_{m+1,n}^{u}\\
	&= c(m)|t|^{(m+1)k-r}\tL_{m+1,n}^{k}
	\end{align*}
	and for $a,k=0,\dots,m$ and $v=0,\dots,3m$ by the Leibniz rule,
	\begin{equation}\label{eq.da-mix}
	\begin{split}
	\Big|\ddta \big(r_n^{k}(t,z)\ t^{v-(m+1)k}\big)\Big|
	&\le c(m)\sum_{b=0}^a\Big(|t|^{(m+1)k-b}\tL_{m+1,n}^{k}\Big)\Big(|t|^{v-(m+1)k-a+b}\Big)\\
	&\le c(m)|t|^{v-a}\tL_{m+1,n}^{k}.\\
	\end{split}
	\end{equation}
	In \labelcref{eq.R_n^1} the summand with $u=0$ of the first sum and the 1 cancel each other out, so we analyse the remaining first sum using \labelcref{eq.v_n-4}
	\begin{align*}
	\sum_{u=1}^{m-1}\frac{v_n^u(t,z)}{u!}
	&=\sum_{u=1}^{m-1}\frac{1}{u!}\Big(
	\sum_{l=1}^{m-2}\frac{\tlambda_{l+2,n}}{(l+2)!}(it)^{l+2}z^l+r_n(t,z)z^{m-1}
	\Big)^u\\
	&=\sum_{u=1}^{m-1}\sum_{*(k_\cdot,\cdot,u,m-1)}
	\prod_{l=1}^{m-2}\frac{1}{k_l!}\Big(\frac{\tlambda_{l+2,n}}{(l+2)!}(it)^{l+2}z^l\Big)^{k_l}\;\frac{1}{k_{m-1}!}\Big(r_n(t,z)z^{m-1}\Big)^{k_{m-1}}\\
	&=\sum_{u=1}^{m-1}\sum_{*(k_\cdot,\cdot,u,m-1)}
	\prod_{l=1}^{m-2}\frac{1}{k_l!}\Big(\frac{\tlambda_{l+2,n}}{(l+2)!}\Big)^{k_l}\;\frac{1}{k_{m-1}!}\Big(r_n(t,z)\Big)^{k_{m-1}} \\ 
	&\qquad\qquad\qquad\qquad \cdot (it)^{r(k_\cdot)+2u-(m+1)k_{m-1}}\,z^{r(k_\cdot)}.
	\end{align*}
	Let $r(k_\cdot)\le m-2$, so particularly $u\le r\le m-2$. Since $k_r$ is the highest $k_l$ that can be greater 0, also $k_{r+1}=k_{r+2}=\dots =k_{m-1}=0$. After shortening the term, we see that instead of summing over $u$, we can sum over $r$ as well without changing any summand. This yields
	\begin{align*}
	&\sum_{u=1}^{m-1}\sum_{*(k_\cdot,\cdot,u,m-1)} \1_{\{r(k_\cdot)\le m-2\}}
	\prod_{l=1}^{m-2}\frac{1}{k_l!}\Big(\frac{\tlambda_{l+2,n}}{(l+2)!}\Big)^{k_l}\;\frac{1}{k_{m-1}!}\Big(r_n(t,z)\Big)^{k_{m-1}} \\*
	&\qquad\qquad\qquad\qquad\qquad\qquad \cdot (it)^{r(k_\cdot)+2u-(m+1)k_{m-1}}\,z^{r(k_\cdot)}\\
	&=\sum_{u=1}^{m-2}\sum_{*(k_\cdot,\cdot,u,m-2)} \1_{\{r(k_\cdot)\le m-2\}}
	\prod_{l=1}^{r(k_\cdot)}\frac{1}{k_l!}\Big(\frac{\tlambda_{l+2,n}}{(l+2)!}\Big)^{k_l}(it)^{r(k_\cdot)+2u}\,z^{r(k_\cdot)}\\
	&=\sum_{u=1}^{m-2}\sum_{r=1}^{m-2}\sum_{*(k_\cdot,r,u)}
	\prod_{l=1}^{r}\frac{1}{k_l!}\Big(\frac{\tlambda_{l+2,n}}{(l+2)!}\Big)^{k_l}(it)^{r+2u}\,z^{r}\\
	&=\sum_{r=1}^{m-2}\sum_{*(k_\cdot,r,\cdot)}
	\prod_{l=1}^{r}\frac{1}{k_l!}\Big(\frac{\tlambda_{l+2,n}}{(l+2)!}\Big)^{k_l}(it)^{r+2u(k_\cdot)}\,z^r\\
	&=\sum_{r=1}^{m-2}\tU_{r,n}\,z^r.
	\end{align*}
	As a result, in \labelcref{eq.R_n^1} everything is eliminated except for $c(m)$ many summands from the first sum, having the form
	\begin{equation}\label{tlambda-bound}
	c(m)\prod_{l=1}^{m-2}\tlambda_{l+2,n}^{k_l}\;r_n(t,z)^{k_{m-1}}(it)^{r(k_\cdot)+2u-(m+1)k_{m-1}}\,z^{r(k_\cdot)}
	\end{equation}
	for some $k_1,\dots, k_{m-1}\ge0, \sum_{l=1}^{m-1}lk_l=r(k_\cdot)\ge m-1, u=\sum_{l=1}^{m-1}k_l, 1\le u\le m-1$. The sum over these $k_i$ will be denoted by $\sum_{*(k_\cdot,\cdot,u,m-1)*}$.
	The cumulants are upper bounded by constants times the absolute moments, so by \cref{l.lemma2},
	\[\tlambda_{l+2,n}
	=\frac{\sum\tk_{l+2,j}}{V_n^{l+2}}
	\le c(l)\frac{\sum|X_j|^{l+2}}{V_n^{l+2}}
	= c(l)\tL_{l+2,n}
	\le c(l) \tL_{m+1,n}^{l/(m-1)}
	\]
	for $l=1,\dots,m-1$. For the second part of \labelcref{tlambda-bound}, we use \labelcref{eq.da-mix} (with $k=k_{m-1}$ and ${v=r+2u}$) and get by \labelcref{eq.tL-k-2}
	\begin{equation}\label{eq.da-R_n^1}
	\begin{split}
	\Big|\ddta R_{n,1}(t,z)\Big|
	&\le c(m)\sum_{*(k_\cdot,\cdot,u,m-1)*}\prod_{l=1}^{m-2}\tL_{m+1,n}^{k_l\cdot l/(m-1)}|t|^{r(k_\cdot)+2u-a}\tL_{m+1,n}^{k_{m-1}}\,z^{r(k_\cdot)}\\
	&\le c(m)\sum_{*(k_\cdot,\cdot,u,m-1)*}\tL_{m+1,n}^{r(k_\cdot)/(m-1)}|t|^{r(k_\cdot)+2u-a}\\
	&\le c(m)\sum_{*(k_\cdot,\cdot,u,m-1)*}\tL_{m+1,n}\Big(\tL_{m+1,n}^{1/(m-1)}|t|\Big)^{r(k_\cdot)-(m-1)}|t|^{m-1+2u-a}\\
	&\le c(m)\sum_{*(k_\cdot,\cdot,u,m-1)*}\tL_{m+1,n}|t|^{m-1+2u-a}\\
	&\le c(m)\tL_{m+1,n}\Big(|t|^{m+1-a}+|t|^{3m-3-a}\Big)
	\end{split}
	\end{equation}
	for $a=0,\dots,m$.
	Combining \labelcref{eq.R_n,eq.da-R_n^2-2,eq.da-R_n^1} gives
	\begin{equation*}
	\Big|\ddta R_n(t,z)\Big|
	\le c(m) \tL_{m+1,n} \Big(|t|^{m+1-a}+|t|^{3m-1-a}\Big)e^{t^2/3}
	\end{equation*}
	for $a=0,\dots,m$ and by the Leibniz rule
	\begin{align*}
	\lefteqn{\Big|\ddtk \Big(e^{-t^2/2}R_n(t,z)\Big)\Big|}\quad\\
	&\le c(m)\sum_{l=0}^k (1+|t|^l)e^{-t^2/2}\tL_{m+1,n} \Big(|t|^{m+1-(k-l)}+|t|^{3m-1-(k-l)}\Big)e^{t^2/3}\\
	&\le c(m) \tL_{m+1,n} e^{-t^2/6} \Big(|t|^{m+1-k}+|t|^{3m-1+k}\Big)
	\end{align*}
	for $k=0,\dots,m$. By \labelcref{eq.phi_n^z}, we get
	\begin{equation*}
	\tphi_{T_n}(tz)^{1/z^2}-e^{-t^2/2}\Big(1+\sum_{r=1}^{m-2}\tU_{r,n}(it)z^r\Big)
	=e^{-t^2/2}R_n(t,z)
	\end{equation*}
	and by differentiating $k$ times and setting $z=1$, this implies
	\begin{align*}
	&\Big|\ddtk \Big(\tphi_{T_n}(t)-e^{-t^2/2}\Big(1+\sum_{r=1}^{m-2}\tU_{r,n}(it)\Big)\Big)\Big|
	\le c(m) \tL_{m+1,n} e^{-t^2/6} \Big(|t|^{m+1-k}+|t|^{3m-1+k}\Big)
	\end{align*}
	for $k=0,\dots,m$.
\end{proof}

In the next step, we prove the non-uniform bound for our Edgeworth expansion in the conditional setting.
Recall the definition $\Phi^{\tP}_{m,n}(x)=\Phi(x)+\sum_{r=1}^{m-2}\tP_{r,n}(x)$, where
\[
\tP_{r,n}(x)=-\phi(x)\sum_{*(k_\cdot,r,\cdot)} H_{r+2u(k_\cdot)-1}(x) \prod_{l=1}^{r} \frac{1}{k_l!} \Big(\frac{\tlambda_{l+2,n}}{(l+2)!}\Big)^{k_l}
\]
from \labelcref{eq.tP,eq.Phi^tP}.

\begin{proposition}\label{p.cond}
	Assume that $X_1$ is symmetric, $V_n^2>0$ and $m\ge2$ is an integer. Then 
	\begin{equation}\label{eq.pcond}
	\begin{split}
	&\sup_{x\in\R} \, (1+|x|)^{m}|\tF_n(x)-\Phi^{\tP}_{m,n}(x)|
	\le c(m) \Big(\tL_{m+1,n} 
	+ I_2
	+ \sum_{k=0}^{m} I_{k,2}
	+ e^{-B_n^2/4}\Big),
	\end{split}
	\end{equation}
	where $I_2=\int_{B_n\le|t|<\tL_{m+1,n}^{-1}}|t|^{-1}\big|\tphi_{T_n}(t)\big|\dt$ and for $k=0,\dots,m$,
	\[I_{k,2}=\int_{B_n\le|t|<\tL_{m+1,n}^{-1}}|t|^{k-m-1}\Big|\ddtk \tphi_{T_n}(t)\Big|\dt\].
	\end{proposition}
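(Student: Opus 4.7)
The plan is to obtain \cref{p.cond} by combining \cref{p.lemma4} with a non-uniform version of Esseen's smoothing inequality. The relevant non-uniform smoothing lemma bounds $\sup_x(1+|x|)^m|F(x)-G(x)|$ by $c_m\int_{|t|\le T}|t|^{-1}|\hat F(t)-\hat G(t)|\,\dt+c_m\sum_{k=0}^m\int_{|t|\le T}|t|^{k-m-1}|\ddtk(\hat F(t)-\hat G(t))|\,\dt$ plus a remainder of order $T^{-1}$ times a constant depending on $\sup_x(1+|x|)^m|G'(x)|$. The non-uniform integrals with derivatives arise on using that the Fourier transform of $x^m$ times the difference $F-G$ equals (up to signs) $\ddtm$ applied to $\hat F-\hat G$, so that $m$-fold integration by parts in the Fourier inversion formula for $F-G$ produces all derivatives of order $0$ through $m$ weighted by $|t|^{k-m-1}$. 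I would apply this inequality with $F=\tF_n$, $G=\Phi^{\tP}_{m,n}$, and cutoff $T=\tL_{m+1,n}^{-1}$. Combining \labelcref{eq.tp-bound} with the Gaussian decay of $\phi$ shows that $(1+|x|)^m|\phi^{\tp}_{m,n}(x)|\le c(m)$ uniformly, so that the smoothing remainder contributes at most $c(m)\tL_{m+1,n}$ and is absorbed into the corresponding term of the claim.

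Next I would split each of the resulting integrals at $|t|=B_n$. On the small-$|t|$ region $|t|<B_n$, \cref{p.lemma4} yields $|\ddtk(\tphi_{T_n}(t)-\tphi_{\Phi^{\tP}_{m,n}}(t))|\le c(m)\tL_{m+1,n}e^{-t^2/6}(|t|^{m+1-k}+|t|^{3m-1+k})$ for $k=0,\dots,m$. After multiplication by $|t|^{k-m-1}$ (respectively $|t|^{-1}$ for the uniform integral), the Gaussian factor $e^{-t^2/6}$ renders the resulting integrand absolutely integrable on all of $\R$ with an upper bound independent of $n$, so this region contributes only $c(m)\tL_{m+1,n}$. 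On the middle region $B_n\le|t|\le\tL_{m+1,n}^{-1}$, the triangle inequality separates each integrand into a $\tphi_{T_n}$ part, which by definition produces exactly $I_2$ and the $I_{k,2}$, and a $\tphi_{\Phi^{\tP}_{m,n}}$ part. For the latter I would apply \labelcref{eq.ddtl-tphi}, $|\ddtk\tphi_{\Phi^{\tP}_{m,n}}(t)|\le c(m)e^{-t^2/2}(1+|t|^{2m-4+k})$, together with the Gaussian tail estimate $\int_{|t|\ge B_n}e^{-t^2/2}|t|^j\,\dt\le c(j)e^{-B_n^2/4}$ (absorbing polynomial factors into half of the Gaussian exponent), which yields the $e^{-B_n^2/4}$ term of the claim.

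The main obstacle is establishing the non-uniform smoothing inequality in the form required, with the $|t|^{k-m-1}$ weights on derivatives and with a remainder that still decays polynomially in $T^{-1}$ after multiplication by the weight $(1+|x|)^m$. This is not immediate from the standard uniform Esseen lemma and requires a careful choice of smoothing kernel whose Fourier transform is compactly supported in $[-T,T]$ and whose tails decay quickly enough to absorb the polynomial weight. A secondary difficulty is to ensure that all constants depend only on $m$ and on the law of $X_1$, independently of the $\cF_n$-measurable quantities $B_n$ and $\tL_{k,n}$, so that the bound can subsequently be averaged against $\cF_n$ in the unconditional step.
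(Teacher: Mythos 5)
Your proposal is correct and follows essentially the same route as the paper: the paper obtains the non-uniform smoothing inequality you describe by combining Lemmas 7 and 8 of \cite[Chapter VI]{Pet75sums} (applied with $T=\tL_{m+1,n}^{-1}$ and the bound $(1+|x|)^m|\phi^{\tp}_{m,n}(x)|\le c(m)$ from \labelcref{eq.tp-bound}), then splits at $B_n$, handles $|t|<B_n$ via \cref{p.lemma4}, and bounds the $\tphi_{\Phi^{\tP}_{m,n}}$ tails via \labelcref{eq.ddtl-tphi} and a Gaussian tail estimate exactly as you do. The "main obstacle" you identify is thus resolved by citation rather than by a new construction, and the rest of your argument matches the paper's.
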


\begin{proof}
	First, we use Lemma 8 with $F=\tF_n$ and $G=\Phi^{\tP}_{m,n}$, $T=\tL_{m+1,n}^{-1}$ and Lemma 7 with $G=\tF_n-\Phi^{\tP}_{m,n}$ from \cite[Chapter VI]{Pet75sums}, each applied with conditional Fourier transforms and finite $m$-th conditional moment. Next, $\ddx \Phi^{\tP}_{m,n}=\phi^{\tp}_{m,n}$, thus by \labelcref{eq.phi^tp,eq.tp-bound},
	\[
	\big(1+|x|\big)^m \Big| \ddx \Phi^{\tP}_{m,n}(x)\Big|
	= \big(1+|x|\big)^m \big| \phi^{\tP}_{m,n}(x)\big|
	= \big(1+|x|\big)^m \Big| \phi(x)+\sum_{r=1}^{m-2}\tp_{r,n}(x) \Big|
	\le c(m)
	\]
	uniformly for all $x$ and $n$ and all conditions of the lemmas are satisfied.
	Let
	\[\delta_m(t)=\int_{-\infty}^{\infty}e^{itx}\dd\big(x^m(\tF_n(x)-\Phi^{\tP}_{m,n}(x))\big)\,.\]
	Now the lemmas yield
	\begin{align}\label{eq.pcond-1}
	&\big|\tF_n(x)-\Phi^{\tP}_{m,n}(x)\big|\nonumber\\*
	&\le c(m)(1+|x|)^{-m}\Big(\int_{-\tL_{m+1,n}^{-1}}^{\tL_{m+1,n}^{-1}}\Big|\frac{\tphi_{T_n}(t)-\tphi_{\Phi^{\tP}_{m,n}}(t)}{t}\Big|\dt + \int_{-\tL_{m+1,n}^{-1}}^{\tL_{m+1,n}^{-1}}\Big|\frac{\delta_m(t)}{t}\Big|\dt + \frac{c(m)}{\tL_{m+1,n}^{-1}}\Big)\nonumber\\
	&\le c(m)(1+|x|)^{-m}
	\Big(\int_{-\tL_{m+1,n}^{-1}}^{\tL_{m+1,n}^{-1}}|t|^{-1}\big|\tphi_{T_n}(t)-\tphi_{\Phi^{\tP}_{m,n}}(t)\big|\dt\nonumber\\*
	&\qquad\qquad\qquad\qquad+ \sum_{k=0}^{m}\int_{-\tL_{m+1,n}^{-1}}^{\tL_{m+1,n}^{-1}}|t|^{k-m-1}\Big|\ddtk \tphi_{T_n}(t)-\tphi_{\Phi^{\tP}_{m,n}}(t)\Big|\dt + c(m) \, \tL_{m+1,n} \Big)\nonumber\\
	&= c(m)(1+|x|)^{-m}
	\Big(\underbrace{\int_{-\tL_{m+1,n}^{-1}}^{\tL_{m+1,n}^{-1}}|t|^{-1}\big|\tphi_{T_n}(t)-\tphi_{\Phi^{\tP}_{m,n}}(t)\big|\dt}_{=I}\nonumber\\
	&\qquad\qquad\qquad\qquad + \sum_{k=0}^{m}\underbrace{\int_{|t|<B_n}|t|^{k-m-1}\Big|\ddtk \tphi_{T_n}(t)-\tphi_{\Phi^{\tP}_{m,n}}(t)\Big|\dt}_{=I_{k,1}}\nonumber\\
	&\qquad\qquad\qquad\qquad + \sum_{k=0}^{m}\underbrace{\int_{B_n\le|t|<\tL_{m+1,n}^{-1}}|t|^{k-m-1}\Big|\ddtk \tphi_{T_n}(t)\Big|\dt}_{=I_{k,2}}\\
	&\qquad\qquad\qquad\qquad + \sum_{k=0}^{m}\underbrace{\int_{B_n\le|t|<\tL_{m+1,n}^{-1}}|t|^{k-m-1}\Big|\ddtk \tphi_{\Phi^{\tP}_{m,n}}(t)\Big|\dt}_{=I_{k,3}}\nonumber\\*
	&\qquad\qquad\qquad\qquad + c(m) \, \tL_{m+1,n}\Big)\nonumber
	\end{align}
	for $x\in\R$ with $\tphi_{\Phi^{\tP}_{m,n}}$ from \labelcref{eq.tphi}.
	
	Since in our conditional setting all moments exist, we can deduce from \cref{p.lemma4}
	\begin{align*}
	I_{k,1}
	&\le\int_{|t|<B_n}|t|^{k-m-1}c(m)\tL_{m+1,n}e^{-t^2/6}\big(|t|^{m+1-k}+|t|^{3m-1+k}\big)\dt\\
	&\le c(m)\tL_{m+1,n}\int_{|t|<B_n}e^{-t^2/6}\big(1+|t|^{2m-2+2k}\big)\dt\le c(m)\tL_{m+1,n}
	\end{align*}
	for $k=0,\dots,m$.
	
	
	Now we examine $I_{k,3}$. 
	By \labelcref{eq.ddtl-tphi} and since $|t|\ge B_n\ge 1$,
	\begin{equation}\label{eq.Ik3}
	\begin{split}
	I_{k,3}
	&=\int_{B_n\le|t|<\tL_{m+1,n}^{-1}}|t|^{k-m-1}\Big|\ddtk \tphi_{\Phi^{\tP}_{m,n}}(t)\Big|\dt\\
	&\le c(m) \int_{|t|\ge B_n} |t|^{m-5+2k} e^{-t^2/2} \dt
	\le c(m) e^{-B_n^2/4}
	\end{split}
	\end{equation}
	for $k=0,\dots,m$, where we use \cref{l.int-t-exp} (with $\beta=1$, $\nu_n=B_n$) in the last step.
	
	In the same manner we proceed for $I$ and get
	\begin{align*}
	I
	&=\int_{-\tL_{m+1,n}^{-1}}^{\tL_{m+1,n}^{-1}}|t|^{-1}\big|\tphi_{T_n}(t)-\tphi_{\Phi^{\tP}_{m,n}}(t)\big|\dt\\
	&\le\int_{|t|<B_n}c(m)\tL_{m+1,n}e^{-t^2/6}\big(|t|^{m}+|t|^{3m-2}\big)\dt\\
	&\quad + \underbrace{\int_{B_n\le|t|<\tL_{m+1,n}^{-1}}|t|^{-1}\Big|\tphi_{T_n}(t)\Big|\dt}_{=I_2} \ +\  \int_{|t|\ge B_n}\Big| e^{-t^2/2}\Big(1+\sum_{r=1}^{m-2} \tU_{r,n}(it)\Big)\Big|\dt\\
	&\le c(m)\tL_{m+1,n} + I_2 + c(m) e^{-B_n^2/4}.
	\end{align*}
	Inserting everything in \labelcref{eq.pcond-1}, we arrive at
	\begin{equation*}
	\big|\tF_n(x)-\Phi^{\tP}_{m,n}(x)\big|
	\le c(m)(1+|x|)^{-m}
	\Big(\tL_{m+1,n} 
	+ I_2
	+ \sum_{k=0}^{m} I_{k,2}
	+ e^{-B_n^2/4}\Big)
	\end{equation*}
	for $x\in\R$, which can be rewritten as \labelcref{eq.pcond}.
\end{proof}

\subsection{Proof of Theorem \ref{t.clt}}\label{s.distr.main}

Before proving the main theorem, we formulate some auxiliary propositions in which we bound expectation values of summands from \labelcref{eq.pcond} separately.

The proof of the following proposition involves thorough investigation and precise bounding of integral terms. The crucial point here is that $B_n$ is random and its expectation is smaller in order than $n^{-1/2}$ for our self-normalized sum $T_n$. In contrast, for the classical CLT-statistic $Z_n$, the integration interval starts at a deterministic point of order $n^{-1/2}$. Our larger integration interval complicates the bounding of the integral enormously and finally leads to the logarithmic factor in the remainder of \cref{t.clt}. The randomness in our integral, combined with the expectation results in a proof that differs fundamentally from usual proofs for Edgeworth expansions.

\begin{proposition}\label{p.E.I}
	Assume that the distribution of $X_1$ is non-singular and ${\E|X_1|^{s}<\infty}$ for some ${s}\ge2$. Let $\nu_n,\tau_n\to \infty$ in polynomial order, bounded by $\nu_n,\tau_n\le n^{c}$ for some ${c= c(s)}$. Then for $m=\lfloor {s} \rfloor$ and $\zeta \in \R$,
	\begin{align*}
	&\E\bigg[\1_{\{\zeta < M_n< \tau_n\}} \int_{B_n\le|u|< \nu_n} \Big|\ddur \prod \cos\big(uV_n^{-1}|X_j|\big)\Big|\du \bigg]\\
	&\quad= \co\big( n^{-({s}-2)/2} \, (\log n)^{({s}+\max\{2r,1\})/2} \big)
	\end{align*}
	holds for all $r=0,\dots,m$.
\end{proposition}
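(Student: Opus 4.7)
The plan is to expand the $r$-th derivative by the generalized Leibniz rule, reduce matters to controlling a product of cosines, and leverage the non-singularity of $X_1$ to obtain the required decay in $n$.

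\emph{Step 1 (Leibniz expansion).} The general Leibniz rule gives
\begin{equation*}
\ddur\prod_j \cos(uV_n^{-1}|X_j|) = V_n^{-r}\sum_{(k_j):\,\sum k_j=r}\binom{r}{k_1,\dots,k_n}\prod_j |X_j|^{k_j}\,\cos\!\bigl(uV_n^{-1}|X_j|+k_j\tfrac{\pi}{2}\bigr).
\end{equation*}
Setting $J:=\{j:k_j\ge 1\}$ (so $|J|\le r$) and taking absolute values yields the pointwise bound
\begin{equation*}
\Bigl|\ddur\prod \cos\Bigr|\le V_n^{-r}\sum_{(k_j)}\binom{r}{k_1,\dots,k_n}\prod_{j\in J}|X_j|^{k_j}\cdot\prod_{j\notin J}|\cos(uV_n^{-1}|X_j|)|.
\end{equation*}
After Fubini, the task reduces to bounding, for each subset $J$ with $|J|\le r$, the expectation of $V_n^{-r}\prod_{j\in J}|X_j|^{k_j}\cdot\int_{B_n\le|u|<\nu_n}\prod_{j\notin J}|\cos(uV_n^{-1}|X_j|)|\,\du$.

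\emph{Step 2 (Splitting the integration range and using non-singularity).} I would split $\{B_n\le|u|<\nu_n\}$ according to whether the deterministic quantity $uV_n^{-1}$ lies in a fixed compact set $K\subset(0,\infty)$ (the ``bulk'') or not (the ``boundary''). On the bulk, the key input is the Cram\'er-type estimate
\begin{equation*}
\E\bigl[|\cos(\theta|X_1|)|\bigr]\le 1-\delta_K\qquad\text{for all }\theta\in K,
\end{equation*}
where $\delta_K>0$ is furnished by non-singularity: the absolutely continuous component of $|X_1|$ has a characteristic function decaying at infinity by Riemann--Lebesgue, forcing its expectation to be strictly less than one. A leave-one-out argument, conditioning successively on all but one $|X_j|$, propagates this through the product to an exponentially small bound $(1-\delta_K)^{n-r}$; integrating over the bulk piece, whose length is polynomial in $n$, is then absorbed into the final rate. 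The boundary piece further splits into where $uV_n^{-1}$ is atypically small (controlled via $\zeta<M_n$ together with $B_n\ge 1$ and a Taylor argument in the spirit of \cref{p.lemma4}, on a window of logarithmic length) and atypically large (controlled via $M_n<\tau_n$, $\nu_n\le n^c$ and a Riemann--Lebesgue tail estimate on a similar window). These two boundary windows are what generate the $(\log n)$-type factors in the stated bound.

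\emph{Step 3 (Moment bookkeeping).} The prefactor $V_n^{-r}\prod_{j\in J}|X_j|^{k_j}$ combined with $\1_{\{\zeta<M_n<\tau_n\}}$ is controlled using (conditional) H\"older-type estimates in the spirit of \cref{l.lemma2}, together with $\E|X_1|^{s}<\infty$. Here the $n^{-(s-2)/2}$ scaling arises, and a further $(\log n)^{s/2}$ factor appears from turning $\tau_n=n^{c(s)}$ into an effective truncation by a Markov argument. Summing over the finite collection of multi-indices $(k_j)$ and subsets $J$ yields the claim, and the final exponent of $\log n$ sits at $(s+\max\{2r,1\})/2$ after combining the $s/2$ from moment truncation with the $1/2$ for $r=0$ (a single boundary contribution) or with the $r$ factors produced by the $r$ Leibniz-derivative branches for $r\ge 1$.

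The main obstacle is \emph{Step 2}: the Cram\'er-type estimate $\E|\cos(\theta|X_1|)|\le 1-\delta$ in the presence of the random normalizer $V_n$, which couples all the $|X_j|$'s through the argument $uV_n^{-1}|X_j|$. The classical Cram\'er condition $\limsup|\varphi_1|<1$ only bounds $|\E\cos(\theta|X_1|)|$ and does not suffice once the absolute value is placed inside the expectation; non-singularity is precisely the added strength required to guarantee $\E|\cos(\theta|X_1|)|<1$, as emphasized in \cref{r.non-singularity}. The remaining difficulty is pure bookkeeping---tracking the interplay of polynomial growth of $\nu_n,\tau_n$, the moment-driven gain $n^{-(s-2)/2}$, the two logarithmic integration windows, and the combinatorics of the Leibniz expansion---to arrive at the stated exponent of $\log n$.
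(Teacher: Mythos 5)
Your overall architecture (Leibniz expansion of the derivative, a split of the $u$-range, a Cram\'er-type estimate supplied by non-singularity, and moment bookkeeping) matches the paper's, but two steps as written do not go through.

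First, the decoupling. You correctly flag that $V_n$ couples the factors $|\cos(uV_n^{-1}|X_j|)|$, but your proposed fix --- ``conditioning successively on all but one $|X_j|$'' --- does not remove the coupling: conditioning on $\{|X_i|\}_{i\neq j}$ still leaves $uV_n^{-1}|X_j|=u|X_j|/(|X_j|^2+\sum_{i\ne j}|X_i|^2)^{1/2}$ a nonlinear function of $|X_j|$, so the estimate $\E[|\cos(\theta|X_1|)|]\le 1-\delta$ for deterministic $\theta$ does not apply and you cannot conclude $(1-\delta_K)^{n-r}$ this way. Your phrase ``the deterministic quantity $uV_n^{-1}$'' is the symptom: $uV_n^{-1}$ is random. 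The paper's resolution is a change of variables in the $u$-integral \emph{before} taking expectations: using $V_n\ge M_n>\zeta$ and Fubini, the integral over $B_n\le|u|<\nu_n$ of a function of $uV_n^{-1}$ becomes an integral over the deterministic range $0\le|u|<\nu_n\zeta^{-1}$ of $\E\big[\1_{\{M_n^{-1}\le|u|\}}\cdots\prod|\cos(u|X_j|)|\big]$, and only with the deterministic $u$ multiplying $|X_j|$ directly does independence factorize the expectation of the product.

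Second, the small-$u$ region, which is where the dominant term actually lives. You describe the region where the cosine arguments are atypically small as a ``window of logarithmic length'' handled by ``a Taylor argument in the spirit of \cref{p.lemma4}''; this misses the mechanism. After the change of variables, the integral over $u\in\big[0,\sqrt{n^{-1}\log(n)\,\eta}\big)$ is non-zero only when $M_n^{-1}\le u$, i.e.\ on the rare event $\{M_n\ge\sqrt{n/(\log(n)\,\eta)}\}$; the bound there comes from the truncated moment estimate (\cref{l.E.mom}) on that event, which produces $n^{-(s-2)/2}(\log n)^{s/2}$, and the extra $(\log n)^{r}$ arises from trading non-positive powers of $V_n\ge M_n$ against the threshold $\sqrt{n/(\log(n)\,\eta)}$ --- not from the length of an integration window. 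Relatedly, on the complementary range the Cram\'er bound must hold uniformly down to $u\asymp\sqrt{\log(n)/n}$, where $\E[|\cos(u|X_1|)|]$ is only $\le 1-cu^2$; a fixed $\delta_K$ on a fixed compact $K$ is therefore insufficient, and one needs the quantitative bound $1-\log(n)\,\eta/(6n)$ so that the $(n-r)$-fold product still beats every polynomial factor of $n$. Your use of non-singularity (rather than Cram\'er's condition) to control the absolute value inside the expectation, and Riemann--Lebesgue at the large-$u$ end, are both correct and match the paper.
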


The following lemma treats the expectation of mixed moments (with an additional term) and is required in the proof of \cref{p.E.I}. Its proof can be found in \cref{app.proofs.distr}.

\begin{lemma}\label{l.E.mom}
Assume that $\E |X_1|^{{s}}<\infty$ for some ${s}\ge2$. Then for all $l=1,\dots, \lfloor s \rfloor$, ${1\le a_i \le {s}}$ for $i=1,\dots,l$, and $1\le \eta \le c(s)$,
\begin{align*}
\E\Big[\1_{\big\{M_n\ge \sqrt{\frac{n}{\log(n)\, \eta}}\big\}} |X_1|^{a_1} \cdots |X_l|^{a_l}\Big]
= \co\Big(n^{-({s}-\max\{2,a_1,\dots,a_l\})/2} \, (\log n)^{{s}/2} \, \eta^{{s}/2} \Big).
\end{align*}
\end{lemma}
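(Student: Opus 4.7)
The approach is a union bound over which single summand attains the large‑value threshold, combined with Markov's inequality to convert that threshold into a moment‑based tail bound. Set $T := \sqrt{n/(\log(n)\,\eta)}$ and $a^{*} := \max\{2, a_1, \ldots, a_l\}$. Since $\{M_n \ge T\} \subseteq \bigcup_{k=1}^{n}\{|X_k| \ge T\}$, the plan is to decompose
$$\E\Big[\1_{\{M_n \ge T\}} \prod_{i=1}^{l} |X_i|^{a_i}\Big]
\;\le\; \sum_{k=1}^{l} \E\Big[\1_{\{|X_k| \ge T\}} \prod_{i=1}^{l} |X_i|^{a_i}\Big]
\;+\; \sum_{k=l+1}^{n} \E\Big[\1_{\{|X_k| \ge T\}} \prod_{i=1}^{l} |X_i|^{a_i}\Big],$$
separating according to whether the large‐value index already appears among the explicit factors.

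For the second sum ($k > l$), independence factorizes the expectation, and Markov's inequality applied to $|X_k|^s$ yields $\P(|X_k| \ge T) \le T^{-s}\,\E|X_1|^s$. Each remaining factor $\E|X_i|^{a_i}$ is finite by Hölder/interpolation from $\E|X_1|^s < \infty$ (since $a_i \le s$). Summing the $(n-l)$ terms and plugging in $T^{-s} = n^{-s/2}(\log n)^{s/2}\eta^{s/2}$ gives a total contribution of order $n^{1-s/2}(\log n)^{s/2}\eta^{s/2}$, which, because $a^{*} \ge 2$, is bounded by the claimed order $n^{-(s-a^{*})/2}(\log n)^{s/2}\eta^{s/2}$.

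For the first sum ($k \le l$), I would absorb the indicator into the matching factor by writing $\1_{\{|X_k| \ge T\}}|X_k|^{a_k} \le T^{a_k - s}|X_k|^s$ (valid since $a_k \le s$), so that $\E[\1_{\{|X_k|\ge T\}}|X_k|^{a_k}] \le T^{a_k - s}\,\E|X_1|^s$. The remaining $l-1$ factors are again uniformly bounded by constants. Inserting $T^{a_k - s} = n^{-(s-a_k)/2}(\log n)^{(s-a_k)/2}\eta^{(s-a_k)/2}$ and using $a_k \le a^{*}$, together with $\log n \ge 1$ and $\eta \ge 1$ (which allow replacing the $(s-a_k)/2$ exponents in $\log n$ and $\eta$ by $s/2$), each of the at most $l \le \lfloor s \rfloor$ terms is dominated by $n^{-(s-a^{*})/2}(\log n)^{s/2}\eta^{s/2}$.

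Both contributions being of the claimed order completes the proof. The argument is essentially routine; the only point requiring attention is that the $a_i$ are allowed to be non‑integer, which is handled by the standard interpolation inequality for $L^p$ norms. Since $l$, the $a_i$, and $\eta$ are all bounded by constants depending only on $s$, the hidden constants absorbed into $\co(\cdot)$ remain of the required form.
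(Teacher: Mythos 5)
Your decomposition is essentially the one the paper uses (the paper indexes by the argmax $j_0$ of $M_n$ rather than a plain union bound, but the split into ``the large index is among $1,\dots,l$'' versus ``it is among $l+1,\dots,n$'' and the subsequent factorization by independence are the same). However, there is a concrete gap: the lemma asserts a $\co(\cdot)$ bound, and your Markov estimates only deliver $\cO(\cdot)$ in the boundary case. For the sum over $k>l$ you bound $\P(|X_k|\ge T)\le T^{-s}\,\E|X_1|^s$ and obtain a total contribution of order $n\,T^{-s}=n^{1-s/2}(\log n)^{s/2}\eta^{s/2}$; when $\max\{2,a_1,\dots,a_l\}=2$ this is exactly a constant times the claimed rate, i.e.\ $\cO$ but not $\co$. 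The fix is the tail-restricted form of Markov's inequality used in the paper's \labelcref{eq.Mn-an}:
\begin{equation*}
\P\big(M_{n,l}\ge T\big)\le n\,T^{-s}\int_{|x|\ge T}|x|^s\dd\P_{X_1}(x),
\end{equation*}
where the tail integral tends to $0$ by dominated convergence since $\E|X_1|^s<\infty$ and $T\to\infty$; this is what converts $\cO$ into $\co$.

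For the sum over $k\le l$ your conclusion is actually correct, but for a reason you do not state: after inserting $T^{a_k-s}=n^{-(s-a_k)/2}(\log n)^{(s-a_k)/2}\eta^{(s-a_k)/2}$, the exponent $(s-a_k)/2$ on $\log n$ is \emph{strictly} smaller than $s/2$ (since $a_k\ge1$), so $(\log n)^{(s-a_k)/2}=\co\big((\log n)^{s/2}\big)$ and the whole term is genuinely $\co$ of the target; merely ``replacing the exponents by $s/2$'' as you propose yields only domination, i.e.\ $\cO$. With these two adjustments (tail-restricted Markov for $k>l$, and the strict log-exponent comparison for $k\le l$) your argument matches the paper's and is complete; the $\co$ versus $\cO$ distinction is not cosmetic here, as the lemma feeds into the $\co$ remainder of \cref{p.E.I} and ultimately \cref{t.clt}.
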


\begin{remark}[Moment assumptions]\label{r.moment}
	Assuming finite moments ($\E |X_1|^{{s}}<\infty$ for some ${s}\ge2$) provides useful truncation inequalities. We truncate the maximum $M_n$ at $a_n\to\infty$ and get by Markov's inequality
	\begin{equation}\label{eq.Mn-an}
	\P\big(M_n\ge a_n\big)
	\le n \, \P\big(|X_1|\ge a_n\big)
	\le n \, a_n^{-{s}} \int_{|x|\ge a_n} |x|^{{s}} \dd\P_{X_1}(x)
	= \co \big( n \, a_n^{-{s}} \big).
	\end{equation}
	Note 
	that
	$\E\big[ \1_{\{|X_1|\ge n^{1/4}\}} |X_1|^s \big] \too 0
	$ 
	for $n\to\infty$. Let $(\delta_n)$ be a sequence that satisfies $n^{-1/4} \le \delta_n <1$ and $\delta_n\to0$ sufficiently slowly that
	\begin{align*}
	\delta_n^{-s} \E\Big[ \1_{\{|X_1|\ge n^{1/4}\}} |X_1|^s \Big] \too 0.
	\end{align*}
	Then,
	\begin{align}\label{eq.delta_n-def}
	\E\Big[ \1_{\{|X_1|\ge \delta_n \sqrt{n} \, \}} \delta_n^{-s} |X_1|^s \Big]
	\le \delta_n^{-s} \E\Big[ \1_{\{|X_1|\ge n^{-1/4} \sqrt{n} \, \}} |X_1|^s \Big]
	 \too 0
	\end{align}
	and thus
	\begin{equation}\label{eq.Mn-delta}
	\begin{split}
	\P\big(M_n\ge \delta_n \sqrt{n} \, \big)
	&\le n \, (\delta_n \sqrt{n} \, )^{-{s}} \int_{|x|\ge \delta_n \sqrt{n}} |x|^{{s}} \dd\P_{X_1}(x)\\
	&= n^{-(s-2)/2} \E\Big[ \1_{\{|X_1|\ge \delta_n \sqrt{n} \, \}} \delta_n^{-s} |X_1|^s \Big]
	= \co \big( n^{-(s-2)/2} \big).
	\end{split}
	\end{equation}
	
	Additionally by \cite[Theorem 28, Chapter IX]{Pet75sums},
	\begin{equation*}
	\P\big(|V_n^2-n\mu_2|\ge \epsilon n\big)
	=\co\big(n^{-({s}-2)/2}\big)
	\end{equation*}
	for all $\epsilon>0$ and particularly (by choosing $\epsilon=\mu_2/2=1/2$)
	\begin{align}\label{eq.Vn-bound-1/2}
	\P\big(V_n^2 \le n/2\big)
	&\le\P\big(|V_n^2-n|\ge n/2\big)
	=\co\big(n^{-({s}-2)/2}\big).
	\end{align}
\end{remark}

\begin{proof}[Proof of \cref{p.E.I}]
	For the sake of legibility, we suppress the notation of the indicator $\1_{\{\zeta < M_n< \tau_n\}}$ which is denoted by $\E_*[\,\cdot\,]:=\E[\1_{\{\zeta < M_n< \tau_n\}}\,\cdot\,]$ throughout this proof. Every time these bounds are used, we will mention it explicitly. In the following set of inequalities, we use the indicator. By $V_n\ge M_n >\zeta$ and Fubini's theorem, we get
	\begin{equation}\label{eq.fubini}
	\begin{split}
	\lefteqn{\E_*\bigg[\int_{B_n\le|u|< \nu_n} \Big|\ddur \prod \cos\big(uV_n^{-1}|X_j|\big)\Big|\du\bigg]}\quad\\
	&= \E_*\bigg[\int_{M_n^{-1}\le|u|< \nu_nV_n^{-1}} V_n^{-(r-1)}\Big|\ddur \prod \cos(u|X_j|)\Big|\du\bigg]\\
	&\le \E_*\bigg[\int_{0\le|u|< \nu_n\zeta^{-1}} \1_{\{M_n^{-1}\le|u|\}} V_n^{-(r-1)}\Big|\ddur \prod \cos(u|X_j|)\Big|\du\bigg]\\
	&= \int_{0\le|u|< \nu_n\zeta^{-1}}\E_*\Big[\1_{\{M_n^{-1}\le|u|\}} V_n^{-(r-1)}\Big|\ddur \prod \cos(u|X_j|)\Big|\Big]\du.
	\end{split}
	\end{equation}
	
	Subsequently, we deduce rather tight $u$-dependent bounds on the $\E_*$ expression within the integral. To this end, we derive precise bounds on the derivatives of the product inside.
	
	The higher derivative of a product can be evaluated by the generalized Leibniz formula which states in our case
	\[
	\ddur \prod \cos(u|X_j|)
	=\sum _{i_{1}+i_{2}+\cdots +i_{n}=r} {r \choose i_{1},\ldots ,i_{n}}\prod\Big(\ddU{i_j}\cos(u|X_j|)\Big)\,.
	\]
	Here the sum extends over all $n$-tuples $(i_1,...,i_n)$ of non-negative integers with $\sum_{j=1}^ni_j=r$ and 
	\[
	{r \choose i_{1},\ldots ,i_{n}}={\frac {r!}{i_{1}!\,i_{2}!\cdots i_{n}!}}.
	\]
	Since $X_1,\dots,X_n$ are i.i.d.,
	\begin{align*}
	\E\Big[ {r \choose i_{\pi(1)},\ldots ,i_{\pi(n)}}\prod\Big(\ddU{i_{\pi(j)}}\cos(u|X_{j}|)\Big)	\Big]
	=	\E\Big[ {r \choose i_{1},\ldots ,i_{n}}\prod\Big(\ddU{i_j}\cos(u|X_j|)\Big)	\Big]
	\end{align*}
	for all (non-random) permutations $\pi$ from $\{1,\dots,n\}$ to itself.
	
	
	For $l=1,\dots,r$, let $k_l$ be the number of appearances of $l$ in the tuple $(i_1,...,i_n)$ (so $\sum_{l=1}^rlk_l=\sum_{j=1}^ni_j=r$). Then for each tuple $(k_1,\dots,k_r)$, there exist ${n \choose k_{1},\ldots ,k_{r},(n-\sum_{l=1}^rk_l)}$ tuples $(i_1,...,i_n)$ such that $k_l$ is the number of appearances of $l$ in the tuple $(i_1,...,i_n)$. So instead of summing over all $n$-tuples $(i_1,...,i_n)$ with $\sum_{j=1}^ni_j=r$, we can sum over all $r$-tuples $(k_1,...,k_r)$ of non-negative integers with $\sum_{l=1}^rlk_l=r$ and multiply each summand by ${n \choose k_{1},\ldots ,k_{r},(n-\sum_{l=1}^rk_l)}$. We perform the just described summation by using the sum $\sum_{*(k_\cdot,r,\cdot)}$ and $u(k_\cdot)$ which are explained in \cref{ch.pre}. Additionally, we convert
	\[
	{r \choose i_{1},\ldots ,i_{n}}={\frac {r!}{i_{1}!\,i_{2}!\cdots i_{n}!}}=\frac{r!}{1!^{k_1}\cdots r!^{k_r}}.
	\]
	
	Now for any $(k_1,\dots,k_r)$, we identify that element of the corresponding set
	\[
	\{(i_{\pi(1)},...,i_{\pi(n)}): \pi \text{ permutations from }\{1,\dots,n\}\text{ to itself} \}
	\]
	for which
	\begin{align*}
	i_1&=\dots=i_{k_1}=1,\\
	i_{k_1+1}&=\dots=i_{k_1+k_2}=2,\\
	&\vdots\\
	i_{\sum_{l=1}^{r-1}k_l+1}&=\dots=i_{\sum_{l=1}^rk_l}=r,\\
	i_{\sum_{l=1}^rk_l+1}&=\dots=i_n=0.
	\end{align*}
	When summing over $(k_1,\dots,k_r)$, this will be the representative, evaluated within the expectation.
		As $V_n^2$ and $M_n$ are invariant under permutation, we convert the above expression \labelcref{eq.fubini} with $A_n:=\1_{\{M_n^{-1}\le|u|\}} V_n^{-(r-1)}$ into
	\begin{align}\label{eq.reordering}
	\lefteqn{\E_*\bigg[\1_{\{M_n^{-1}\le|u|\}} V_n^{-(r-1)}\Big|\ddur \prod \cos(u|X_j|)\Big|\bigg]}\quad\nonumber\\
	&=\E_*\bigg[A_n\cdot\Big|\sum_{i_{1}+i_{2}+\cdots +i_{n}=r}{r \choose i_{1},\ldots ,i_{n}}\prod\Big(\ddU{i_j}\cos(u|X_n|)\Big)\Big|\bigg]\\
	&=\E_*\bigg[A_n\cdot\Big|\sum_{*(k_\cdot,r,\cdot)} {n \choose k_{1},\ldots ,k_{r},(n-u(k_\cdot))} \frac{r!}{1!^{k_1}\cdots r!^{k_r}}\nonumber\\*
	&\qquad\qquad\quad\cdot\prod_{l=1}^r\Big(\prod_{a=1}^{k_l}\Big(\ddU{l}\cos\big(u|X_{\sum_{b=1}^{l-1}k_b+a}|\big)\Big)\Big)\prod_{l=u(k_\cdot)+1}^n\cos(u|X_l|) \Big|\bigg].\nonumber
	\end{align}
	In the next step we apply the triangle inequality and evaluate the derivates of these cosine-structures which result in multiplying by the respective $|X_j|$ and switching to the sine for uneven derivatives. Overall we get
	\begin{equation}\label{eq.triangle}
	\begin{split}
	&\E_*\bigg[A_n\cdot\Big|\sum_{*(k_\cdot,r,\cdot)} {n \choose k_{1},\ldots ,k_{r},(n-u(k_\cdot))} \frac{r!}{1!^{k_1}\cdots r!^{k_r}}\\
	&\qquad\qquad\cdot\prod_{l=1}^r\Big(\prod_{a=1}^{k_l}\ddU{l}\cos\big(u|X_{\sum_{b=1}^{l-1}k_b+a}|\big)\Big)\prod_{l=u(k_\cdot)+1}^n\cos(u|X_l|) \Big|\bigg]\\
	&\quad\le \E_*\bigg[A_n\,c(r)\sum_{*(k_\cdot,r,\cdot)} n^{u(k_\cdot)} \prod_{l=1}^r\Big(\prod_{a=1}^{k_l}\Big|\ddU{l}\cos\big(u|X_{\sum_{b=1}^{l-1}k_b+a}|\big)\Big|\Big)\prod_{l=u(k_\cdot)+1}^n\big|\cos(u|X_l|)\big|\bigg]
	\end{split}
	\end{equation}
	\begin{equation*}
	\begin{split}
	&\quad= c(r)\sum_{*(k_\cdot,r,\cdot)} n^{u(k_\cdot)} \, 
	\E_*\bigg[A_n
	\prod_{l=1}^{\lceil r/2\rceil} \Big(\prod_{a=1}^{k_{2l-1}}\big|X_{\sum_{b=1}^{(2l-1)-1}k_b+a}\big|^{2l-1}\Big|\sin\big(u\big|X_{\sum_{b=1}^{(2l-1)-1}k_b+a}\big|\big)\Big|\Big)\\
	&\quad\qquad\cdot\prod_{l=1}^{\lfloor r/2\rfloor} \Big(\prod_{a=1}^{k_{2l}}\big|X_{\sum_{b=1}^{2l-1}k_b+a}\big|^{2l}\Big|\cos\big(u\big|X_{\sum_{b=1}^{2l-1}k_b+a}\big|\big)\Big|\Big)
	\prod_{l=u(k_\cdot)+1}^n\big|\cos(u|X_l|)\big|\bigg].
	\end{split}
	\end{equation*}
	
	Since $|\sin|$ and $|\cos|$ are symmetric around 0, we can only work with positive $u$ and multiply by 2 (which is included in $c(r)$). Thus in \labelcref{eq.fubini,eq.reordering,eq.triangle}, we showed
	\begin{equation}\label{eq.E.expanded}
	\begin{split}
	&\E_*\bigg[\int_{B_n\le|u|< \nu_n} \Big|\ddur \prod \cos\big(uV_n^{-1}|X_j|\big)\Big|\du\bigg]
	\le c(r) \sum_{*(k_\cdot,r,\cdot)} n^{u(k_\cdot)} \int_{0\le u< \nu_n\zeta^{-1}}\\
	&\qquad \E_*\bigg[\1_{\{M_n^{-1}\le u\}} V_n^{-(r-1)}\prod_{l=1}^{\lceil r/2\rceil} \Big(\prod_{a=1}^{k_{2l-1}}\big|X_{\sum_{b=1}^{(2l-1)-1}k_b+a}\big|^{2l-1}\Big|\sin\big(u\big|X_{\sum_{b=1}^{(2l-1)-1}k_b+a}\big|\big)\Big|\Big)\\
	&\qquad\quad \cdot\prod_{l=1}^{\lfloor r/2\rfloor} \Big(\prod_{a=1}^{k_{2l}}\big|X_{\sum_{b=1}^{2l-1}k_b+a}\big|^{2l}\Big|\cos\big(u\big|X_{\sum_{b=1}^{2l-1}k_b+a}\big|\big)\Big|\Big)
	\prod_{l=u(k_\cdot)+1}^n\big|\cos(u|X_l|)\big| \bigg]\du\\
	&=:W_1+W_2,
	\end{split}
	\end{equation}
	where $W_1$ represents the integral over $u\in \big[0,\sqrt{n^{-1}\log(n)\, \eta}\big)$ and $W_2$ represents the integral over $u\in \big[\sqrt{n^{-1}\log(n)\, \eta}, \nu_n\zeta^{-1}\big)$ with
	\[
	\eta := 6\big(2r+{s}+4c\big).
	\]
	Note $1\le \eta\le c(s)$. Our choice of the split point $\sqrt{n^{-1}\log(n)\, \eta}$ is motivated by the bounds in \labelcref{eq.n^log(n),eq.W_2-finish}.

	First, we focus on a bound on $W_1$. Here, our basic idea is to simplify the complex expectation in $W_1$ to allow using \cref{l.E.mom}. The bounds we use have to be particularly tight for summands where $k_1$ and $k_2$ are large (i.e. $k_3=\dots=k_r=0$) as these summands result in the largest powers of $n$. We bound $|\cos(x)|\le 1$ and $\sin(x)\le x$ for $x\ge 0$. Thus,
	\begin{align*}
	W_1
	%
	&\le c(r) \sum_{*(k_\cdot,r,\cdot)} n^{u(k_\cdot)} \int_{0\le u< \sqrt{\frac{\log(n)\, \eta}{n}}} \\
	&\qquad\E_*\bigg[\1_{\{M_n^{-1}\le u\}} V_n^{-(r-1)}\prod_{l=1}^{\lceil r/2\rceil} \Big(\prod_{a=1}^{k_{2l-1}}\big|X_{\sum_{b=1}^{(2l-1)-1}k_b+a}\big|^{2l-1}\Big|\big(u\big|X_{\sum_{b=1}^{(2l-1)-1}k_b+a}\big|\big)\Big|\Big)\\
	&\qquad\quad\cdot\prod_{l=1}^{\lfloor r/2\rfloor} \Big(\prod_{a=1}^{k_{2l}}\big|X_{\sum_{b=1}^{2l-1}k_b+a}\big|^{2l} \Big)
	\bigg]\du.
	\end{align*}
	We replace the integral by the supremum of the integrand multiplied with the interval length, which bounds the expression above by
	\begin{align*}
	&c(r) \sum_{*(k_\cdot,r,\cdot)} n^{u(k_\cdot)} n^{-\frac12 \sum_{l=1}^{\lceil r/2\rceil}k_{2l-1}} \big(\log(n)\, \eta\big)^{\frac12 \sum_{l=1}^{\lceil r/2\rceil} k_{2l-1}} \sqrt{\tfrac{\log(n)\, \eta}{n}} \, \E_*\bigg[\1_{\big\{M_n^{-1}\le \sqrt{\frac{\log(n)\, \eta}{n}}\big\}} \\*
	&\qquad \cdot V_n^{-(r-1)}\prod_{l=1}^{\lceil r/2\rceil} \Big(\prod_{a=1}^{k_{2l-1}}\big|X_{\sum_{b=1}^{(2l-1)-1}k_b+a}\big|^{2l}\Big)
	\prod_{l=1}^{\lfloor r/2\rfloor} \Big(\prod_{a=1}^{k_{2l}}\big|X_{\sum_{b=1}^{2l-1}k_b+a}\big|^{2l} \Big)
	\bigg]\\
	&= c(r) \sum_{*(k_\cdot,r,\cdot)} \hspace{-0.8ex} n^{\frac12 (-1+\sum_{l=1}^{\lceil r/2\rceil}k_{2l-1}+2\sum_{l=1}^{\lfloor r/2\rfloor}k_{2l})} \big(\log(n)\, \eta\big)^{\frac12 (1+\sum_{l=1}^{\lceil r/2\rceil} k_{2l-1})} \hspace{0.2ex} \E_*\bigg[\1_{\big\{M_n\ge \sqrt{\frac{n}{\log(n)\, \eta}}\big\}}\\*
	&\qquad \cdot V_n^{-(r-1)}\prod_{l=1}^{\lceil r/2\rceil} \Big(\prod_{a=1}^{k_{2l-1}}\big|X_{\sum_{b=1}^{(2l-1)-1}k_b+a}\big|^{2l}\Big)
	\prod_{l=1}^{\lfloor r/2\rfloor} \Big(\prod_{a=1}^{k_{2l}}\big|X_{\sum_{b=1}^{2l-1}k_b+a}\big|^{2l} \Big)
	\bigg]\\*
	&=:W_3.
	\end{align*}
	Hence, $W_1\le W_3$. For $r=0$, $W_3$ reduces significantly. Using \labelcref{eq.Mn-an} and \cref{l.E.mom} ($l=1$, $a_1=2$) yields
	\begin{align}\label{eq.W_3-0}
	W_3
	&= c(0) \sqrt{\tfrac{\log(n)\, \eta}{n}} \; \E_*\Big[\1_{\big\{M_n\ge \sqrt{\frac{n}{\log(n)\, \eta}}\big\}} V_n \Big]\\
	& \le c(0) \sqrt{\tfrac{\log(n)\, \eta}{n}} \sqrt{\tfrac n2} \; \E_*\Big[\1_{\big\{M_n\ge \sqrt{\frac{n}{\log(n)\, \eta}}\big\}}\1_{\{V_n^2 \le \frac n2\}} \Big]\nonumber\\*
	&\quad+c(0) \sqrt{\tfrac{\log(n)\, \eta}{n}} \sqrt{\tfrac 2n} \; \E_*\Big[\1_{\big\{M_n\ge \sqrt{\frac{n}{\log(n)\, \eta}}\big\}} \1_{\{V_n^2 > \frac n2\}} V_n^2\Big]\nonumber\\
	& \le c(0) \sqrt{\log(n)\, \eta} \; \E_*\Big[\1_{\big\{M_n\ge \sqrt{\frac{n}{\log(n)\, \eta}}\big\}} \Big]
	+c(0) \sqrt{\log(n)\, \eta} \; \E_*\Big[\1_{\big\{M_n\ge \sqrt{\frac{n}{\log(n)\, \eta}}\big\}} X_1^2\Big]\nonumber\\
	& = \co \big( n^{-({s}-2)/2} \, \log(n)^{({s}+1)/2} \big).\nonumber
	\end{align}
	
	For $r\ge1$, we can simplify $W_3$ by using $|X_j| \le V_n$,
	\begin{align*}
	&c(r) \sum_{*(k_\cdot,r,\cdot)} \hspace{-0.9ex} n^{\frac12 (-1+\sum_{l=1}^{\lceil r/2\rceil}k_{2l-1}+2\sum_{l=1}^{\lfloor r/2\rfloor}k_{2l})} \big(\log(n)\, \eta\big)^{\frac12 (1+\sum_{l=1}^{\lceil r/2\rceil} k_{2l-1})} \, \E_*\bigg[\1_{\big\{M_n\ge \sqrt{\frac{n}{\log(n)\, \eta}}\big\}} \\*
	&\qquad \cdot V_n^{-(r-1)} \prod_{l=1}^{\lceil r/2\rceil} \Big(\prod_{a=1}^{k_{2l-1}}\big|X_{\sum_{b=1}^{(2l-1)-1}k_b+a}\big|^{2l}\Big)
	\prod_{l=1}^{\lfloor r/2\rfloor} \Big(\prod_{a=1}^{k_{2l}}\big|X_{\sum_{b=1}^{2l-1}k_b+a}\big|^{2l} \Big)
	\bigg]\\
	&\le c(r) \sum_{*(k_\cdot,r,\cdot)} \hspace{-0.9ex} n^{\frac12 (-1+\sum_{l=1}^{\lceil r/2\rceil}k_{2l-1}+2\sum_{l=1}^{\lfloor r/2\rfloor}k_{2l})} \big(\log(n)\, \eta\big)^{\frac12 (1+\sum_{l=1}^{\lceil r/2\rceil} k_{2l-1})} \, \E_*\bigg[\1_{\big\{M_n\ge \sqrt{\frac{n}{\log(n)\, \eta}}\big\}} \\*
	&\qquad \cdot V_n^{-(r-1)} \prod_{l=1}^{\lceil r/2\rceil} \Big(\prod_{a=1}^{k_{2l-1}}\big|X_{\sum_{b=1}^{(2l-1)-1}k_b+a}\big|^{2} V_n^{2l-2}\Big)
	\prod_{l=1}^{\lfloor r/2\rfloor} \Big(\prod_{a=1}^{k_{2l}}\big|X_{\sum_{b=1}^{2l-1}k_b+a}\big|^{2} V_n^{2l-2}\Big)
	\bigg].
	\end{align*}
	The power of $V_n$ in the respective summands is
	\begin{align*}
	-(r-1)+\sum_{l=1}^{\lceil r/2\rceil} k_{2l-1} (2l-2)+\sum_{l=1}^{\lfloor r/2\rfloor} k_{2l} (2l-2)
	&={1-\sum_{l=1}^{\lceil r/2\rceil}k_{2l-1}-2\sum_{l=1}^{\lfloor r/2\rfloor}k_{2l}}
	\end{align*}
	such that the above expression equals
	\begin{align*}
	& c(r) \sum_{*(k_\cdot,r,\cdot)} n^{\frac12 (-1+\sum_{l=1}^{\lceil r/2\rceil}k_{2l-1}+2\sum_{l=1}^{\lfloor r/2\rfloor}k_{2l})} \big(\log(n)\, \eta\big)^{\frac12 (1+\sum_{l=1}^{\lceil r/2\rceil} k_{2l-1})} \, \E_*\bigg[\1_{\big\{M_n\ge \sqrt{\frac{n}{\log(n)\, \eta}}\big\}} \\
	&\quad \cdot V_n^{1-\sum_{l=1}^{\lceil r/2\rceil}k_{2l-1}-2\sum_{l=1}^{\lfloor r/2\rfloor}k_{2l}}
	\prod_{l=1}^{\lceil r/2\rceil} \Big(\prod_{a=1}^{k_{2l-1}}\big|X_{\sum_{b=1}^{(2l-1)-1}k_b+a}\big|^{2} \Big)
	\prod_{l=1}^{\lfloor r/2\rfloor} \Big(\prod_{a=1}^{k_{2l}}\big|X_{\sum_{b=1}^{2l-1}k_b+a}\big|^{2} \Big)
	\bigg].
	\end{align*}
	As $r\ge1$, we know that $k_l>0$ for some $l=1,\dots,r$ such that $V_n$ has a non-positive power. Thus, using $V_n\ge M_n$, we can bound the above expression by
	\begin{align*}
	&c(r) \sum_{*(k_\cdot,r,\cdot)} n^{\frac12 (-1+\sum_{l=1}^{\lceil r/2\rceil}k_{2l-1}+2\sum_{l=1}^{\lfloor r/2\rfloor}k_{2l})} \big(\log(n)\, \eta\big)^{\frac12 (1+\sum_{l=1}^{\lceil r/2\rceil} k_{2l-1})} \\*
	&\qquad \cdot \Big(\sqrt{\tfrac{n}{\log(n)\, \eta}}\,\Big)^{1-\sum_{l=1}^{\lceil r/2\rceil}k_{2l-1}-2\sum_{l=1}^{\lfloor r/2\rfloor}k_{2l}} \, \E_*\bigg[\1_{\big\{M_n\ge \sqrt{\frac{n}{\log(n)\, \eta}}\big\}}\\*
	&\qquad \cdot \prod_{l=1}^{\lceil r/2\rceil} \Big(\prod_{a=1}^{k_{2l-1}}\big|X_{\sum_{b=1}^{(2l-1)-1}k_b+a}\big|^{2} \Big)
	\prod_{l=1}^{\lfloor r/2\rfloor} \Big(\prod_{a=1}^{k_{2l}}\big|X_{\sum_{b=1}^{2l-1}k_b+a}\big|^{2} \Big)
	\bigg]\\
	&= c(r) \sum_{*(k_\cdot,r,\cdot)} \big(\log(n) \, \eta\big)^{u(k_\cdot)} \\*
	&\qquad \cdot \E_*\bigg[\1_{\big\{M_n\ge \sqrt{\frac{n}{\log(n)\, \eta}}\big\}}
	\prod_{l=1}^{\lceil r/2\rceil} \Big(\prod_{a=1}^{k_{2l-1}}\big|X_{\sum_{b=1}^{(2l-1)-1}k_b+a}\big|^{2} \Big)
	\prod_{l=1}^{\lfloor r/2\rfloor} \Big(\prod_{a=1}^{k_{2l}}\big|X_{\sum_{b=1}^{2l-1}k_b+a}\big|^{2} \Big)
	\bigg]\\
	&=\co\big( n^{-({s}-2)/2} \, (\log n)^{({s}+2r)/2} \big)
	\end{align*}
	where we used \cref{l.E.mom} ($l=u(k_\cdot),a_1=\dots=a_{u(k_\cdot)}=2$) in the last step. Thus, for $r\ge1$,
	\begin{equation}\label{eq.W_3}
	W_1\le W_3=\co\big( n^{-({s}-2)/2} \, (\log n)^{({s}+2r)/2} \big).
	\end{equation}
	
	After having dealt with $W_1$, we direct our attention to bounding the other part of \labelcref{eq.E.expanded}, $W_2$. Due to the area of integration in $W_2$, $u$ now is larger and thus, $\cos(u|X_\cdot|)$ is smaller than in the integral in $W_1$. Therefore, our basic idea is to show that the expectation of the cosines are sufficiently small and to use their $(n-u(k_\cdot))$ times product $\big|\cos(u|X_{u(k_\cdot)+1}|)\big|\cdots \big|\cos(u|X_n|)\big|$ to bound $W_2$. In the process, we use $\sin(x)\le1$ and $\cos(x)\le1$ for all $x\in\R$ and $|X_j|/V_n\le1$ for all $j=1,\dots,n$. This leads to
	\begin{align}\label{eq.W_2}
	W_2&\le c(r) \sum_{*(k_\cdot,r,\cdot)} n^{u(k_\cdot)} \int_{\sqrt{\frac{\log(n)\, \eta}{n}}\le u< \nu_n\zeta^{-1}}
	\E_*\bigg[ V_n \prod_{l=u(k_\cdot)+1}^n\big|\cos(u|X_l|)\big| \bigg]\du\\
	&\le c(r) n^{r} \int_{\sqrt{\frac{\log(n)\, \eta}{n}}\le u< \nu_n\zeta^{-1}}
	\E_*\bigg[ V_n \prod_{l=r+1}^n\big|\cos(u|X_l|)\big| \bigg]\du.\nonumber
	\end{align}
	Using the indicator $\1_{\{\zeta < M_n< \tau_n\}}$ implies $V_n^2\le n M_n^2\le n \tau_n^2$ such that we can eliminate the $V_n$ and since the $X_j$ are i.i.d., by defining $\cU_n:=\big[\sqrt{n^{-1}\log(n)\, \eta}, \nu_n\zeta^{-1}\big]$, we get
	\begin{align}\label{eq.W_2-2}
	\lefteqn{c(r) n^r \int_{\sqrt{\frac{\log(n)\, \eta}{n}}\le u< \nu_n\zeta^{-1}} \E_*\bigg[ V_n \prod_{l=r+1}^n\big|\cos(u|X_l|)\big| \bigg]\du}\quad\nonumber\\*
	&\le c(r) n^{(2r+1)/2}\tau_n\,|\cU_n|\,\sup_{u\in\cU_n}\E_*\bigg[ \prod_{l=r+1}^n\big|\cos(u|X_l|)\big| \bigg]\\
	&\le c(r) n^{(2r+1)/2}\tau_n\,\nu_n\zeta^{-1}\,\sup_{u\in\cU_n}\prod_{l=r+1}^n\E\Big[\big|\cos(u|X_l|)\big|\Big]\nonumber\\
	&\le c(r) n^{(2r+1)/2}\tau_n\,\nu_n\,\sup_{u\in\cU_n}\E\Big[\big|\cos(u|X_1|)\big|\Big]^{n-r}.\nonumber
	\end{align}
	
	
	To achieve the aspired rate of convergence for $W_1$, we will show that
	\begin{align}\label{eq.cases}
	\sup_{u\in\cU_n}\E\Big[\big|\cos(u|X_1|)\big|\Big]\le1-\frac{\log(n)\, \eta / 6}{n}
	\end{align}
	for all $n\in\N$ sufficiently large.
	We conduct the proof by contradiction. Suppose \labelcref{eq.cases} does not hold, meaning for every $N\in\N$, there exists some $n\ge N$ for which there exists at least one $u_n\in\cU_n$ such that
	\begin{align}\label{eq.cases-supp}
	\E\Big[\big|\cos\big(u_n|X_1|\big)\big|\Big]>1-\frac{\log(n)\, \eta / 6}{n}.
	\end{align}
	Let $(n_k)$ be a subsequence of these $n$ such that
	\begin{align*}
	\E\Big[\big|\cos\big(u_{n_k}|X_1|\big)\big|\Big]>1-\frac{\log(n_k)\, \eta / 6}{n_k}.
	\end{align*}
	Hence, this sequence $(u_{n_k})$ either has a convergent subsequence $(u_{n_{k_l}})$ with $u_{n_{k_l}}\to u^*$ for some $u^*\in[0,\infty)$ or $u_{n_{k_l}}\to \infty$. Note $u_{n_{k_l}}>0$. For the ease of notation, we still denote the subsubsequence by $(u_n)$. In consequence, we assume that for all members of this sequence, \labelcref{eq.cases-supp} holds. We subsequently distinguish the cases $u^*=0,u^*\in(0,\infty)$ and $u_n\to\infty$.\\
	
	\underline{Case 1 ($u^*=0$):}
	
	Due to $\cos(x)\le 1-x^2/3$ for $x\in(0,\tfrac{\pi}{2})$, we have
	\begin{align*}
	\E\Big[\big|\cos\big(u_n|X_1|\big)\big|\Big]
	\le1-\E\bigg[\1_{\{0<u_n|X_1|< \pi/2\}} \frac{u_n^2|X_1|^2}{3}\bigg]
	=1-\frac{u_n^2\,\mu_{2,n}'}{3},
	\end{align*}
	where
	\[
	\mu_{2,n}'
	:=\E\Big[\1_{\{0<u_n|X_1|< \pi/2\}}|X_1|^2\Big] 
	= \E\Big[\1_{\{0<|X_1|< \frac{\pi}{2u_n}\}}|X_1|^2\Big]
	\too 1 
	\]
	by dominated convergence. Therefore, $\mu_{2,n}'\ge 1/2$ for $n$ sufficiently large. Since $u_n \in\cU_n$, $u_n\ge \sqrt{n^{-1}\log(n)\, \eta}$ such that
	\begin{equation*}
	\E\Big[\cos\big(u_n|X_1|\big)\Big]
	\le 1-\frac{u_n^2\,\mu_{2,n}'}{3}
	\le 1-\frac{\log(n)\, \eta / 6}{n}
	\end{equation*}
	for $n$ sufficiently large.\\
	
	\underline{Case 2 ($u^*\in(0,\infty)$\,):}
	
	Due to non-singularity, the distribution of $|X_1|$ cannot have mass in $\{j\,\pi/u^*\mid j\in\Z\}$ only. By \cref{l.ana1}, there exists $\beta$ with $0<\beta<1$ such that
	\begin{align*}
	\E\Big[\big|\cos(u^*|X_1|)\big|\Big]
	=\beta.
	\end{align*}
	So due to continuity of the cosine and dominated convergence, for all $\beta'$ with $\beta<\beta'<1$, there exists $N\in\N$ such that
	\[
	\E\Big[\big|\cos(u_n|X_1|)\big|\Big]
	\le\beta'
	\le 1-\frac{\log(n)\, \eta / 6}{n}
	\]
	holds for all $n\ge N$.\\
	
	\underline{Case 3 ($u_n\to \infty$):}
	
	By non-singularity, we know that $p_{ac}>0$ (see \cref{r.non-singularity}), so we first focus on the absolute continuous part $F_{ac}$ of the distribution of $X_1$.
	
	By the Riemann--Lebesgue lemma,
	\[
	\Big|\int_\R\exp(iu_nx)\dd F_{ac}(x)\Big|\too0
	\]
	for $u_n\to\infty$. Thus for the real part, we know
	\[
	\int_\R\cos(u_nx)\dd F_{ac}(x)\too0
	\]
	for $u_n\to\infty$. Since the cosine is symmetric, this yields
	\[
	\tfrac12 \int_\R\big(\cos(2u_n|x|)+1 \big)\dd F_{ac}(x)\too \tfrac12
	\]
	for $u_n\to\infty$. By Jensen's inequality and the trigonometric equation $\cos(2\alpha)= 2 \cos(\alpha)^2-1$ \cite[6.5.10]{Zwillinger},
	\[
	\Big(\int_\R|\cos(u_n|x|)|\dd F_{ac}(x)\Big)^2
	\le \int_\R\cos(u_n|x|)^2\dd F_{ac}(x)
	=\tfrac12 \int_\R\cos(2u_n|x|)+1 \dd F_{ac}(x).
	\]
	The right-hand side converges to $1/2$ for $u_n\to\infty$. Due to the Lebesgue decomposition of the distribution function \labelcref{eq.lebesgue-dec},
	\begin{align*}
	\lefteqn{\E\big[|\cos(u_n|X_{1}|)|\big]}\enspace\\
	&=p_{ac}\int_\R|\cos(u_n|x|)|\dd F_{ac}(x)+p_{d}\int_\R|\cos(u_n|x|)|\dd F_{d}(x)+p_{sc}\int_\R|\cos(u_n|x|)|\dd F_{sc}(x)\\
	&\le p_{ac}\sqrt{\tfrac12 \int_\R\cos(2u_n|x|)+1 \dd F_{ac}(x)}+p_{d}+p_{sc}\too \frac{1}{\sqrt{2}}p_{ac}+p_{d}+p_{sc}<1
	\end{align*}
	for $u_n\to\infty$. Hence, for any $\beta'$ with $p_{ac}/\sqrt{2}+p_{d}+p_{sc}<\beta'<1$, there consequently exists $N\in\N$ such that
	\[
	\E\big[|\cos(u_n|X_1|)|\big]
	\le\beta'
	\le 1-\frac{\log(n)\, \eta / 6}{n}
	\]
	holds for all $n\ge N$.
	
	As demonstrated, \labelcref{eq.cases-supp} leads to a contradiction in all three cases. Thus, we have shown \labelcref{eq.cases} by contradiction.
	
	Since $\log(1-x)\le -x$ for all $x<1$, for $n\ge2m$ and $n>\log(n)\, \eta / 6$, we can write
	\begin{equation}\label{eq.n^log(n)}
	\begin{split}
	\Big(1-\tfrac{\log(n)\, \eta / 6}{n}\Big)^{n-r}
	&\le \Big(1-\tfrac{\log(n)\, \eta / 6}{n}\Big)^{n/2}
	=\exp\Big(\tfrac n2\, \log\big(1-\tfrac{\log(n)\, \eta / 6}{n}\big)\Big)\\
	&\le \exp\Big(\tfrac n2 \big(-\tfrac{\log(n)\, \eta / 6}{n}\big)\Big)
	= n^{-\eta / 12}.
	\end{split}
	\end{equation}
	Recall $\nu_n,\tau_n\le n^{c}$ and $\eta = 6(2r+{s}+4c)$. Thus, by \labelcref{eq.W_2,eq.W_2-2,eq.cases,eq.n^log(n)}
	\begin{align}\label{eq.W_2-finish}
	W_2
	&\le c(r) n^{(2r+1)/2}\tau_n\,\nu_n\,\Big(1-\frac{\log(n)\, \eta / 6}{n}\Big)^{n-r}\\
	&\le c(r) n^{(2r+1)/2}\tau_n\,\nu_n\,n^{-(2r+{s}+4c) / 2}
	=\cO\big(n^{-({s}-1)/2}\big).\nonumber
	\end{align}
	
	We finish the proof by merging \labelcref{eq.E.expanded,eq.W_3-0,eq.W_3,eq.W_2-finish} to
	\begin{align*}
	\E_*\bigg[\int_{B_n\le|u|< \nu_n} \Big|\ddur \prod \cos\big(uV_n^{-1}|X_j|\big)\Big|\du\bigg]
	&\le W_1+W_2
	\le W_3+W_2\\
	&= \co\big( n^{-({s}-2)/2} \, (\log n)^{({s}+\max\{2r,1\})/2} \big).
	\end{align*}
\end{proof}

In the next lemma, we bound the expectation of $\tL_{k,n}$.

\begin{lemma}\label{l.E.tL}
Assume that $\E |X_1|^{{s}}<\infty$ for some ${s}\ge2$. Then for all $k> {s}$,
\begin{equation*}
\E[\tL_{k,n}]=\co\big(n^{-({s}-2)/2}\big).
\end{equation*}
\end{lemma}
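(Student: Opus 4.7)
The bound $\tL_{k,n}\le 1$ from \eqref{eq.Lle1} is too crude on its own, but it becomes useful on low-probability events. The plan is to split the expectation according to whether $V_n^2$ is of the expected order $n$ and whether $M_n$ is appropriately small. With $(\delta_n)$ the sequence from \eqref{eq.delta_n-def}, define
\[
E_1 = \{V_n^2>n/2,\ M_n<\delta_n\sqrt{n}\},\qquad E_2=\{V_n^2\le n/2\},\qquad E_3=\{M_n\ge\delta_n\sqrt{n}\},
\]
so that $\Omega=E_1\cup E_2\cup E_3$. On $E_2$ and $E_3$ I would just use the trivial bound $\tL_{k,n}\le 1$: the probability estimates \eqref{eq.Vn-bound-1/2} and \eqref{eq.Mn-delta} from \cref{r.moment} give $\P(E_2)+\P(E_3)=\co(n^{-(s-2)/2})$, which handles those two contributions immediately.

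The main contribution comes from $E_1$, and here the key inequality is
\[
\sum |X_j|^k \ =\ \sum |X_j|^{k-s}|X_j|^{s}\ \le\ M_n^{k-s}\sum |X_j|^{s},
\]
valid for $k>s$. Combined with $V_n^{-k}\le (n/2)^{-k/2}$ on $E_1$ and $M_n^{k-s}\le (\delta_n\sqrt{n})^{k-s}$ on $E_1$, this yields
\[
\1_{E_1}\tL_{k,n} \ \le\ c(k)\, n^{-k/2}\,(\delta_n\sqrt{n})^{k-s}\sum |X_j|^{s}\ =\ c(k)\, n^{-s/2}\,\delta_n^{k-s}\sum |X_j|^{s}.
\]
Taking expectations, the i.i.d.\ assumption gives $\E[\sum|X_j|^s]=n\,\E|X_1|^s$, so
\[
\E[\1_{E_1}\tL_{k,n}] \ \le\ c(k)\,\E|X_1|^{s}\,\delta_n^{k-s}\, n^{-(s-2)/2}.
\]
Since $k>s$ and $\delta_n\to 0$, the factor $\delta_n^{k-s}$ is itself $\co(1)$, so this piece is $\co(n^{-(s-2)/2})$.

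Combining the three pieces yields $\E[\tL_{k,n}]=\co(n^{-(s-2)/2})$, completing the proof. There is no real obstacle here; the only subtle point is the choice of truncation level, which is dictated by the definition of $\delta_n$ in \eqref{eq.delta_n-def}: it has to decay slowly enough that $\P(M_n\ge\delta_n\sqrt{n})=\co(n^{-(s-2)/2})$ via \eqref{eq.Mn-delta}, yet the power $\delta_n^{k-s}$ must still tend to zero (which is automatic as soon as $k>s$), so $\delta_n$ does all the work needed to beat the trivial bound on $E_1$.
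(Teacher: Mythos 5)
Your proposal is correct and follows essentially the same route as the paper's proof: the identical three-event decomposition via $\{V_n^2\le n/2\}$, $\{M_n\ge\delta_n\sqrt n\}$ and their complement, the trivial bound $\tL_{k,n}\le 1$ on the two exceptional events, and the key estimate $\sum|X_j|^k\le M_n^{k-s}\sum|X_j|^s$ combined with $V_n^{-k}\le(n/2)^{-k/2}$ and $M_n^{k-s}\le(\delta_n\sqrt n)^{k-s}$ on the main event, with $\delta_n^{k-s}\to0$ supplying the little-$o$. No gaps.
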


The expectation of the remaining summand, an exponential of $B_n$, is bounded in the lemma below.

\begin{lemma}\label{l.exp.B_n}
Assume that $\E |X_1|^{{s}}<\infty$ for some ${s}\ge2$. Then for all $\eta>0$,
\begin{align*}
\E\left[ \exp\big(-\eta\,B_n^2\big)\right]
= \co\big( n^{-({s}-2)/2} \, (\log n)^{{s}/2} (\eta\wedge 1)^{-s/2} \big).
\end{align*}
\end{lemma}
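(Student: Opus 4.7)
The plan is to split the expectation via a threshold truncation of $M_n$ combined with the lower bound on $V_n^2$ that was already exploited in \labelcref{eq.Vn-bound-1/2}. Set $\tilde\eta := \eta \wedge 1$ and $a_n := \sqrt{n\tilde\eta/(s\log n)}$, and decompose $\E[\exp(-\eta B_n^2)]$ according to the three events
\begin{equation*}
A = \{M_n \ge a_n\}, \qquad B = \{M_n < a_n,\ V_n^2 > n/2\}, \qquad C = \{V_n^2 \le n/2\}.
\end{equation*}
Since $\eta > 0$ is fixed and $s \ge 2$, we have $a_n \to \infty$, which makes the usual tail truncation arguments applicable.

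On $C$, the bound \labelcref{eq.Vn-bound-1/2} yields $\P(C) = \co(n^{-(s-2)/2})$, and hence $\E[\1_C \exp(-\eta B_n^2)] \le \P(C)$ is absorbed into the target estimate since $(\log n)^{s/2} (\eta \wedge 1)^{-s/2} \ge 1$. On $A$, the Markov-style inequality leading to \labelcref{eq.Mn-an}, together with $\E[\1_{|X_1| \ge a_n} |X_1|^s] \to 0$ by dominated convergence, gives
\begin{equation*}
\P(A) \le n a_n^{-s} \E\bigl[\1_{|X_1|\ge a_n}|X_1|^s\bigr] = \co\bigl(n^{1-s/2} \tilde\eta^{-s/2} (s\log n)^{s/2}\bigr),
\end{equation*}
which matches the desired order for $\E[\1_A \exp(-\eta B_n^2)] \le \P(A)$.

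On the good event $B$, by construction
\begin{equation*}
B_n^2 = V_n^2/M_n^2 \ge (n/2)/a_n^2 = s(\log n)/(2\tilde\eta),
\end{equation*}
so $\exp(-\eta B_n^2) \le n^{-\eta s/(2\tilde\eta)} \le n^{-s/2}$, using $\eta/\tilde\eta \ge 1$ in both regimes $\eta\le 1$ and $\eta > 1$. Hence $\E[\1_B \exp(-\eta B_n^2)] \le n^{-s/2} = \co(n^{-(s-2)/2})$ for $s \ge 2$, which is again dominated by the target.

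The main subtlety, and really the only one, is calibrating the truncation level $a_n$ to balance the two competing error terms: making $a_n$ larger shrinks $\P(A)$ but weakens the lower bound on $B_n^2$ on the event $B$, and vice versa. The precise choice $a_n^2 \asymp n\tilde\eta/(s\log n)$ is what produces the announced factor $(\log n)^{s/2}(\eta \wedge 1)^{-s/2}$; beyond this elementary trade-off no deeper structural obstacle arises.
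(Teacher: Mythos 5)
Your proof is correct and follows essentially the same route as the paper's: the same three-way decomposition into $\{V_n^2\le n/2\}$, a large-maximum event handled by the Markov-type bound \labelcref{eq.Mn-an}, and the good event on which $B_n^2\gtrsim \log n$, with the same calibration of the truncation level (the paper uses threshold $\sqrt{n\eta/(s\log n)}$ and then bounds $\eta^{-s/2}\le(\eta\wedge1)^{-s/2}$, which is only cosmetically different from your $\tilde\eta$). No gaps.
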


The rather straightforward proofs of \cref{l.E.tL,l.exp.B_n} are deferred to \cref{app.proofs.distr}. The following remark is the last required component for the proof of the main theorem.

\begin{remark}\label{r.T_n}
	By the Cauchy--Schwarz inequality,
	\begin{equation*}
	T_n^2=\frac{S_n^2}{V_n^2}=\frac{(\sum X_j)^2}{\sum X_j^2}\le\frac{n\sum X_j^2}{\sum X_j^2}=n,
	\end{equation*}
	that is,  $|T_n|\le\sqrt{n}$.
\end{remark}

\begin{proof}[Proof of \cref{t.clt}]
	As $|T_n|\le\sqrt{n}$, $F_n(x)=0$ for $x< -\sqrt{n}$ and $F_n(x)=1$ for $x> \sqrt{n}$. Due to the form \labelcref{eq.Phi^Q-def} and the factor $\phi$ in all expansion terms ${Q}_{r}$,
	\[
	\sup_{x>\sqrt{n}}(1+|x|)^{m}\Big(\big|\Phi^{Q}_{m,n}(-x)\big|+\big|1-\Phi^{Q}_{m,n}(x)\big|\Big)=\co\big(n^{-({s}-2)/2}\big).
	\]
	Therefore, the left-hand side of \labelcref{eq.t-clt} is of order $\co\big(n^{-({s}-2)/2}\big)$ for the supremum restricted to $|x|> \sqrt{n}$. That is why we only have to consider the supremum over $|x|\le \sqrt{n}$ throughout this proof.
	
	In order to apply \cref{p.cond}, the left-hand side of \labelcref{eq.t-clt} has to be converted into a form related to the left-hand side of \labelcref{eq.pcond}. By $\E[\tF_n(x)]=F_n(x)$, Jensen's inequality and \cref{p.E-tP-Q}, we have
	\begin{align*}
	\lefteqn{\sup_{|x|\le\sqrt{n}} (1+|x|)^{m}|F_n(x)-\Phi^{Q}_{m,n}(x)|}\enspace\\
	&\le \sup_{|x|\le\sqrt{n}} (1+|x|)^{m}\left|\E\left[\tF_n(x)-\Phi^{\tP}_{m,n}(x)\right]\right| + \sup_{|x|\le\sqrt{n}} (1+|x|)^{m}\left|\E\left[\Phi^{\tP}_{m,n}(x)\right]-\Phi^{Q}_{m,n}(x)\right|\\
	&\le \E\left[\sup_{|x|\le\sqrt{n}} (1+|x|)^{m}\left|\tF_n(x)-\Phi^{\tP}_{m,n}(x)\right|\right] +\co\big(n^{-({s}-2)/2}\big).
	\end{align*}
	
	Due to $|x|\le \sqrt{n}$, the maximum order the factor $(1+|x|)^{m}$ can take is $n^{m/2}$. Next, $\Phi^{\tP}_{m,n}$ from \labelcref{eq.Phi^tP} has the property
	\[
	\sup_n \|\Phi^{\tP}_{m,n}\|_{\sup} \le c(m)
	\]
	due to \labelcref{eq.tP-bound}.
	Thus, the absolute difference $\|\tF_n-\Phi^{\tP}_{m,n}\|_{\sup}$ is bounded by a constant $c(m)$.
	By \labelcref{eq.Mn-an},
	\begin{align*}
	\P(M_n\ge n)
	=\co\big(n^{-({s}-1)}\big).
	\end{align*}
	Let $\zeta>0$ such that $\P(|X_1|\le \zeta)<1$. Then,
	\[
	\P (M_n \le \zeta) = \P(|X_1|\le \zeta)^n = \co\big(n^{-(m+{s}-2)/2}\big)
	\]
	in particular. Therefore, decomposing into $\{M_n \le \zeta \},\{\zeta < M_n< n\}$ and $\{M_n \ge n\}$,
	\begin{align*}
	\lefteqn{\E\left[\sup_{|x|\le\sqrt{n}}(1+|x|)^{m}\left|\tF_n(x)-\Phi^{\tP}_{m,n}(x)\right|\right]}\quad\\
	&= \E\left[ \1_{\{\zeta < M_n< n\}} \sup_{|x|\le\sqrt{n}}(1+|x|)^{m}\left|\tF_n(x)-\Phi^{\tP}_{m,n}(x)\right| \right]
	+\co\big(n^{-({s}-2)/2}\big).
	\end{align*}
	Recall $B_n=V_n/M_n$. Applying \cref{p.cond} ($M_n>\zeta$ guarantees $V_n^2>0$) yields
	\begin{align*}
	\lefteqn{\E\left[\1_{\{\zeta < M_n< n\}} \sup_{|x|\le\sqrt{n}}(1+|x|)^{m}\left|\tF_n(x)-\Phi^{\tP}_{m,n}(x)\right| \right]}\quad\\
	&\le c(m) \E\left[ \1_{\{\zeta < M_n< n\}} \Big(\tL_{m+1,n} 
	+ I_2
	+ \sum_{k=0}^{m} I_{k,2}
	+ e^{-B_n^2/4}\Big)
	\right],
	\end{align*}
	where $I_2=\int_{B_n\le|t|<\tL_{m+1,n}^{-1}}|t|^{-1}\big|\tphi_{T_n}(t)\big|\dt$ and
	\[I_{k,2}=\int_{B_n\le|t|<\tL_{m+1,n}^{-1}}|t|^{k-m-1}\Big|\ddtk \tphi_{T_n}(t)\Big|\dt.\]
	
	Regarding the first summand,
	\begin{equation*}
	\E[\tL_{m+1,n}] = \co\big(n^{-({s}-2)/2}\big)
	\end{equation*}
	by \cref{l.E.tL} with $m+1>{s}$.
	Concerning $I_2$ and $I_{k,2}$, we first bound the upper endpoint of the integration interval $\tL_{m+1,n}^{-1}$ by
	\begin{align*}
	\tL_{m+1,n}^{-1}
	=V_n^{m+1}\Big(\sum|X_j|^{m+1}\Big)^{-1}
	\le \big(n \,M_n^2\big)^{(m+1)/2} \big(M_n^{m+1}\big)^{-1}
	= n^{(m+1)/2}
	=:\nu_n.
	\end{align*}
	Note that $\nu_n$ is deterministic.
	Next, we can leave out negative powers of $|t|$ since $|t|\ge B_n\ge1$. Then we get from \cref{p.E.I} (with $\tau_n=n$)
	\begin{align*}
	\E[\1_{\{\zeta < M_n< n\}} I_2]
	&\le\E\bigg[\1_{\{\zeta < M_n< n\}} \int_{B_n\le|t|< \nu_n} \Big| \prod \cos\big(tV_n^{-1}|X_j|\big)\Big|\dt\bigg]\\
	&= \co\big( n^{-({s}-2)/2} \, (\log n)^{({s}+1)/2} \big)
	\end{align*}
	and
	\begin{align*}
	\E[\1_{\{\zeta < M_n< n\}} I_{k,2}]
	&\le\E\bigg[\1_{\{\zeta < M_n< n\}} \int_{B_n\le|t|< \nu_n} \Big|\ddtk \prod \cos\big(tV_n^{-1}|X_j|\big)\Big|\dt\bigg]\\
	&= \co\big( n^{-({s}-2)/2} \, (\log n)^{({s}+\max\{2k,1\})/2} \big)
	\end{align*}
	for all $k=0,\dots,m$.
	
	\cref{l.exp.B_n} yields
	\begin{align*}
	\E\left[ e^{-B_n^2/4}\right]
	= \co\big( n^{-({s}-2)/2} \, (\log n)^{{s}/2} \big).
	\end{align*}
	
	Therefore \labelcref{eq.t-clt} holds and \cref{t.clt} is proved.
\end{proof}

\section{Non-uniform bounds for Edgeworth expansions in local limit theorems for self-normalized sums}\label{ch.density}

This section is devoted to proving \cref{t.llt-red} and more general LLT-type results for self-normalized sums. The crucial obstacle as compared to the CLT-setting is that, conditional on $\cF_n$, the self-normalized sum $T_n$ does not have a density with respect to the Lebesgue measure. Instead it is continuous with respect to the counting measure. Therefore, the Fourier inversion formula does not apply any longer.

We introduce the following condition, which will be imposed in \cref{t.llt}. It is particularly satisfied if $X_1$ has a bounded density, see \cref{p.prop}. For instance, it has been successfully applied in \cite{JSZ04} to derive a saddle-point approximation for $T_n$.

\begin{condition}\label{c.density-cf}
	The joint characteristic function $\varphi_{(X_1,X_1^2)}$ of $(X_1,X_1^2)$ satisfies
	\begin{align*}
	\int_{\R^2} |\varphi_{(X_1,X_1^2)}(t)|^c \dt < \infty
	\end{align*}
	for some $c\ge1$.
\end{condition}

\cref{c.density-cf} is a smoothness condition for the corresponding density. By \cite[Theorem 19.1]{BR76normal}, it is equivalent to $(\sum_{j=1}^n X_j,\sum_{j=1}^n X_j^2)$
having bounded densities for $n$ sufficiently large.
In this case, $p_{d}=0$ (see \labelcref{eq.lebesgue-dec}) holds necessarily because any discrete part cannot not vanish when summing up i.i.d. random variables (see \cite[Proposition 6.1]{JSZ04}).
\cref{c.density-cf} implies Cram\'er condition (see \cite[p. 78]{Hal92edgeworth}), but it does not imply non-singularity and neither does non-singularity imply \cref{c.density-cf} (see \cite[p. 192]{BR76normal}).

Recall $\phi^{q}_{m,n}(x)=\phi(x)+\sum_{r=1}^{m-2}q_{r}(x)n^{-r/2}$ from \labelcref{eq.phi^q-def}.

\begin{theorem}\label{t.llt}
	Assume that $X_1$ is symmetric, the distribution of $X_1$ is non-singular, \cref{c.density-cf} is satisfied and $\E|X_1|^{2m}<\infty$ for some $m\in\N$, $m\ge3$. Then there exists $N\in\N$ such that for all $n\ge N$, the statistics $T_{n}$ have densities $f_{n}$ that satisfy 
	\begin{equation}\label{eq.t-llt}
	\sup_{x\in\R} \, (1+|x|)^{m}|f_n(x)-\phi^{q}_{m,n}(x)|= \co\big(n^{-(m-3)/2}\big).
	\end{equation}
	Moreover, if $\E |X_1|^{2m+2}<\infty$ for some $m\in\N, m\ge2$, the order of convergence reduces to $\co\big(n^{-(m-1)/2} \, \log n \big)$.
\end{theorem}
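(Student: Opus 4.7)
The plan is to carry out the Gaussian perturbation strategy sketched after \cref{t.llt-red}. Let $Z\sim\cN(0,1)$ be independent of $(X_1,\dots,X_n)$, let $\sigma_n>0$ be a polynomially small sequence to be tuned, and set $T_n^\sigma=T_n+\sigma_n Z$. Conditionally on $\cF_n$, the discrete statistic $T_n$ is smoothed into a variable with a density, which opens the door to a Fourier-inversion-based Edgeworth expansion; the price is a deconvolution step at the end, for which \cref{c.density-cf} is the essential input.

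First I would establish the density analogs of \cref{p.lemma4,p.cond}. Because $\tphi_{T_n^\sigma}(t)=e^{-\sigma_n^2 t^2/2}\,\tphi_{T_n}(t)$, the derivative bounds of \cref{p.lemma4} transfer to $\tphi_{T_n^\sigma}$ on $|t|<B_n$ with an additional harmless damping, while on $|t|\ge B_n$ the Gaussian factor guarantees integrability. Applying Fourier inversion to the difference $\tphi_{T_n^\sigma}(t)-\tphi_{\Phi^{\tP}_{m,n}}(t)\,e^{-\sigma_n^2 t^2/2}$, together with $m$-fold differentiation to produce the non-uniform weight $(1+|x|)^m$, yields a conditional bound on
\[
\sup_{x\in\R}(1+|x|)^m\,\bigl|\tf_n^\sigma(x)-(\phi^{\tp}_{m,n}*\phi_{0,\sigma_n^2})(x)\bigr|
\]
of the same shape as the right-hand side of \labelcref{eq.pcond}, with the integrals $I_2$ and $I_{k,2}$ merely carrying the extra factor $e^{-\sigma_n^2 t^2/2}\le 1$.

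Next I would pass to the unconditional setting by taking expectations over $\cF_n$. Under \cref{c.density-cf}, Theorem 19.1 of \cite{BR76normal} yields that $(\sum X_j,\sum X_j^2)$ has a bounded density for $n$ large, so that $T_n$ has a density $f_n$ and $\E[\tf_n^\sigma(x)]=(f_n*\phi_{0,\sigma_n^2})(x)$. The density variant of \cref{p.E-tP-Q}, namely \labelcref{eq.E-tp-q} applied with $s=2m$, replaces $\E[\phi^{\tp}_{m,n}]$ by $\phi^{q}_{m,n}$, and the expected random remainders are controlled by the same arguments as in \cref{ch.distr}, using \cref{p.E.I,l.E.tL,l.exp.B_n} with $s=2m$. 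Combining these estimates yields
\[
\sup_{x\in\R}(1+|x|)^m\,\bigl|(f_n*\phi_{0,\sigma_n^2})(x)-(\phi^{q}_{m,n}*\phi_{0,\sigma_n^2})(x)\bigr|=\co\bigl(n^{-(m-3)/2}\bigr).
\]

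The final and most delicate step is the deconvolution back from $f_n*\phi_{0,\sigma_n^2}$ to $f_n$, and analogously for $\phi^q_{m,n}$. Writing
\[
f_n(x)-(f_n*\phi_{0,\sigma_n^2})(x)=\frac{1}{2\pi}\int_{\R}e^{-itx}\,\varphi_{T_n}(t)\,\bigl(1-e^{-\sigma_n^2 t^2/2}\bigr)\dt
\]
and integrating by parts $m$ times to generate $(1+|x|)^m$, one needs $L^1$-decay of $\varphi_{T_n}$ and of its first $m$ derivatives. \cref{c.density-cf} supplies this: on a bounded window we reuse the conditional estimates of the first step, while on $|t|\ge n^K$ we invoke $|\varphi_{(X_1,X_1^2)}|^c\in L^1$. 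The main obstacle is calibrating $\sigma_n$: it must be polynomially small so that $1-e^{-\sigma_n^2 t^2/2}$ is negligible on the bounded-$t$ regime, yet the tail of $\varphi_{T_n}$ beyond $\sigma_n^{-1}$ must decay fast enough to absorb the non-uniform weight; a choice $\sigma_n=n^{-K}$ with $K=K(m)$ sufficiently large will suffice. The moreover statement follows from the same scheme with one additional even Edgeworth polynomial (odd ones vanish by symmetry), so that the two extra moments buy a full factor $n^{-1}$ of precision, the remaining $\log n$ being the residual logarithm inherited from \cref{p.E.I}.
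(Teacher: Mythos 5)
Your first three steps (Gaussian perturbation, conditional Edgeworth expansion via Fourier inversion, passage to the unconditional setting by taking expectations and invoking \cref{p.E-tP-Q,p.E.I,l.E.tL,l.exp.B_n}) match the paper's route through \cref{l.lemma4',p.dichte-p-c,p.dichte-p}. The gap is in the deconvolution. You propose to write $f_n-f_n*\phi_{0,\sigma_n^2}$ as a Fourier integral against $\varphi_{T_n}(t)\bigl(1-e^{-\sigma_n^2t^2/2}\bigr)$ and to integrate by parts $m$ times, which requires $\varphi_{T_n}$ and its first $m$ derivatives to be integrable over all of $\R$ with quantitative tail control. \cref{c.density-cf} does not supply this in the way you claim: it controls $\varphi_{(X_1,X_1^2)}$, and there is no product formula or other direct mechanism linking the tails of the characteristic function of the \emph{ratio} $T_n=S_n/V_n$ to $\varphi_{(X_1,X_1^2)}$ — the absence of such structure is precisely the obstacle the whole paper is organized around. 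The paper instead deconvolves by the elementary identity $f_n(x)-f_n'(x)=\E\bigl[f_n(x)-f_n(x-\epsilon_n)\bigr]$ and feeds it a quantitative H\"older-type modulus of continuity, $(1+|x|)^m|f_n(x)-f_n(x-h)|\le c\,|h|\,n^{4w-8}+r_n$, which it extracts from the bivariate Edgeworth expansion of the density $g_n$ of $n^{1/2}(\overline Y-\mu_Y)$, $Y_j=(X_j,X_j^2)$ (Theorem 19.2 of Bhattacharya--Rao under \cref{c.density-cf}), combined with the curve-integral representation $f_n(x)=\int_0^\infty g_n(\gamma_{x,n}(z))(n^{-1/2}z)^{1/2}\,\mathrm{d}z$. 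That representation is also what establishes the existence and uniform boundedness of $f_n$, which your argument needs but does not produce.

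A second, related inaccuracy: you assert that the smoothed comparison already gives only $\co\bigl(n^{-(m-3)/2}\bigr)$ and that the final rate is inherited from there. In fact \cref{p.dichte-p} with $s$ up to $2m$ gives a much better rate for $f_n'$; the degradation to $n^{-(m-3)/2}$ (respectively $n^{-(m-1)/2}\log n$) enters only through the term $r_n$ in the H\"older estimate, i.e.\ through the remainder $R_n'=\co(n^{-(w-2)/2})$ of the \emph{bivariate} expansion integrated along $\gamma_{x,n}$, where the integral $\int(1+(z-n^{1/2})^2)^{-(w-m)/2}\,\mathrm{d}z$ contributes a factor $\sqrt n$ when $w=m$ and $\log n$ when $w=m+1$ (\cref{l.int-a/2}). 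Consequently the \emph{moreover} statement is not obtained by adding an Edgeworth term for $T_n$ — the expansion $\phi^q_{m,n}$ is unchanged — but by the extra moments shrinking $R_n'$ and changing the exponent in that $z$-integral. Your proposal has no mechanism that produces either the stated rate or its improvement.
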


Note that \labelcref{eq.t-llt} implies that the densities $f_n$ are uniformly bounded for $n\ge N$. For the normalized sum $Z_n$, a similar non-uniform bound can be found in \cite{BCG11}. Together with the following \cref{p.prop}, \cref{t.llt} implies \cref{t.llt-red}. Its rather technical proof can be found in \cref{app.proofs.density}.

\begin{proposition}\label{p.prop}
	Assume that $X_1$ has a bounded density. Then \labelcref{c.density-cf} holds.
\end{proposition}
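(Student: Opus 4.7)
The plan is to exhibit some $n\in\N$ for which the joint density of $\bigl(\sum_{j=1}^{n} X_j,\sum_{j=1}^{n} X_j^2\bigr)$ exists and is bounded; by the equivalence recalled immediately after \cref{c.density-cf} (from \cite[Theorem~19.1]{BR76normal}), this already implies \cref{c.density-cf}. I expect $n=3$ to suffice, and would prove boundedness by an explicit two-step change-of-variables computation starting from the hypothesis $M:=\|p\|_\infty<\infty$, where $p$ denotes the density of $X_1$.

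For $n=2$, I would first compute the density of $(S_2,V_2^2)=(X_1+X_2,X_1^2+X_2^2)$. The map $(x_1,x_2)\mapsto(x_1+x_2,x_1^2+x_2^2)$ has Jacobian determinant $2(x_2-x_1)$ and is two-to-one on $\{x_1\neq x_2\}$ via the involution $(x_1,x_2)\leftrightarrow(x_2,x_1)$. A direct computation then yields
\[
p_{(S_2,V_2^2)}(u,w)\;=\;\frac{p(x_+(u,w))\,p(x_-(u,w))}{\sqrt{2w-u^2}}\,\1_{\{2w>u^2\}},\qquad x_\pm(u,w)=\tfrac{u\pm\sqrt{2w-u^2}}{2}.
\]
This density exhibits an integrable $1/\sqrt{\cdot}$ singularity along $\{2w=u^2\}$ and fails to be bounded, so one more convolution is needed.

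For $n=3$, I would then convolve once more against $(X_3,X_3^2)$, giving
\[
p_{(S_3,V_3^2)}(s,v)\;=\;\int p_{(S_2,V_2^2)}(s-x,\,v-x^2)\,p(x)\,\dx.
\]
Using $p(x_\pm),p(x)\le M$ and completing the square in $x$,
\[
2(v-x^2)-(s-x)^2\;=\;3\bigl(a^2-(x-s/3)^2\bigr),\qquad a^2:=\tfrac{2}{3}\bigl(v-\tfrac{s^2}{3}\bigr),
\]
what remains is the elementary arcsine integral $\int_{-a}^{a}(a^2-u^2)^{-1/2}\du=\pi$, and one arrives at
\[
p_{(S_3,V_3^2)}(s,v)\;\le\;\frac{\pi}{\sqrt{3}}\,M^3
\]
uniformly in $(s,v)\in\R^2$. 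This is the desired bounded density, and the quoted equivalence closes the argument.

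The main obstacle will be the careful bookkeeping of the two-to-one change of variables in the computation of $p_{(S_2,V_2^2)}$, in particular tracking the factor of two from the swap symmetry and checking that the resulting density integrates to one; beyond that, the argument is really the observation that two convolutions of the parabola-supported measure $(X_1,X_1^2)$ spread the mass transversally just enough for the Jacobian singularity to integrate to a uniform bound. As a side remark, one can avoid invoking \cite[Theorem~19.1]{BR76normal} altogether by noting that the bounded density of $(S_3,V_3^2)$ lies in $L^1\cap L^\infty\subset L^2(\R^2)$, so that Plancherel yields $|\varphi_{(X_1,X_1^2)}|^6=|\varphi_{(S_3,V_3^2)}|^2\in L^1(\R^2)$, i.e.\ \cref{c.density-cf} with $c=6$.
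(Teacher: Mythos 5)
Your proposal is correct and follows essentially the same route as the paper: both show that $(S_3,V_3^2)$ has a density bounded by $\pi M^3/\sqrt{3}$ by reducing, after a change of variables, to the arcsine integral $\int_{-a}^{a}(a^2-u^2)^{-1/2}\du=\pi$, and then invoke \cite[Theorem 19.1]{BR76normal}. Your two-step organization (explicit density of $(S_2,V_2^2)$, then one further convolution) is merely a tidier bookkeeping of the paper's single three-variable substitution, and your closing Plancherel remark, which yields \labelcref{c.density-cf} with $c=6$ directly from $p_{(S_3,V_3^2)}\in L^1\cap L^\infty$, is a pleasant way to bypass the citation altogether.
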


\paragraph*{Structure of the proof of \cref{t.llt}} As already mentioned, the starting point is again conditioning on $\cF_n$, leaving us with discrete $T_n$. Perturbing $T_n$ (see \cref{s.density.cond}) allows to derive an LLT in the conditional setting (\cref{p.dichte-p-c}) by bounding the characteristic functions (\cref{l.lemma4'}) and using the Fourier inversion formula. Afterwards, we return to the unconditional setting by taking expectations (\cref{p.dichte-p}). The necessary deconvolution leads to \cref{p.dichte-general}, which applies under \cref{c.density-cf} and is used in the proof of \cref{t.llt}.

\begin{remark}
	The one-dimensional analogue of \cref{c.density-cf}, namely $|\varphi_1|^c$ being integrable for some $c\ge 1$, is equivalent to $Z_n$ having a bounded density $f_{Z_N}$ for some $N\in\N$ (or equivalently for all sufficiently large $n$) by \cite[Theorem 19.1]{BR76normal}. This is necessary for any LLT for $Z_n$ to hold and therefore posed as a condition in all LLTs such as \cite[Theorem 19.2]{BR76normal}, \cite{BCG11} and \cite[Theorem 17, Chapter VII]{Pet75sums}.
	
\end{remark}

\subsection{The conditional setting}\label{s.density.cond}

Given $|X_1|,\dots,|X_n|$, there are (at most) $2^n$ possible values for $T_n$ as each $|X_j|$ can be included in $S_n$ with positive or negative sign. This is denoted by $\pm_i$, $i=1,\dots,2^n$. Due to the symmetry of $X_1$, the self-normalized sum $T_n$ (conditioned on $|X_1|,\dots,|X_n|$) takes each of these values with probability $2^{-n}$. Thus in the conditional setting, $T_n$ does not have a density but the probability mass function
\begin{equation*}
\tf_n(x)
:=f_{T_n\mid|X_1|,\dots,|X_n|}\big(x\mid |X_1| ,\dots,|X_n|\big)
=2^{-n}\sum_{i=1}^{2^n}\1_{\{V_n^{-1}{\sum \pm_i |X_j|}=x\}}\,.
\end{equation*}
for all $x\in\R$.

As $\tf_n$ is not a Lebesgue density, the Fourier inversion formula does not apply. Therefore, we perturb $T_n$ with an independent absolutely continuous random variable $\epsilon_n$ and examine
\[
T_n':=T_n+\epsilon_n.
\]
For the distribution of $\epsilon_n$, the normal distribution seems to be a suitable choice. In contrast to other distributions, the tails of its density and characteristic function decay in the same magnitude. As we operate with both of these functions, their simultaneous exponential decay will minimize the negative effects of perturbation. Furthermore, it does not hinder convergence to a normal distribution. 

Hence, we take $\epsilon_n\sim \cN(0,\beta_n)$ with density $\phi_{\beta_n}$, independent of $X_1,\dots,X_n$. Here, ${\beta_n\in(0,1)}$ for all $n\in\N$ and $\beta_n\to 0$ in polynomial order for $n\to\infty$. The precise sequence $(\beta_n)$ will be fixed later.

Next, we derive the distribution of $T_n'$ in the conditional setting. To this end, let $B\in\cB(\R)$ and $a_1,\dots,a_n\ge0$, then
\begin{align*}
&\P\big(T_n+\epsilon_n\in B \;|\; |X_1|=a_1,\dots,|X_n|=a_n\big)\\*
&\quad=\sum_{i=1}^{2^n}\E\Big[\1\Big\{T_n=\tfrac{\sum\pm_ia_j}{(\sum a_j^2)^{1/2}}\Big\}\1\Big\{\epsilon_n\in B-\tfrac{\sum\pm_ia_j}{(\sum a_j^2)^{1/2}}\Big\} \;\Big|\; |X_1|=a_1,\dots,|X_n|=a_n\Big].
\end{align*}
As $T_n$ and $\epsilon_n$ are independent given $|X_1|,\dots,|X_n|$, the expression above is equal to
\begin{align*}
\sum_{i=1}^{2^n} &\P\Big(T_n=\tfrac{\sum\pm_ia_j}{(\sum a_j^2)^{1/2}} \;\Big|\; |X_1|=a_1,\dots,|X_n|=a_n\Big) \P\Big(\epsilon_n\in B-\tfrac{\sum\pm_ia_j}{(\sum a_j^2)^{1/2}}\Big)\\
&\quad=\int_{B}2^{-n}\sum_{i=1}^{2^n} \phi_{\beta_n}\Big(y-\tfrac{\sum\pm_ia_j}{(\sum a_j^2)^{1/2}}\Big)\dy.
\end{align*}
Thus, the conditional distribution of $T_n'=T_n+\epsilon_n$ is a convolution of the two underlying distributions, which will be denoted by $\ast$. Its conditional density has the form
\begin{align*}
&\tf_n'(x)
:=f_{T_n'\mid|X_1|,\dots,|X_n|}(x\mid |X_1| ,\dots,|X_n|)
=\tf_n\ast \phi_{\beta_n}(x)
=2^{-n}\sum_{i=1}^{2^n} \phi_{\beta_n}\Big(x-\tfrac{\sum \pm_i |X_j|}{V_n}\Big).
\end{align*}

For the following steps until \labelcref{eq.E-fn'}, assume that the self-normalized sums $T_{n}$ have densities $f_{n}$ for $n\ge N$ for some $N\in\N$, and that we are in the regime $n\ge N$. This assumption is implied by the conditions of \cref{t.llt} (see \cref{s.density.uncond}).

In the unconditional setting, $T_n$ and $\epsilon_n$ are independent, too. Thus, the distribution of $T_n'=T_n+\epsilon_n$ has the density
\begin{equation}\label{eq.f_n'-def}
f_n'(x)=f_{T_n'}(x)=f_n\ast \phi_{\beta_n}(x)=\int_{-\infty}^{\infty}f_n(x-y) \phi_{\beta_n}(y)\dy.
\end{equation}
By Fubini's theorem, $\tf_n'$ and $f_n'$ are connected by
\begin{align*}
\int_B \E\big[\tf_n'(x)\big]\dx
&= \E\Big[\int_B \tf_n'(x)\dx\Big]
= \P\big(T_n' \in B \big)= \int_B f_n'(x)\dx
\end{align*}
for all $B\in\cB(\R)$ and thus $\E\big[\tf_n'(x)\big]= f_n'(x)$ $\lambda$-a.e. Since $\phi_{\beta_n}$ is continuous, $\tf_n'$ is also a continuous function. If $x_k\to x$ for $k\to\infty$, then
\[
\lim_{k\to\infty} \E\big[\tf_n'(x_k)\big]
=\E\big[\lim_{k\to\infty} \tf_n'(x_k)\big]
=\E\big[\tf_n'(x)\big]
\]
by dominated convergence which is applicable since $0\le\tf_n'(x)\le(2\pi\beta_n)^{-1/2}$ for all $x\in\R$ and $n\in\N$. Thus, $\E\big[\tf_n'\big]$ is also continuous. As $f_n\in L^1$ and $\phi_{\beta_n}\in L^\infty$, the convolution $f_n'=f_n\ast \phi_{\beta_n}$ also is a bounded and continuous function (see e.g. \cite[Theorem 15.8]{Schilling17}). We can therefore conclude that
\begin{equation}\label{eq.E-fn'}
\E\big[\tf_n'(x)\big]= f_n'(x) \qquad\text{ for all }x\in\R.
\end{equation}

For $T_n'$, we proceed as described above. First, we need a version of \cref{p.lemma4} that is suitable for $T_n'$. We do not need perturbed approximation functions because we will choose $\beta_n$ sufficiently small such that the approximation functions $\tp$ and $\tU$ are still appropriate. Recall the definitions $M_n=\max|X_j|$, $B_n=V_n/M_n$, $\tL_{k,n}=V_n^{-k}\sum|X_j|^k$ and
\[
\tphi_{\Phi^{\tP}_{m,n}}(t)
=e^{-t^2/2}\Big(1+\sum_{r=1}^{m-2} \tU_{r,n}(it)\Big),
\]
where
\[
\tU_{r,n}(it)
= \sum_{*(k_\cdot,r,\cdot)} (it)^{r+2u(k_\cdot)} \prod_{l=1}^{r} \frac{1}{k_l!} \Big(\frac{\tlambda_{l+2,n}}{(l+2)!}\Big)^{k_l}
\]
from \labelcref{eq.tU,eq.tphi}, and where the sum $\sum_{*(k_\cdot,r,\cdot)}$ is introduced in \cref{ch.pre}. The following lemma is a consequence of \cref{p.lemma4}. Its proof is deferred to \cref{app.proofs.density}.

\begin{lemma}\label{l.lemma4'}
	Assume that $X_1$ is symmetric, $V_n^2>0$ and $m\ge2$ is an integer. Then
	\begin{equation}\label{eq.lemma4'}
	\Big|\ddtk \big(\tphi_{T_n'}(t)-\tphi_{\Phi^{\tP}_{m,n}}(t)\big)\Big|
	\le c(m)\big(\tL_{m+1,n}+\beta_n\big)e^{-t^2/6}\big(1+|t|^{3m+1+k}\big)
	\end{equation}
	holds for $k=0,\dots, m$ in the interval $|t|<B_n$.
\end{lemma}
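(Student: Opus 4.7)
The plan exploits the product structure of $\tphi_{T_n'}$ that the independent Gaussian perturbation conveniently provides. Since $\epsilon_n \sim \cN(0, \beta_n)$ is independent of $(X_1, \dots, X_n)$, conditional independence gives the clean factorisation $\tphi_{T_n'}(t) = \tphi_{T_n}(t)\, e^{-\beta_n t^2/2}$. Adding and subtracting $\tphi_{\Phi^{\tP}_{m,n}}(t)\, e^{-\beta_n t^2/2}$ I obtain the decomposition
\begin{equation*}
\tphi_{T_n'}(t) - \tphi_{\Phi^{\tP}_{m,n}}(t) = \bigl(\tphi_{T_n}(t) - \tphi_{\Phi^{\tP}_{m,n}}(t)\bigr)\, e^{-\beta_n t^2/2} + \tphi_{\Phi^{\tP}_{m,n}}(t)\bigl(e^{-\beta_n t^2/2} - 1\bigr),
\end{equation*}
so that the first summand inherits the $\tL_{m+1,n}$-scale from \cref{p.lemma4} and the second summand produces the $\beta_n$-scale. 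Each summand will then be differentiated $k$ times via the Leibniz rule and bounded separately.

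For the first summand, the derivatives $\ddT{l}(\tphi_{T_n} - \tphi_{\Phi^{\tP}_{m,n}})$ of order $l\le m$ are controlled directly by \cref{p.lemma4} on $|t|<B_n$. The derivatives of $e^{-\beta_n t^2/2}$ of any order $j$ are, via Fa\`a di Bruno, bounded by $c(j)(1+|t|^j)$ uniformly in $\beta_n\le 1$ (since $e^{-\beta_n t^2/2}\le 1$). Collecting terms via Leibniz and retaining the largest polynomial factor among $|t|^{m+1-l}(1+|t|^{k-l})$ and $|t|^{3m-1+l}(1+|t|^{k-l})$ yields a bound of order $c(m)\,\tL_{m+1,n}\, e^{-t^2/6}\,(1+|t|^{3m-1+k})$.

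For the second summand the essential observation is that $e^{-\beta_n t^2/2}-1$ and \emph{all} its derivatives carry a factor of $\beta_n$. For order $0$ this is the elementary estimate $|e^{-\beta_n t^2/2}-1|\le \beta_n t^2/2$, and for order $j\ge 1$ Fa\`a di Bruno yields
\begin{equation*}
\ddT{j} e^{-\beta_n t^2/2} = e^{-\beta_n t^2/2}\sum_{k_2=0}^{\lfloor j/2\rfloor} c_{j,k_2}\, \beta_n^{j-k_2}\, t^{j-2k_2},
\end{equation*}
where every exponent $j-k_2$ is at least $\lceil j/2\rceil\ge 1$, so one factor of $\beta_n$ can be extracted while the remaining powers are absorbed using $\beta_n\le 1$. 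Combining this with the standard bound \labelcref{eq.ddtl-tphi} on the derivatives of $\tphi_{\Phi^{\tP}_{m,n}}$ via the Leibniz rule produces $c(m)\,\beta_n\, e^{-t^2/2}\,(1+|t|^{2m-2+k})$.

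Adding the two bounds, using $e^{-t^2/2}\le e^{-t^2/6}$ and the trivial comparison $1+|t|^{3m-1+k}\le 2(1+|t|^{3m+1+k})$ to absorb both polynomial parts into the single factor $(1+|t|^{3m+1+k})$, delivers \labelcref{eq.lemma4'}. The only delicate step is the uniform extraction of $\beta_n$ from $e^{-\beta_n t^2/2}-1$ and its derivatives; the remainder is bookkeeping based on the Leibniz rule and on estimates already available in the paper.
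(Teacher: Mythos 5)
Your proposal is correct and follows essentially the same route as the paper: both exploit the factorisation $\tphi_{T_n'}(t)=\tphi_{T_n}(t)e^{-\beta_n t^2/2}$, split the difference into a $\tL_{m+1,n}$-term controlled by \cref{p.lemma4} and a $\beta_n$-term obtained by extracting one factor of $\beta_n$ from $e^{-\beta_n t^2/2}-1$ and its derivatives, and assemble the bound with the Leibniz rule and \labelcref{eq.ddtl-tphi}. The only cosmetic difference is the choice of inserted cross-term (you subtract $\tphi_{\Phi^{\tP}_{m,n}}e^{-\beta_n t^2/2}$, the paper subtracts $\tphi_{T_n}$), which changes nothing of substance.
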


Next, we prove the non-uniform bound for our Edgeworth expansion of the density of the perturbed distribution in the conditional setting.
 Recall the definition $\phi^{\tp}_{m,n}(x)=\phi(x)+\sum_{r=1}^{m-2}\tp_{r,n}(x)$, where
\[
\tp_{r,n}(x)=\phi(x)\sum_{*(k_\cdot,r,\cdot)} H_{r+2u(k_\cdot)}(x) \prod_{l=1}^{r} \frac{1}{k_l!} \Big(\frac{\tlambda_{l+2,n}}{(l+2)!}\Big)^{k_l}
\]
from \labelcref{eq.tp,eq.phi^tp}.

\begin{proposition}\label{p.dichte-p-c}
	Assume that $X_1$ is symmetric, $V_n^2>0$ and $m\ge2$ is an integer. Then
	\begin{equation}\label{eq.p-dichte-p-c}
	\begin{split}
	&\sup_{x\in\R} \, (1+|x|)^{m}|\tf_n'(x)-\phi^{\tp}_{m,n}(x)|\\
	&\enspace\le c(m)\Big(\tL_{m+1,n}
	+ \beta_n
	+ J_{0,2}
	+ J_{m,2}
	+ \beta_n^{-(m+2)/2} n^{m-\log(n)/4}
	+ e^{-B_n^2/4}\Big),
	\end{split}
	\end{equation}
	where $J_{k,2}=\int_{B_n\le|t|< \nu_n} \big|\ddtk\tphi_{T_n'}(t)\big|\dt$ for $k=0,\dots,m$ and $\nu_n=\beta_n^{-1/2}\cdot \log(n)$.
\end{proposition}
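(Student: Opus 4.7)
The Gaussian perturbation $\epsilon_n$ makes $\tf_n'=\tf_n\ast\phi_{\beta_n}$ a bounded continuous Lebesgue density, so both $\tf_n'$ and $\phi^{\tp}_{m,n}$ admit the Fourier inversion formula. The plan is to exploit $(1+|x|)^m\le c(m)(1+|x|^m)$ and bound $\sup_{x\in\R}|\tf_n'(x)-\phi^{\tp}_{m,n}(x)|$ and $\sup_{x\in\R}|x|^m|\tf_n'(x)-\phi^{\tp}_{m,n}(x)|$ separately. Direct Fourier inversion yields
\[
\sup_{x\in\R}\big|\tf_n'(x)-\phi^{\tp}_{m,n}(x)\big|\le \frac{1}{2\pi}\int_\R\big|\tphi_{T_n'}(t)-\tphi_{\Phi^{\tP}_{m,n}}(t)\big|\,\dt,
\]
while integrating $m$ times by parts in frequency---justified by \labelcref{eq.ddtl-tphi} and by the factorization $\tphi_{T_n'}(t)=\tphi_{T_n}(t)\,e^{-\beta_n t^2/2}$, which gives rapid decay of all derivatives---yields
\[
\sup_{x\in\R}|x|^m\big|\tf_n'(x)-\phi^{\tp}_{m,n}(x)\big|\le \frac{1}{2\pi}\int_\R\Big|\ddtm\big(\tphi_{T_n'}(t)-\tphi_{\Phi^{\tP}_{m,n}}(t)\big)\Big|\,\dt.
\]
I would then split both integrals (for $k\in\{0,m\}$) at the frequencies $B_n$ and $\nu_n=\beta_n^{-1/2}\log n$.

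On $|t|<B_n$, \cref{l.lemma4'} immediately gives
\[
\int_{|t|<B_n}\Big|\ddtk\big(\tphi_{T_n'}-\tphi_{\Phi^{\tP}_{m,n}}\big)(t)\Big|\,\dt\le c(m)\big(\tL_{m+1,n}+\beta_n\big)\int_\R e^{-t^2/6}\big(1+|t|^{3m+1+k}\big)\,\dt\le c(m)\big(\tL_{m+1,n}+\beta_n\big),
\]
producing the first two summands on the right-hand side of \labelcref{eq.p-dichte-p-c}. On the middle range $B_n\le|t|<\nu_n$, a triangle inequality separates $\tphi_{T_n'}$, contributing exactly $J_{k,2}$, from $\tphi_{\Phi^{\tP}_{m,n}}$, whose contribution is controlled by $c(m)e^{-B_n^2/4}$ via \labelcref{eq.ddtl-tphi} and the same argument as in \labelcref{eq.Ik3} (using $|t|\ge B_n\ge 1$ together with \cref{l.int-t-exp}).

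The main obstacle is the far tail $|t|\ge\nu_n$, where the conditional characteristic function $\tphi_{T_n}$ provides no useful decay. Here the Gaussian perturbation comes decisively into play: the factor $e^{-\beta_n t^2/2}$ in $\tphi_{T_n'}$ reaches the scale $e^{-(\log n)^2/2}$ at $|t|=\nu_n$, which dominates any polynomial in $n$. For $k=m$ I would apply Leibniz's rule to $\tphi_{T_n'}=\tphi_{T_n}\cdot e^{-\beta_n t^2/2}$, combined with the crude bound $|\ddT{j}\tphi_{T_n}(t)|\le\tE[|T_n|^j]\le n^{j/2}$ (using $|T_n|\le\sqrt n$ from \cref{r.T_n}) and the Hermite-type identity
\[
\ddT{l}e^{-\beta_n t^2/2}=(-\beta_n)^{l/2}H_l\big(\sqrt{\beta_n}\,t\big)\,e^{-\beta_n t^2/2}.
\]
After the substitution $u=\sqrt{\beta_n}\,t$ (so the integration range becomes $|u|\ge\log n$), each resulting summand reduces to a Gaussian tail $\int_{|u|\ge\log n}|H_l(u)|e^{-u^2/2}\,\du$, which is of order $e^{-(\log n)^2/4}=n^{-\log(n)/4}$ up to polylogarithmic prefactors. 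Collecting the factors $\beta_n^{-1/2}$ from the substitution, $n^{j/2}$ from the trivial bound, and $\beta_n^{(m-j)/2}$ from the Hermite weights, a generous estimate yields the summand $\beta_n^{-(m+2)/2}n^{m-\log(n)/4}$; the analogous tail contribution of $\tphi_{\Phi^{\tP}_{m,n}}$ is of order $e^{-\nu_n^2/4}$ and is absorbed into $e^{-B_n^2/4}$. Assembling the three ranges for $k=0$ and $k=m$ establishes \labelcref{eq.p-dichte-p-c}.
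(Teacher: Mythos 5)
Your proposal is correct and follows essentially the same route as the paper: Fourier inversion for the derivatives of order $0$ and $m$, splitting the frequency integral at $B_n$ and $\nu_n$, \cref{l.lemma4'} on the inner range, retaining $J_{k,2}$ on the middle range, the Gaussian damping factor $e^{-\beta_n t^2/2}$ together with a polynomial-in-$n$ bound on the derivatives of $\tphi_{T_n}$ plus \cref{l.int-t-exp} on the far tail, and \labelcref{eq.ddtl-tphi} for the tail of $\tphi_{\Phi^{\tP}_{m,n}}$. The only (harmless) deviation is that you bound $|\ddT{j}\tphi_{T_n}(t)|$ by $\tE[|T_n|^j]\le n^{j/2}$ while the paper applies the generalized Leibniz formula to the product of cosines to get $n^{j}$; both land within the stated summand $\beta_n^{-(m+2)/2}n^{m-\log(n)/4}$.
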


\begin{proof}
	By conditional independence and \labelcref{eq.phi_n}, the characteristic function of $T_n'$ has the form
	\begin{equation}\label{eq.phi_n'}
	\begin{split}
	\tphi_{T_n'}(t)
	&= \Big( \prod\tphi_{X_j}(tV_n^{-1}) \Big) \cdot \exp(-\tfrac12\beta_nt^2)\\
	&= \Big( \prod \cos\big(tV_n^{-1}|X_j|\big) \Big) \cdot \exp(-\tfrac12\beta_nt^2).
	\end{split}
	\end{equation}
	For $k=0,\dots,m$, all derivatives $\ddtk\tphi_{T_n'}$ exist and are continuous. Furthermore, $\ddtk\tphi_{T_n'}$ is bounded by $c(m)n^k|t|^k\exp(-\beta_nt^2/2)\in L^1$ and thus in $L^1$. The Fourier transform of our approximating function $\tphi_{\Phi^{\tP}_{m,n}}(t)$ also possesses these properties by \labelcref{eq.ddtl-tphi}. This means that the conditions of the Fourier inversion theorem (see e.g. \cite[Theorem 4.1 (iv,v)]{BR76normal}) are satisfied. This yields
	\begin{equation*}
	\sup_{x\in\R} |x|^{k}|\tf_n'(x)-\phi^{\tp}_{m,n}(x)|
	\le \int_{-\infty}^{\infty} \Big|\ddtk\big(\tphi_{T_n'}(t)-\tphi_{\Phi^{\tP}_{m,n}}(t)\big)\Big|\dt
	\end{equation*}
	for $k=0,\dots,m$. As before, we split up the integral at $B_n$ and additionally at $\nu_n$. This leads to
	\begin{equation}\label{eq.dichte-int-split}
	\begin{split}
	\lefteqn{\sup_{x\in\R} |x|^{k}|\tf_n'(x)-\phi^{\tp}_{m,n}(x)|}\quad\\
	&\le \underbrace{\int_{|t|<B_n} \Big|\ddtk\big(\tphi_{T_n'}(t)-\tphi_{\Phi^{\tP}_{m,n}}(t)\big)\Big|\dt}_{=J_{k,1}}
	\, + \underbrace{\int_{B_n\le|t|< \nu_n} \Big|\ddtk\tphi_{T_n'}(t)\Big|\dt}_{=J_{k,2}}\\
	&\quad + \underbrace{\int_{|t|\ge \nu_n} \Big|\ddtk\tphi_{T_n'}(t)\Big|\dt}_{=J_{k,3}}
	\, + \underbrace{\int_{|t|\ge B_n} \Big|\ddtk\tphi_{\Phi^{\tP}_{m,n}}(t)\Big|\dt}_{=J_{k,4}}.
	\end{split}
	\end{equation}
	
	From \cref{l.lemma4'} we deduce
	\[
	J_{k,1}
	\le \int_{|t|<B_n} c(m)(\tL_{m+1,n}+\beta_n\big)e^{-t^2/6}\big(1+|t|^{3m+1+k}\big) \dt
	\le c(m)(\tL_{m+1,n}+\beta_n\big),
	\]
	for all $k=0,\dots,m$.
	
	
	Next, by the Leibniz rule,
	\begin{equation}\label{eq.J_k3}
	\begin{split}
	J_{k,3}
	&=\int_{|t|\ge \nu_n} \Big|\ddtk \Big( \prod \cos\big(tV_n^{-1}|X_j|\big) \Big) \cdot \exp(-\tfrac12\beta_nt^2)\Big|\dt\\
	&\le\int_{|t|\ge \nu_n} \sum_{l=0}^{k}\binom{k}{l}\Big|\ddtl \prod \cos\big(tV_n^{-1}|X_j|\big)\Big| \Big|\ddT{k-l}\exp(-\tfrac12\beta_nt^2)\Big|\dt.
	\end{split}
	\end{equation}
	As all derivatives of $\cos(x)$ are bounded in absolute value by 1,
	\begin{align*}
	\Big|\ddtl \prod \cos(tV_n^{-1}|X_j|)\Big|
	&\le\sum _{i_{1}+i_{2}+\cdots +i_{n}=l} {l \choose i_{1},\ldots ,i_{n}} \prod\Big|\ddT{i_j}\cos(tV_n^{-1}|X_j|)\Big|\\
	&\le\sum _{i_{1}+i_{2}+\cdots +i_{n}=l} {l \choose i_{1},\ldots ,i_{n}} \prod\Big|(V_n^{-1}|X_j|)^{i_j}\Big|\\
	&\le\sum _{i_{1}+i_{2}+\cdots +i_{n}=l} {l \choose i_{1},\ldots ,i_{n}}=n^l.
	\end{align*}
	Here, the sum extends over all $n$-tuples $(i_1,...,i_n)$ of non-negative integers with $\sum_{j=1}^ni_j=l$ and 
	\[
	{l \choose i_{1},\ldots ,i_{n}}={\frac {l!}{i_{1}!\,i_{2}!\cdots i_{n}!}}.
	\]
	Since $\beta_n<1$ and $|t|\ge \nu_n \ge1$, the $l$-th derivative of the exponential term in \labelcref{eq.J_k3} is bounded by $c(k)|t|^k\exp(-\beta_nt^2/2)$ for $l=0,\dots,k$.
	
	Using \cref{l.int-t-exp}, $J_{k,3}$ can be bounded by
	\begin{align*}
	\lefteqn{\int_{|t|\ge \nu_n} \sum_{l=0}^{k}\binom{k}{l} n^l \, c(k) |t|^{k}\exp(-\beta_nt^2/2)\dt}\quad\\
	&\le c(k) n^k \beta_n^{-(k+2)/2} \exp(-\beta_n \nu_n^2/4)\\
	&\le c(m) n^m \beta_n^{-(m+2)/2} \exp\big(-(\log n)^{2}/4\big)\\
	&= c(m) \beta_n^{-(m+2)/2} n^{m- \log(n)/4}
	\end{align*}
	for all $k=0,\dots,m$.
	
	
	As in \labelcref{eq.Ik3}, by \labelcref{eq.ddtl-tphi} and since $|t|\ge B_n\ge 1$,
	\[
	J_{k,4}
	=\int_{|t|\ge B_n} \Big|\ddtk\tphi_{\Phi^{\tP}_{m,n}}(t)\Big|\dt
	\le c(m)e^{-B_n^2/4}
	\]
	for all $k=0,\dots,m$, where we use \cref{l.int-t-exp} (with $\beta=1$, $\nu_n=B_n$) in the last step.
	
	Setting $k=0$ and $k=m$ in \labelcref{eq.dichte-int-split} now yields \labelcref{eq.p-dichte-p-c}.
\end{proof}

\subsection{The unconditional setting}\label{s.density.uncond}

In the next step towards \cref{t.llt}, we apply the expectation to the result of \cref{p.dichte-p-c} and prove the non-uniform bound for our Edgeworth expansion of the density of the perturbed distribution in the unconditional setting. For this purpose, we introduce the following condition \cref{c.density-ex}.
Together with a moment condition on $X_1$, \cref{c.density-cf} implies \cref{c.density-ex} by \cref{t.llt}. 

\begin{condition}\label{c.density-ex}
	There exists $N\in\N$ such that the statistics $T_{n}$ have densities $f_{n}$
	for
	all $n\ge N$.
\end{condition}

The well-known argument, which allows to conclude that the existence of a density $f_N$ for some $N\in\N$ implies the existence of densities $f_n$ for any $n\ge N$ which holds for the CLT-statistic $Z_n$ does not apply here.
%
%
%
%
For both statistics $T_n$ and $Z_n$, conditions of this type do not imply that $X_1$ is absolutely continuous or even has a non-zero absolutely continuous component. For a counterexample with $X_1,X_2$ having singular continuous distribution and $X_1+X_2$ being absolutely continuous with bounded density, see \cite[p. 17]{Luk72}.

Note that in contrast to the distribution functions in the proof of \cref{t.clt}, the densities $f_n$ do not exist a priori. Therefore, the requirement \cref{c.density-ex} is crucial for applying \labelcref{eq.E-fn'}.

\begin{proposition}\label{p.dichte-p}
	Assume that $X_1$ is symmetric, the distribution of $X_1$ is non-singular and $\E|X_1|^{s}<\infty$ for some ${s}\ge2$. Additionally assume that \cref{c.density-ex} is satisfied and ${\beta_n=\co\big(n^{-({s}-2)/2}\big)}$ in polynomial order. Then for $m=\lfloor {s} \rfloor$,
	\begin{equation}\label{eq.p-dichte-p}
	\sup_{x\in\R} \, (1+|x|)^{m}|f_n'(x)-\phi^{q}_{m,n}(x)|= \co\big( n^{-({s}-2)/2} \, (\log n)^{({s}+2m)/2} \big).
	\end{equation}
\end{proposition}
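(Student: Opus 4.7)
The plan is to parallel the route of \cref{t.clt}, but to replace the CLT-type bound by the density-level result of \cref{p.dichte-p-c}. The starting point is the identity $\E[\tf_n'(x)]=f_n'(x)$ from \labelcref{eq.E-fn'}, which is available thanks to \cref{c.density-ex}, combined with the density part of \cref{p.E-tP-Q}; since $\exp(x^2/4)$ dominates $(1+|x|)^m$, the triangle inequality and Jensen's inequality give
\begin{equation*}
\sup_{x\in\R}(1+|x|)^m|f_n'(x)-\phi^q_{m,n}(x)|
\le \E\Big[\sup_{x\in\R}(1+|x|)^m|\tf_n'(x)-\phi^{\tp}_{m,n}(x)|\Big]+\co\big(n^{-(s-2)/2}\big),
\end{equation*}
so everything reduces to bounding the expectation on the right-hand side.

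Next, I would localize onto a good event $\{\zeta<M_n<\tau_n\}$ for a fixed $\zeta>0$ with $\P(|X_1|>\zeta)>0$ (so that $\P(M_n\le\zeta)$ decays geometrically) and a polynomial threshold $\tau_n=n^c$. On the complement, the uniform bounds $|\tf_n'|\le(2\pi\beta_n)^{-1/2}$ and $\sup_x(1+|x|)^m|\phi^{\tp}_{m,n}(x)|\le c(m)$, together with the observation that $\tf_n'(x)$ decays super-exponentially for $|x|>2\sqrt n$ (the $2^n$ convolution centers $V_n^{-1}\sum\pm_i|X_j|$ lie in $[-\sqrt n,\sqrt n]$ by Cauchy--Schwarz, while $\phi_{\beta_n}$ is Gaussian with $\beta_n$ polynomial), reduce the task to controlling $n^{m/2}\beta_n^{-1/2}\cdot\P(M_n\ge\tau_n)$, which is $\co(n^{-(s-2)/2})$ as soon as $\tau_n$ is chosen as a sufficiently large polynomial in $n$.

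On $\{\zeta<M_n<\tau_n\}$ one has $V_n^2>0$ and \cref{p.dichte-p-c} applies, producing six summands whose expectations I would bound as follows: $\E[\tL_{m+1,n}]=\co(n^{-(s-2)/2})$ by \cref{l.E.tL} (since $m+1>s$); $\beta_n=\co(n^{-(s-2)/2})$ by assumption; $\E[e^{-B_n^2/4}]=\co(n^{-(s-2)/2}(\log n)^{s/2})$ by \cref{l.exp.B_n}; the deterministic term $\beta_n^{-(m+2)/2}n^{m-\log(n)/4}$ is super-polynomially small, since $\log(n)/4\to\infty$ while $\beta_n^{-(m+2)/2}$ is only polynomial; and $\E[\1_{\{\zeta<M_n<\tau_n\}}J_{0,2}]$ is handled by the trivial inequality $|\tphi_{T_n'}|\le|\tphi_{T_n}|$ and \cref{p.E.I} with $r=0$, yielding $\co(n^{-(s-2)/2}(\log n)^{(s+1)/2})$.

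The main obstacle is the estimate on $\E[\1_{\{\zeta<M_n<\tau_n\}}J_{m,2}]$. Here I would exploit the factorization $\tphi_{T_n'}(t)=\tphi_{T_n}(t)\,e^{-\beta_n t^2/2}$ via the Leibniz rule. The Hermite identity gives $|\ddT{j}e^{-\beta_n t^2/2}|\le c(m)\beta_n^{j/2}(1+(\sqrt{\beta_n}|t|)^j)e^{-\beta_n t^2/2}$, which on the range $|t|<\nu_n=\beta_n^{-1/2}\log n$ (where $\sqrt{\beta_n}|t|\le\log n$) is at most $c(m)\beta_n^{j/2}(\log n)^j$. Consequently
\begin{equation*}
J_{m,2}\le c(m)\sum_{l=0}^m \beta_n^{(m-l)/2}(\log n)^{m-l}\int_{B_n\le|t|<\nu_n}\big|\ddT{l}\tphi_{T_n}(t)\big|\dt.
\end{equation*}
Applying \cref{p.E.I} with $r=l$ (polynomial $\tau_n$ and $\nu_n$) to each integral yields a factor $\co(n^{-(s-2)/2}(\log n)^{(s+\max\{2l,1\})/2})$; combined with the external $(\log n)^{m-l}$, the logarithmic exponents add up to $(s+2m)/2$ uniformly in $l\ge 1$, while for $l=0$ the tiny factor $\beta_n^{m/2}$ absorbs the extra logarithm. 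Since $\beta_n^{(m-l)/2}\le 1$, this gives $\E[\1_{\{\zeta<M_n<\tau_n\}}J_{m,2}]=\co(n^{-(s-2)/2}(\log n)^{(s+2m)/2})$. Summing all six contributions together with the negligible bad-event term yields \labelcref{eq.p-dichte-p}.
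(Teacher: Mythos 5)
Your proposal is correct and follows essentially the same route as the paper: reduce to the conditional setting via $\E[\tf_n'(x)]=f_n'(x)$ and \cref{p.E-tP-Q}, localize on $\{\zeta<M_n<\tau_n\}$ with a polynomially growing $\tau_n$, apply \cref{p.dichte-p-c}, and bound the six summands with \cref{l.E.tL}, \cref{l.exp.B_n} and \cref{p.E.I}, using that the derivatives of $e^{-\beta_n t^2/2}$ are bounded by constants on $|t|<\nu_n=\beta_n^{-1/2}\log n$. The only cosmetic differences are your slightly sharper Hermite-type bound on those Gaussian derivatives and your generic choice of $\tau_n$ (the paper fixes $\tau_n=n^{(m+s)/(2s)}\beta_n^{-1/(2s)}$), neither of which changes the argument.
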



\begin{proof}
	Let $n\ge N$ with $N$ from \cref{c.density-ex}. Recall $|T_n|\le\sqrt{n}$ from \cref{r.T_n} and thus, $f_n(x)=0$ for all $|x|>\sqrt{n}$. 
	Let $x>2\sqrt{n}$, then
	\begin{align*}
	(1+|x|)^{m}f_n'(x)
	&=(1+x)^{m}\int_{-\sqrt{n}}^{\sqrt{n}} f_n(y) \phi_{\beta_n}(x-y)\dy\\
	&\le(1+x)^{m} \phi_{\beta_n}(x-\sqrt{n} \, ) \int_{-\sqrt{n}}^{\sqrt{n}} f_n(y) \dy\\
	&=(1+x)^{m} (2\pi\beta_n)^{-1/2} \exp\big(-\tfrac12\beta_n^{-1}(x-\sqrt{n} \, )^2\big).
	\end{align*}
	Due to
	\begin{align*}
	\lefteqn{\ddx (1+x)^{m} \exp\big(-\tfrac12\beta_n^{-1}(x-\sqrt{n} \, )^2\big)}\quad\\
	&= \Big(m-(1+x) \beta_n^{-1}(x-\sqrt{n} \, )\Big) (1+x)^{m-1} \exp\big(-\tfrac12\beta_n^{-1}(x-\sqrt{n} \, )^2\big)\\
	&\le \Big(m-(1+2\sqrt{n}\,) \beta_n^{-1} \sqrt{n}\Big) (1+x)^{m-1} \exp\big(-\tfrac12\beta_n^{-1}(x-\sqrt{n} \, )^2\big)< 0,
	\end{align*}
	$(1+x)^{m} \exp\big(-\beta_n^{-1}(x-\sqrt{n} \, )^2/2\big)$ is monotonically decreasing for $x>2\sqrt{n}$ (and $n>m$). Therefore,
	\begin{equation}\label{eq.fn'-bound}
	\begin{split}
	\sup_{x > 2\sqrt{n}} (1+|x|)^{m}f_n'(x)
	&\le \sup_{x > 2\sqrt{n}} (1+x)^{m} (2\pi\beta_n)^{-1/2} \exp\big(-\tfrac12\beta_n^{-1}(x-\sqrt{n} \, )^2\big)\\
	&= (1+2\sqrt{n} \, )^{m} (2\pi\beta_n)^{-1/2} \exp\big(-\tfrac12\beta_n^{-1}n\big)= \co\big(n^{-({s}-2)/2}\big)
	\end{split}
	\end{equation}
	in particular.
	This bound also holds for negative $x$ as $f_n$ and $\phi_{\beta_n}$ are symmetric. Additionally, due to the form \labelcref{eq.phi^q-def} and the factor $\phi$ in all expansion terms $q_r$,
	\begin{equation}\label{eq.phi^q-bound}
	\sup_{|x|>2\sqrt{n}} (1+|x|)^{m} \big|\phi^{q}_{m,n}(x)\big|=\co\big(n^{-({s}-2)/2}\big).
	\end{equation}
	
	Therefore, the left-hand side of \labelcref{eq.p-dichte-p} is of order $\co\big(n^{-({s}-2)/2}\big)$ for the supremum restricted to $|x|> 2\sqrt{n}$. That is why we only have to consider the supremum over $|x|\le 2\sqrt{n}$ throughout the remainder of this proof.
	
	In order to apply \cref{p.dichte-p-c}, the left-hand side of \labelcref{eq.p-dichte-p} has to be converted into a form related to the left-hand side of \labelcref{eq.p-dichte-p-c}. By \labelcref{eq.E-fn'}, Jensen's inequality and \cref{p.E-tP-Q}, we have
	\begin{align*}
	\lefteqn{\sup_{|x|\le2\sqrt{n}} (1+|x|)^{m}|f_n'(x)-\phi^{q}_{m,n}(x)|}\;\\
	&\le \sup_{|x|\le2\sqrt{n}} (1+|x|)^{m}\left|\E\left[\tf_n'(x)-\phi^{\tp}_{m,n}(x)\right]\right| + \sup_{|x|\le2\sqrt{n}} (1+|x|)^{m}\left|\E\left[\phi^{\tp}_{m,n}(x)\right]-\phi^{q}_{m,n}(x)\right|\\
	&\le \E\left[\sup_{|x|\le2\sqrt{n}} (1+|x|)^{m}\left|\tf_n'(x)-\phi^{\tp}_{m,n}(x)\right|\right] +\co\big(n^{-({s}-2)/2}\big).
	\end{align*}
	
	Due to the form of $\tf_n'$ and uniform boundedness of $\phi^{\tp}_{m,n}$ (see \labelcref{eq.phi^tp,eq.tp-bound}),
	\begin{align*}
	\lefteqn{\sup_{|x|\le2\sqrt{n}}(1+|x|)^{m}\left|\tf_n'(x)-\phi^{\tp}_{m,n}(x)\right|}\quad\\*
	&=\sup_{|x|\le2\sqrt{n}}(1+|x|)^{m}\left|2^{-n}\sum_{i=1}^{2^n} \phi_{\beta_n}\Big(x-\tfrac{\sum \pm_i |X_j|}{V_n}\Big)-\phi^{\tp}_{m,n}(x)\right|\\
	&\le \sup_{|x|\le2\sqrt{n}} (2\sqrt{n} \, )^{m}\left((2\pi\beta_n)^{-1/2}+c(m)\right)= \cO\big(n^{m/2}\beta_n^{-1/2}\big).
	\end{align*}
	By \labelcref{eq.Mn-an} for $\tau_n:=n^{(m+s)/(2s)}\beta_n^{-1/(2s)}$,
	\begin{align*}
	\P(M_n\ge \tau_n)
	=\co\big(n^{-(m+s-2)/2}\beta_n^{1/2}\big).
	\end{align*}
	Let $\zeta>0$ such that $\P(|X_1|\le \zeta)<1$. Then,
	\[
	n^{m/2}\beta_n^{-1/2} \cdot \P (M_n \le \zeta) 
	= n^{m/2}\beta_n^{-1/2} \cdot \P(|X_1|\le \zeta)^n 
	= \co\big(n^{-({s}-2)/2}\big)
	\]
	in particular. Therefore, decomposing into $\{M_n \le \zeta \},\{\zeta < M_n< \tau_n\}$ and $\{M_n \ge \tau_n\}$,
	\begin{align*}
	&\E\left[\sup_{|x|\le2\sqrt{n}} (1+|x|)^{m}\left|\tf_n'(x)-\phi^{\tp}_{m,n}(x)\right|\right]\\*
	&\quad= \E\left[\1_{\{\zeta < M_n< \tau_n\}} \sup_{|x|\le2\sqrt{n}} (1+|x|)^{m}\left|\tf_n'(x)-\phi^{\tp}_{m,n}(x)\right| \right]
	+\co\big(n^{-({s}-2)/2}\big).
	\end{align*}
	
	Recall $B_n=V_n/M_n$. Applying \cref{p.dichte-p-c} ($M_n>\zeta$ guarantees $V_n^2>0$) yields
	\begin{align*}
	&\E\left[\1_{\{\zeta < M_n< \tau_n\}} \sup_{|x|\le2\sqrt{n}} (1+|x|)^{m}\left|\tf_n'(x)-\phi^{\tp}_{m,n}(x)\right| \right]
	\le c(m) \E\Big[\1_{\{\zeta < M_n< \tau_n\}} \\*
	&\qquad\cdot \Big(\tL_{m+1,n}
	+ \beta_n
	+ J_{0,2}
	+ J_{m,2}
	+ \beta_n^{-(m+2)/2} n^{m-\log(n)/4}
	+ e^{-B_n^2/4}\Big)
	\Big],
	\end{align*}
	where $J_{k,2}=\int_{B_n\le|t|< \nu_n} \big|\ddtk\tphi_{T_n'}(t)\big|\dt$ for $k=0,\dots,m$ and $\nu_n=\beta_n^{-1/2}\cdot \log(n)$.
	
	For the first summand,
	\begin{equation*}
	\E[\tL_{m+1,n}] = \co\big(n^{-({s}-2)/2}\big).
	\end{equation*}
	by \cref{l.E.tL} with $m+1>{s}$.
	Clearly, for the second summand $\beta_n=\co\big(n^{-({s}-2)/2}\big)$ and for the fifth summand $\beta_n^{-(m+2)/2} n^{m-\log(n)/4}=\co\big(n^{-({s}-2)/2}\big)$ as $\beta_n$ decreases in polynomial order and $n^{-\log(n)}$ is smaller than $n^{-k}$ for every $k>0$ and $n$ sufficiently large.
	
	Now we examine the expectation of $J_{k,2}$. Notice $\beta_n|t|< \beta_n\nu_n=\sqrt{\beta_n}\cdot \log(n)\le 1$ in the domain of the integral for $n$ sufficiently large.
	For $B_n\le t< \nu_n$ and $a\ge0$, this yields
	\begin{align*}
	\Big|\ddT{a}\exp(-\tfrac12 \beta_n t^2)\Big|
	\le c(a) \exp(-\tfrac12 \beta_n t^2)
	\le c(a).
	\end{align*}
	Thus by \labelcref{eq.phi_n'} and the Leibniz formula,
	\begin{align*}
	J_{k,2}
	&=\int_{B_n\le|t|< \nu_n} \Big|\sum_{l=0}^k\binom{k}{l}\Big(\ddtl \prod \cos\big(tV_n^{-1}|X_j|\big)\Big) \Big(\ddT{k-l}\exp(-\tfrac12\beta_nt^2)\Big)\Big|\dt\\
	&\le c(k)\sum_{l=0}^{k}\int_{B_n\le|t|< \nu_n} \Big|\ddtl \prod \cos\big(tV_n^{-1}|X_j|\big)\Big| \Big|\ddT{k-l}\exp(-\tfrac12\beta_nt^2)\Big|\dt\\
	&\le c(k)\sum_{l=0}^{k}\int_{B_n\le|t|< \nu_n} \Big|\ddtl \prod \cos\big(tV_n^{-1}|X_j|\big)\Big| \dt.
	\end{align*}	
	As $\nu_n=\beta_n^{-1/2}\cdot \log(n)$ and $\tau_n=n^{(m+s)/(2s)}\beta_n^{-1/(2s)}$ with $\beta_n=\co\big(n^{-({s}-2)/2}\big)$ grow in polynomial order, by \cref{p.E.I} we get
	\begin{align*}
	\E\big[\1_{\{\zeta < M_n< \tau_n\}} J_{k,2}\big]
	&\le c(k)\sum_{l=0}^{k}\E\bigg[\1_{\{\zeta < M_n< \tau_n\}} \int_{B_n\le|t|< \nu_n} \Big|\ddtl \prod \cos\big(tV_n^{-1}|X_j|\big)\Big|\dt \bigg]\\
	&= \co\big( n^{-({s}-2)/2} \, (\log n)^{({s}+\max\{2k,1\})/2} \big)
	\end{align*}
	for $k=0,\dots,m$.	
	Regarding the sixth summand, \cref{l.exp.B_n} yields
	\begin{align*}
	\E\left[ e^{-B_n^2/4}\right]
	= \co\big( n^{-({s}-2)/2} \, (\log n)^{{s}/2} \big).
	\end{align*}
	
	Therefore \labelcref{eq.p-dichte-p} holds and \cref{p.dichte-p} is proved.
\end{proof}

\cref{p.dichte-p} provides a non-uniform bound for $f_n'$. We will now deconvolute the densities to achieve a similar bound for $f_n$.
The H\"older type condition $(iii)$ of the following result seems hard to verify, but it will turn out to be satisfied by the conditions of \cref{t.llt}.

\begin{proposition}\label{p.dichte-general}
	Assume that the following conditions are satisfied:
	\begin{enumerate}[$(i)$]
		\item $X_1$ is symmetric, the distribution of $X_1$ is non-singular, and $\E|X_1|^{s}<\infty$ for some ${s}\ge2$.
		\item \cref{c.density-ex} is satisfied and there exists $N_1\in\N$ such that there exist versions of $f_{n}$
		that are bounded uniformly over all $n\ge N_1$.
		\item There exist $a\in(0,1],b,\alpha>0$ (that are allowed to depend on $s$), $N_2\in\N$ and a sequence $(r_n)$ with $r_n\to 0$ such that for some $m\in\N$ with $2\le m \le {s}$,
		\begin{equation}\label{eq.density-Hoelder-1}
		(1+|x|)^{m}|f_n(x)-f_n(x-h)|
		\le \alpha \, (1+|x|)^{m} |h|^{a} n^{b} + r_n
		\end{equation}
		holds for all $n\ge N_2$, $|h|<1$ and for all $|x|\le2\sqrt{n}$.
	\end{enumerate}
	Then,
	\begin{equation}\label{eq.p-dichte-general}
	\begin{split}
	&\sup_{x\in\R} \, (1+|x|)^{m}|f_n(x)-\phi^{q}_{m,n}(x)|\\*
	&\quad= \co\big( n^{-({s}-2)/2} \, (\log n)^{({s}+2\lfloor {s} \rfloor)/2} \big) 
	+ \cO\big( n^{-\lceil\frac{m-1}2\rceil} \big)
	+ r_n.
	\end{split}
	\end{equation}
\end{proposition}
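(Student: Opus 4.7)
My plan is to upgrade the bound on the smoothed density $f_n'$ from \cref{p.dichte-p} to a bound on $f_n$ itself by deconvolution, using the H\"older-type hypothesis~(iii) to control the resulting smoothing error. Since $|T_n|\le\sqrt n$ by \cref{r.T_n}, I pick the version of $f_n$ that vanishes on $\{|x|>\sqrt n\}$; the Gaussian tail argument leading to \labelcref{eq.phi^q-bound} then shows that the supremum in \labelcref{eq.p-dichte-general} restricted to $|x|>2\sqrt n$ is already $\co(n^{-(s-2)/2})$, so it suffices to control $|x|\le 2\sqrt n$. Next I apply \cref{p.dichte-p} at the maximal expansion level $m_0=\lfloor s\rfloor$ (with $\beta_n=n^{-k}$ for a $k$ fixed below) to obtain
\[
\sup_{x\in\R}(1+|x|)^{m_0}|f_n'(x)-\phi^q_{m_0,n}(x)|=\co\bigl(n^{-(s-2)/2}(\log n)^{(s+2\lfloor s\rfloor)/2}\bigr),
\]
which, since $m\le m_0$ and $1+|x|\ge 1$, also holds with weight $(1+|x|)^m$. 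Because $q_r\equiv 0$ for odd $r$ and $(1+|x|)^m|q_r(x)|\le c(m)$ by \labelcref{eq.tp-bound} combined with \cref{p.E-tP-Q}, switching from $\phi^q_{m_0,n}$ to $\phi^q_{m,n}$ costs only the dropped terms $q_r n^{-r/2}$ with $m-1\le r\le\lfloor s\rfloor-2$ and $r$ even; the dominant such $r$ equals $2\lceil(m-1)/2\rceil$, producing a clean $\cO(n^{-\lceil(m-1)/2\rceil})$ contribution.

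For the deconvolution itself I exploit $f_n'=f_n\ast\phi_{\beta_n}$ from \labelcref{eq.f_n'-def} to write
\[
f_n(x)-f_n'(x)=\int_{\R}\bigl(f_n(x)-f_n(x-y)\bigr)\phi_{\beta_n}(y)\dy
\]
and split the integral at $|y|=1$. On $\{|y|<1\}$ hypothesis~(iii) gives, for $|x|\le 2\sqrt n$,
\[
(1+|x|)^m|f_n(x)-f_n(x-y)|\le\alpha(1+|x|)^m|y|^a n^b+r_n,
\]
so after integration, using $\int|y|^a\phi_{\beta_n}(y)\dy\le c_a\beta_n^{a/2}$ and $(1+|x|)^m\le c\,n^{m/2}$, one obtains a contribution bounded by $c\alpha\,n^{m/2+b}\beta_n^{a/2}+r_n$. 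On $\{|y|\ge 1\}$ the uniform boundedness of the $f_n$ from~(ii), combined with the Gaussian tail $\P(|\cN(0,\beta_n)|\ge 1)\le 2\exp\bigl(-1/(2\beta_n)\bigr)$, yields a contribution of order $n^{m/2}\exp(-n^{k}/2)$, which is super-polynomially small whenever $\beta_n=n^{-k}$.

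It then remains to choose $k$ so that $\beta_n=n^{-k}$ satisfies simultaneously $\beta_n=\co(n^{-(s-2)/2})$ (required to apply \cref{p.dichte-p}) and $n^{m/2+b}\beta_n^{a/2}\le n^{-\lceil(m-1)/2\rceil}$, i.e.\ any $k>\max\bigl\{(s-2)/2,\ 2(m/2+b+\lceil(m-1)/2\rceil)/a\bigr\}$. Summing the four contributions --- the $\co$ term from \cref{p.dichte-p}, the $\cO(n^{-\lceil(m-1)/2\rceil})$ from the switch of expansion levels, the short-range deconvolution error, and the trailing $r_n$ --- produces \labelcref{eq.p-dichte-general}. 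I expect the main technical obstacle to be precisely this balancing: the non-uniform weight $(1+|x|)^m$ forces an $n^{m/2}$ blow-up of the deconvolution remainder for $|x|\asymp\sqrt n$, and this can only be absorbed because the H\"older exponent $a$ in~(iii) is strictly positive, so that the smoothing scale $\beta_n^{a/2}$ can be made polynomially small to any desired order.
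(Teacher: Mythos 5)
Your proposal is correct and follows essentially the same route as the paper: restrict to $|x|\le 2\sqrt n$ via the compact support of $T_n$ and the Gaussian factor in $\phi^q_{m,n}$, invoke \cref{p.dichte-p} at level $\lfloor s\rfloor$ with a polynomially small $\beta_n$, pay $\cO\big(n^{-\lceil (m-1)/2\rceil}\big)$ for dropping the even expansion terms between $m-1$ and $\lfloor s\rfloor-2$, and deconvolve $f_n'=f_n\ast\phi_{\beta_n}$ using hypothesis $(iii)$ together with $\E|\epsilon_n|^a\le c_a\beta_n^{a/2}$. The only (immaterial) difference is on the event $\{|\epsilon_n|\ge1\}$, where you use the Gaussian tail $\exp(-1/(2\beta_n))$ while the paper absorbs that case into the H\"older bound by noting $|\epsilon_n|^a\ge1$ there; both choices of $\beta_n$ make the deconvolution error negligible.
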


\begin{proof}
	Let $n\ge N\vee N_1\vee N_2$ with $N$ from \cref{c.density-ex}. In this proof we decompose the left-hand side of \labelcref{eq.p-dichte-general} such that
	\begin{equation}\label{eq.f-split}
	\begin{split}
	\lefteqn{\sup_{x\in\R} \, (1+|x|)^{m}|f_n(x)-\phi^{q}_{m,n}(x)|}\quad\\*
	&\le\sup_{x\in\R} \, (1+|x|)^{m}|f_n(x)-f_n'(x)|
	+\sup_{x\in\R} \, (1+|x|)^{\lfloor {s} \rfloor}|f_n'(x)-\phi^{q}_{\lfloor s \rfloor,n}(x)|\\
	&\quad +\sup_{x\in\R} \, (1+|x|)^{m}|\phi^{q}_{\lfloor {s} \rfloor,n}(x)-\phi^{q}_{m,n}(x)|.
	\end{split}
	\end{equation}
	
	Now we fix $\beta_n= \co\big( n^{-(2b+m+{s}-2)/a} \big)$ (polynomial in $n$), which satisfies the assumptions of \cref{p.dichte-p}. Thus, the second summand of the right-hand side in \labelcref{eq.f-split} is of order $\co\big( n^{-({s}-2)/2} \, (\log n)^{({s}+2\lfloor {s} \rfloor)/2} \big)$.
	
	By \labelcref{eq.phi^q-def}, the third summand equals
	\begin{equation*}
	\sup_{x\in\R} \, (1+|x|)^{m}\Big|\sum_{r=m-1}^{\lfloor {s} \rfloor-2}q_{r}(x)n^{-r/2}\Big|
	\le c({s}) n^{-\lceil\frac{m-1}2\rceil}
	\end{equation*}
	for all $x\in\R$. Similar to \labelcref{eq.phi^q-bound}, the bound is due to the factor $\phi$ in all expansion terms $q_{r}$ and the fact $q_r=0$ for uneven $r$.
	
	As a last step, we need to bound $\sup_{x\in\R} (1+|x|)^{m}|f_n(x)-f_n'(x)|$. By \cref{r.T_n}, $|T_n|\le\sqrt{n}$ and thus $f_n(x)=0$ for all $|x|>\sqrt{n}$. Then, \labelcref{eq.fn'-bound} implies
	\begin{align*}
	\sup_{|x|>2\sqrt{n}}(1+|x|)^{m}|f_n(x)-f_n'(x)|
	=\sup_{|x|>2\sqrt{n}}(1+|x|)^{m}f_n'(x)
	\le \co\big(n^{-({s}-2)/2}\big).
	\end{align*}
	Hence, from now on we only have to consider the supremum over $|x|\le 2\sqrt{n}$. For ${\epsilon_n\sim \cN(0,\beta_n)}$ with density $\phi_{\beta_n}$, we get
	\begin{align}\label{eq.fn-fn'}
	\lefteqn{\sup_{|x|\le 2\sqrt{n}}(1+|x|)^{m}|f_n(x)-f_n'(x)|}\quad\nonumber\\
	&=\sup_{|x|\le 2\sqrt{n}}(1+|x|)^{m}\Big|\int_{-\infty}^{\infty} \big(f_n(x)-f_n(x-y)\big) \phi_{\beta_n}(y)\dy\Big|\\
	&=\sup_{|x|\le 2\sqrt{n}}(1+|x|)^{m}\Big|\E\big[f_n(x)-f_n(x-\epsilon_n)\big]\Big|\nonumber\\
	&\le\E\Big[\sup_{|x|\le 2\sqrt{n}}(1+|x|)^{m}\big|f_n(x)-f_n(x-\epsilon_n)\big|\Big].\nonumber
	\end{align}
	By \labelcref{eq.density-Hoelder-1}, we get for $|\epsilon_n|<1$
	\begin{align*}
	\sup_{|x|\le 2\sqrt{n}}(1+|x|)^{m}|f_n(x)-f_n(x-\epsilon_n)|
	&\le \sup_{|x|\le 2\sqrt{n}} \alpha \, (1+|x|)^{m} |\epsilon_n|^{a} n^{b} + r_n\\
	&= c(s) \sup_{|x|\le 2\sqrt{n}} (1+|x|)^{m} |\epsilon_n|^{a} n^{b} + r_n.
	\end{align*}
	By uniform boundedness of $f_n$ for $|\epsilon_n|\ge1$,
	\begin{align*}
	\sup_{|x|\le 2\sqrt{n}}(1+|x|)^{m} \big|f_n(x)-f_n(x-\epsilon_n)\big|
	&\le \sup_{|x|\le 2\sqrt{n}}(1+|x|)^{m} \cdot 2\sup_{n\ge N_1}\|f_n\|_{\sup}\\
	&\le c(s) \sup_{|x|\le 2\sqrt{n}}(1+|x|)^{m} |\epsilon_n|^{a} n^{b} + r_n.
	\end{align*}
	Therefore, noting that $\varepsilon_n\sim\mathcal{N}(0,\beta_n)$, we deduce from \labelcref{eq.fn-fn'}
	\begin{align*}
	\sup_{|x|\le 2\sqrt{n}}(1+|x|)^{m}|f_n(x)-f_n'(x)|
	&\le\E\Big[\sup_{|x|\le 2\sqrt{n}}(1+|x|)^{m}\big|f_n(x)-f_n(x-\epsilon_n)\big|\Big]\\
	&\le \E\Big[c(s) \sup_{|x|\le 2\sqrt{n}}(1+|x|)^{m} |\epsilon_n|^{a} n^{b} + r_n\Big]\\
	&\le c(s) n^{(2b+m)/2} \big(\E\epsilon_n^2\big)^{a/2} + r_n\\
	&= \co\big( n^{-({s}-2)/2} \big) + r_n.
	\end{align*}
\end{proof}

\subsection{The smooth function model}\label{s.density.smooth}

In this subsection, we briefly explain the implications of the smooth function model for self-normalized sums, which we use in the proof of \cref{t.llt}. For a more detailed introduction, see \cite[Sections 2.4 and 2.8]{Hal92edgeworth}. Let $Y_j:=(X_j,X_j^2)$ with mean $\mu_Y=(0,1)$ and covariance matrix
\begin{align*}
\Sigma:= \Cov(Y)=
\begin{pmatrix}
1 & \mu_3\\
\mu_3 & \mu_4-1
\end{pmatrix}.
\end{align*}
Assume that \cref{c.density-cf} holds and that $\mu_4<\infty$. This implies that $(X_1,X_1^2)$ is non-degenerate and thus $\Sigma$ is positive definite, see \cref{r.non-deg}. Let $Y^{(i)}$ denote the $i$-th coordinate of $Y$ and $\overline{Y}:=\tfrac1n \sum Y_j$.

If $\E |X_1|^{2m}<\infty$ holds for some $m\in\N$, $m\ge3$, by \cite[Theorem 19.2]{BR76normal}, for $n$ sufficiently large, the density of $n^{1/2}(\overline{Y}-\mu_Y)$, $g_n$ exists, is bounded and admits the expansion
\begin{equation}\label{eq.BR-19.2}
\sup_{y\in\R^2} (1+\|y\|)^{m}\Big|g_n(y)-\sum_{r=0}^{m-2}n^{-r/2}\pi_r(y)\phi_{\Sigma}(y)\Big| =\co\big(n^{-(m-2)/2}\big),
\end{equation}
where $\pi_r$ is a bivariate polynomial of degree $3r$ (which includes moments up to order $2r+4$) and $\phi_{\Sigma}$ denotes the 2-dimensional normal density with parameters 0 and $\Sigma$.

Next, we derive a density $f_n$ of $T_n$ in terms of $g_n$.
\begin{align*}
\P(T_n\le a)
&=\P\Big( n^{1/2} \, \overline{Y^{(1)}} \, \Big(\overline{Y^{(2)}}\Big)^{-1/2} \le a\Big)\\
&=\P\bigg( \Big(n^{1/2} \, \overline{Y^{(1)}}\Big) \, \Big(n^{-1/2}n^{1/2}\Big(\overline{Y^{(2)}}-1\Big)+1\Big)^{-1/2} \le a\bigg)\\
&=\int_{\R^2} \1\Big\{ u_1 \, \big(n^{-1/2}u_2+1\big)^{-1/2} \le a , \, u_2>-n^{1/2}\Big\} g_n(u) \du\\
&=\int_{-n^{1/2}}^\infty \int_{-\infty}^{a \, (n^{-1/2}u_2+1)^{1/2}} g_n(u_1,u_2) \du_1\du_2.
\intertext{By substituting $u_1=x\big(n^{-1/2}u_2+1\big)^{1/2}$, applying Fubini's theorem, and substituting ${u_2=z-n^{1/2}}$,}
\P(T_n\le a)
&=\int_{-n^{1/2}}^\infty \int_{-\infty}^{a} g_n\big(x \, (n^{-1/2}u_2+1)^{1/2},u_2\big) \big(n^{-1/2}u_2+1\big)^{1/2} \dx \du_2\\
&=\int_{-\infty}^{a} \int_{-n^{1/2}}^\infty g_n\big(x \, (n^{-1/2}u_2+1)^{1/2},u_2\big) \big(n^{-1/2}u_2+1\big)^{1/2} \du_2 \dx\\
&=\int_{-\infty}^{a} \int_{0}^\infty g_n\big(x \, (n^{-1/2}z)^{1/2},z-n^{1/2}\big) \big(n^{-1/2}z\big)^{1/2} \dz \dx\\
&=\int_{-\infty}^{a} \int_{0}^\infty g_n\big(\gamma_{x,n}(z)\big) \big(n^{-1/2}z\big)^{1/2} \dz \dx
\end{align*}
where $\gamma_{x,n}(z)=\big(x(n^{-1/2}z)^{1/2},z-n^{1/2}\big)$ and thus
\begin{align}\label{eq.fn-int-gn}
f_n(x)
= \int_{0}^\infty g_n\big(\gamma_{x,n}(z)\big) \big(n^{-1/2}z\big)^{1/2} \dz.
\end{align}

\begin{remark}
	In this context, Equation (2.67) on page 79 in \cite{Hal92edgeworth} states that 
	\begin{equation}\label{eq.fn-int-gn-Hall}
	f_n(x) = \int_{S(n,x)} g_n(y) \dy
	\end{equation}
	with
	$S(n,x):
	=\big\{\gamma_{x,n}(z)\in\R^2\colon z\in(0,\infty)\big\}$.
	However, the integral in \labelcref{eq.fn-int-gn-Hall} can neither be understood as an integral with respect to the two-dimensional Lebesgue measure (as it vanishes in this case) nor as a classical curve integral for the parametrization of $S(n,x)$ by $z\mapsto\gamma_{x,n}(z)=\big(x(n^{-1/2}z)^{1/2},z-n^{1/2}\big)$, namely
	\begin{align*}
	\int_{S(n,x)} g_n\,
	=\int_0^\infty g_n(\gamma_{x,n}(z)) \|\dot\gamma_{x,n}(z)\| \dz.
	\end{align*}
	Indeed, adopting the latter strategy for the uniform distribution on the lower triangle of the unit square
	$
	\bigcup_{x\in[0,1]} S(x)$
	with
	\begin{align*}
	S(x)=\big\{\gamma_{x,n}(t)\in[0,1]\times[0,1] \colon t\in[0,1-x]\big\} \quad \text{and} \quad \gamma_{x,n}(t)=(x+t,t),
	\end{align*}
	we do not obtain a density:
	\begin{align*}
	\int_0^1 \Big( \int_{ S(x)} 2 \Big) \dx
	=\int_0^1 \int_0^{1-x} 2\, \|\dot\gamma_{x,n}(t)\| \dt \dx
	=\sqrt{2} 
	\ne 1.
	\end{align*}

\end{remark}


%



\subsection{Proof of Theorem \ref{t.llt}}\label{s.density.main}

In the following remark, we bound polynomials that appear in the proof of \cref{t.llt}.

\begin{remark}\label{r.polynome}
	In the following derivations, we will often consider polynomials of the form $\pi_r\big(\gamma_{x,n}(z)\big)$, where $\pi_r$ is a bivariate polynomial of degree $3r$, which includes moments of $X_1$ up to order $(2r+4)$ and $\gamma_{x,n}(z)=\big(x(n^{-1/2}z)^{1/2},z-n^{1/2}\big)$. Integration of these polynomials leads to cumbersome calculations which we shall avoid by the subsequent upper bounds.
	Let $\pi_{r,1}$ denote the function which arises from $\pi_r$ by replacing all coefficients and both arguments by their absolute values. Then
	\begin{equation*}
	\big|\pi_r\big(\gamma_{x,n}(z)\big)\big|=\big|\pi_r\big(x(n^{-1/2}z)^{1/2},z-n^{1/2}\big)\big|\le\pi_{r,1}\big(x(n^{-1/2}z)^{1/2},z-n^{1/2}\big).
	\end{equation*}
	Now, $\pi_{r,1}$ is symmetric in both arguments and thus non-decreasing for growing absolute values of the variables. Hence, replacing the factors $n^{-1/4}$ by 1 and $|x|$ by $(|x|\vee 1)$ yields an upper bound. This yields
	\begin{align*}
	\pi_{r,1}\big(x(n^{-1/2}z)^{1/2},z-n^{1/2}\big)
	&\le \pi_{r,1}\big((|x|\vee 1)z^{1/2},z-n^{1/2}\big)\\
	&\le (|x|\vee 1)^{3r}\pi_{r,1}\big(z+1,z-n^{1/2}\big)
	\end{align*}
	because the degree of the polynomial is at most $3r$. Next,
	\begin{equation*}
	\pi_{r,1}\big(z+1,z-n^{1/2}\big)
	\le c(r) n^{3r/2}\hat{\pi}_{r}(|z|)
	\end{equation*}
	for some suitable univariate polynomial $\hat{\pi}_{r}$ of degree $3r$.
	If $|x|<2\sqrt{n}$, we have
	\begin{equation*}
	\big|\pi_r\big(\gamma_{x,n}(z)\big)\big|
	\le c(r) n^{3r/2}(|x|\vee 1)^{3r}\hat{\pi}_{r}(|z|)
	\le c(r) n^{3r} \hat{\pi}_{r}(|z|).
	\end{equation*}
\end{remark}

%
Now we are in the position to prove the main theorem.

\begin{proof}[Proof of \cref{t.llt}]
	Depending on which moment assumption in \cref{t.llt} is satisfied, let ${w}=m$ or ${w}=m+1$. In any case, ${w}\in\N$, ${w}\ge 3$ and $\E |X_1|^{2{w}}<\infty$.
	\cref{c.density-cf} and $\E |X_1|^{2{w}}<\infty$ yield that there exists $N_1\in \N$ such that \labelcref{eq.fn-int-gn,eq.BR-19.2} hold for all $n\ge N_1$. Throughout the remainder of the proof assume $n\ge N_1$. Now \labelcref{eq.fn-int-gn} implies \cref{c.density-ex}. In order to apply \cref{p.dichte-general}, it remains to show that $f_n$ are uniformly bounded over $n\ge N_1$ and that the H\"older-type condition $(iii)$ holds. We start with the latter.
	\labelcref{eq.fn-int-gn} and \labelcref{eq.BR-19.2} together with the required notation as introduced in \cref{s.density.smooth} yield
	\begin{equation}\label{eq.f_n-von-g_n}
	\begin{split}
	f_n(x)
	&= \int_{0}^\infty g_n\big(\gamma_{x,n}(z)\big) \big(n^{-1/2}z\big)^{1/2} \dz\\
	&=\int_0^\infty \Big(\sum_{r=0}^{{w}-2}n^{-r/2}\pi_r(\gamma_{x,n}(z))\phi_{\Sigma}(\gamma_{x,n}(z)) \\
	&\qquad\qquad+ \big(1+\|\gamma_{x,n}(z)\|\big)^{-{w}} R_n(\gamma_{x,n}(z))\Big) \big(n^{-1/2}z\big)^{1/2} \dz,
	\end{split}
	\end{equation}
	where $\pi_r$ is a bivariate polynomial of degree $3r$, which includes moments up to order $2r+4$ and $\phi_{\Sigma}$ denotes the 2-dimensional normal density with parameters 0 and
	\begin{align*}
	\Sigma=
	\begin{pmatrix}
	1\enskip & 0\\
	0\enskip & \mu_4-1
	\end{pmatrix}.
	\end{align*}
	\cref{c.density-cf} implies that $X_1^2$ is non-degenerate and thus $\Sigma$ is positive definite. Additionally, $\gamma_{x,n}(z)=\big(x(n^{-1/2}z)^{1/2},z-n^{1/2}\big)$ and
	\begin{equation}\label{eq.def-Rn'}
	R_n':=\sup_{y\in\R^2} |R_n(y)|=\co\big(n^{-({w}-2)/2}\big).
	\end{equation}
	Then by the triangle-inequality, 
	\begin{align}\label{eq.f_n-f_n-Z}
	\lefteqn{|f_n(x)-f_n(x-h)|}\quad\\*
	%
	&=\bigg|\int_0^\infty \Big(\sum_{r=0}^{{w}-2}n^{-r/2}\pi_r(\gamma_{x,n}(z))\phi_{\Sigma}(\gamma_{x,n}(z)) \big(n^{-1/2}z\big)^{1/2} \nonumber\\
	&\qquad+ \big(1+\|\gamma_{x,n}(z)\|\big)^{-{w}} R_n(\gamma_{x,n}(z)) \big(n^{-1/2}z\big)^{1/2} \Big) \nonumber\\
	&\quad- \Big(\sum_{r=0}^{{w}-2}n^{-r/2}\pi_r(\gamma_{x-h,n}(z))\phi_{\Sigma}(\gamma_{x-h,n}(z)) \big(n^{-1/2}z\big)^{1/2} \nonumber\\
	&\qquad+ \big(1+\|\gamma_{x-h,n}(z)\|\big)^{-{w}} R_n(\gamma_{x-h,n}(z)) \big(n^{-1/2}z\big)^{1/2}\Big)\dz \bigg|\nonumber\\
	%
	&\le \sum_{r=0}^{{w}-2}n^{-r/2}\int_0^\infty \big|\pi_r(\gamma_{x,n}(z))-\pi_r(\gamma_{x-h,n}(z))\big| \, \phi_{\Sigma}(\gamma_{x,n}(z)) \, \big(n^{-1/2}z\big)^{1/2}\dz\nonumber\\
	&\quad+ \sum_{r=0}^{{w}-2}n^{-r/2}\int_0^\infty \big|\pi_r(\gamma_{x-h,n}(z))\big| \, \big| \phi_{\Sigma}(\gamma_{x,n}(z))-\phi_{\Sigma}(\gamma_{x-h,n}(z)) \big| \, \big(n^{-1/2}z\big)^{1/2}\dz\nonumber\\
	&\quad+R_n'\int_0^\infty \big(1+\|\gamma_{x,n}(z)\|\big)^{-{w}} \big(n^{-1/2}z\big)^{1/2}+ \big(1+\|\gamma_{x-h,n}(z)\|\big)^{-{w}} \big(n^{-1/2}z\big)^{1/2}\dz\nonumber\\
	&=: K_1+K_2+K_3.\nonumber
	\end{align}
	
	Recall that we are in the case of $|x|<2\sqrt{n}$, $|h|<1$. All following formulas will be deduced uniformly in this regime.
	\cref{r.polynome} provides the bound
	\[
	\big|\pi_r\big(\gamma_{x,n}(z)\big)\big|
	\le c(r) n^{3r} \hat{\pi}_{r}(|z|)
	\]
	for $z\in\R$. Here $\hat{\pi}_{r}$ being a polynomial of degree $3r$ whose coefficients depend on the coefficients of $\pi_r$ and $r$. Due to
	\begin{equation*}
	\begin{split}
	\big|\pi_r\big(\gamma_{x-h,n}(z)\big)\big|
	\le c(r) n^{3r/2}(|x|+1)^{3r}\hat{\pi}_{r}(|z|)
	\le c(r) n^{3r} \hat{\pi}_{r}(|z|),
	\end{split}
	\end{equation*}
	the bound also holds if we replace $x$ by $x-h$.
	Note that
	\begin{align*}
	\phi_{\Sigma}(\gamma_{x,n}(z))=\phi\big(x(n^{-1/2}z)^{1/2}\big)\,\phi_{\mu_4-1}\big(z-n^{1/2}\big).
	\end{align*}
	We also need to bound integrals of the form
	\begin{align*}
	\int_0^\infty \hat{\pi}_{r}(z) \, z \, \phi_{\mu_4-1}\big(z-n^{1/2}\big) \dz
	&\le \int_{-\infty}^\infty \hat{\pi}_{r}(z) \, |z| \, \phi_{\mu_4-1}\big(z-n^{1/2}\big) \dz\\
	&\le \E\Big[\hat{\pi}_{r}(W_n) |W_n| \Big],
	\end{align*}
	where $W_n\sim\cN(n^{1/2},\mu_4-1)$ is a normally distributed random variable with mean $n^{1/2}$ and variance $\mu_4-1$. As the expected value is bounded up to a constant by the sum over the first $(3r+1)$ absolute moments of $W_n$,
	\begin{equation}\label{int-pi-bound}
	\int_0^\infty \hat{\pi}_{r}(z) \, z \, \phi_{\mu_4-1}\big(z-n^{1/2}\big) \dz
	\le c(r)n^{(3r+1)/2}. 
	\end{equation}
	Now we expand
	\begin{align*}
	K_1
	%
	%
	&=\sum_{r=0}^{{w}-2}n^{-r/2}\int_0^\infty 
	\phi\big(x(n^{-1/2}z)^{1/2}\big)\,\phi_{\mu_4-1}\big(z-n^{1/2}\big)
	\big(n^{-1/2}z\big)^{1/2}\\
	&\qquad \cdot \Big| \pi_r\big(x(n^{-1/2}z)^{1/2},z-n^{1/2}\big) - \pi_r\big((x-h)(n^{-1/2}z)^{1/2},z-n^{1/2}\big) \Big|\dz.
	\end{align*}
	The polynomial $\pi_r\big(x(n^{-1/2}z)^{1/2},z-n^{1/2}\big)$ is a part of $\pi_r\big((x-h)(n^{-1/2}z)^{1/2},\allowbreak z-n^{1/2}\big)$ in the sense that every summand of the first polynomial is also a summand of the second polynomial. The remaining summands of $\pi_r\big((x-h)(n^{-1/2}z)^{1/2},z-n^{1/2}\big)$ all feature the factor $h(n^{-1/2}z)^{1/2}$. Therefore, $h(n^{-1/2}z)^{1/2}$ is a common factor of every summand of $\pi_r\big(x(n^{-1/2}z)^{1/2},\allowbreak z-n^{1/2}\big) - \pi_r\big((x-h)(n^{-1/2}z)^{1/2},z-n^{1/2}\big)$ and thus
	\begin{align*}
	&\big|\pi_r\big(x(n^{-1/2}z)^{1/2},z-n^{1/2}\big) - \pi_r\big((x-h)(n^{-1/2}z)^{1/2},z-n^{1/2}\big)\big|\\
	&\quad\le |h|(n^{-1/2}z)^{1/2} \cdot \pi_{r,2}\big((|x|\vee |x-h|\vee 1)(n^{-1/2}|z|)^{1/2},|z-n^{1/2}|\big).
	\end{align*}
	for a suitable polynomial $\pi_{r,2}$ with non-negative coefficients and degree at most $3r$.
	Similar in spirit to \cref{r.polynome}, we bound
	\[
	\pi_{r,2}\big((|x|\vee |x-h|\vee 1)(n^{-1/2}|z|)^{1/2},|z-n^{1/2}|\big)
	\le c(r) n^{3r} \hat{\pi}_{r}(z)
	\]
	with $\hat{\pi}_{r}$ being a polynomial of degree $3r$ whose coefficients depend on the coefficients of $\pi_r$ and $r$. Using \labelcref{int-pi-bound}, the expression above is bounded by
	\begin{align*}
	&\sum_{r=0}^{{w}-2}n^{-r/2}\int_0^\infty \phi_{\mu_4-1}\big(z-n^{1/2}\big) \big(n^{-1/2}z\big)^{1/2}
	\cdot |h|(n^{-1/2}z)^{1/2} c(r) n^{3r} \widehat{\pi}_{r}(z) \dz\\*
	&\quad\le |h|c(w) \sum_{r=0}^{{w}-2}n^{(5r-1)/2}\int_0^\infty \phi_{\mu_4-1}\big(z-n^{1/2}\big) \, z \,
	\widehat{\pi}_{r}(z) \dz\\
	&\quad\le|h| c({w}) \sum_{r=0}^{{w}-2}n^{(5r-1)/2}n^{(3r+1)/2}\\
	&\quad\le|h| c({w}) n^{4{w}-8}.
	\end{align*}
	We conclude that
	\begin{equation}\label{eq.k1}
	K_1 \le |h| c({w}) n^{4{w}-8}.
	\end{equation}
	
	For $K_2$, we use that $\phi$ is Lipschitz continuous with Lipschitz constant $L>0$ and get
	\begin{align}\label{eq.k2}
	K_2
	%
	%
	&=\sum_{r=0}^{{w}-2}n^{-r/2}\int_0^\infty 
	\big|\pi_r(\gamma_{x-h,n}(z))\big|
	\big(n^{-1/2}z\big)^{1/2}
	\phi_{\mu_4-1}\big(z-n^{1/2}\big)\nonumber\\
	&\qquad\qquad\qquad\quad \cdot \Big| \phi\big(x(n^{-1/2}z)^{1/2}\big) - \phi\big((x-h)(n^{-1/2}z)^{1/2}\big) \Big|\dz\nonumber\\
	&\le\sum_{r=0}^{{w}-2}n^{-r/2}\int_0^\infty 
	c(r) n^{3r} \hat{\pi}_{r}(z)
	\big(n^{-1/2}z\big)^{1/2}
	\phi_{\mu_4-1}\big(z-n^{1/2}\big)\\*
	&\qquad\qquad\qquad\quad \cdot L |h|(n^{-1/2}z)^{1/2} \dz\nonumber\\
	&\le|h|c({w})\sum_{r=0}^{{w}-2}n^{(5r-1)/2}\int_0^\infty 
	\hat{\pi}_{r}(z)
	\, z \,
	\phi_{\mu_4-1}\big(z-n^{1/2}\big) \dz\nonumber\\
	%
	%
	&\le|h| c({w}) n^{4{w}-8}.\nonumber
	\end{align}
	
	We are left with
	\begin{equation*}
	K_3
	= R_n'\int_0^\infty \big(1+\|\gamma_{x,n}(z)\|\big)^{-{w}} \big(n^{-1/2}z\big)^{1/2}+ \big(1+\|\gamma_{x-h,n}(z)\|\big)^{-{w}} \big(n^{-1/2}z\big)^{1/2}\dz
	\end{equation*}
	where it is obviously impossible to attain a bound in $h$. Therefore, we also have to take the factor $(1+|x|)^m$ from \labelcref{eq.density-Hoelder-1} into account and find a bound in $n$. We only examine the first summand as the second one can be handled in a similar fashion. Thus,
	\begin{align*}
	&(1+|x|)^m\int_0^\infty \big(1+\|\gamma_{x,n}(z)\|\big)^{-{w}} \big(n^{-1/2}z\big)^{1/2} \dz\\*
	&\quad\leq c(m) \int_0^\infty \sqrt{(1+x^2)^{m} \big(1+n^{-1/2}zx^2+(z-n^{1/2})^2\big)^{-{w}} \big(n^{-1/2}z\big) }\dz\\
	&\quad= c(m) \int_0^\infty \sqrt{ \bigg( \frac{1+x^2}{1+n^{-1/2}zx^2+(z-n^{1/2})^2} \bigg)^{m}}\\*
	&\quad\qquad\qquad\quad \cdot \sqrt{\big(1+n^{-1/2}zx^2+(z-n^{1/2})^2\big)^{-({w}-m)} \big(n^{-1/2}z\big) }\dz.
	\end{align*}
	Recalling $w\ge m$ and $|x|<2\sqrt{n}$, we split this integral into three regions that we treat separately. First,
	\begin{align*}
	&\int_0^{\sqrt{n}/2} \sqrt{ \bigg( \frac{1+x^2}{1+n^{-1/2}zx^2+(z-n^{1/2})^2} \bigg)^{m}}\\*
	&\qquad\quad\cdot \sqrt{\big(1+n^{-1/2}zx^2+(z-n^{1/2})^2\big)^{-({w}-m)} \big(n^{-1/2}z\big) }\dz\\*
	&\quad\le \int_0^{\sqrt{n}/2} \sqrt{ \bigg( \frac{1+x^2}{(\frac{1}{2}\sqrt{n}-n^{1/2})^2} \bigg)^{m} \big((\tfrac{1}{2}\sqrt{n}-n^{1/2})^2\big)^{-({w}-m)} \cdot \tfrac12 }\dz\\
	&\quad\le \int_0^{\sqrt{n}/2} \sqrt{ \bigg( \frac{1+4n}{n/4} \bigg)^{m} \big(n/4\big)^{-({w}-m)} }\dz\\
	&\quad\le c(w)n^{(m+1-{w})/2}
	\end{align*}
	and second by \cref{l.int-a/2},
	\begin{align*}
	&\int_{\frac{1}{2}\sqrt{n}}^{2\sqrt{n}} \sqrt{ \bigg( \frac{1+x^2}{1+n^{-1/2}zx^2+(z-n^{1/2})^2} \bigg)^{m}}\\*
	&\qquad\quad\cdot \sqrt{\big(1+n^{-1/2}zx^2+(z-n^{1/2})^2\big)^{-({w}-m)} \big(n^{-1/2}z\big) }\dz\\*
	&\quad\le \int_{\frac{1}{2}\sqrt{n}}^{2\sqrt{n}} \sqrt{ \bigg( \frac{1+x^2}{1+x^2/2} \bigg)^{m} \big(1+(z-n^{1/2})^2\big)^{-({w}-m)} \cdot 2 }\dz\\
	&\quad\le c(m) \int_{\frac{1}{2}\sqrt{n}}^{2\sqrt{n}} \big(1+(z-n^{1/2})^2\big)^{-({w}-m)/2} \dz\\
	&\quad\le \begin{cases}
	c(w) \sqrt{n} , &\text{ if } {w}=m,\\
	c(w) \log n , &\text{ if } {w}=m+1.
	\end{cases}
	\end{align*}
	Finally,
	\begin{align*}
	&\int_{2\sqrt{n}}^\infty \sqrt{ \bigg( \frac{1+x^2}{1+n^{-1/2}zx^2+(z-n^{1/2})^2} \bigg)^{m}}\\* &\qquad\quad\cdot\sqrt{\big(1+n^{-1/2}zx^2+(z-n^{1/2})^2\big)^{-({w}-m)} \big(n^{-1/2}z\big) }\dz\\*
	&\quad\le \int_{2\sqrt{n}}^\infty \sqrt{ (1+4n)^{m} \big((z-n^{1/2})^2\big)^{-{w}} \big(n^{-1/2}z\big) }\dz\\
	&\quad\le c(m) n^{m/2} n^{-1/4} \int_{2\sqrt{n}}^\infty \sqrt{ \big((z-z/2)^2\big)^{-{w}} \cdot z}\dz\\
	&\quad\le c(m) n^{m/2} n^{-1/4} \int_{2\sqrt{n}}^\infty z^{-(2{w}-1)/2} \dz\\
	&\quad\le c(w) n^{(m+1-{w})/2}.
	\end{align*}
	This also holds if $x$ is replaced by $x-h$ and thus combining the last four equations and \labelcref{eq.def-Rn'}
	\begin{align}\label{eq.k3}
	(1+|x|)^m K_3
	&= (1+|x|)^m R_n' \int_0^\infty \big(1+\|\gamma_{x,n}(z)\|\big)^{-{w}} \big(n^{-1/2}z\big)^{1/2}\nonumber\\*
	&\qquad\qquad\qquad\qquad+ \big(1+\|\gamma_{x-h,n}(z)\|\big)^{-{w}} \big(n^{-1/2}z\big)^{1/2}\dz\nonumber\\
	&\le R_n' \begin{cases}
	c(w) \sqrt{n} , &\text{ if } {w}=m,\\
	c(w) \log n , &\text{ if } {w}=m+1,
	\end{cases}\\&
	\le \begin{cases}
	\co\big(n^{-(m-3)/2}\big), &\text{ if } {w}=m,\nonumber\\
	\co\big(n^{-(m-1)/2} \, \log n \big), &\text{ if } {w}=m+1,
	\end{cases}\nonumber\\*
	&=:r_n\nonumber
	\end{align}
	uniformly for all $|x|<2\sqrt{n}$, $|h|<1$. In summary, we get by \labelcref{eq.f_n-f_n-Z,eq.k1,eq.k2,eq.k3}, 
	\begin{equation*}
	(1+|x|)^{m}|f_n(x)-f_n(x-h)|
	\le c(w) (1+|x|)^{m} |h| n^{4{w}-8} + r_n.
	\end{equation*}
	Hence, setting $a=1$ and $b=4{w}-8$, the H\"older-type condition $(iii)$ in \cref{p.dichte-general} is satisfied.
	It remains to derive a uniform bound on $f_n$. For this aim we subsequently use the identity for the normal density
	\begin{align*}
	\phi_{\sigma^2}(x)=\phi_{\sigma^2/2}(x)\,\phi_{\sigma^2/2}(x) \sqrt{2/(\pi\sigma^2)}
	\end{align*}
	for $x\in\R$ and $\sigma^2>0$.
	By \labelcref{eq.k3,eq.f_n-von-g_n}, it suffices to bound
	\begin{align*}
	\lefteqn{\int_0^\infty \sum_{r=0}^{{w}-2}n^{-r/2}\pi_r(\gamma_{x,n}(z))\phi_{\Sigma}(\gamma_{x,n}(z)) \big(n^{-1/2}z\big)^{1/2} \dz}\quad\\*
	&= \sqrt{2/(\pi(\mu_4-1))} \int_0^\infty \sum_{r=0}^{{w}-2}n^{-r/2}\pi_r(\gamma_{x,n}(z)) \, \phi\big(x(n^{-1/2}z)^{1/2}\big) \, \phi_{(\mu_4-1)/2}\big(z-n^{1/2}\big)  \\*
	&\qquad \qquad \qquad \qquad \qquad \qquad \cdot \phi_{(\mu_4-1)/2}\big(z-n^{1/2}\big) \, \big(n^{-1/2}z\big)^{1/2} \dz\\
	&= \sqrt{2/(\pi(\mu_4-1))} \int_0^\infty \sum_{r=0}^{{w}-2}n^{-r/2}\pi_r(\gamma_{x,n}(z)) \phi_{\Sigma_2}(\gamma_{x,n}(z)) \\*
	&\qquad \qquad \qquad \qquad \qquad \qquad \cdot \phi_{(\mu_4-1)/2}\big(z-n^{1/2}\big) \, \big(n^{-1/2}z\big)^{1/2} \dz\\
	&\le c(w) n^{-1/4}\int_0^\infty \phi_{(\mu_4-1)/2}\big(z-n^{1/2}\big) \, z^{1/2} \dz
	\end{align*}
	where
	\begin{align*}
	\Sigma_2=
	\begin{pmatrix}
	1\enskip & 0\\
	0\enskip & (\mu_4-1)/2
	\end{pmatrix}.
	\end{align*}
	The integral is divided into two parts that are evaluated separately. First,
	\begin{align*}
	c(w) n^{-1/4}\int_0^{2n^{1/2}} \phi_{(\mu_4-1)/2}\big(z-n^{1/2}\big) \, z^{1/2} \dz
	&\le c(w) \int_0^{2n^{1/2}} \phi_{(\mu_4-1)/2}\big(z-n^{1/2}\big)  \dz\\
	&\le c(w)
	\end{align*}
	and second,
	\begin{align*}
	\lefteqn{c(w) n^{-1/4}\int_{2n^{1/2}}^\infty \phi_{(\mu_4-1)/2}\big(z-n^{1/2}\big) \, z^{1/2} \dz}\quad\\
	&= c(w) n^{-1/4}\int_{n^{1/2}}^\infty \phi_{(\mu_4-1)/2}(z) \, \big(z+n^{1/2}\big)^{1/2} \dz\\
	&\le  c(w) n^{-1/4}\int_{n^{1/2}}^\infty \phi_{(\mu_4-1)/2}(z) \, z^{2} \dz \le  c(w).
	\end{align*}
	Thus, $f_n$ is bounded uniformly over all $n\ge N_1$ and \cref{p.dichte-general} yields
	\begin{equation*}
	\sup_{x\in\R} \, (1+|x|)^{m}|f_n(x)-\phi^{q}_{m,n}(x)|
	= \co\big( n^{-({s}-2)/2} \, (\log n)^{({s}+2\lfloor {s} \rfloor)/2} \big) 
	+ \cO\big( n^{- \lceil \frac{m-1}2 \rceil} \big)
	+ r_n
	\end{equation*}
	for both $m =w-1$ and $m=w$, and for all ${s}$ with $m \le {s} \le 2{w}$. We choose ${s}=m+2$ to finally get
	\begin{align*}
	\sup_{x\in\R} \, (1+|x|)^{m}|f_n(x)-\phi^{q}_{m,n}(x)|
	&= \begin{cases}
	\co\big(n^{-(m-3)/2}\big), &\text{ if } {w}=m,\\
	\co\big(n^{-(m-1)/2} \, \log n \big), &\text{ if } {w}=m+1.
	\end{cases}
	\end{align*}
\end{proof}

\section{Rate of convergence and Edgeworth-type expansion in the entropic central limit theorem for self-normalized sums}\label{ch.entropy}

The main result of this section is the following theorem.

\begin{theorem}\label{t.entropy-general}
	Assume that $X_1$ is symmetric, the distribution of $X_1$ is non-singular, \cref{c.density-cf} is satisfied and $\E|X_1|^{2m}<\infty$ for some $m\in\N$, $m\ge3$. Then
	\begin{equation}\label{eq.t-entropy-general}
	D(T_n)=\frac{c_2}{n^2}+\dots+\frac{c_{\lfloor (m-2)/2 \rfloor}}{n^{\lfloor (m-2)/2 \rfloor}}+ \co\big((n\log n)^{-(m-2)/2}\,\log n\big),
	\end{equation}
	where
	\begin{align*}
	c_l=\sum_{k=2}^{2l} \frac{(-1)^k}{k(k-1)} \sum \int_{\R} \frac{q_{r_1}(x)\dots q_{r_k}(x)}{\phi(x)^{k-1}} \dx, \quad l\in\N,
	\end{align*}
	with the inner sum running over all positive integers $r_1,\dots,r_k$ such that $r_1+\dots+r_k=2l$.
\end{theorem}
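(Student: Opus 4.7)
The starting point is the standard identity, obtained by writing $f_n = \phi(1+u_n)$ with $u_n := (f_n-\phi)/\phi$, Taylor-expanding $\log(1+u_n)$ inside $D(T_n)=\int f_n\log(f_n/\phi)\,dx$, and absorbing the first-order term via $\int(f_n-\phi)\,dx=0$:
\[
D(T_n) = \sum_{k=2}^\infty \frac{(-1)^k}{k(k-1)} \int_\R \frac{(f_n(x)-\phi(x))^k}{\phi(x)^{k-1}}\,dx.
\]
Into this I would substitute $f_n - \phi = P_n + \rho_n$, where $P_n(x) := \sum_{r=1}^{m-2} q_r(x)\,n^{-r/2}$ and $\rho_n := f_n - \phi^q_{m,n}$ satisfies the non-uniform bound $\sup_x(1+|x|)^m|\rho_n(x)| = \co\big(n^{-(m-2)/2}\log n\big)$ furnished by \cref{t.llt-red}, and expand $(P_n + \rho_n)^k$ via the binomial theorem.

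The pure-$P_n$ contributions give a polynomial in $n^{-1/2}$: the coefficient of $n^{-l}$ equals
\[
\sum_{k=2}^{2l} \frac{(-1)^k}{k(k-1)} \sum_{r_1+\cdots+r_k=2l} \int_\R \frac{q_{r_1}(x)\cdots q_{r_k}(x)}{\phi(x)^{k-1}}\,dx,
\]
which is exactly $c_l$. Because $q_r\equiv 0$ for odd $r$, only even total orders $n^{-l}$ appear, and each integral is finite as $q_{r_1}\cdots q_{r_k}/\phi^{k-1}$ is a polynomial times $\phi$. Truncating the series at $l=\lfloor(m-2)/2\rfloor$ yields the leading expansion; higher orders of this pure part are absorbed into the announced remainder.

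The cross and remainder terms must be controlled in $\co\big((n\log n)^{-(m-2)/2}\log n\big)$. I would split $\R$ into a central region $I_n := \{|x|\le T_n\}$ with $T_n := (\alpha\log n)^{1/2}$ for a suitably small $\alpha>0$, and its complement. On $I_n$, the LLT bound gives $|\rho_n(x)/\phi(x)| \lesssim n^{-(m-2)/2+\alpha/2}\log n$, which tends to zero uniformly, so the series for $\log(1+u_n)$ converges absolutely and each cross term $\int_{I_n}P_n^{k-j}\rho_n^{j}\phi^{-(k-1)}\,dx$ with $j\ge 1$ is of the target order (the series in $k$ is controlled by $|u_n|\le 1/2$ on $I_n$). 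Outside $I_n$, the non-uniform bound of \cref{t.llt-red} gives $f_n(x) = \cO((1+|x|)^{-m})$ while $|\log(f_n/\phi)|\lesssim 1 + x^2$, whence the tail contribution to $D(T_n)$ is majorised by $\int_{|x|>T_n}(1+|x|)^{-m+2}\,dx = \cO(T_n^{-(m-3)})$.

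The principal obstacle is the tail analysis: because $f_n$ only has a polynomially decaying control via \cref{t.llt-red}, whereas $\phi$ decays as a Gaussian, the ratio $f_n/\phi$ may be very large in the tails and $\log(f_n/\phi)$ is not uniformly bounded. The truncation level $T_n\sim\sqrt{\log n}$ balances two competing errors: enlarging $T_n$ reduces the tail contribution but inflates the Edgeworth error inside $I_n$ through the factor $e^{T_n^2/2}$, and this trade-off is precisely the origin of the extra $\log n$ in the claimed remainder. A supplementary step is needed to ensure $f_n>0$ so that $\log f_n$ is finite; on $I_n$ this follows from $f_n=\phi+\rho_n$ with $\rho_n/\phi\to 0$, while in the tails one can invoke the uniform boundedness of $f_n$ together with a comparison against $\phi^q_{m,n}$ to reduce the problem to estimating $\int f_n(|\log f_n|+|\log\phi|)\,dx$ outside $I_n$, where each term is handled as above. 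The bookkeeping of all contributions, in the spirit of \cite{BCG13}, then yields the stated expansion.
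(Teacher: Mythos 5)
Your overall architecture (split at a $\sqrt{\log n}$-threshold, expand $L(1+u)$ in the centre, isolate the pure Edgeworth contributions to read off $c_l$) matches the paper's, but your tail estimate contains a fatal quantitative gap. You bound the tail contribution by
$\int_{|x|>T_n}(1+|x|)^{-m+2}\,dx=\cO\big(T_n^{-(m-3)}\big)$, which with $T_n\sim\sqrt{\log n}$ is only $\cO\big((\log n)^{-(m-3)/2}\big)$ --- polylogarithmically small --- whereas the theorem requires the remainder to be $\co\big(n^{-(m-2)/2}\,(\log n)^{\cdot}\big)$, i.e.\ polynomially small in $n$. The error is that you discarded the $n$-dependence when invoking \cref{t.llt-red}: the correct statement is $f_n=\phi^{q}_{m-1,n}+\cO\big((1+|x|)^{-(m-1)}r_n\big)$ with $r_n=\co\big(n^{-(m-2)/2}\log n\big)$, and both pieces must be integrated against $x^2$ over the tail. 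The remainder piece then contributes $r_n\,T_n^{-(m-4)}$ (acceptable), while the Gaussian piece contributes roughly $T_n^{\cdot}\phi(T_n)\approx n^{-\alpha/2}$ up to logs when $T_n^2=\alpha\log n$. Consequently your prescription ``suitably small $\alpha$'' cannot work: already $\P(|Z|>T_n)\asymp n^{-\alpha/2}$ exceeds the target unless $\alpha\ge m-2$, and the threshold must be taken essentially at the critical value $A_n^2=(m-2)\log n+(m-3)\log\log n+\rho_n$, which simultaneously sits at the edge of where the central estimates (your factor $e^{T_n^2/2}$ in $|v_n|$, and the quadratic term $\int v_n^2\phi$) remain of the target order. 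The paper resolves the tail differently: it uses only the uniform boundedness of $f_n$ for the upper bound and $u\log u\ge u-1$ for the lower bound, reduces the tail to $\int_{|x|>A_n}x^2f_n\,dx$, and controls this via the non-uniform \emph{CLT} bound (\cref{t.clt} with $s=m+1$ applied to $F_n$) and integration by parts --- a step your proposal has no substitute for.

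A secondary but real issue: your opening identity $D(T_n)=\sum_{k\ge2}\frac{(-1)^k}{k(k-1)}\int(f_n-\phi)^k/\phi^{k-1}$ is only formal, since $|(f_n-\phi)/\phi|<1$ fails in the tails; truncation repairs this in spirit, but then the first-order term $\int_{I_n}(f_n-\phi)\,dx$ no longer vanishes and must be traded against the tail (the paper's term $D_1$). Also note that \cref{t.llt-red} with weight $(1+|x|)^m$ and expansion $\phi^{q}_{m,n}$ only yields the rate $\co(n^{-(m-3)/2})$ under $\E|X_1|^{2m}<\infty$; to obtain $\co(n^{-(m-2)/2}\log n)$ one must pass to $\phi^{q}_{m-1,n}$ with weight $(1+|x|)^{m-1}$, which is what the paper does and what your $c_l$-bookkeeping silently assumes.
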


Together with \cref{p.prop}, \cref{t.entropy-general} implies \cref{t.entropy}. Before starting with the proof of \cref{t.entropy-general}, let us record the following observation.

\begin{remark}\label{r.E-T_n-normalized}
	The classical statistic $Z_n$ is normalized in the sense that $\E Z_n=0$ and $\Var Z_n=1$. The $t$-statistic is not normalized (see \cite[p. 72f.]{Hal92edgeworth}). In general, the self-normalized sum is also not normalized (see \cref{r.E-T_n^2}). If $X_1$ is symmetric however, it is normalized because
	\begin{equation*}
	\E T_n
	=\E \Big[\tE \big[ S_n/V_n \big]\Big]
	=\E \Big[V_n^{-1}\sum \tE \big[ X_j \big] \Big]
	=0
	\end{equation*}
	and
	\begin{equation*}
	\E T_n^2
	=\E \Big[\tE \big[ S_n^2/V_n^2 \big]\Big]
	=\E \Big[V_n^{-2}\Big(\sum \tE \big[ X_j^2 \big] + 2\sum_{j=1}^{n-1}\sum_{k=j+1}^{n} \tE \big[ X_jX_k \big]\Big) \Big]
	=1.
	\end{equation*}
\end{remark}

In the following proof, we adapt some ideas from \cite{BCG13}.

\begin{proof}[Proof of \cref{t.entropy-general}]
	By \cref{t.llt}, the densities $f_n$ exist and are uniformly bounded by some constant $M$ for $n\ge N$. Throughout the remainder of the proof assume $n\ge N$. 
	Since $X_1$ is symmetric, $T_n$ is normalized (see \cref{r.E-T_n-normalized}). Therefore, its relative entropy, defined in \labelcref{eq.def-rel-entropy}, includes the standard normal density $\phi_{0,1}=\phi$.
	
	Set $\Delta_{m,n}=(n\log n)^{-(m-2)/2}\,\log n$.
	Let $A_n\ge1$, then we split up the integral from the definition
	\begin{align}\label{eq.D-ints}
	D(T_n)
	= \int_{|x|\le A_n} f_n(x)\log \frac{f_n(x)}{\phi(x)}\dx 
	+ \int_{|x|>A_n} f_n(x)\log \frac{f_n(x)}{\phi(x)}\dx.
	\end{align}
	
	For the second integral, we get the upper bound
	\begin{align*}
	\int_{|x|>A_n} f_n(x)\log \frac{f_n(x)}{\phi(x)}\dx
	&\le\int_{|x|>A_n} f_n(x)\log \frac{M}{\phi(x)}\dx\\
	&=\log \big(M\sqrt{2\pi}\big)\int_{|x|>A_n} f_n(x)\dx+\tfrac12 \int_{|x|>A_n} x^2 f_n(x)\dx\\
	&\leq\Big(\log \big(M\sqrt{2\pi}\big)A_n^{-2}+\tfrac12\Big) \int_{|x|>A_n} x^2 f_n(x)\dx.
	\end{align*}
	By means of the inequality $u\log u\ge u-1$ for $u>0$, we get the lower bound
	\begin{align*}
	\int_{|x|>A_n} f_n(x)\log \frac{f_n(x)}{\phi(x)}\dx
	&=\int_{|x|>A_n} \phi(x) \frac{f_n(x)}{\phi(x)} \log \frac{f_n(x)}{\phi(x)}\dx\\
	&\ge\int_{|x|>A_n} \phi(x) \Big(\frac{f_n(x)}{\phi(x)} -1\Big)\dx\\
	&=\int_{|x|>A_n} \big( f_n(x)-\phi(x) \big) \dx\\
	&\ge - \P\{|Z|>A_n\}.
	\end{align*}
	Therefore,
	\begin{align}\label{eq.entropy>A_n}
	\bigg|\int_{|x|>A_n} f_n(x)\log \frac{f_n(x)}{\phi(x)}\dx\bigg|
	\le C \int_{|x|>A_n} x^2 f_n(x)\dx
	+\P\{|Z|>A_n\},
	\end{align}
	where $C=\log \big(M\sqrt{2\pi}\big)+1/2$. Before further evaluation of this term, we specify $A_n$ with the intent to balance both integrals in \labelcref{eq.D-ints}.
	Set
	\begin{align*}
	A_n=\sqrt{(m-2)\log n + (m-3)\log\log n + \rho_n},
	\end{align*}
	where $\rho_n\to \infty$ sufficiently slow and $\rho_n\le\log n$. $(\rho_n)$ is needed to achieve the order $\co$ and not only $\cO$ in the remainder in \labelcref{eq.t-entropy-general}. It will be defined at a later stage, depending on the remainder in \cref{t.llt-red}. For any $k\ge0$, the bound
	\begin{equation}\label{eq.A_n-exp}
	\begin{split}
	A_n^k \, e^{-A_n^{2}/2}
	&= \big((m-2)\log n + (m-3)\log\log n + \rho_n\big)^{k/2}\\*
	&\quad \cdot n^{-(m-2)/2} (\log n)^{-(m-3)/2} \, e^{-\rho_n/2}\\
	&=\co\big(n^{-(m-2)/2} (\log n)^{-(m-3-k)/2}\big)
	\end{split}
	\end{equation}
	is frequently needed.
	
	Recall that $|T_n|\le\sqrt{n}$ by \cref{r.T_n}. We return to \labelcref{eq.entropy>A_n} and derive by partial integration for Stieltjes integrals (see e.g. \cite[Theorem 21.67 (iv)]{HS-Analysis}) using the symmetry of $T_n$,
	\begin{align}\label{eq.entropy>A_n-expanded}
	\int_{|x|>A_n} x^2 f_n(x)\dx
	&= 2\int_{A_n}^{\sqrt{n}} x^2 \dd F_n(x)\nonumber\\
	&= 2\Big(\sqrt{n}^2 (F_n(\sqrt{n} \, ))-A_n^2 (F_n(A_n))-\int_{A_n}^{\sqrt{n}} F_n(x) 2x \dx \Big)\\
	&= 2A_n^2 \big(1-F_n(A_n)\big)+4\int_{A_n}^{\infty} x \big( 1-F_n(x)\big)\dx\nonumber 
	\end{align}
	
	By \cref{t.clt} (with $s=m+1$),
	\begin{equation*}
	F_n(x) = \Phi^{Q}_{m+1,n}(x) + (1+|x|)^{-(m+1)} R_n(x)
	\end{equation*}
	for all $x\in\R$ and $n$ sufficiently large. Here, $\sup_{x} |R_n(x)|=\co\big( n^{-(m-1)/2} \, (\log n)^{(3m+3)/2} \big)$.
	
	Therefore, we can replace $A_n^2\big(1-F_n(A_n)\big)$ with
	\begin{equation*}
	A_n^2\big(1-\Phi^{Q}_{m+1,n}(A_n)\big)
	\end{equation*}
	with an error of magnitude
	\begin{align*}
	A_n^2(1+A_n)^{-(m+1)}R_n(A_n)
	=\co\big( n^{-(m-1)/2} \, (\log n)^{(2m+4)/2} \big).
	\end{align*}
	Recall the definition
	\begin{align*}
	\Phi^{Q}_{m+1,n}(x)=\Phi(x)+\sum_{r=1}^{m-1}Q_{r}(x)n^{-r/2}
	\end{align*}
	from \labelcref{eq.Phi^Q-def}. For uneven $r$, $Q_r$ vanishes and for even $r$, $Q_{r}$ has the useful form of $\phi$ multiplied with a polynomial of degree $2r-1$. The coefficients of $Q_{r}$ are functions of the moments $\mu_3,\dots,\mu_{r+2}$ and can thus be bounded by $c(r)$. Now by \labelcref{eq.A_n-exp},
	\begin{align*}
	\Big|A_n^2\sum_{r=1}^{m-1}Q_{r}(A_n)n^{-r/2}\Big|
	\le A_n^2 \, c(m) \, A_n^{2m-3} \phi(A_n) n^{-1/2}
	=\co\big(n^{-(m-1)/2} (\log n)^{(m+2)/2}\big).
	\end{align*}
	For the last part, we use $1-\Phi(x)\le \phi(x)/x$ (for $x>0$) and \labelcref{eq.A_n-exp} to achieve
	\begin{align*}
	A_n^2\big(1-\Phi(A_n)\big)
	\le A_n \phi(A_n)
	=\co(\Delta_{m,n})
	\end{align*}
	and therefore
	\begin{align*}
	A_n^2\big(1-F_n(A_n)\big)
	=\co(\Delta_{m,n}).
	\end{align*}
	
	Similarly, in the second part of \labelcref{eq.entropy>A_n-expanded}, we can replace $\int_{A_n}^{\infty} x \big( 1-F_n(x)\big)\dx$ with
	\begin{align*}
	\int_{A_n}^{\infty} x \big( 1-\Phi^{Q}_{m+1,n}(x)\big)\dx
	\end{align*}
	with an error of at most
	\begin{align*}
	\int_{A_n}^{\infty} x (1+|x|)^{-(m+1)} R_n(x)\dx
	=\co\big( n^{-(m-1)/2} \, (\log n)^{(2m+4)/2} \big).
	\end{align*}
	As above, using $\alpha=\frac{1}{2(2m-2)}$, \cref{l.int-t-exp} (with $\beta=1$, $\nu_n=n^\alpha$) and \labelcref{eq.A_n-exp},
	\begin{align}\label{eq.int-Q}
	\lefteqn{\Big|\int_{A_n}^{\infty} x\sum_{r=1}^{m-1}Q_{r}(x)n^{-r/2} \dx\Big|}\quad\nonumber\\*
	&\leq c(m) n^{-1/2} \Big(\int_{A_n}^{n^\alpha} x^{2m-2} \phi(x) \dx + \int_{n^\alpha}^{\infty} x^{2m-2} \phi(x) \dx \Big)\nonumber\\
	&\le c(m) \int_{A_n}^{n^\alpha} \phi(x) \dx + c(m) \exp\big(-\tfrac14 n^{2\alpha}\big)\\
	&\le c(m) \phi(A_n) + c(m) \exp\big(-\tfrac14 n^{2\alpha}\big)\nonumber\\
	&=\co\big(n^{-(m-2)/2} (\log n)^{-(m-3)/2}\big)\nonumber
	\end{align}
	and by \labelcref{eq.A_n-exp},
	\begin{align*}
	\int_{A_n}^{\infty} x \big( 1-\Phi(x)\big)\dx
	&\le  \int_{A_n}^{\infty} \phi(x) \dx\le \phi(A_n)
	=\co\big(n^{-(m-2)/2} (\log n)^{-(m-3)/2}\big)
	\end{align*}
	and therefore
	\begin{align*}
	\int_{A_n}^{\infty} x \big( 1-F_n(x)\big)\dx
	=\co\big(n^{-(m-2)/2} (\log n)^{-(m-3)/2}\big).
	\end{align*}
	
	Hence by \labelcref{eq.entropy>A_n-expanded},
	\begin{align}\label{eq.int-x^2f_n}
	\int_{|x|>A_n} x^2 f_n(x)\dx
	=\co(\Delta_{m,n})
	\end{align}
	and by inserting
	\begin{align}\label{eq.P-Z>A_n}
	\P\{|Z|>A_n\}
	=2\big(1-\Phi(A_n)\big)
	\le 2 \phi(A_n)
	=\co\big(n^{-(m-2)/2} (\log n)^{-(m-3)/2}\big)
	\end{align}
	into \labelcref{eq.entropy>A_n}, we obtain 
	\begin{align*}
	\bigg|\int_{|x|>A_n} f_n(x)\log \frac{f_n(x)}{\phi(x)}\dx\bigg|
	=\co(\Delta_{m,n}).
	\end{align*}
	
	In order to evaluate $D(T_n)$, we thus only have to examine (with $L(u):=u\log(u)$)
	\begin{equation}\label{eq.entropy<A_n}
	\begin{split}
	\int_{|x|\le A_n} f_n(x)\log \frac{f_n(x)}{\phi(x)}\dx
	&= \int_{|x|\le A_n} L\Big(\frac{f_n(x)}{\phi(x)}\Big)\phi(x) \dx\\
	&= \int_{|x|\le A_n} L\big(1+u_m(x)+v_n(x)\big)\phi(x) \dx,
	\end{split}
	\end{equation}
	where
	\begin{align*}
	u_m(x)=\frac{\phi^{q}_{m-1,n}(x)-\phi(x)}{\phi(x)}
	\qquad \text{and}\qquad
	v_n(x)=\frac{f_n(x)-\phi^{q}_{m-1,n}(x)}{\phi(x)}.
	\end{align*}
	By \cref{t.llt-red},
	\begin{equation}\label{eq.entropy-llt}
	\begin{split}
	\big| f_n(x) - \phi^{q}_{m-1,n}(x)\big| 
	&\le (1+|x|)^{-(m-1)} R_n(x) \\
	&\le (1+|x|)^{-(m-1)} n^{-(m-2)/2} \log(n) r_n
	\end{split}
	\end{equation}
	for all $x\in\R$ and $n$ sufficiently large. Here, $r_n=n^{(m-2)/2} (\log n)^{-1} \, \sup_{x} |R_n(x)| =\co(1)$.
	
	There exists $C_m>0$, such that $y\mapsto (1+y)^{m-1}\phi(y)$ is decreasing for $y\ge C_m$ and thus its inverse is increasing. Additionally, for $|x|\le C_m$, we bound
	\[
	(1+|x|)^{-(m-1)}\phi(x)^{-1}\le c(m).
	\]
	Therefore, we estimate
	\[
	(1+|x|)^{-(m-1)}\phi(x)^{-1}\le (1+A_n)^{-(m-1)}\phi(A_n)^{-1}
	\]
	for all $|x|\le A_n$ and for $n$ sufficiently large. Hence,
	\begin{align}\label{eq.vn-bound}
	|v_n(x)|
	&\le (1+|x|)^{-(m-1)} \phi(x)^{-1} n^{-(m-2)/2} \log(n) r_n\nonumber\\
	&\le c(m) A_n^{-(m-1)} e^{A_n^2/2} \, n^{-(m-2)/2} \log(n) r_n\nonumber\\
	&= c(m) \big((m-2)\log n + (m-3)\log\log n + \rho_n\big)^{-(m-1)/2} \big(\log n\big)^{(m-3)/2} \\
	&\quad \cdot e^{\rho_n/2} \log(n) r_n\nonumber\\
	&\le c(m) e^{\rho_n/2} r_n.\nonumber
	\end{align}
	Next, we choose the sequence $(\rho_n)$ in a way that the last expression goes to 0. Thus, ${|v_n(x)|<1/4}$ holds for $|x|\le A_n$ and $n$ sufficiently large.
	
	Recall $\phi^{q}_{m-1,n}(x)=\phi(x)+\sum_{r=1}^{m-3}q_{r}(x)n^{-r/2}$ from \labelcref{eq.phi^q-def}. For even $r$, $q_{r}$ factorizes as $\phi$ multiplied with an even polynomial of degree $2r$. The coefficients of $q_{r}$ are functions of the moments $\mu_3,\dots,\mu_{r+2}$ and can thus be bounded by $c(r)$. As above, $q_{r}=0$ for uneven $r\in\N$. So
	\begin{equation}\label{eq.u_m}
	\begin{split}
	|u_m(x)|
	=\frac{\big|\phi^{q}_{m-1,n}(x)-\phi(x)\big|}{\phi(x)}
	\le c(m)\big(1+|x|^{2m-6}\big)n^{-1/2}
	\le c(m)A_n^{2m-6} n^{-1/2},
	\end{split}
	\end{equation}
	where the last inequality only holds for $|x|\le A_n$. In particular $|u_m(x)|<1/4$ holds for $|x|\le A_n$ and $n$ sufficiently large.
	Next, by Lemma \ref{Taylor_L},
	\begin{equation*}
	L(1+u+v)=L(1+u)+v+\theta_1 u v+\theta_2 v^2
	\end{equation*}
	for $|u|\le 1/4, |v|\le1/4$ and $|\theta_j|\le2$ depending on $u$ and $v$.
		Inserting $u=u_m(x)$ and $v=v_n(x)$, shows how to eliminate $v_n(x)$ from \labelcref{eq.entropy<A_n}. This produces an error not exceeding
	\[
	|D_1|+2D_2+2D_3,
	\]
	where
	\begin{align*}
	D_1
	&=\int_{|x|\le A_n}\big(f_n(x)-\phi^{q}_{m-1,n}(x)\big)\dx,\\
	D_2
	&=\int_{|x|\le A_n}\big|u_m(x)\big|\big|f_n(x)-\phi^{q}_{m-1,n}(x)\big|\dx \\
	\intertext{and}
	D_3
	&=\int_{|x|\le A_n}\frac{\big(f_n(x)-\phi^{q}_{m-1,n}(x)\big)^2}{\phi(x)}\dx.
	\end{align*}
	
	For even $r\in\N$, the function $q_{r}$ factorizes as $\phi$ multiplied with a sum of Hermite polynomials $H_k$ for $k>0$ (see \labelcref{eq.tp,eq.E-tp-q,eq.q-2}). Now
	\begin{align*}
	\int_\R \phi(x)H_k(x)\dx=\int_\R \phi(x)H_k(x)H_0(x)\dx=0
	\end{align*}
	for all $k>0$ yields
	\begin{align}\label{eq.int-q=0}
	\int_\R q_{r}(x)\dx=0
	\end{align}
	for all even $r=1,\dots,m-3$. As $q_{r}=0$ for uneven $r\in\N$,
	\begin{align*}
	\int_\R\ \big(f_n(x)-\phi^{q}_{m-1,n}(x)\big)\dx=0.
	\end{align*}
	Thus,
	\begin{equation*}
	|D_1|
	=\Big|\int_{|x|>A_n}\big(f_n(x)-\phi^{q}_{m-1,n}(x)\big)\dx\Big|
	\le \int_{|x|>A_n}f_n(x)\dx + \int_{|x|>A_n}\big|\phi^{q}_{m-1,n}(x)\big|\dx,
	\end{equation*}
	where the first integral is of order $\co(\Delta_{m,n})$ by \labelcref{eq.int-x^2f_n}. Using \labelcref{eq.u_m} and \labelcref{eq.P-Z>A_n}, the second integral is bounded by
	\begin{equation}\label{eq.int1}
	\begin{split}
	&\int_{|x|>A_n}\big|\phi^{q}_{m-1,n}(x)-\phi(x)\big|\dx +\int_{|x|>A_n}\phi(x)\dx\\
	&\quad \le c(m)\int_{|x|>A_n}|x|^{2m-6} n^{-1/2} \phi(x)\dx +\P\{|Z|>A_n\}\\
	&\quad =\co\big(n^{-(m-2)/2} (\log n)^{-(m-3)/2}\big).
	\end{split}
	\end{equation}
	For $m>3$, we have used \labelcref{eq.int-Q} with $\alpha=\frac{1}{2(2m-6)}$ and for $m=3$ the order is true because in that case the order of the first term is smaller than the second one.
	Therefore, ${|D_1|=\co(\Delta_{m,n})}$. By \labelcref{eq.u_m} and \labelcref{eq.entropy-llt},
	\begin{align*}
	D_2
	&\le c(m) \int_{|x|\le A_n} A_n^{2m-6} n^{-1/2} (1+|x|)^{-(m-1)} n^{-(m-2)/2} \log(n) r_n \dx\\
	&\le c(m) A_n^{2m-5} n^{-(m-1)/2} \log(n) r_n\\
	&=\co(n^{-(m-1)/2} (\log n)^{(2m-3)/2}).
	\end{align*}
	Similarly as in \labelcref{eq.vn-bound}, we obtain for all $|x|\le A_n$
	\begin{align*}
	D_3
	&\le \int_{|x|\le A_n} (1+|x|)^{-2(m-1)} \phi(x)^{-1} n^{-(m-2)} (\log n)^2 r_n^2 \dx\\
	&\le c(m) A_n^{-(2m-3)} \phi(A_n)^{-1} n^{-(m-2)} (\log n)^2 r_n^2 \\
	&= c(m) \big((m-2)\log n + (m-3)\log\log n + \rho_n\big)^{-(2m-3)/2} \big(\log n\big)^{(m-3)/2} \, e^{\rho_n/2} \\
	&\quad \cdot n^{-(m-2)/2} (\log n)^2 r_n^2 \\
	&\le c(m) (\log n)^{-(m-4)/2} n^{-(m-2)/2} r_n\\
	&=\co(\Delta_{m,n}).
	\end{align*}
	Thus, eliminating $v_n(x)$ from \labelcref{eq.entropy<A_n} leads to
	\begin{equation*}
	D(T_n)= \int_{|x|\le A_n} L\big(1+u_m(x)\big)\phi(x) \dx + \co(\Delta_{m,n}).
	\end{equation*}
	
	For $m=3$, $u_m(x)=0$ and \labelcref{eq.t-entropy-general} holds. Hence, assume $m\ge4$ from now on. By Taylor expansion around $u=0$,
	\begin{align*}
	L(1+u) 
	&= u + \sum_{k=2}^{m-2} \frac{(-1)^k}{k(k-1)}u^k+\theta u^{m-1}
	\end{align*}
	for some $\theta \ge 0$, depending on $u$ and $m$ that can be bounded by $c(m)$ for all $|u|\le1/4$.
	%
	%
	Inserting $u_m$ yields
	\begin{align*}
	\lefteqn{\int_{|x|\le A_n} L\big(1+u_m(x)\big)\phi(x) \dx}\quad\\
	&\le\int_{|x|\le A_n} \phi^{q}_{m-1,n}(x)-\phi(x) \dx 
	+ \sum_{k=2}^{m-2} \frac{(-1)^k}{k(k-1)} \int_{|x|\le A_n} u_m(x)^k\phi(x) \dx \\
	&\quad+ c(m) \int_{|x|\le A_n} \big|u_m(x)\big|^{m-1} \phi(x) \dx\\
	&=:E_1+E_2+E_3
	\end{align*}
	for $n$ large enough. By \labelcref{eq.int-q=0} and \labelcref{eq.int1},
	\begin{align*}
	\lefteqn{|E_1|=\bigg| \int_{|x|\le A_n} \phi^{q}_{m-1,n}(x)-\phi(x) \dx \bigg|
	= \bigg| \int_{|x|> A_n} \phi^{q}_{m-1,n}(x)-\phi(x) \dx \bigg|}\quad\\*
	&\le \int_{|x|> A_n} \big|\phi^{q}_{m-1,n}(x)-\phi(x)\big| \dx
	=\co\big(n^{-(m-2)/2} (\log n)^{-(m-3)/2}\big)
	\end{align*}
	and by \labelcref{eq.u_m}
	\begin{align*}
	E_3
	&\le \int_\R \big|u_m(x)\big|^{m-1} \phi(x) \dx\\
	&\le c(m) n^{-(m-1)/2}\int_\R \big(1+|x|^{2m-6}\big)^{m-1} \phi(x) \dx=\cO\big(n^{-(m-1)/2}\big).
	\end{align*}
	By \labelcref{eq.u_m} and \labelcref{eq.int-Q} with $\alpha=\frac{1}{2(2m-6)}$, the integral in $E_2$ can be extended to the whole real line at the expense of an error not exceeding
	\begin{align*}
	\lefteqn{\sum_{k=2}^{m-2} \frac{(-1)^k}{k(k-1)} \int_{|x|> A_n} u_m(x)^k\phi(x) \dx}\quad\\
	&\le c(m) \sum_{k=2}^{m-2} \frac{(-1)^k}{k(k-1)} n^{-k/2} \int_{|x|> A_n} \big(1+|x|^{2m-6}\big)^k \phi(x) \dx\\
	&=\co\big(n^{-(m-1)/2} (\log n)^{-(m-3)/2}\big).
	\end{align*}
	In summary,
	\begin{align*}
	D(T_n)
	&= \int_{|x|\le A_n} L\big(1+u_m(x)\big)\phi(x) \dx + \co(\Delta_{m,n})\\*
	&= \sum_{k=2}^{m-2} \frac{(-1)^k}{k(k-1)} \int_{\R} \frac{\big(\phi^{q}_{m-1,n}(x)-\phi(x)\big)^k}{\phi(x)^{k-1}} \dx + \co(\Delta_{m,n}).
	\end{align*}
	
	Recalling $\phi^{q}_{m-1,n}(x)-\phi(x)=\sum_{r=1}^{m-3}q_{r}(x)n^{-r/2}$, we get
	\begin{align*}
	\big(\phi^{q}_{m-1,n}(x)-\phi(x)\big)^k
	=\sum_{l=1}^{k(m-3)} n^{-l/2} \sum q_{r_1}(x)\dots q_{r_k}(x)
	\end{align*}
	where the inner sum runs over all positive integers $r_1,\dots,r_k\le m-3$ with $r_1+\dots+r_k=l$. So
	\begin{align*}
	D(T_n)= \sum_{k=2}^{m-2} \frac{(-1)^k}{k(k-1)} \sum_{l=1}^{k(m-3)} n^{-l/2} \sum \int_{\R} \frac{q_{r_1}(x)\dots q_{r_k}(x)}{\phi(x)^{k-1}} \dx + \co(\Delta_{m,n}).
	\end{align*}
	Here, all summands with uneven $l=r_1+\dots+r_k$ vanish as $q_{r}=0$ for uneven $r$.
	Additionally, all summands with $l>m-2$ will be absorbed by the remainder $\co(\Delta_{m,n})$ which yields
	\begin{equation}\label{eq.D(Tn)-finish}
	\begin{split}
	D(T_n)
	&= \sum_{k=2}^{m-2} \frac{(-1)^k}{k(k-1)} \sum_{l=1,~l \text{ even}}^{m-2} n^{-l/2} \sum \int_{\R} \frac{q_{r_1}(x)\dots q_{r_k}(x)}{\phi(x)^{k-1}} \dx + \co(\Delta_{m,n})\\*
	&= \sum_{k=2}^{m-2} \frac{(-1)^k}{k(k-1)} \sum_{l=1}^{\lfloor (m-2)/2 \rfloor} n^{-l} \sum \int_{\R} \frac{q_{r_1}(x)\dots q_{r_k}(x)}{\phi(x)^{k-1}} \dx + \co(\Delta_{m,n})
	\end{split}
	\end{equation}
	where the inner sum runs over all positive integers $r_1,\dots,r_k\le m-3$ such that $r_1+\dots+r_k=l$ in the first line and $r_1+\dots+r_k=2l$ in the second line. Define
	\begin{align*}
	c_l
	&=\sum_{k=2}^{m-2} \frac{(-1)^k}{k(k-1)} \sum \int_{\R} \frac{q_{r_1}(x)\dots q_{r_k}(x)}{\phi(x)^{k-1}} \dx\\*
	&=\sum_{k=2}^{2l} \frac{(-1)^k}{k(k-1)} \sum \int_{\R} \frac{q_{r_1}(x)\dots q_{r_k}(x)}{\phi(x)^{k-1}} \dx.
	\end{align*}
	Here, the inner sum in the first line runs over all positive integers $r_1,\dots,r_k\le m-3$ such that $r_1+\dots+r_k=2l$ and the the inner sum in the second line runs over all positive integers $r_1,\dots,r_k$ such that $r_1+\dots+r_k=2l$. The fact $2\le k\le 2l\le m-2$ implies the second identity.
	Note that
	\begin{equation*}
	c_1
	= \frac{(-1)^2}{2} \sum_{r_1+r_2=2} \int_{\R} \frac{q_{r_1}(x) q_{r_2}(x)}{\phi(x)} \dx
	= \frac{1}{2} \int_{\R} \frac{q_1(x) q_1(x)}{\phi(x)} \dx
	=0
	\end{equation*}
	and thus, \cref{t.entropy-general} follows from \labelcref{eq.D(Tn)-finish}.
\end{proof}

\section{Rate of convergence for Edgeworth expansions in the central limit theorem in total variation distance for self-normalized sums}\label{ch.TV}

The main goal of this section is to prove \cref{t.TV} by examining the total variation distance between $F_n$ and $\Phi^{Q}_{m,n}$. By \labelcref{eq.TV-L1}, this is equivalent to bounding the $L^1$ norm of the difference of the corresponding densities.

\begin{theorem}\label{t.dichte-L1}
	Assume that $X_1$ is symmetric, the distribution of $X_1$ is non-singular, \cref{c.density-cf} is satisfied and $\E|X_1|^{2m}<\infty$ for some $m\in\N$, $m\ge3$. Then there exists $N\in\N$ such that for all $n\ge N$, the statistics $T_{n}$ have densities $f_{n}$ that satisfy
	\begin{equation}\label{eq.t-dichte-L1}
	\|f_n-\phi^{q}_{m,n}\|_{L^1}= \co\big(n^{-(m-2)/2}\big).
	\end{equation}
\end{theorem}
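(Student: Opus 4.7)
The plan is to combine the non-uniform LLT bound from \cref{t.llt-red} in a tail region with a Plancherel argument on a bulk region, eliminating the logarithmic factor present in the trivial bound \eqref{eq.fn-phi-Lp}.

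Set $g_n := f_n - \phi^q_{m,n}$ and choose $A_n := n^{1/(2(m-1)) + \epsilon}$ for some small $\epsilon > 0$. Split $\|g_n\|_{L^1} = \int_{|x| \leq A_n} |g_n|\,dx + \int_{|x| > A_n} |g_n|\,dx$. The first statement of \cref{t.llt-red}, which applies under our hypotheses, yields $\sup_x (1+|x|)^m|g_n(x)| = \co(n^{-(m-3)/2})$, so the tail is bounded by $\co(n^{-(m-3)/2} A_n^{-(m-1)}) = \co(n^{-(m-2)/2 - \epsilon(m-1)})$. For the bulk, Cauchy--Schwarz and Plancherel give
\[
\int_{|x| \leq A_n} |g_n|\,dx \;\leq\; \sqrt{2A_n}\,\|g_n\|_{L^2(\R)} \;=\; \sqrt{A_n/\pi}\,\|\hat g_n\|_{L^2(\R)},
\]
with $\hat g_n := \varphi_{T_n} - \varphi_{\Phi^Q_{m,n}}$. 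Since $\sqrt{A_n} = n^{1/(4(m-1)) + \epsilon/2}$, it then suffices to show $\|\hat g_n\|_{L^2(\R)}^2 = \co(n^{-(m-1)} (\log n)^K)$ for some fixed $K$.

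To bound $\|\hat g_n\|_{L^2}$, condition on $\cF_n$ and decompose
\[
|\hat g_n(t)| \;\leq\; \E\big|\tphi_{T_n}(t) - \tphi_{\Phi^{\tP}_{m,n}}(t)\big| \;+\; \big|\E[\tphi_{\Phi^{\tP}_{m,n}}(t)] - \varphi_{\Phi^Q_{m,n}}(t)\big|.
\]
By the Fourier-side translation of \cref{p.E-tP-Q} with $s = 2m$, combined with the Gaussian decay in $t$ inherited from \labelcref{eq.ddtl-tphi}, the second term contributes $\co(n^{-(m-1)})$ to $\|\hat g_n\|_{L^2}^2$. For the first term, partition $\R$ into $\{|t|<B_n\}$, $\{B_n \leq |t| < n^{(m+1)/2}\}$, and $\{|t| \geq n^{(m+1)/2}\}$, using $\tL_{m+1,n}^{-1} \leq n^{(m+1)/2}$ as in \cref{s.distr.main}. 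On $\{|t|<B_n\}$, \cref{p.lemma4} yields a pointwise bound whose square integrates to $c(m)\,\tL_{m+1,n}^2$; by \cref{l.lemma2} we have $\tL_{m+1,n}^2 \leq \tL_{2m,n}$, and a direct truncation of the event $\{V_n^2 \leq n/2\}$ via \labelcref{eq.Vn-bound-1/2} gives $\E\,\tL_{2m,n} = \cO(n^{-(m-1)})$. On the middle interval, the elementary inequality $|\tphi_{T_n}|^2 \leq |\tphi_{T_n}|$ reduces the required $L^2$ integral to an $L^1$ integral that is controlled by \cref{p.E.I} with $s = 2m$ and $r = 0$ (after the truncations $\{M_n \leq \zeta\}$ and $\{M_n \geq \tau_n\}$ of \cref{s.distr.main}), giving $\co(n^{-(m-1)} (\log n)^{(2m+1)/2})$. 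On $\{|t| \geq n^{(m+1)/2}\}$, the integral representation \labelcref{eq.fn-int-gn} combined with the smoothness of $g_n$ supplied by the Edgeworth expansion \labelcref{eq.BR-19.2} shows that $f_n$ is $C^m$ with derivatives bounded uniformly in $n$, whence $|\varphi_{T_n}(t)| = \cO(|t|^{-m})$ and the contribution is of arbitrarily small polynomial order.

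The main obstacle is the middle interval, where \cref{p.E.I} only delivers an $L^1$ bound carrying logarithmic factors, and where both the lower endpoint $B_n$ and the upper endpoint $\tL_{m+1,n}^{-1}$ are random. The bound $|\tphi_{T_n}|^2 \leq |\tphi_{T_n}|$ converts the required $L^2$ estimate into an $L^1$ one at no cost, and the polynomial margin of $n^{-1/2}$ between the rate $n^{-(m-1)}$ furnished by \cref{p.E.I} at $s = 2m$ and the target $n^{-(m-2)}/A_n$ absorbs the logarithmic factors, producing the desired $\co(n^{-(m-2)/2})$ bound.
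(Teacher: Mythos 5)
Your strategy (bulk/tail splitting plus Cauchy--Schwarz and Plancherel on the bulk) is genuinely different from the paper's route, which smooths $f_n$ by an independent $\cN(0,\beta_n)$ perturbation and then deconvolves via the $L^1$ modulus of continuity (\cref{p.dichte-Lp,p.L1-modulus}), killing the logarithm by taking the non-integer moment $s=m+1/2$. Your tail estimate, the arithmetic balancing $A_n$ against the target rate, and the treatment of the regions $\{|t|<B_n\}$ and $\{B_n\le|t|<n^{(m+1)/2}\}$ (via \cref{p.lemma4}, \cref{l.lemma2} with $\tL_{m+1,n}^2\le\tL_{2m,n}$, and \cref{p.E.I} with $s=2m$, $r=0$) are all sound, modulo the routine bookkeeping with random integration endpoints; note only that you should invoke \cref{t.llt} rather than \cref{t.llt-red}, since the latter assumes a bounded density which is not among the hypotheses here.

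However, there is a genuine gap in the high-frequency region $\{|t|\ge n^{(m+1)/2}\}$. You claim that \labelcref{eq.fn-int-gn} together with \labelcref{eq.BR-19.2} shows $f_n\in C^m$ with uniformly bounded derivatives, whence $|\varphi_{T_n}(t)|=\cO(|t|^{-m})$. But \labelcref{eq.BR-19.2} is a sup-norm approximation: its remainder $R_n$ is controlled only in absolute value, with no information whatsoever about derivatives of $g_n$, so no smoothness of $f_n$ can be extracted from it. (This is visible in the paper's own use of \labelcref{eq.BR-19.2}: in \cref{t.llt} and \cref{p.L1-modulus} the remainder term $K_3$ is precisely the piece that admits no H\"older bound in $h$ and has to be controlled in $L^1$ directly.) Nor does \cref{c.density-cf} supply integrability of $\|t\|^k|\varphi_{(X_1,X_1^2)}(t)|^n$, which is what one would need to differentiate $g_n$ under the inversion integral. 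Without some decay of $|\varphi_{T_n}(t)|$ beyond every polynomial threshold, $\|\hat g_n\|_{L^2}$ cannot be controlled: the conditional characteristic function $\prod\cos(tV_n^{-1}|X_j|)$ is almost periodic and does not decay at all, and the unconditional $\varphi_{T_n}=\E[\tphi_{T_n}]$ is only known to be square integrable, not quantitatively small at high frequencies. This is exactly the obstruction that motivates the paper's Gaussian perturbation $T_n'=T_n+\epsilon_n$ (whose characteristic function carries the factor $e^{-\beta_nt^2/2}$) and the subsequent deconvolution step; your argument would need an independent proof of high-frequency decay of $\varphi_{T_n}$ to close.
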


Together with \cref{p.prop} and \labelcref{eq.TV-L1}, \cref{t.dichte-L1} implies \cref{t.TV}. On our route to proving \cref{t.dichte-L1}, we first show the following proposition. For any $h\in\R$, let $f(\lbullet+h)$ denote the function $x\mapsto f(x+h)$.

\begin{proposition}\label{p.dichte-Lp}
	Assume that $X_1$ is symmetric, the distribution of $X_1$ is non-singular and $\E|X_1|^{s}<\infty$ for some ${s}\ge2$. Additionally, assume that \cref{c.density-ex} is satisfied and there exists $p\in[1,\infty)$ and $\alpha\in\R$ with $0<\alpha\le c(s)$ such that $\|f_n\|_{L^p}\le \exp(n^\alpha)$ for all $n$ sufficiently large. Let $m=\lfloor {s} \rfloor$ and $a_n=\co(1)$ with polynomial order. Then
	\begin{equation}\label{eq.p-dichte-Lp}
	\|f_n-\phi^{q}_{m,n}\|_{L^p} \le \sup_{|h|\le a_n} \big\|f_n(\lbullet)-f_n(\lbullet+h)\big\|_{L^p} + \co\big( n^{-({s}-2)/2} \, (\log n)^{({s}+2m)/2} \big).
	\end{equation}
\end{proposition}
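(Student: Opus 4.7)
Following the strategy of the proof of \cref{p.dichte-general}, perturb $T_n$ by an independent Gaussian $\epsilon_n\sim\cN(0,\beta_n)$ with $\beta_n$ to be chosen, and denote by $f_n' = f_n\ast\phi_{\beta_n}$ the resulting smoothed density. The triangle inequality gives
\[
\|f_n-\phi^{q}_{m,n}\|_{L^p} \;\le\; \|f_n-f_n'\|_{L^p} + \|f_n'-\phi^{q}_{m,n}\|_{L^p}.
\]
The second summand will be absorbed into the $\co$-remainder of \labelcref{eq.p-dichte-Lp} via \cref{p.dichte-p}, while the first summand will produce the supremum term by Minkowski combined with a Gaussian tail estimate on $\epsilon_n$.

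\textbf{The smoothed term.} Choose $\beta_n$ of polynomial order with $\beta_n=\co(n^{-({s}-2)/2})$ so that \cref{p.dichte-p} applies. The resulting weighted sup bound
\[
(1+|x|)^{m}|f_n'(x)-\phi^{q}_{m,n}(x)| \le R_n,\qquad R_n = \co\!\big(n^{-({s}-2)/2}(\log n)^{({s}+2m)/2}\big),
\]
together with $\|f_n'-\phi^{q}_{m,n}\|_{L^p} \le R_n\,\|(1+|\cdot|)^{-m}\|_{L^p}$ yields the claimed order for this summand. The weight is in $L^p(\R)$ because $m\ge 2$ and $p\ge 1$ force $mp\ge 2>1$.

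\textbf{The convolution error.} Writing $f_n(x)-f_n'(x) = \int\big[f_n(x)-f_n(x-y)\big]\phi_{\beta_n}(y)\dy$ and applying Minkowski's integral inequality yields
\[
\|f_n-f_n'\|_{L^p} \le \int_{\R} \|f_n(\lbullet)-f_n(\lbullet-y)\|_{L^p}\,\phi_{\beta_n}(y)\dy.
\]
Split the integration range at $|y|=a_n$. By translation invariance of the $L^p$-norm, the inner part is bounded by $\sup_{|h|\le a_n}\|f_n(\lbullet)-f_n(\lbullet+h)\|_{L^p}$, which is precisely the supremum on the right-hand side of \labelcref{eq.p-dichte-Lp}. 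For the outer part, use $\|f_n(\lbullet)-f_n(\lbullet-y)\|_{L^p}\le 2\|f_n\|_{L^p}\le 2\exp(n^\alpha)$ together with the Gaussian tail bound $\P(|\epsilon_n|>a_n)\le c\exp(-a_n^2/(2\beta_n))$ to obtain
\[
\int_{|y|>a_n}\|f_n(\lbullet)-f_n(\lbullet-y)\|_{L^p}\,\phi_{\beta_n}(y)\dy \;\le\; c\,\exp\!\big(n^\alpha - a_n^2/(2\beta_n)\big).
\]

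\textbf{Calibration of $\beta_n$ and main obstacle.} The only subtlety is the joint calibration of $\beta_n$: it must be polynomially bounded above (namely $\beta_n=\co(n^{-({s}-2)/2})$) so that \cref{p.dichte-p} applies, while simultaneously small enough from below that $a_n^2/(2\beta_n)\ge 2n^\alpha$ eventually, in order that $\exp(n^\alpha - a_n^2/(2\beta_n))$ decays faster than any polynomial in $n$. Since $a_n^{-1}$, $n^\alpha$ and $n^{({s}-2)/2}$ are all polynomial in $n$ with exponents depending only on $s$ (using the assumption $\alpha\le c(s)$ and the polynomial order of $a_n$), picking $\beta_n=n^{-d}$ for a sufficiently large constant $d=d(s)$ accommodates both constraints. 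Under this choice the outer contribution is $\cO(\exp(-n^\alpha))$, which is negligible compared to the announced $\co$-remainder, and combining the three estimates yields \labelcref{eq.p-dichte-Lp}.
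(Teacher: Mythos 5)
Your proposal is correct and follows essentially the same route as the paper: Gaussian smoothing, control of $\|f_n'-\phi^q_{m,n}\|_{L^p}$ via \cref{p.dichte-p} and integrability of $(1+|x|)^{-mp}$, Minkowski's integral inequality, a split of the convolution integral, and a Gaussian tail bound beating $\exp(n^\alpha)$; the paper merely splits at $b_n=\sqrt{\beta_n}\,n^\alpha\le a_n$ and takes $\beta_n=a_n^2 n^{-\max\{2\alpha,(s-2)/2\}}$, which is the same calibration you describe. One negligible imprecision: the polynomial exponent governing $a_n$ need not depend only on $s$, so your $d$ should be allowed to depend on the decay rate of $(a_n)$ as well, which does not affect the argument.
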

\begin{proof}
	Set $\beta_n= a_n^2\cdot n^{-\max\{2\alpha,(s-2)/2\}}$ and recall $f_n'$ from \labelcref{eq.f_n'-def}. Now, \cref{p.dichte-p} yields
	\begin{equation*}
	\sup_{x\in\R} \, (1+|x|)^{m}|f_n'(x)-\phi^{q}_{m,n}(x)|= r_n
	\end{equation*}
	for $r_n=\co\big( n^{-({s}-2)/2} \, (\log n)^{({s}+2m)/2} \big)$.
	By Minkowski's inequality and \cref{t.llt},
	\begin{align*}
	\|f_n-\phi^{q}_{m,n}\|_{L^p}
	&\le \|f_n-f_n'\|_{L^p} + \|f_n'-\phi^{q}_{m,n}\|_{L^p}\\
	&\le \|f_n-f_n'\|_{L^p} + \|(1+|\lbullet|)^{-m}\|_{L^p} \cdot r_n\\
	&= \|f_n-f_n'\|_{L^p} + \co\big( n^{-({s}-2)/2} \, (\log n)^{({s}+2m)/2} \big).
	\end{align*}
	Next, for $b_n:=\sqrt{\beta_n}\cdot n^\alpha \le a_n$ and by \cite[4.13(1)]{Alt},
	\begin{align*}
	\|f_n-f_n'\|_{L^p}
	&\le \Big\|\int_{-\infty}^{\infty} \big(f_n(\lbullet)-f_n(\lbullet-y)\big) \phi_{\beta_n}(y) \1_{\{|y|\le b_n\}} \dy\Big\|_{L^p}\\
	&\quad+\Big\|\int_{-\infty}^{\infty} \big(f_n(\lbullet)-f_n(\lbullet-y)\big) \phi_{\beta_n}(y) \1_{\{|y|> b_n\}} \dy\Big\|_{L^p}\\
	&\le \big\|\phi_{\beta_n}(\lbullet)\1_{\{|\lbullet|\le b_n\}}\big\|_{L^1} \cdot \sup_{|h|\le b_n} \big\|f_n(\lbullet)-f_n(\lbullet+h)\big\|_{L^p}\\
	&\quad + \big\|\phi_{\beta_n}(\lbullet)\1_{\{|\lbullet|> b_n\}}\big\|_{L^1} \cdot \sup_{|h|> b_n} \big\|f_n(\lbullet)-f_n(\lbullet+h)\big\|_{L^p}.
	\end{align*}
	As $1-\Phi(x)\le \phi(x)/x$, for $n$ sufficiently large, the expression above is bounded by
	\begin{align*}
	\lefteqn{ \sup_{|h|\le b_n} \big\|f_n(\lbullet)-f_n(\lbullet+h)\big\|_{L^p}
	+ 2\big(1-\Phi\big(b_n/\sqrt{\beta_n}\,\big)\big)\cdot 2 \|f_n\|_{L^p}}\quad\\
	&= \sup_{|h|\le b_n} \big\|f_n(\lbullet)-f_n(\lbullet+h)\big\|_{L^p}
	+ 2\big(1-\Phi(n^\alpha)\big)\cdot 2 \|f_n\|_{L^p}\\
	&\le \sup_{|h|\le a_n} \big\|f_n(\lbullet)-f_n(\lbullet+h)\big\|_{L^p} + 4 \,n^{-\alpha} \exp(-n^{2\alpha}/2+n^\alpha) \\
	&= \sup_{|h|\le a_n} \big\|f_n(\lbullet)-f_n(\lbullet+h)\big\|_{L^p} + \co\big( n^{-({s}-2)/2} \, (\log n)^{({s}+2m)/2} \big)
	\end{align*}
	in particular.
\end{proof}

The term
\begin{align*}
\sup_{|h|\le a_n} \|f_n(\lbullet)-f_n(\lbullet+h)\|_{L^p}
\end{align*}
from \labelcref{eq.p-dichte-Lp} is known as the integral (or $L^p$) modulus of continuity. There exists plentiful literature on bounds of this term if the Fourier transform of $f_n$ satisfies various kinds of tail bounds (see e.g. \cite{Cli91,GT12,Tit48}).

Next, we derive a bound on the $L^1$ modulus of continuity of $f_n$.

\begin{proposition}\label{p.L1-modulus}
	Assume that $X_1$ is symmetric, the distribution of $X_1$ is non-singular, \cref{c.density-cf} is satisfied and $\E|X_1|^{2m}<\infty$ for some $m\in\N$, $m\ge3$. Let $a_n=\co\big(n^{-(9m-17)/2}\big)$. Then there exists $N\in\N$ such that for all $n\ge N$, the statistics $T_{n}$ have densities $f_{n}$ that satisfy
	\begin{align}\label{eq.p-L1-modulus}
	\sup_{|h|\le a_n} \|f_n(\lbullet)-f_n(\lbullet+h)\|_{L^1} = \co\big(n^{-(m-2)/2}\big).
	\end{align}
\end{proposition}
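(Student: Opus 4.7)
The plan is to re-use the pointwise machinery built in the proof of \cref{t.llt} (the smooth-function representation \labelcref{eq.fn-int-gn} together with the expansion \labelcref{eq.BR-19.2} of $g_n$), but to extract $L^1$ bounds rather than non-uniform bounds weighted by $(1+|x|)^m$. Under the hypotheses of \cref{p.L1-modulus} there exists $N$ such that for $n\ge N$ the density $f_n$ exists and admits the representation \labelcref{eq.f_n-von-g_n} with $w=m$; in particular $R_n':=\sup_y |R_n(y)|=\co(n^{-(m-2)/2})$ by \labelcref{eq.def-Rn'}. Since $|T_n|\le\sqrt n$ by \cref{r.T_n} and $|h|\le a_n<1$, both $f_n(x)$ and $f_n(x-h)$ vanish for $|x|>2\sqrt n$, so
\[
\|f_n(\lbullet)-f_n(\lbullet+h)\|_{L^1}=\int_{|x|\le 2\sqrt n}|f_n(x)-f_n(x-h)|\dx.
\]

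On $|x|\le 2\sqrt n$, the decomposition \labelcref{eq.f_n-f_n-Z} yields $|f_n(x)-f_n(x-h)|\le K_1(x)+K_2(x)+K_3(x)$. From the estimates \labelcref{eq.k1,eq.k2} established in the proof of \cref{t.llt} with $w=m$ we have the pointwise bound $K_1(x)+K_2(x)\le c(m)|h|n^{4m-8}$. Integrating over $|x|\le 2\sqrt n$ and using $a_n=\co(n^{-(9m-17)/2})$ gives
\[
\int_{|x|\le 2\sqrt n}(K_1+K_2)\dx \le 4\sqrt n\cdot c(m)\,a_n\,n^{4m-8}=\co(n^{-(m-2)/2}),
\]
since $\tfrac12-\tfrac{9m-17}{2}+4m-8=-\tfrac{m-2}{2}$; this is where the precise polynomial rate in the hypothesis on $a_n$ is consumed.

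The main obstacle is $K_3$: integrating the pointwise estimate $(1+|x|)^m K_3\le r_n$ from \labelcref{eq.k3} would cost a factor $\sqrt n$ and only yield $\co(n^{-(m-3)/2})$, which is too weak. Instead we bound $\int K_3\dx$ directly by Fubini. Write
\[
K_3(x)\le R_n'\int_0^\infty\!\Big[(1+\|\gamma_{x,n}(z)\|)^{-m}+(1+\|\gamma_{x-h,n}(z)\|)^{-m}\Big]\big(n^{-1/2}z\big)^{1/2}\dz,
\]
and interchange the order of integration. For the first term, substitute $u=x(n^{-1/2}z)^{1/2}$, which turns $\|\gamma_{x,n}(z)\|^2$ into $u^2+(z-n^{1/2})^2$, so
\[
\int_\R(1+\|\gamma_{x,n}(z)\|)^{-m}\dx=\big(n^{-1/2}z\big)^{-1/2}\!\int_\R\big(1+\sqrt{u^2+(z-n^{1/2})^2}\big)^{-m}\du.
\]
Splitting into the regimes $|z-n^{1/2}|\le 1$ and $|z-n^{1/2}|>1$ (and substituting $u=|z-n^{1/2}|t$ in the latter) gives the bound $\le C(1+|z-n^{1/2}|)^{1-m}(n^{-1/2}z)^{-1/2}$. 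The $x-h$ term is identical by translation invariance in $x$. Since $m\ge 3$, the $z$-integral $\int_0^\infty(1+|z-n^{1/2}|)^{1-m}\dz$ is bounded by $\int_\R(1+|v|)^{1-m}\dv<\infty$, so
\[
\int_\R K_3(x)\dx \le 2CR_n'\int_0^\infty(1+|z-n^{1/2}|)^{1-m}\dz =O(R_n')=\co(n^{-(m-2)/2}).
\]

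Combining the three contributions proves \labelcref{eq.p-L1-modulus}. The role of the Fubini/substitution trick is crucial: it turns the Gaussian-type concentration of $\gamma_{x,n}(\cdot)$ around $n^{1/2}$ in the second coordinate into an $n$-independent bound, eliminating the lossy $\sqrt n$ factor that would arise from a naive use of the pointwise remainder bound.
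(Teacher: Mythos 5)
Your proposal is correct and follows essentially the same route as the paper: the same decomposition $|f_n(x)-f_n(x-h)|\le K_1+K_2+K_3$ from the proof of \cref{t.llt}, the same integration of the Lipschitz bound $K_1+K_2\le c(m)|h|n^{4m-8}$ against the smallness of $a_n$, and the same Fubini step to bound $\int K_3$ by $O(R_n')$ without the lossy $\sqrt n$ factor. The only (harmless) difference is in evaluating the double integral for $K_3$: the paper weakens the exponent to $-3$, computes the inner $x$-integral on $[-2\sqrt n,2\sqrt n]$ in closed form via \cref{l.int-ax^2+b} and splits the $z$-integral into three regions, whereas you extend to $\R$, substitute $u=x(n^{-1/2}z)^{1/2}$, and obtain the integrable bound $(1+|z-n^{1/2}|)^{1-m}$ directly.
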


\begin{proof}
	Analogously to the proof of \cref{t.llt}, \cref{c.density-cf} and $\E |X_1|^{2m}<\infty$ yield that there exists $N\in \N$ such that \labelcref{eq.fn-int-gn,eq.BR-19.2} hold for all $n\ge N$. Throughout the remainder of the proof assume $n\ge N$. Now \labelcref{eq.fn-int-gn} implies that the self-normalized sums $T_{n}$ have densities $f_n$. We expand
	\[
	|f_n(x)-f_n(x-h)|
	\le K_1+K_2+K_3
	\]
	as in \labelcref{eq.f_n-f_n-Z}, where the terms $K_1$, $K_2$ and $K_3$ are introduced. This time, the $2m$-th moment is assumed to be finite. Therefore with $w=m$, \labelcref{eq.k1,eq.k2} yield
	\[
	K_1+K_2\le |h| c(m) n^{4m-8},
	\]
	and thus
	\begin{align*}
	|f_n(x)-f_n(x-h)|
	&\le |h| c(m) n^{4m-8}
	+ K_3\\
	&= |h| c(m) n^{4m-8}\\
	&\quad+ R_n' \int_0^\infty \big(1+\|\gamma_{x,n}(z)\|\big)^{-m} \big(n^{-1/2}z\big)^{1/2}\\
	&\qquad\qquad\quad + \big(1+\|\gamma_{x-h,n}(z)\|\big)^{-m} \big(n^{-1/2}z\big)^{1/2}\dz
	\end{align*}
	for all $|x|<2\sqrt{n}$ and $|h|<1$. Here, $R_n'=\co\big(n^{-(m-2)/2}\big)$ and $\gamma_{x,n}(z)=\big(x(n^{-1/2}z)^{1/2}, \allowbreak z-n^{1/2}\big)$. By \cref{r.T_n},
	\begin{equation}\label{eq.L1-modulus-zerlegung}
	\begin{split}
	\lefteqn{\sup_{|h|\le a_n} \|f_n(\lbullet)-f_n(\lbullet+h)\|_{L^1}}\quad\\
	&\le\sup_{|h|\le a_n} \int_{-2\sqrt{n}}^{2\sqrt{n}} |h| c(m) n^{4m-8}\dx\\
	&\quad + \sup_{|h|\le a_n} \int_{-2\sqrt{n}}^{2\sqrt{n}} R_n' \int_0^\infty \big(1+\|\gamma_{x,n}(z)\|\big)^{-m} \big(n^{-1/2}z\big)^{1/2}\\
	&\qquad\qquad\qquad\qquad\qquad+ \big(1+\|\gamma_{x-h,n}(z)\|\big)^{-m} \big(n^{-1/2}z\big)^{1/2}\dz\dx.
	\end{split}
	\end{equation}
	Regarding the first summand,
	\begin{equation}\label{eq.L1-K3}
	\sup_{|h|\le a_n} \int_{-2\sqrt{n}}^{2\sqrt{n}} |h| c(m) n^{4m-8}\dx
	\le c(m) a_n \, n^{(8m-15)/2}\\
	= \co\big(n^{-(m-2)/2}\big).
	\end{equation}
	For the second summand, it is obviously impossible to attain a bound in $h$. Therefore, we have to find a bound in $n$.
	For $m\ge3$, by Fubini's theorem
	\begin{align}\label{eq.int-Z4}
	\lefteqn{\int_{-2\sqrt{n}}^{2\sqrt{n}} \int_0^\infty \big(1+\|\gamma_{x,n}(z)\|\big)^{-m} \big(n^{-1/2}z\big)^{1/2} \dz \dx}\quad\nonumber\\
	&\le \int_{-2\sqrt{n}}^{2\sqrt{n}} \int_0^\infty \big(1+\|\gamma_{x,n}(z)\|^2\big)^{-3/2} \big(n^{-1/2}z\big)^{1/2} \dz \dx\\
	&= \int_0^\infty \big(n^{-1/2}z\big)^{1/2} \int_{-2\sqrt{n}}^{2\sqrt{n}} \big(1+n^{-1/2}zx^2+(z-n^{1/2})^2\big)^{-3/2} \dx \dz.\nonumber
	\end{align}
	Applying \cref{l.int-ax^2+b} gives
	\begin{equation}\label{eq.inner-int}
	\begin{split}
	\lefteqn{\int_{-2\sqrt{n}}^{2\sqrt{n}} \big(1+n^{-1/2}zx^2+(z-n^{1/2})^2\big)^{-3/2} \dx}\quad\\
	&= 2\big(1+4n^{1/2}z+(z-n^{1/2})^2\big)^{-1/2} (1+(z-n^{1/2})^2)^{-1} 2 n^{1/2}
	\end{split}
	\end{equation}
	such that we have to examine
	\begin{align*}
	\int_0^\infty \big(n^{-1/2}z\big)^{1/2} \big(1+4n^{1/2}z+(z-n^{1/2})^2\big)^{-1/2} (1+(z-n^{1/2})^2)^{-1} n^{1/2} \dz.
	\end{align*}
	We split this integral into three regions that we treat separately. First,
	\begin{align*}
	&\int_0^{\sqrt{n}/2} \big(n^{-1/2}z\big)^{1/2} \big(1+4n^{1/2}z+(z-n^{1/2})^2\big)^{-1/2} (1+(z-n^{1/2})^2)^{-1} n^{1/2} \dz\\
	&\quad\le c(0) \int_0^{\sqrt{n}/2} 1 \cdot n^{-1/2} \, n^{-1} \, n^{1/2} \dz
	\le c(0) n^{-1/2}.
	\end{align*}
	Second, by \cref{l.int-a/2}
	\begin{align*}
	&\int_{\frac{1}{2}\sqrt{n}}^{2\sqrt{n}} \big(n^{-1/2}z\big)^{1/2} \big(1+4n^{1/2}z+(z-n^{1/2})^2\big)^{-1/2} (1+(z-n^{1/2})^2)^{-1} n^{1/2} \dz\\
	&\quad\le c(0) \int_{\frac{1}{2}\sqrt{n}}^{2\sqrt{n}} \sqrt{2} \cdot n^{-1/2} \, (1+(z-n^{1/2})^2)^{-1} \, n^{1/2} \dz\\
	&\quad= c(0) \int_{\frac{1}{2}\sqrt{n}}^{2\sqrt{n}} (1+(z-n^{1/2})^2)^{-1} \dz\le c(0).
	\end{align*}
	Finally,
	\begin{align*}
	&\int_{2\sqrt{n}}^\infty \big(n^{-1/2}z\big)^{1/2} \big(1+4n^{1/2}z+(z-n^{1/2})^2\big)^{-1/2} (1+(z-n^{1/2})^2)^{-1} n^{1/2} \dz\\
	&\quad\le c(0) \int_{2\sqrt{n}}^\infty n^{-1/4} z^{1/2} \big(z-n^{1/2}\big)^{-1} \, \big(z-n^{1/2}\big)^{-2} \, n^{1/2} \dz\\
	&\quad\le c(0) \, n^{1/4} \int_{2\sqrt{n}}^\infty z^{1/2} \big(z-z/2\big)^{-3} \dz\\
	&\quad= c(0) \, n^{1/4} \int_{2\sqrt{n}}^\infty z^{-5/2} \dz= c(0) \, n^{-1/2}.
	\end{align*}
	Combining \labelcref{eq.int-Z4}, \labelcref{eq.inner-int} and the last three formulas yields
	\begin{equation*}
	\int_{-2\sqrt{n}}^{2\sqrt{n}} \int_0^\infty \big(1+\|\gamma_{x,n}(z)\|\big)^{-m} \big(n^{-1/2}z\big)^{1/2} \dz \dx
	\le c(0).
	\end{equation*}
	This remains valid if $x$ is replaced by $x-h$ for any $|h|\le a_n$ and thus
	\begin{align*}
	& \sup_{|h|\le a_n} \int_{-2\sqrt{n}}^{2\sqrt{n}} R_n' \int_0^\infty \big(1+\|\gamma_{x,n}(z)\|\big)^{-m} \big(n^{-1/2}z\big)^{1/2}\\*
	&\qquad\qquad\qquad\qquad+ \big(1+\|\gamma_{x-h,n}(z)\|\big)^{-m} \big(n^{-1/2}z\big)^{1/2}\dz \dx\\
	&\quad\le R_n' \, c(0)
	=\co\big(n^{-(m-2)/2}\big),
	\end{align*}
	which yields \labelcref{eq.p-L1-modulus} when combined with \labelcref{eq.L1-modulus-zerlegung,eq.L1-K3}.
\end{proof}

\begin{proof}[Proof of \cref{t.dichte-L1}]
	\cref{t.llt} implies \cref{c.density-ex}. 
	Due to the form \labelcref{eq.phi^q-def} and the factor $\phi$ in all expansion terms $q_r$, $\|\phi^{q}_{m,n}\|_{L^p}\le c(m)$ for all $p\in[1,\infty)$. Thus by \labelcref{eq.fn-phi-Lp}, $\|f_n\|_{L^p}\le c(m)$ for all $p\in[1,\infty)$ and all $n$ sufficiently large.
	Set ${s}=m+1/2$ and $a_n=\co\big(n^{-(9m-16)/2}\big)$ with polynomial order.
	Now \cref{p.dichte-Lp,p.L1-modulus} imply \labelcref{eq.t-dichte-L1}.
\end{proof}

\begin{remark}
	The bounds in the $L^1$ limit theorem are by $\log n$ tighter than in the LLT. This is because the integral modulus of continuity used in the $L^1$ setting is easier to estimate than the pointwise H\"older-type bound which was the equivalent condition in the LLT setting. As this bound was used in the proof of the entropic CLT, the discrepancy of $\log n$ transfers there. The reason behind the factor $\log n$ is the appearance of different exponents in \cref{l.int-a/2}.
\end{remark}

\begin{appendix}

\section{Proofs, remarks and auxiliary lemmas}\label{app.proofs}

\subsection{Proof of Proposition \ref{p.E-tP-Q}}\label{app.proofs.pre}

\begin{remark}\label{r.V_n-lambda}
	We calculate the expected values of the $\tlambda_{l,n}$ appearing in $\tP_{2,n}$ and $\tP_{4,n}$ (see \labelcref{eq.tP-2+4}). For clear illustration, let all moments be finite in the following procedure. Additionally, we will explicitly write $\mu_2$ instead of its fixed value 1 as it helps to keep track of the order of moments. First, we need the expected values of the conditional cumulants divided by powers of $V_n$ for which we expand
	\begin{equation}\label{eq.V_n^-2}
	\begin{split}
	V_n^{-2}
	&=n^{-1}\Big(\mu_2+n^{-1} \Big(\sum (X_j^2-\mu_2)\Big)\Big)^{-1}\\
	&= n^{-1} \mu_2^{-1}
	- n^{-2} \mu_2^{-2} \Big(\sum (X_j^2-\mu_2)\Big) \\
	&\quad+ n^{-3} \mu_2^{-3} \Big(\sum (X_j^2-\mu_2)\Big)^2 
	+ \cO_p\big(n^{-3}\big)
	\end{split}
	\end{equation}
	therefore
	\begin{align*}
	V_n^{-4}
	&= n^{-2} \mu_2^{-2}
	- n^{-3} 2 \mu_2^{-3} \Big(\sum (X_j^2-\mu_2)\Big) 
	+ n^{-4} 3 \mu_2^{-4} \Big(\sum (X_j^2-\mu_2)\Big)^2
	+ \cO_p\big(n^{-4}\big),\\
	V_n^{-6}
	&= n^{-3} \mu_2^{-3}
	- n^{-4} 3 \mu_2^{-4} \Big(\sum (X_j^2-\mu_2)\Big) 
	+ n^{-5} 6 \mu_2^{-5} \Big(\sum (X_j^2-\mu_2)\Big)^2
	+ \cO_p\big(n^{-5}\big),\\
	V_n^{-8}
	&= n^{-4} \mu_2^{-4}
	- n^{-5} 4 \mu_2^{-5} \Big(\sum (X_j^2-\mu_2)\Big) 
	+ n^{-6} 10 \mu_2^{-6} \Big(\sum (X_j^2-\mu_2)\Big)^2
	+ \cO_p\big(n^{-6}\big).
	\end{align*}
	Due to $\E X_j^2=\mu_2$, the index of every factor $(X_j^2-\mu_2)$ within
	\[
	\Big(\sum (X_j^2-\mu_2)\Big)^{k}\sum|X_j|^{2 l}
	=\sum_{j_1,\dots,j_{k+1}=1}^n \big(X_{j_1}^2-\mu_2\big) \cdots \big( X_{j_k}^2-\mu_2 \big) |X_{j_{k+1}}|^{2 l}
	\]
	has to be equal to the index of another factor or the summand vanishes within the expectation. Thus, for $k\in\N$, the expectation of summands with $\big(\sum (X_j^2-\mu_2)\big)^{2k-1}$ and ${\big(\sum (X_j^2-\mu_2)\big)^{2k}}$ produce the same order of $n$. This yields
	\begin{align*}
	\E\Big[V_n^{-4}\sum|X_j|^4\Big]
	&=n^{-1} \mu_2^{-2} \mu_4
	- n^{-2} \mu_2^{-4} \big(2\mu_2\mu_6 +\mu_2^2\mu_4 - 3 \mu_4^2\big)
	+ \cO\big(n^{-3}\big),\\
	\E\Big[V_n^{-6}\sum|X_j|^6\Big]
	&=n^{-2} \mu_2^{-3} \mu_6
	- n^{-3} \mu_2^{-5} (3\mu_8\mu_2+3\mu_6\mu_2^2-6\mu_4\mu_6)
	+ \cO\big(n^{-4}\big),\\
	\E\Big[V_n^{-8}\Big(\sum|X_j|^4\Big)^2\Big]
	&=n^{-2} \mu_2^{-4} \mu_4^2
	- n^{-3} \mu_2^{-5} (8\mu_6\mu_4-7\mu_4^2\mu_2- \mu_8\mu_2)
	+ \cO\big(n^{-4}\big)
	\end{align*}
	such that by \labelcref{eq.tk-2r,eq.tlambda}
	\begin{align*}
	\E\bigg[\frac{\tlambda_{4,n}}{4!}\bigg]
	&= n^{-1} \big(- \tfrac{1}{12}\big) \mu_2^{-2} \mu_4
	+ n^{-2} \tfrac{1}{12} \mu_2^{-4} \big(2\mu_2\mu_6 +\mu_2^2\mu_4 - 3 \mu_4^2\big)
	+ \cO\big(n^{-3}\big),\\
	\E\bigg[\frac{\tlambda_{6,n}}{6!}\bigg]
	&= n^{-2} \tfrac{1}{45} \mu_2^{-3} \mu_6
	+ n^{-3} \big(- \tfrac{1}{45}\big) \mu_2^{-5} (3\mu_8\mu_2+3\mu_6\mu_2^2-6\mu_4\mu_6)
	+ \cO\big(n^{-4}\big),\\
	\E\bigg[\frac{1}{2} \Big(\frac{\tlambda_{4,n}}{4!}\Big)^2\bigg]
	&=n^{-2} \tfrac{1}{288} \mu_2^{-4} \mu_4^2
	+ n^{-3} \big(- \tfrac{1}{288}\big) \mu_2^{-5} (8\mu_6\mu_4-7\mu_4^2\mu_2- \mu_8\mu_2)
	+ \cO\big(n^{-4}\big).
	\end{align*}
\end{remark}

\begin{proof}[Proof of \cref{p.E-tP-Q}]
	The connection $\E\big[\Phi^{\tP}_{m,n}\big]$ and $\Phi^Q_{m,n}$ (the procedure for $\E\big[\phi^{\tp}_{m,n}\big]$ and $\phi^q_{m,n}$ is the same) is given in \labelcref{eq.Phi^tP}, \labelcref{eq.tP}, \labelcref{eq.Q-def} and \labelcref{eq.Phi^Q-def} and the corresponding computations are conducted in \cref{r.V_n-lambda}. However, we need to address the issue of moments higher than ${s}$ (as they could be infinite) and the factor $\exp(x^2/4)$. For ${s}<4$, there are no expansion terms in \labelcref{eq.E-tP-Q,eq.E-tp-q} and there is nothing to prove. So assume ${s}\ge4$.
	
	As in \cref{r.V_n-lambda}, we first evaluate the expectation of $V_n^{-2\alpha}\sum|X_j|^{2\alpha}$ for $\alpha\in\N$, ${2\le \alpha\le m/2}$. Define ${h}=2\lceil \frac{m-2\alpha+1}{2}\rceil$. Similar to \labelcref{eq.V_n^-2}, we perform a Taylor expansion of $V_n^{-2\alpha}$ up to order ${h}-1$ as follows
	\begin{equation}\label{eq.Exp-tL}
	\begin{split}
	V_n^{-2\alpha}\sum|X_j|^{2\alpha}
	&=\sum_{r=0}^{{h}-1} (-1)^r n^{-(\alpha+r)} t_{r,2\alpha} \Big(\sum (X_j^2-1)\Big)^{r}\Big(\sum|X_j|^{2\alpha}\Big)\\
	&\quad+(-1)^{{h}} n^{-(\alpha+{h})} t_{{h},2\alpha} \xi^{-(\alpha+{h})} \Big(\sum (X_j^2-1)\Big)^{{h}}\Big(\sum|X_j|^{2\alpha}\Big),
	\end{split}
	\end{equation}
	with (random) intermediate value $\xi$ between $1$ and $n^{-1} V_n^2$. Here, $t_{r,2\alpha}$ are the constant factors which appear in this Taylor expansion and are bounded in absolute value by $c(m)$.
	
	First, we will show that the sum of all terms where $X_\cdot$ appears with a power higher than ${s}$ is of order $\co \big(n^{-({s}-2)/2}\big)$. The same is shown for the remainder of the Taylor expansion including the $\xi$.
	
	Note that by \labelcref{eq.Lle1,eq.Mn-delta,eq.Vn-bound-1/2}, we can restrict the expectation of the terms in \labelcref{eq.Exp-tL} to the event
	\begin{equation}\label{eq.event}
	A_{n,1}\cap A_{n,2} \quad \text{where} \quad A_{n,1}:=\{M_n< \delta_n \sqrt{n} \, \} \quad\text{and} \quad A_{n,2}:=\{V_n^2 > \tfrac{n}{2}\}
	\end{equation}
	with an error of order $\co \big(n^{-({s}-2)/2}\big)$. The sequence $(\delta_n)$ is introduced in \cref{r.moment}.
	Therefore,
	\begin{equation}\label{eq.E-expanded}
	\begin{split}
	\lefteqn{\E\Big[ V_n^{-2\alpha}\sum|X_j|^{2\alpha} \Big]}\quad\\
	&=\E\Big[ \1_{A_{n,1}\cap A_{n,2}} V_n^{-2\alpha}\Big(\sum|X_j|^{2\alpha}\Big) \Big] + \co \big(n^{-({s}-2)/2}\big)\\
	&=\E\bigg[ \1_{A_{n,1}\cap A_{n,2}} \sum_{r=0}^{{h}-1} (-1)^r n^{-(\alpha+r)} t_{r,2\alpha} \Big(\sum (X_j^2-1)\Big)^{r}\Big(\sum|X_j|^{2\alpha}\Big)\bigg]\\
	&\quad+\E\bigg[ \1_{A_{n,1}\cap A_{n,2}} (-1)^{{h}} n^{-(\alpha+{h})} t_{{h},2\alpha} \xi^{-(\alpha+{h})} \Big(\sum (X_j^2-1)\Big)^{{h}}\Big(\sum|X_j|^{2\alpha}\Big) \bigg] \\
	&\quad+ \co \big(n^{-({s}-2)/2}\big).
	\end{split}
	\end{equation}
	
	Note that by \labelcref{eq.event}
	\begin{align*}
	\lefteqn{\bigg|\E\bigg[\1_{A_{n,1}\cap A_{n,2}^c} \sum_{r=0}^{{h}-1} (-1)^r n^{-(\alpha+r)} t_{r,2\alpha} \Big(\sum (X_j^2-1)\Big)^{r}\Big(\sum|X_j|^{2\alpha}\Big)\bigg]\bigg|}\quad\\*
	&\le c(h) \sum_{r=0}^{{h}-1} \E\Big[\1_{A_{n,1}\cap A_{n,2}^c} n^{-(\alpha+r)} \big(V_n^2+n\big)^{r} V_n^2 (\delta_n \sqrt{n} \, )^{2\alpha-2}\Big]\\
	&\le c(h) \sum_{r=0}^{{h}-1} \E\Big[\1_{A_{n,2}^c} n^{-(r+1)} \big(\tfrac{n}{2}+n\big)^{r} \tfrac{n}{2}\Big] \\
	&\le c(h) \P\big(A_{n,2}^c\big)\\
	&= \co \big(n^{-({s}-2)/2}\big)
	\end{align*}
	and thus,
	\begin{equation}\label{eq.A_{n,2}-raus}
	\begin{split}
	&\E\bigg[ \1_{A_{n,1}\cap A_{n,2}} \sum_{r=0}^{{h}-1} (-1)^r n^{-(\alpha+r)} t_{r,2\alpha} \Big(\sum (X_j^2-1)\Big)^{r}\Big(\sum|X_j|^{2\alpha}\Big)\bigg]\\
	&= \sum_{r=0}^{{h}-1} (-1)^r n^{-(\alpha+r)} t_{r,2\alpha} \, \E\Big[ \1_{A_{n,1}} \Big(\sum (X_j^2-1)\Big)^{r}\Big(\sum|X_j|^{2\alpha}\Big)\Big]
	+ \co \big(n^{-({s}-2)/2}\big).
	\end{split}
	\end{equation}
	
	Initially, we examine
	\[
	n^{-(\alpha+r)} \, \E\Big[ \1_{A_{n,1}} \Big(\sum (X_j^2-1)\Big)^{r}\Big(\sum|X_j|^{2\alpha}\Big)\Big]
	\]
	for $r=0,\dots,m$.
	
	
	
	We expand
	\[
	\Big(\sum (X_j^2-1)\Big)^{r}
	=\sum_{i_1+\dots+i_{n}=r} \binom{r}{i_1,\dots,i_n} \prod (X_j^2-1)^{i_{j}}.
	\]
	Here the sum extends over all $n$-tuples $(i_1,...,i_n)$ of non-negative integers with $\sum_{j=1}^ni_j=r$ and 
	\[
	{r \choose i_{1},\ldots ,i_{n}}={\frac {r!}{i_{1}!\,i_{2}!\cdots i_{n}!}}.
	\]
	Since $X_1,\dots,X_n$ are i.i.d.,
	\begin{align*}
	\E\bigg[ {r \choose i_{\pi(1)},\ldots ,i_{\pi(n)}}\prod (X_j^2-1)^{i_{\pi(j)}} \bigg]
	=	\E\bigg[ {r \choose i_{1},\ldots ,i_{n}}\prod (X_j^2-1)^{i_{j}}	\bigg]
	\end{align*}
	for all (non-random) permutations $\pi$ from $\{1,\dots,n\}$ to itself.
	
	For $l=1,\dots,r$ let $k_l$ be the number of appearances of $l$ in the tuple $(i_1,...,i_n)$ (so $\sum_{l=1}^{r}lk_l =\sum_{j=1}^ni_j=r$). Then for each tuple $(k_1,\dots,k_r)$, there exist ${n \choose k_{1},\ldots ,k_{r},(n-\sum_{l=1}^{r} k_l)}$ tuples $(i_1,...,i_n)$ such that $k_l$ is the number of appearances of $l$ in the tuple $(i_1,...,i_n)$. So instead of summing over all $n$-tuples $(i_1,...,i_n)$ with $\sum_{j=1}^ni_j=r$, we can sum over all $r$-tuples $(k_1,...,k_r)$ of non-negative integers with $\sum_{l=1}^{r}lk_l =r$ and multiply each summand by ${n \choose k_{1},\ldots ,k_{r},(n-\sum_{l=1}^{r} k_l)}$. We perform the just described summation by using the sum $\sum_{*(k_\cdot,r,\cdot)}$ and $u(k_\cdot)$ which are explained in \cref{ch.pre}. Additionally, we convert
	\[
	{r \choose i_{1},\ldots ,i_{n}}={\frac {r!}{i_{1}!\,i_{2}!\cdots i_{n}!}}=\frac{r!}{1!^{k_1}\cdots r!^{k_r}}.
	\]
	
	Now for any $(k_1,\dots,k_r)$, we identify that element of the corresponding set
	\[
	\{(i_{\pi(1)},...,i_{\pi(n)}): \pi \text{ permutations from }\{1,\dots,n\}\text{ to itself} \}
	\]
	for which
	\begin{align*}
	i_1&=\dots=i_{k_1}=1,\\
	i_{k_1+1}&=\dots=i_{k_1+k_2}=2,\\
	&\vdots\\
	i_{\sum_{l=1}^{r-1}k_l+1}&=\dots=i_{\sum_{l=1}^rk_l}=r,\\
	i_{\sum_{l=1}^rk_l+1}&=\dots=i_n=0.
	\end{align*}
	When summing over $(k_1,\dots,k_r)$, this will be the representative, evaluated within the expectation. This reordering yields
	\begin{align*}
	&\E\left[\Big(\sum (X_j^2-1)\Big)^{r} \Big(\sum|X_j|^{2\alpha}\Big) \right]\\
	&\quad=\E\left[\sum_{i_1+\dots+i_{n}=r} \binom{r}{i_1,\dots,i_n} \Big(\prod (X_j^2-1)^{i_{j}}\Big) \Big(\sum|X_j|^{2\alpha}\Big) \right]\\
	&\quad=\E\Bigg[\sum_{*(k_\cdot,r,\cdot)} {n \choose k_{1},\ldots ,k_{r},(n-u(k_\cdot))} \frac{r!}{1!^{k_1}\cdots r!^{k_r}}\\
	&\quad\qquad\cdot\bigg( \prod_{l=1}^{r} \prod_{a=1}^{k_l}\Big(X_{\sum_{b=1}^{l-1}k_b+a}^2-1 \Big)^{l} \bigg) \Big(\sum|X_j|^{2\alpha}\Big) \Bigg].
	\end{align*}
	As $M_n$ is invariant under permutation, the last set of formulas remains valid if $\1_{A_{n,1}}$ is present in the expectation. So
	\begin{equation}\label{eq.expanded}
	\begin{split}
	&n^{-(\alpha+r)} \, \E\Big[ \1_{A_{n,1}} \Big(\sum (X_j^2-1)\Big)^{r}\Big(\sum|X_j|^{2\alpha}\Big)\Big]\\
	&\quad= n^{-(\alpha+r)} \sum_{*(k_\cdot,r,\cdot)} {n \choose k_{1},\ldots ,k_{r},(n-u(k_\cdot))} \frac{r!}{1!^{k_1}\cdots r!^{k_r}}\\
	&\quad\qquad\cdot \sum \E\bigg[ \1_{A_{n,1}} \bigg( \prod_{l=1}^{r} \prod_{a=1}^{k_l}\Big(X_{\sum_{b=1}^{l-1}k_b+a}^2-1 \Big)^{l} \bigg) |X_j|^{2\alpha} \bigg]
	\end{split}
	\end{equation}
	for all $r=0,\dots,m$.
	
	Below, we distinguish between the values of $j$, which only coincide with the index of another factor for $j\le u(k_\cdot)$, and get
	\begin{equation}\label{eq.j-split}
	\begin{split}
	&\sum \E\bigg[ \1_{A_{n,1}} \bigg( \prod_{l=1}^{r} \prod_{a=1}^{k_l}\Big(X_{\sum_{b=1}^{l-1}k_b+a}^2-1 \Big)^{l} \bigg) |X_j|^{2\alpha} \bigg]\\
	&\quad= \sum_{j=u(k_\cdot)+1}^{n} \E\bigg[ \1_{A_{n,1}} \bigg( \prod_{l=1}^{r} \prod_{a=1}^{k_l}\Big(X_{\sum_{b=1}^{l-1}k_b+a}^2-1 \Big)^{l} \bigg) |X_j|^{2\alpha} \bigg]\\
	&\quad\quad+ \sum_{j=1}^{u(k_\cdot)} \E\bigg[ \1_{A_{n,1}} \bigg( \prod_{l=1}^{r} \prod_{a=1}^{k_l}\Big(X_{\sum_{b=1}^{l-1}k_b+a}^2-1 \Big)^{l} \bigg) |X_j|^{2\alpha} \bigg].
	\end{split}
	\end{equation}
	
	Next, we use that $X_1,\dots,X_n$ are i.i.d. and
	\begin{equation}\label{eq.A_{n,1}-exp}
	A_{n,1}=\{|X_1|<\delta_n \sqrt{n} \, \} \cap \dots \cap \{|X_n|<\delta_n \sqrt{n} \, \}.
	\end{equation}
	Additionally, we introduce the notation
	\[
	\E_{\delta_n,n}[\,\cdot\,]:=\E[\1_{\{|X_1|<\delta_n \sqrt{n} \, \}}\,\cdot\,].
	\]
	For the upper part of the sum this yields
	\begin{align*}
	&\sum_{j=u(k_\cdot)+1}^{n} \E\bigg[ \1_{A_{n,1}} \bigg( \prod_{l=1}^{r} \prod_{a=1}^{k_l}\Big(X_{\sum_{b=1}^{l-1}k_b+a}^2-1 \Big)^{l} \bigg) |X_j|^{2\alpha} \bigg]\\*
	&= \big(n-u(k_\cdot)\big) \bigg( \prod_{l=1}^{r}\Big(\E_{\delta_n,n}\Big[ \big(X_1^2-1 \big)^{l} \Big]\Big)^{k_l} \bigg) \E_{\delta_n,n}\Big[ |X_1|^{2\alpha} \Big] \P\big(\{|X_1|<\delta_n \sqrt{n} \, \}\big)^{n-u(k_\cdot)-1}.
	\end{align*}
	
	Assume that $k_l>0$ for at least one $l \ge \lfloor s/2\rfloor+1$ (i.e. $2l>{s}$). Furthermore, in the following formula we use the fact $2\le\alpha\le m/2$. As
	\[
	{n \choose k_{1},\ldots ,k_{r},(n-u(k_\cdot))}\le c(r) n^{u(k_\cdot)} 
	\qquad
	\text{and} 
	\qquad
	\big(n-u(k_\cdot)\big)\le n,
	\]
	these summands of $\sum_{*(k_\cdot,r,\cdot)}$ in \labelcref{eq.expanded} can be bounded by
	\begin{align*}
	&c(r) n^{-\alpha-r+u(k_\cdot)+1}
	\bigg( \prod_{l=1}^{r}\Big(\E_{\delta_n,n}\Big[ \big(X_1^2-1 \big)^{l} \Big]\Big)^{k_l} \bigg)\\
	& \qquad \cdot \E_{\delta_n,n}\Big[ |X_1|^{2\alpha} \Big] \P\big(\{|X_1|<\delta_n \sqrt{n} \, \}\big)^{n-u(k_\cdot)-1} \\
	&=c(r) n^{1-\alpha}
	\bigg( \prod_{l=1}^{r}\Big(n^{1-l}\E_{\delta_n,n}\Big[ \big(X_1^2-1 \big)^{l} \Big]\Big)^{k_l} \bigg)\\
	& \qquad \cdot \E_{\delta_n,n}\Big[ |X_1|^{2\alpha} \Big] \P\big(\{|X_1|<\delta_n \sqrt{n} \, \}\big)^{n-u(k_\cdot)-1} \\
	&=c(r) n^{1-\alpha}
	\bigg( \prod_{l=1}^{\lfloor s/2\rfloor}\Big(n^{1-l}\E_{\delta_n,n}\Big[ \big(X_1^2-1 \big)^{l} \Big]\Big)^{k_l} \bigg) \bigg( \prod_{l=\lfloor s/2\rfloor+1}^{r}\Big(n^{1-l}\E_{\delta_n,n}\Big[ \big(X_1^2-1 \big)^{l} \Big]\Big)^{k_l} \bigg) \\
	& \qquad \cdot \E_{\delta_n,n}\Big[ |X_1|^{2\alpha} \Big] \P\big(\{|X_1|<\delta_n \sqrt{n} \, \}\big)^{n-u(k_\cdot)-1} \\
	&\le c(r) n^{1-\alpha}
	\bigg( \prod_{l=1}^{\lfloor s/2\rfloor}\Big(n^{1-l}\E_{\delta_n,n}\Big[ \big(X_1^2-1 \big)^{l} \Big]\Big)^{k_l} \bigg) \\
	& \qquad \cdot \bigg( \prod_{l=\lfloor s/2\rfloor+1}^{r}\Big(n^{1-\lfloor s/2\rfloor}\delta_n^{2(l-\lfloor s/2\rfloor)} \E_{\delta_n,n}\Big[ \big(X_1^2-1 \big)^{\lfloor s/2\rfloor} \Big]\Big)^{k_l} \bigg)\\
	& \qquad \cdot \E_{\delta_n,n}\Big[ |X_1|^{2\alpha} \Big] \P\big(\{|X_1|<\delta_n \sqrt{n} \, \}\big)^{n-u(k_\cdot)-1} \\
	&\le c(r) n^{1-\alpha+1-\lfloor s/2\rfloor} \delta_n^2 \\
	&= \co \big(n^{-({s}-2)/2}\big).
	\end{align*}
	
	Thus, the upper part of the sum reduces to
	\begin{equation}\label{eq.j-ge}
	\begin{split}
	& n^{-(\alpha+r)} \sum_{*(k_\cdot,r,\cdot)} {n \choose k_{1},\ldots ,k_{r},(n-u(k_\cdot))} \frac{r!}{1!^{k_1}\cdots r!^{k_r}} \big(n-u(k_\cdot)\big) \1_{\{k_{\lfloor s/2\rfloor+1}=\dots=k_r=0\}}\\
	&\qquad \cdot \bigg( \prod_{l=1}^{r}\Big(\E_{\delta_n,n}\Big[ \big(X_1^2-1 \big)^{l} \Big]\Big)^{k_l} \bigg) \E_{\delta_n,n}\Big[ |X_1|^{2\alpha} \Big] \P\big(\{|X_1|<\delta_n \sqrt{n} \, \}\big)^{n-u(k_\cdot)-1}\\
	&\quad+\co \big(n^{-({s}-2)/2}\big).
	\end{split}
	\end{equation}
	
	Now to the lower part of the sum in \labelcref{eq.j-split}. We treat the summands separately. For any $j$, define $l^*=l^*(j)\in\{1,\dots,r\}$ as the power of the term $(X_{j}^2-1 )$ within the products in \labelcref{eq.j-split}. in turn, for any $l^*$, let $j^*$ be the smallest index with the power $l^*$, i.e. $j^*=j^*(l^*):=\argmin\{j:l^*(j)=l^*\}$. With this assignment $j^*=\sum_{b=1}^{{l^*}-1}k_b+1$.
	
	We use that $X_1,\dots,X_n$ are i.i.d. and \labelcref{eq.A_{n,1}-exp}, which yields
	\begin{align*}
	\lefteqn{\E\bigg[ \1_{A_{n,1}} \bigg( \prod_{l=1}^{r} \prod_{a=1}^{k_l}\Big(X_{\sum_{b=1}^{l-1}k_b+a}^2-1 \Big)^{l} \bigg) |X_j|^{2\alpha} \bigg]}\quad\\*
	&=\E\bigg[ \1_{A_{n,1}} \bigg( \prod_{l=1}^{r} \prod_{a=1}^{k_l}\Big(X_{\sum_{b=1}^{l-1}k_b+a}^2-1 \Big)^{l} \bigg) |X_{\sum_{b=1}^{{l^*}-1}k_b+1}|^{2\alpha} \bigg]\\
	&= \bigg( \prod_{l=1 , ~ l \ne l^*}^{r}\Big(\E_{\delta_n,n}\Big[ \big(X_1^2-1 \big)^{l} \Big]\Big)^{k_l} \bigg) \Big(\E_{\delta_n,n}\Big[ \big(X_1^2-1 \big)^{l^*} \Big]\Big)^{k_{l^*}-1}\\
	& \qquad \cdot \E_{\delta_n,n}\Big[ \big(X_1^2-1 \big)^{l^*}|X_1|^{2\alpha} \Big] \P\big(\{|X_1|<\delta_n \sqrt{n} \, \}\big)^{n-u(k_\cdot)}.
	\end{align*}
	
	Let $2\alpha+2l^*>{s}$. These summands of $\sum_{*(k_\cdot,r,\cdot)}$ in \labelcref{eq.expanded} can be bounded by
	\begin{align*}
	&c(r) n^{-\alpha-r+u(k_\cdot)}
	\bigg( \prod_{l=1 , ~ l \ne l^*}^{r}\Big(\E_{\delta_n,n}\Big[ \big(X_1^2-1 \big)^{l} \Big]\Big)^{k_l} \bigg) \Big(\E_{\delta_n,n}\Big[ \big(X_1^2-1 \big)^{l^*} \Big]\Big)^{k_{l^*}-1}\\*
	& \qquad \cdot \E_{\delta_n,n}\Big[ \big(X_1^2-1 \big)^{l^*}|X_1|^{2\alpha} \Big] \P\big(\{|X_1|<\delta_n \sqrt{n} \, \}\big)^{n-u(k_\cdot)} \\
	&=c(r) n^{-\alpha}
	\bigg( \prod_{l=1 , ~ l \ne l^*}^{r}\Big(n^{1-l}\E_{\delta_n,n}\Big[ \big(X_1^2-1 \big)^{l} \Big]\Big)^{k_l} \bigg) \Big(n^{1-l^*}\E_{\delta_n,n}\Big[ \big(X_1^2-1 \big)^{l^*} \Big]\Big)^{k_{l^*}-1}\\
	& \qquad \cdot n^{1-l^*} \E_{\delta_n,n}\Big[ \big(X_1^2-1 \big)^{l^*}|X_1|^{2\alpha} \Big] \P\big(\{|X_1|<\delta_n \sqrt{n} \, \}\big)^{n-u(k_\cdot)} \\
	&\le c(r) n^{-\alpha}
	\bigg( \prod_{l=1 , ~ l \ne l^*}^{r}\Big(\delta_n^{2(l-1)}\E_{\delta_n,n}\Big[ \big(X_1^2-1 \big) \Big]\Big)^{k_l} \bigg) \Big(\delta_n^{2(l^*-1)}\E_{\delta_n,n}\Big[ \big(X_1^2-1 \big) \Big]\Big)^{k_{l^*}-1}\\
	& \qquad \cdot n^{1-l^*} \E_{\delta_n,n}\Big[ \big(|X_1|^{2l^*+2\alpha}+|X_1|^{2\alpha} \big) \Big] \P\big(\{|X_1|<\delta_n \sqrt{n} \, \}\big)^{n-u(k_\cdot)} \\
	&\le c(r) n^{-\alpha}
	\bigg( \prod_{l=1 , ~ l \ne l^*}^{r}\Big(\delta_n^{2(l-1)}\E_{\delta_n,n}\Big[ \big(X_1^2-1 \big) \Big]\Big)^{k_l} \bigg) \Big(\delta_n^{2(l^*-1)}\E_{\delta_n,n}\Big[ \big(X_1^2-1 \big) \Big]\Big)^{k_{l^*}-1}\\
	& \qquad \cdot n^{1-l^*} (\delta_n \sqrt{n} \, )^{2l^*+2\alpha-{s}} \E_{\delta_n,n}\Big[ \big(|X_1|^{s}+|X_1|^{{s}-2l^*} \big) \Big] \P\big(\{|X_1|<\delta_n \sqrt{n} \, \}\big)^{n-u(k_\cdot)} \\
	&\le c(r) n^{-\alpha+1-l^*+l^*+\alpha-{s}/2} \delta_n^{2l^*+2\alpha-{s}} \\
	&= \co \big(n^{-({s}-2)/2} \big)
	\end{align*}
	since $\delta_n\to0$.
	
	If $2\alpha+2l^*\le{s}$, a similar procedure and bound as for the upper part of the sum in \labelcref{eq.j-split} apply. To this end, assume that $k_l>0$ for at least one $l \ge \lfloor s/2\rfloor+1$. Furthermore, in the following formula we use the fact $2\le\alpha\le m/2$. With $l^*$ as above, these summands of \labelcref{eq.expanded} can be bounded by
	\begin{align*}
	&c(r) n^{-\alpha-r+u(k_\cdot)}
	\bigg( \prod_{l=1 , ~ l \ne l^*}^{r}\Big(\E_{\delta_n,n}\Big[ \big(X_1^2-1 \big)^{l} \Big]\Big)^{k_l} \bigg) \Big(\E_{\delta_n,n}\Big[ \big(X_1^2-1 \big)^{l^*} \Big]\Big)^{k_{l^*}-1}\\*
	& \quad \cdot \E_{\delta_n,n}\Big[ \big(X_1^2-1 \big)^{l^*}|X_1|^{2\alpha} \Big] \P\big(\{|X_1|<\delta_n \sqrt{n} \, \}\big)^{n-u(k_\cdot)} \\*
	&=c(r) n^{-\alpha}
	\bigg( \prod_{l=1 , ~ l \ne l^*}^{r}\Big(n^{1-l}\E_{\delta_n,n}\Big[ \big(X_1^2-1 \big)^{l} \Big]\Big)^{k_l} \bigg) \Big(n^{1-l^*}\E_{\delta_n,n}\Big[ \big(X_1^2-1 \big)^{l^*} \Big]\Big)^{k_{l^*}-1}\\
	& \quad \cdot n^{1-l^*} \E_{\delta_n,n}\Big[ \big(X_1^2-1 \big)^{l^*}|X_1|^{2\alpha} \Big] \P\big(\{|X_1|<\delta_n \sqrt{n} \, \}\big)^{n-u(k_\cdot)} \\
	&=c(r) n^{-\alpha}
	\bigg( \prod_{l=1 , ~ l \ne l^*}^{\lfloor s/2\rfloor}\Big(n^{1-l}\E_{\delta_n,n}\Big[ \big(X_1^2-1 \big)^{l} \Big]\Big)^{k_l} \bigg) \Big(n^{1-l^*}\E_{\delta_n,n}\Big[ \big(X_1^2-1 \big)^{l^*} \Big]\Big)^{k_{l^*}-1}\\
	& \quad \cdot n^{1-l^*} \E_{\delta_n,n}\Big[ \big(X_1^2-1 \big)^{l^*}|X_1|^{2\alpha} \Big] \bigg( \prod_{l=\lfloor s/2\rfloor+1}^{r}\Big(n^{1-l}\E_{\delta_n,n}\Big[ \big(X_1^2-1 \big)^{l} \Big]\Big)^{k_l} \bigg)\\
	& \quad \cdot \P\big(\{|X_1|<\delta_n \sqrt{n} \, \}\big)^{n-u(k_\cdot)} \\
	&\le c(r) n^{-\alpha}
	\bigg( \prod_{l=1 , ~ l \ne l^*}^{\lfloor s/2\rfloor}\Big(n^{1-l}\E_{\delta_n,n}\Big[ \big(X_1^2-1 \big)^{l} \Big]\Big)^{k_l} \bigg) \Big(n^{1-l^*}\E_{\delta_n,n}\Big[ \big(X_1^2-1 \big)^{l^*} \Big]\Big)^{k_{l^*}-1}\\
	& \quad \cdot n^{1-l^*} \E_{\delta_n,n}\Big[ \big(X_1^2-1 \big)^{l^*}|X_1|^{2\alpha} \Big] \\
	&\quad \cdot \bigg( \prod_{l=\lfloor s/2\rfloor+1}^{r}\Big(n^{1-\lfloor s/2\rfloor}\delta_n^{2(l-\lfloor s/2\rfloor)} \E_{\delta_n,n}\Big[ \big(X_1^2-1 \big)^{\lfloor s/2\rfloor} \Big]\Big)^{k_l} \bigg) \P\big(\{|X_1|<\delta_n \sqrt{n} \, \}\big)^{n-u(k_\cdot)} \\
	&\le c(r) n^{-\alpha+1-l^*+1-\lfloor s/2\rfloor} \delta_n^2 \\
	&= \co \big(n^{-({s}-2)/2}\big).
	\end{align*}
	
	Summarizing, for the summands of $\sum_{*(k_\cdot,r,\cdot)}$ in \labelcref{eq.expanded} with $j\le u(k_\cdot)$ we get
	\begin{align}\label{eq.j-le}
	& n^{-(\alpha+r)} {n \choose k_{1},\ldots ,k_{r},(n-u(k_\cdot))} \frac{r!}{1!^{k_1}\cdots r!^{k_r}}\nonumber\\
	&\quad \qquad \cdot \E\bigg[ \1_{A_{n,1}} \bigg( \prod_{l=1}^{r} \prod_{a=1}^{k_l}\Big(X_{\sum_{b=1}^{l-1}k_b+a}^2-1 \Big)^{l} \bigg) |X_j|^{2\alpha} \bigg]\nonumber\\
	&\quad = n^{-(\alpha+r)} {n \choose k_{1},\ldots ,k_{r},(n-u(k_\cdot))} \frac{r!}{1!^{k_1}\cdots r!^{k_r}} \1_{\{k_{\lfloor s/2\rfloor+1}=\dots=k_r=0,\, 2\alpha+2l^*\le{s}\}}\\
	&\quad \qquad \cdot \bigg( \prod_{l=1 , ~ l \ne l^*}^{r}\Big(\E_{\delta_n,n}\Big[ \big(X_1^2-1 \big)^{l} \Big]\Big)^{k_l} \bigg) \Big(\E_{\delta_n,n}\Big[ \big(X_1^2-1 \big)^{l^*} \Big]\Big)^{k_{l^*}-1}\nonumber\\
	&\quad \qquad \cdot \E_{\delta_n,n}\Big[ \big(X_1^2-1 \big)^{l^*}|X_1|^{2\alpha} \Big] \P\big(\{|X_1|<\delta_n \sqrt{n} \, \}\big)^{n-u(k_\cdot)}\nonumber\\
	&\quad\quad+\co \big(n^{-({s}-2)/2}\big).\nonumber
	\end{align}
	
	Putting together \labelcref{eq.j-ge,eq.j-le,eq.j-split,eq.expanded}, we get
	\begin{align}\label{eq.together-indicator}
	&n^{-(\alpha+r)} \, \E\Big[ \1_{A_{n,1}} \Big(\sum (X_j^2-1)\Big)^{r}\Big(\sum|X_j|^{2\alpha}\Big)\Big]\\
	&=n^{-(\alpha+r)} \sum_{*(k_\cdot,r,\cdot)} {n \choose k_{1},\ldots ,k_{r},(n-u(k_\cdot))} \frac{r!}{1!^{k_1}\cdots r!^{k_r}} \big(n-u(k_\cdot)\big) \1_{\{k_{\lfloor s/2\rfloor+1}=\dots=k_r=0\}}\nonumber\\*
	&\qquad \cdot \bigg( \prod_{l=1}^{r}\Big(\E_{\delta_n,n}\Big[ \big(X_1^2-1 \big)^{l} \Big]\Big)^{k_l} \bigg) \E_{\delta_n,n}\Big[ |X_1|^{2\alpha} \Big] \P\big(\{|X_1|<\delta_n \sqrt{n} \, \}\big)^{n-u(k_\cdot)-1}\nonumber\\
	& \quad + n^{-(\alpha+r)} \sum_{*(k_\cdot,r,\cdot)} {n \choose k_{1},\ldots ,k_{r},(n-u(k_\cdot))} \frac{r!}{1!^{k_1}\cdots r!^{k_r}} \nonumber\\
	& \qquad \cdot \Bigg( \sum_{j=1}^{u(k_\cdot)} \1_{\{k_{\lfloor s/2\rfloor+1}=\dots=k_r=0,\, 2\alpha+2l^*\le{s}\}} \bigg( \prod_{l=1 , ~ l \ne l^*}^{r}\Big(\E_{\delta_n,n}\Big[ \big(X_1^2-1 \big)^{l} \Big]\Big)^{k_l} \bigg) \nonumber\\
	& \qquad \cdot \Big(\E_{\delta_n,n}\Big[ \big(X_1^2-1 \big)^{l^*} \Big]\Big)^{k_{l^*}-1} \E_{\delta_n,n}\Big[ \big(X_1^2-1 \big)^{l^*}|X_1|^{2\alpha} \Big] \P\big(\{|X_1|<\delta_n \sqrt{n} \, \}\big)^{n-u(k_\cdot)} \Bigg)\nonumber\\*
	& \quad +\co \big(n^{-({s}-2)/2}\big).\nonumber
	\end{align}
	
	Note that for $a=0,\dots, \lfloor s/2\rfloor$ and $b=0,\dots, \lfloor s/2\rfloor$ with $2a+2b\le{s}$, the expectations including the complementary indicator are bounded by
	\begin{align}\label{eq.X_1^a*X_1^b-ge}
	\lefteqn{\bigg|\E\Big[ \1_{\{|X_1|\ge\delta_n \sqrt{n} \, \}}\big(X_1^2-1 \big)^{a} |X_1|^{2b} \Big] \bigg|}\quad\nonumber\\*
	&\le \E\Big[ \1_{\{|X_1|\ge\delta_n \sqrt{n} \, \}}\big(|X_1|^{2a}+1 \big) |X_1|^{2b} \Big] \nonumber\\
	&\le \E\Big[ \1_{\{|X_1|\ge\delta_n \sqrt{n} \, \}} (\delta_n \sqrt{n} \, )^{-(s-2a-2b)} \big(|X_1|^s+|X_1|^{s-2a} \big) \Big] \\
	&\le c(0) n^{-(s/2-a-b)} \, \E\Big[ \1_{\{|X_1|\ge\delta_n \sqrt{n} \, \}} \delta_n ^{-s} |X_1|^s \Big] \nonumber\\
	&=\co\big(n^{-(s/2-a-b)}\big)\nonumber
	\end{align}
	by \labelcref{eq.delta_n-def}. In particular for $a=b=0$, $\P\big(\{|X_1|\ge\delta_n \sqrt{n} \, \}\big)=\co\big(n^{-s/2}\big)$.
	
	For any $l_0=0,\dots,\lfloor s/2\rfloor$, the presence of a factor $\E_{\delta_n,n}\big[ \big(X_1^2-1 \big)^{l_0} \big]$ in a summand of \labelcref{eq.together-indicator} implies $k_{l_0}\ge1$. Transitioning from $\E_{\delta_n,n}\big[ \big(X_1^2-1 \big)^{l_0} \big]$ to $\E\big[\big(X_1^2-1 \big)^{l_0} \big]$ in one of its appearances leads us to study the additional summand in \labelcref{eq.together-indicator}
	\begin{align*}
	& n^{-(\alpha+r)} {n \choose k_{1},\ldots ,k_{r},(n-u(k_\cdot))} \frac{r!}{1!^{k_1}\cdots r!^{k_r}} \big(n-u(k_\cdot)\big) \1_{\{k_{\lfloor s/2\rfloor+1}=\dots=k_r=0\}}\\
	&\qquad \cdot \bigg( \prod_{l=1,~l\ne l_0}^{r}\Big(\E_{\delta_n,n}\Big[ \big(X_1^2-1 \big)^{l} \Big]\Big)^{k_l} \bigg) \E_{\delta_n,n}\Big[ |X_1|^{2\alpha} \Big] \P\big(\{|X_1|<\delta_n \sqrt{n} \, \}\big)^{n-u(k_\cdot)-1}\\
	&\qquad \cdot \Big(\E_{\delta_n,n}\Big[ \big(X_1^2-1 \big)^{l_0} \Big]\Big)^{k_{l_0}-1}
	\bigg(- \E\Big[ \1_{\{|X_1|\ge\delta_n \sqrt{n} \, \}} \big(X_1^2-1 \big)^{l_0} \Big]\bigg)
	\end{align*}
	in the upper part and in the lower part correspondingly.
	Using \labelcref{eq.X_1^a*X_1^b-ge}, this summand is bounded in absolute value by
	\begin{align*}
	&c(m) n^{-(\alpha+r)} n^{u(k_\cdot)} n
	 \E\Big[ \1_{\{|X_1|\ge\delta_n \sqrt{n} \, \}} \big(X_1^2-1 \big)^{l_0} \Big]\\
	&\quad\le c(m) n^{-\alpha-r+u(k_\cdot)+1}
	\co\big(n^{-(s/2-l_0)}\big)\\
	&\quad= \co\big(n^{-\alpha-r+u(k_\cdot)+1-s/2+l_0}\big).
	\end{align*}
	Using the facts $\alpha\ge 2$ and $k_{l_0}\ge1$, we examine the power of $n$
	\begin{align*}
	-\alpha-r+u(k_\cdot)+1-s/2+l_0
	&\le -2-r+u(k_\cdot)+1-s/2+(l_0-1)k_{l_0}+1\\
	&= -s/2-r+(k_1+\dots+k_{l_0}+\dots+k_r)+(l_0-1)k_{l_0}\\
	&= -s/2-r+(k_1+\dots+l_0k_{l_0}+\dots+k_r)\\
	&\le -s/2.
	\end{align*}
	Thus, leaving out the indicator in the factor $\E_{\delta_n,n}\big[ \big(X_1^2-1 \big)^{l_0} \big]$ produces an error of order $\co\big(n^{-s/2}\big)$ in the upper part of \labelcref{eq.together-indicator}. In the lower part, the factor $n-u(k_\cdot)$ is missing and the order therefore even smaller.
	
	Repeating this argument consecutively (note that this requires $c(r)$ steps at most) until no $\E_{\delta_n,n}\big[ \big(X_1^2-1 \big)^{l} \big]$ is left anymore, we arrive at
	\begin{align*}
	&n^{-(\alpha+r)} \sum_{*(k_\cdot,r,\cdot)} {n \choose k_{1},\ldots ,k_{r},(n-u(k_\cdot))} \frac{r!}{1!^{k_1}\cdots r!^{k_r}} \big(n-u(k_\cdot)\big) \1_{\{k_{\lfloor s/2\rfloor+1}=\dots=k_r=0\}}\\*
	&\quad\qquad \cdot \bigg( \prod_{l=1}^{r}\Big(\E\Big[ \big(X_1^2-1 \big)^{l} \Big]\Big)^{k_l} \bigg) \E_{\delta_n,n}\Big[ |X_1|^{2\alpha} \Big] \P\big(\{|X_1|<\delta_n \sqrt{n} \, \}\big)^{n-u(k_\cdot)-1}\\
	&\quad \quad + n^{-(\alpha+r)} \sum_{*(k_\cdot,r,\cdot)} {n \choose k_{1},\ldots ,k_{r},(n-u(k_\cdot))} \frac{r!}{1!^{k_1}\cdots r!^{k_r}} \\
	&\quad \qquad \cdot \Bigg( \sum_{j=1}^{u(k_\cdot)} \1_{\{k_{\lfloor s/2\rfloor+1}=\dots=k_r=0,\, 2\alpha+2l^*\le{s}\}} \bigg( \prod_{l=1 , ~ l \ne l^*}^{r}\Big(\E\Big[ \big(X_1^2-1 \big)^{l} \Big]\Big)^{k_l} \bigg) \\
	&\quad \qquad \cdot \Big(\E\Big[ \big(X_1^2-1 \big)^{l^*} \Big]\Big)^{k_{l^*}-1} \E_{\delta_n,n}\Big[ \big(X_1^2-1 \big)^{l^*}|X_1|^{2\alpha} \Big] \P\big(\{|X_1|<\delta_n \sqrt{n} \, \}\big)^{n-u(k_\cdot)} \Bigg)\\*
	&\quad \quad +\co \big(n^{-({s}-2)/2}\big).
	\end{align*}
	
	Similar in spirit, we consecutively transition from $\P\big(\{|X_1|\ge\delta_n \sqrt{n} \, \}\big)$ to 1 at the cost of at most $\co \big(n^{-{s}/2}\big)$, from $\E_{\delta_n,n}\Big[ |X_1|^{2\alpha} \Big]$ to $\E\Big[ |X_1|^{2\alpha} \Big]$ at the cost of at most $\co \big(n^{-({s}-2)/2}\big)$ because
	\begin{align*}
	-\alpha-r+u(k_\cdot)+1-s/2+\alpha \le -(s-2)/2,
	\end{align*}
	and from $\E_{\delta_n,n}\Big[ \big(X_1^2-1 \big)^{l^*}|X_1|^{2\alpha} \Big]$ to $\E\Big[ \big(X_1^2-1 \big)^{l^*}|X_1|^{2\alpha} \Big]$ at the cost of at most $\co \big(n^{-({s}-2)/2}\big)$ because
	\begin{align*}
	-\alpha-r+u(k_\cdot)-s/2+{l^*}+\alpha
	&\le -s/2-r+(k_1+\dots+k_{l^*}+\dots+k_r)+({l^*}-1)k_{l^*}+1\\
	&= -s/2-r+(k_1+\dots+{l^*}k_{l^*}+\dots+k_r)+1\\
	&\le -(s-2)/2.
	\end{align*}
	
	As a consequence, we get from \labelcref{eq.together-indicator}
	\begin{align}\label{eq.together}
	\lefteqn{n^{-(\alpha+r)} \E\Big[ \1_{A_{n,1}} \Big(\sum (X_j^2-1)\Big)^{r}\Big(\sum|X_j|^{2\alpha}\Big)\Big]}\quad\nonumber\\*
	&=n^{-(\alpha+r)} \sum_{*(k_\cdot,r,\cdot)} {n \choose k_{1},\ldots ,k_{r},(n-u(k_\cdot))} \frac{r!}{1!^{k_1}\cdots r!^{k_r}} \nonumber\\
	&\qquad \cdot \big(n-u(k_\cdot)\big) \1_{\{k_{\lfloor s/2\rfloor+1}=\dots=k_r=0\}} \bigg( \prod_{l=1}^{r}\Big(\E\Big[ \big(X_1^2-1 \big)^{l} \Big]\Big)^{k_l} \bigg) \E\Big[ |X_1|^{2\alpha} \Big]\nonumber\\
	&\quad+ n^{-(\alpha+r)} \sum_{*(k_\cdot,r,\cdot)} {n \choose k_{1},\ldots ,k_{r},(n-u(k_\cdot))} \frac{r!}{1!^{k_1}\cdots r!^{k_r}} \\
	& \qquad \cdot \sum_{j=1}^{u(k_\cdot)} \1_{\{k_{\lfloor s/2\rfloor+1}=\dots=k_r=0,\, 2\alpha+2l^*\le{s}\}} \bigg( \prod_{l=1 , ~ l \ne l^*}^{r}\Big(\E\Big[ \big(X_1^2-1 \big)^{l} \Big]\Big)^{k_l} \bigg) \nonumber\\
	& \qquad \cdot \Big(\E\Big[ \big(X_1^2-1 \big)^{l^*} \Big]\Big)^{k_{l^*}-1} \E\Big[ \big(X_1^2-1 \big)^{l^*}|X_1|^{2\alpha} \Big]\nonumber\\*
	& \quad +\co \big(n^{-({s}-2)/2}\big).\nonumber
	\end{align}
	
	Now we return to the remainder of the Taylor expansion in \labelcref{eq.E-expanded} and shall prove that this term is of order $\co\big(n^{-({s}-2)/2}\big)$. Recall that $\xi$ is a (random) intermediate value between $1$ and $n^{-1} V_n^2$ and ${h}=2\lceil \frac{m-2\alpha+1}{2}\rceil$. Note that $\xi\ge 1/2$ on $A_{n,2}$ and since $2\le\alpha\le m/2$, $\1_{A_{n,2}}\xi^{-(\alpha+{h})}\le c(m)$. Together with $|t_{{h},2\alpha}|\le c(m)$, this yields
	\begin{align*}
	&\bigg|\E\Big[ \1_{A_{n,1}\cap A_{n,2}} (-1)^{{h}} n^{-(\alpha+{h})} t_{{h},2\alpha} \xi^{-(\alpha+{h})} \Big(\sum (X_j^2-1)\Big)^{{h}}\Big(\sum|X_j|^{2\alpha}\Big) \Big]\bigg| \\
	&\quad\le c(m) n^{-(\alpha+{h})} \, \E\Big[ \1_{A_{n,1}} \Big(\sum (X_j^2-1)\Big)^{{h}}\Big(\sum|X_j|^{2\alpha}\Big) \Big].
	\end{align*}
	If we trace through the arguments above, we see that they are valid for all $r=0,\dots,m$. Thus, all steps are also valid for the remainder of the Taylor expansion, where $r={h}$. By \labelcref{eq.together}, the expression above thus is equal to
	\begin{align}\label{eq.remainder-together}
	&c(m)\Bigg(n^{-(\alpha+r)} \sum_{*(k_\cdot,r,\cdot)} {n \choose k_{1},\ldots ,k_{r},(n-u(k_\cdot))} \frac{r!}{1!^{k_1}\cdots r!^{k_r}} \nonumber\\*
	&\qquad \cdot \big(n-u(k_\cdot)\big) \1_{\{k_{\lfloor s/2\rfloor+1}=\dots=k_r=0\}} \bigg( \prod_{l=1}^{r}\Big(\E\Big[ \big(X_1^2-1 \big)^{l} \Big]\Big)^{k_l} \bigg) \E\Big[ |X_1|^{2\alpha} \Big]\nonumber\\
	&\quad + n^{-(\alpha+r)} \sum_{*(k_\cdot,r,\cdot)} {n \choose k_{1},\ldots ,k_{r},(n-u(k_\cdot))} \frac{r!}{1!^{k_1}\cdots r!^{k_r}} \\
	&\qquad \cdot \sum_{j=1}^{u(k_\cdot)} \1_{\{k_{\lfloor s/2\rfloor+1}=\dots=k_r=0,\, 2\alpha+2l^*\le{s}\}} \bigg( \prod_{l=1 , ~ l \ne l^*}^{r}\Big(\E\Big[ \big(X_1^2-1 \big)^{l} \Big]\Big)^{k_l} \bigg) \nonumber\\
	&\qquad \cdot \Big(\E\Big[ \big(X_1^2-1 \big)^{l^*} \Big]\Big)^{k_{l^*}-1} \E\Big[ \big(X_1^2-1 \big)^{l^*}|X_1|^{2\alpha} \Big]\Bigg)\nonumber\\*
	&\quad +\co \big(n^{-({s}-2)/2}\big)\nonumber
	\end{align}
	with $r={h}$.
	Note that $\E X_1^2=1$. Thus in the first part, all summands with $k_1>0$ vanish and in the second part, all summands with $k_1>1$ vanish. This results in
	\[
	u(k_\cdot)=k_1+k_2+\dots+k_r=\tfrac12(2k_2+\dots+2k_r)\le r/2
	\]
	for the first part and
	\[
	u(k_\cdot)=k_1+k_2+\dots+k_r\le1+\tfrac12(2k_2+\dots+2k_r)\le 1+r/2
	\]
	for the second part.
	
	Now we want to determine the order in $n$ of the terms in \labelcref{eq.remainder-together}. Note that all appearing moments are finite. Using $r={h}\ge m-2\alpha+1$, we can focus on the power of $n$ which is in the first part
	\begin{align*}
	-\alpha-r+u(k_\cdot)+1
	&\le -\alpha-r+r/2+1
	=-\alpha-r/2+1\\
	&\le-\alpha-(m-2\alpha+1)/2+1
	=-(m-1)/2
	\end{align*}
	and in the second part
	\begin{align*}
	-\alpha-r+u(k_\cdot)
	\le -\alpha-r+1+r/2
	=-\alpha-r/2+1
	\le-(m-1)/2.
	\end{align*}
	Thus, all terms (at most $c(m)$ many) from \labelcref{eq.remainder-together} are of order $\co \big(n^{-({s}-2)/2}\big)$.
	Ultimately, merging \labelcref{eq.E-expanded,eq.A_{n,2}-raus,eq.together,eq.remainder-together}, we arrive at
	\begin{align}\label{eq.all}
	\lefteqn{\E\Big[ V_n^{-2\alpha}\sum|X_j|^{2\alpha} \Big]}\quad\nonumber\\
	&=\sum_{r=0}^{{h}-1} (-1)^r n^{-(\alpha+r)} t_{r,2\alpha} \sum_{*(k_\cdot,r,\cdot)} {n \choose k_{1},\ldots ,k_{r},(n-u(k_\cdot))} \frac{r!}{1!^{k_1}\cdots r!^{k_r}} \nonumber\\
	&\qquad \cdot \big(n-u(k_\cdot)\big) \1_{\{k_{\lfloor s/2\rfloor+1}=\dots=k_r=0\}} \bigg( \prod_{l=1}^{r}\Big(\E\Big[ \big(X_1^2-1 \big)^{l} \Big]\Big)^{k_l} \bigg) \E\Big[ |X_1|^{2\alpha} \Big]\nonumber\\
	&\quad+ \sum_{r=0}^{{h}-1} (-1)^r n^{-(\alpha+r)} t_{r,2\alpha} \sum_{*(k_\cdot,r,\cdot)} {n \choose k_{1},\ldots ,k_{r},(n-u(k_\cdot))} \frac{r!}{1!^{k_1}\cdots r!^{k_r}} \\
	&\qquad \cdot \sum_{j=1}^{u(k_\cdot)} \1_{\{k_{\lfloor s/2\rfloor+1}=\dots=k_r=0,\, 2\alpha+2l^*\le{s}\}} \bigg( \prod_{l=1 , ~ l \ne l^*}^{r}\Big(\E\Big[ \big(X_1^2-1 \big)^{l} \Big]\Big)^{k_l} \bigg) \nonumber\\
	&\qquad \cdot \Big(\E\Big[ \big(X_1^2-1 \big)^{l^*} \Big]\Big)^{k_{l^*}-1} \E\Big[ \big(X_1^2-1 \big)^{l^*}|X_1|^{2\alpha} \Big]\nonumber\\
	&\quad +\co \big(n^{-({s}-2)/2}\big).\nonumber
	\end{align}
	
	All expectations in \labelcref{eq.all} can be calculated explicitly as no power of $X_1$ is greater than $s$. Using $u(k_\cdot)\le r, \alpha\in\N$ and $\alpha\ge 2$, this results in an expansion in terms of orders $n^{-1},n^{-2},\dots,n^{-\lfloor m-2/2\rfloor}$ with moments up to order $m$ and a remainder of order $\co \big(n^{-({s}-2)/2}\big)$. By \labelcref{eq.tk-2r,eq.tlambda} for $k\ge4$, the expectation of $\tlambda_{k,n}$ and its powers admit the same expansion up to the respective coefficient from \labelcref{eq.tk-2r}. In \cref{r.V_n-lambda}, this expansion is performed exemplarily for $k=4$ and $k=6$. By \labelcref{eq.Phi^tP} and \labelcref{eq.tP}, $\E\big[\Phi^{\tP}_{m,n}(x)\big]$ admits a similar representation. By the definition of $Q_r$ in \labelcref{eq.Q-def}, the expansion terms of order $n^{-r/2}$ are assigned to their respective $Q_r$ (or to the remainder $\co \big(n^{-({s}-2)/2}\big)$). Note that $Q_{r}=0$ for uneven $r$. That is, all the non-remainder terms in $$\E\big[\Phi^{\tP}_{m,n}(x)\big]$$ are exactly the summands appearing in $\Phi^Q_{m,n}$, defined in \labelcref{eq.Phi^Q-def}, and therefore cancel each other out in \labelcref{eq.E-tP-Q,eq.E-tp-q}.
	What still needs to be examined is the dependence of the remainder on the argument $x$. In $\tP_{k,n}$ (as well as in $\tp_{k,n}$), the $\tlambda_{k,n}$ are multiplied by $\phi$ and a Hermite polynomial of order $l\le2m$. For $l\le2m$, we can bound
	\[
	\sup_{x} \exp(x^2/4)\phi(x)H_l(x)\le c(l).
	\]
	As this non-random factor remains unaffected by the expectation, it also appears in the remainder such that \labelcref{eq.E-tP-Q,eq.E-tp-q} hold.
\end{proof}

\subsection{Further proofs for Section \ref{ch.distr}}\label{app.proofs.distr}

\begin{proof}[Proof of \cref{l.E.mom}]
We prove this lemma by an approach similar to the one used in the classical proof of Markov's inequality. Without loss of generality let $a_1=\max\{a_1,\dots,a_l\}$. By using $M_{n,l}=\max_{j= l+1,l+2,\dots,n} |X_j|$ and $j_0\in\{1,\dots,n\}$ being the smallest index with $|X_{j_0}|=M_n$, we see
\begin{align*}
\lefteqn{ \E\Big[\1_{\big\{M_n\ge \sqrt{\frac{n}{\log(n)\, \eta}}\big\}} |X_1|^{a_1} \cdots |X_l|^{a_l}\Big]}\quad\\
&= \sum_{j=1}^l \E\Big[ \1_{\{j_0=j\}}\1_{\big\{M_n\ge \sqrt{\frac{n}{\log(n)\, \eta}}\big\}} |X_1|^{a_1} \cdots |X_l|^{a_l}\Big]\\
&\quad+ \E\Big[ \sum_{j=l+1}^n \1_{\{j_0=j\}}\1_{\big\{M_n\ge \sqrt{\frac{n}{\log(n)\, \eta}}\big\}} |X_1|^{a_1} \cdots |X_l|^{a_l}\Big]\\
&\le l \, \E\Big[\1_{\{j_0=1\}}\1_{\big\{M_n\ge \sqrt{\frac{n}{\log(n)\, \eta}}\big\}} |X_1|^{a_1} \cdots |X_l|^{a_l}\Big]\\
&\quad+ \E\Big[\1_{\{j_0>l\}}\1_{\big\{M_n\ge \sqrt{\frac{n}{\log(n)\, \eta}}\big\}} |X_1|^{a_1} \cdots |X_l|^{a_l}\Big]\\
&\le \, l \, \big(\tfrac{n}{\log(n)\, \eta}\big)^{-({s}-a_1)/2} \, \E\Big[\1_{\{j_0=1\}}\1_{\big\{M_n\ge \sqrt{\frac{n}{\log(n)\, \eta}}\big\}} |X_1|^{{s}} |X_2|^{a_2} \cdots |X_l|^{a_l}\Big]\\
&\quad+ \E\Big[\1_{\{j_0>l\}}\1_{\big\{M_{n,l}\ge \sqrt{\frac{n}{\log(n)\, \eta}}\big\}} |X_1|^{a_1} \cdots |X_l|^{a_l}\Big]\\
&\le \, l \, \big(\tfrac{n}{\log(n)\, \eta}\big)^{-({s}-a_1)/2} \, \E\Big[ |X_1|^{{s}} |X_2|^{a_2} \cdots |X_l|^{a_l}\Big]\\
&\quad+ \E\Big[\1_{\big\{M_{n,l}\ge \sqrt{\frac{n}{\log(n)\, \eta}}\big\}} |X_1|^{a_1} \cdots |X_l|^{a_l}\Big]\\
&= l \, \eta^{({s}-a_1)/2}\, (\log n)^{({s}-a_1)/2} n^{-({s}-a_1)/2} \E |X_1|^{{s}} \E |X_1|^{a_2} \cdots \E |X_1|^{a_l}\\
&\quad+ \P\big(M_{n,l}\ge \sqrt{\tfrac{n}{\log(n)\, \eta}} \, \big) \E |X_1|^{a_1} \cdots \E |X_1|^{a_l}\\
& = \co\Big( n^{-({s}-\max\{2,a_1\})/2} \, (\log n)^{{s}/2} \Big),
\end{align*}
where we used \labelcref{eq.Mn-an} in the last step.
\end{proof}

\begin{proof}[Proof of \cref{l.E.tL}]
Using $(\delta_n)$ from \cref{r.moment}, we get by \labelcref{eq.Lle1}, \labelcref{eq.Vn-bound-1/2} and \labelcref{eq.Mn-delta}
\begin{align*}
\E\left[ \tL_{k,n}\right]
&\le\E\left[ \1_{\{V_n^2 \le \frac n2\}}\right]\\
&\quad+\E\left[ \1_{\{M_n\ge \delta_n \sqrt{n} \, \}}\right]\\
&\quad+\E\left[ \1_{\{V_n^2 > n/2, M_n< \delta_n \sqrt{n} \, \}} V_n^{-k}M_n^{k-{s}}\sum|X_j|^{{s}} \right]\\
&\le\co\big(n^{-({s}-2)/2}\big)\\
&\quad+ \co\big(n^{-({s}-2)/2}\big)\\
&\quad+ 2^{k/2} n^{-({s}-2)/2} \delta_n^{k-{s}} \E\big[ |X_1|^{{s}}\big]\\
&= \co\big(n^{-({s}-2)/2}\big)
\end{align*}
since $\delta_n\to0$.
\end{proof}

\begin{proof}[Proof of \cref{l.exp.B_n}]
We split up the expectation and get by \labelcref{eq.Vn-bound-1/2,eq.Mn-an}
\begin{align*}
\E\left[ \exp\big(-\eta\,B_n^2\big)\right]
&\le\E\left[ \1_{\{V_n^2 \le \frac n2\}}\right]\\
&\quad+\E\left[ \1_{\big\{M_n\ge \sqrt{\frac{n\,\eta}{\log(n)\, {s}}}\big\}}\right]\\
&\quad+\E\left[ \1_{\big\{V_n^2 > n/2, M_n< \sqrt{\frac{n\,\eta}{\log(n)\, {s}}}\big\}} \exp\big(-\eta\,V_n^2\,M_n^{-2}\big) \right]\\
&\le\P\left(V_n^2 \le \tfrac n2\right)\\
&\quad+\P\left(M_n\ge \sqrt{\tfrac{n\,\eta}{\log(n)\, {s}}}\,\right)\\
&\quad+\E\left[ \1_{\big\{V_n^2 > n/2, M_n< \sqrt{\frac{n\,\eta}{\log(n)\, {s}}}\big\}} \exp\big(-\eta\,\tfrac n2\,\tfrac{\log(n)\,s}{n\,\eta}\big)\right]\\
&\le\co\big(n^{-({s}-2)/2}\big)\\
&\quad+\co\big( n^{-({s}-2)/2} \, (\log n)^{{s}/2} (\eta\wedge 1)^{-s/2} \big)\\
&\quad+n^{-{s}/2}\\
&= \co\big( n^{-({s}-2)/2} \, (\log n)^{{s}/2} (\eta\wedge 1)^{-s/2} \big).
\end{align*}
\end{proof}

\subsection{Further proofs for Section \ref{ch.density}}\label{app.proofs.density}

\begin{proof}[Proof of \cref{p.prop}]
Take $f$ to be the density of $X_1$, $Y_j:=(X_j,X_j^2)$, $B:=B_1\times B_2:=[a_1,b_1]\times[a_2,b_2]$ with $a_1<b_1$, $a_2<b_2$ in $\R$. Then by substitution and Fubini's theorem,
\begin{align*}
\lefteqn{\bP\big(Y_1+Y_2+Y_3 \in B\big)}\quad\\*
&=\bP\Big(X_1+X_2+X_3 \in B_1, X_1^2+X_2^2+X_3^2 \in B_2\Big)\\
&=2\int_\R\int_\R\int_\R f(x)f(y)f(z)\1\big\{x+y+z\in B_1, x^2+y^2+z^2\in B_2, x\le y\big\}\dx\dy\dz\\
&=2\int_\R\int_\R\int_\R f(s-y-z)f(y)f(z)\\*
&\qquad\qquad\quad\cdot\1\big\{s\in B_1, (s-y-z)^2+y^2+z^2 \in B_2, (s-z)/2 \le y\big\}\ds\dy\dz\\
&=2\int_{B_1}\int_\R\int_\R f(s-y-z)f(y)f(z)\\*
&\qquad\qquad\quad\cdot\1\big\{ (s-y-z)^2+y^2+z^2 \in B_2, (s-z)/2 \le y\big\}\dy\dz\ds.
\end{align*}
Define
\[
v_{s,z}(y):=(s-y-z)^2+y^2+z^2=\tfrac12 \Big( \big(2y-(s-z)\big)^2+s^2+3z^2-2sz\Big).
\]
Note that
\[
v'_{s,z}(y) = 2 (2y-s+z).
\]
As we only consider the regime $(s-z)/2 \le y$, the inverse is
\[
v_{s,z}^{-1}(v)=\tfrac12 \Big(s-z+\sqrt{2v-s^2-3z^2+2sz}\Big)=\tfrac12 \Big(s-z+\tfrac{1}{\sqrt{3}} \sqrt{6v-2s^2-(3z-s)^2}\Big).
\]
Thus the expression above is equal to
\begin{align*}
&2\int_{B_1}\int_\R\int_{(s-z)/2}^\infty \frac{2(2v_{s,z}^{-1}(v_{s,z}(y))-s+z)}{2(2v_{s,z}^{-1}(v_{s,z}(y))-s+z)}f(s-v_{s,z}^{-1}(v_{s,z}(y))-z)f(v_{s,z}^{-1}(v_{s,z}(y)))f(z)\\*
&\qquad\qquad\qquad\quad\cdot\1\big\{ v_{s,z}(y) \in B_2 \big\}\dy\dz\ds\\*
&=2\int_{B_1}\int_\R\int_{v_{s,z}((s-z)/2)}^\infty \frac{1}{2(2v_{s,z}^{-1}(v)-s+z)} f\big(s-v_{s,z}^{-1}(v)-z\big) f(v_{s,z}^{-1}(v)) f(z) \\*
&\qquad\qquad\qquad\qquad\qquad\cdot\1\big\{ v \in B_2\big\}\dv\dz\ds\\
&=\int_{B_1}\int_{B_2}\int_\R \frac{1}{2v_{s,z}^{-1}(v)-s+z} f\big(s-v_{s,z}^{-1}(v)-z\big) f(v_{s,z}^{-1}(v)) f(z)\\*
&\qquad\qquad\qquad\cdot\1\big\{ v_{s,z}\big((s-z)/2\big) \le v \big\}\dz\dv\ds.
\end{align*}
Hence the random variable $(Y_1+Y_2+Y_3)$ has the density
\[
\int_\R \frac{1}{2v_{s,z}^{-1}(v)-s+z} f\big(s-v_{s,z}^{-1}(v)-z\big) f(v_{s,z}^{-1}(v)) f(z) \1\big\{ v_{s,z}\big((s-z)/2\big) \le v \big\}\dz
\]
for $s,v\in\R$.
Note that
\[
v_{s,z}((s-z)/2) \le v \quad \Leftrightarrow \quad 6v-2s^2-(3z-s)^2 \ge 0 \quad \Rightarrow \quad 6v-2s^2 \ge 0.
\]
We examine the integral
\begin{align*}
\lefteqn{\int_\R \frac{1}{2v_{s,z}^{-1}(v)-s+z} \1\big\{ v_{s,z}((s-z)/2) \le v \big\}\dz}\quad\\*
&=\int_\R \frac{\sqrt{3}}{\sqrt{6v-2s^2-(3z-s)^2}} \1\big\{ \tfrac12 \big( s^2+3z^2-2sz\big) \le v \big\}\dz\\
&=\int_\R \frac{\sqrt{3}}{\sqrt{6v-2s^2-(3z-s)^2}} \1\big\{ \tfrac13 \big(s-\sqrt{6v-2s^2} \big) \le z \le \tfrac13 \big(s+\sqrt{6v-2s^2}\big) \big\}\dz\\
&=\int_{\frac13 (s-\sqrt{6v-2s^2})}^{\frac13 (s+\sqrt{6v-2s^2})} \frac{\sqrt{3}}{\sqrt{6v-2s^2-(3z-s)^2}}\dz\\
&=\frac{1}{\sqrt{3}}\int_{-\sqrt{6v-2s^2}}^{\sqrt{6v-2s^2}} \frac{1}{\sqrt{6v-2s^2-w^2}}\dw=\frac{\pi}{\sqrt{3}}
\end{align*}
by \cref{l.int-a-w^2}.
Therefore for all bounded $f$, $(\sum X_j,\sum X_j^2)$ has a bounded density for $n=3$ (or equivalently, for all $n\ge3$). Thus by \cite[Theorem 19.1]{BR76normal}, \cref{c.density-cf} is fulfilled.
\end{proof}

\begin{proof}[Proof of \cref{l.lemma4'}]
	By the triangle inequality and \cref{p.lemma4}, we get for ${k=0,\dots,m}$
	\begin{align*}
	\lefteqn{\Big|\ddtk \Big(\tphi_{T_n'}(t)-\tphi_{\Phi^{\tP}_{m,n}}(t)\Big)\Big|}\quad\\
	&\le \Big|\ddtk \Big(\tphi_{T_n'}(t)-\tphi_{T_n}(t)\Big)\Big|+ c(m)\tL_{m+1,n}e^{-t^2/6}\big(|t|^{m+1-k}+|t|^{3m-1+k}\big),
	\end{align*}
	where we expand the first summand by the Leibniz rule as
	\begin{align*}
	\Big|\ddtk \Big(\tphi_{T_n'}(t)-\tphi_{T_n}(t)\Big)\Big|
	&\le \sum_{l=0}^{k-1} \binom{k}{l}\Big|\ddtl\tphi_{T_n}(t)\Big|\Big|\ddT{k-l}\exp(-\tfrac12 \beta_n t^2)\Big|\\
	&\quad+\Big|\ddtk\tphi_{T_n}(t)\Big|\Big(1-\exp(-\tfrac12 \beta_n t^2)\Big).
	\end{align*}
	
	By \cref{p.lemma4}, \labelcref{eq.ddtl-tphi} and \labelcref{eq.Lle1},
	\begin{align*}
	\Big|\ddtl\tphi_{T_n}(t)\Big|
	&\le \Big|\ddtl\tphi_{\Phi^{\tP}_{m,n}}(t)\Big|+c(m)\tL_{m+1,n}e^{-t^2/6}\big(|t|^{m+1-l}+|t|^{3m-1+l}\big)\\
	&\le c(m)e^{-t^2/2}\big(1+|t|^{2m-4+l}\big)+c(m)e^{-t^2/6}\big(|t|^{m+1-l}+|t|^{3m-1+l}\big)\\
	&\le c(m)e^{-t^2/6}\big(1+|t|^{3m-1+l}\big)
	\end{align*}
	for $l=0,\dots,m$. Additionally for $k>l$,
	\[\Big|\ddT{k-l}\exp(-\tfrac12 \beta_n t^2)\Big|
	\le c(k-l) \beta_n (1+|t|^{k-l}) \exp(-\tfrac12 \beta_n t^2)
	\]
	and
	\[\Big(1-\exp(-\tfrac12 \beta_n t^2)\Big)\le\tfrac12 \beta_n t^2.\]
	Putting everything together yields
	\begin{align*}
	\lefteqn{\Big|\ddtk \Big(\tphi_{T_n'}(t)-\tphi_{T_n}(t)\Big)\Big|}\quad\\*
	&\le \sum_{l=0}^{k-1} \binom{k}{l}\Big|\ddtl\tphi_{T_n}(t)\Big|\Big|\ddT{k-l}\exp(-\tfrac12 \beta_n t^2)\Big|\\
	&\quad+\Big|\ddtk\tphi_{T_n}(t)\Big|\Big(1-\exp(-\tfrac12 \beta_n t^2)\Big)\\
	&\le \sum_{l=0}^{k-1} \binom{k}{l}\Big(c(m)e^{-t^2/6}\big(1+|t|^{3m-1+l}\big)\Big)\Big(c(k-l) \beta_n (1+|t|^{k-l}) \exp(-\tfrac12 \beta_n t^2)\Big)\\
	&\quad+\Big(c(m)e^{-t^2/6}\big(1+|t|^{3m-1+k}\big)\Big)\Big(\tfrac12 \beta_n t^2\Big)\\
	&\le c(m) \beta_n e^{-t^2/6} \big(1+|t|^{3m+1+k}\big),
	\end{align*}
	which proves \labelcref{eq.lemma4'}.
\end{proof}

\subsection{Further remarks}\label{app.proofs.further}

\begin{remark}\label{r.non-deg}
	Let $\Sigma$ be the covariance matrix of $(X_1,X_1^2)$ and assume $\mu_4<\infty$. Then the following equivalences hold:\\
	$(X_1,X_1^2)$ is non-degenerate\\
	$\Leftrightarrow$ $\Var\big(a_1 X_1+a_2 X_1^2)>0$ for all $a=(a_1,a_2)\in\R^2$ with $a\ne0$\\
	$\Leftrightarrow$ $a^T \Sigma a >0$ for all $a\in\R^2$ with $a\ne0$\\
	$\Leftrightarrow$ $\Sigma$ is positive definite.
\end{remark}

\begin{remark}\label{r.E-T_n^2}
In this remark, we do neither assume that $X_1$ is symmetric, nor that $\mu_2=1$. Under these conditions, we investigate if $T_n$ is normalized, expanding \cref{r.E-T_n-normalized}. For clear illustration however, let all moments be finite in the following procedure. By \labelcref{eq.V_n^-2} we expand
\begin{equation}\label{eq.Tn-exp}
\begin{split}
T_n^2
&=n^{-1} \mu_2^{-1} S_n^2
- n^{-2} \mu_2^{-2} S_n^2 \Big(\sum (X_j^2-\mu_2)\Big)\\
&\quad+ n^{-3} \mu_2^{-3} S_n^2 \Big(\sum (X_j^2-\mu_2)\Big)^2
+ \cO_p\big(n^{-2}\big).
\end{split}
\end{equation}
Due to $\mu_1=0$ and $\E X_j^2=\mu_2$, the index of every factor within
\[
\Big(\sum X_j\Big)^{2}\Big(\sum (X_j^2-\mu_2)\Big)^{k}
=\sum_{j_1,\dots,j_{k+2}=1}^n X_{j_1} X_{j_2} \big(X_{j_3}^2-\mu_2\big) \cdots \big( X_{j_{k+2}}^2-\mu_2 \big)
\]
has to be equal to the index of another factor or the summand vanishes within the expectation. We take the expectation of each summand of \labelcref{eq.Tn-exp} separately
\begin{align*}
\E S_n^2
&= n \mu_2,\\
\E \Big[ S_n^2 \Big(\sum (X_j^2-\mu_2)\Big) \Big]
&= n (\mu_4-\mu_2^2),\\
\E \Big[ S_n^2 \Big(\sum (X_j^2-\mu_2)\Big)^2 \Big]
&= n^2 \big(\mu_2\mu_4-\mu_2^3+2\mu_3^2\big)
+ \cO(n)
\end{align*}
which leads to
\begin{align*}
\E T_n^2
&= 1 + n^{-1} 2 \mu_2^{-3} \mu_3^2 + \cO\big(n^{-2}\big).
\end{align*}
\end{remark}

\subsection{Auxiliary results}\label{app.lemmas}

\begin{lemma}\label{l.int-t-exp}
	Let $\nu_n\ge1$, $k=0,1,\dots$ and $0<\beta<4$. Then
	\begin{equation*}
	\int_{\nu_n}^\infty t^k \exp\big(- \beta t^2 /2 \big)\dt
	\le c(k)\beta^{-(k+2)/2} \exp\big(- \beta \nu_n^2 /4 \big).
	\end{equation*}
\end{lemma}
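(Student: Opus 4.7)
The plan is a standard split-and-substitute argument. First I would write
\[
\exp\!\big(-\tfrac{\beta}{2}t^2\big)
= \exp\!\big(-\tfrac{\beta}{4}t^2\big)\,\exp\!\big(-\tfrac{\beta}{4}t^2\big),
\]
and on the region of integration $t\ge \nu_n$ pull out the pointwise bound
$\exp(-\tfrac{\beta}{4}t^2)\le \exp(-\tfrac{\beta}{4}\nu_n^2)$ from the first factor. This leaves
\[
\int_{\nu_n}^\infty t^k\,\exp\!\big(-\tfrac{\beta}{2}t^2\big)\,\dt
\;\le\; \exp\!\big(-\tfrac{\beta}{4}\nu_n^2\big)\int_0^\infty t^k\,\exp\!\big(-\tfrac{\beta}{4}t^2\big)\,\dt,
\]
where I have also enlarged the domain from $[\nu_n,\infty)$ to $[0,\infty)$ because the remaining integrand is non-negative.

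Next I would evaluate the residual integral by the substitution $u=t\sqrt{\beta/2}$, yielding
\[
\int_0^\infty t^k\,\exp\!\big(-\tfrac{\beta}{4}t^2\big)\,\dt
\;=\;\bigl(2/\beta\bigr)^{(k+1)/2}\int_0^\infty u^k\,e^{-u^2/2}\,\du
\;=\; \tilde c(k)\,\beta^{-(k+1)/2},
\]
where $\tilde c(k):=2^{(k+1)/2}\int_0^\infty u^k e^{-u^2/2}\du<\infty$ depends only on $k$.

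Finally, since $0<\beta<4$ gives $\beta^{1/2}<2$, we have $\beta^{-(k+1)/2}=\beta^{1/2}\beta^{-(k+2)/2}\le 2\,\beta^{-(k+2)/2}$, so setting $c(k):=2\tilde c(k)$ produces the claimed bound. No step is really an obstacle here; the only thing to be careful about is the choice of the split exponent $\beta/4$ (rather than $\beta/2$), which is exactly what is needed so that the leftover Gaussian integral converges to a finite $k$-dependent constant after rescaling.
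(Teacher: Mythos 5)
Your proof is correct, and it takes a slightly different (and arguably cleaner) route than the paper. The paper instead bounds $t^k\le t\,(t^2)^{\lceil (k-1)/2\rceil}$ (this is where it uses $\nu_n\ge1$), absorbs the resulting power $(\beta t^2/4)^{\lceil(k-1)/2\rceil}/\lceil(k-1)/2\rceil!$ into the full exponential series $\exp(\beta t^2/4)$ at the cost of a factor $k!\,(\beta/4)^{-k/2}$ (this is where it uses $\beta<4$), and then integrates $t\exp(-\beta t^2/4)$ over $[\nu_n,\infty)$ in closed form, which produces the factor $\exp(-\beta\nu_n^2/4)$. You instead split the Gaussian into two equal halves, freeze one half at the endpoint $t=\nu_n$, and compute the remaining Gaussian moment over all of $[0,\infty)$ by rescaling. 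Your argument does not use $\nu_n\ge1$ at all and in fact yields the sharper power $\beta^{-(k+1)/2}$ before you invoke $\beta<4$ only to weaken it to the stated $\beta^{-(k+2)/2}$; the paper's manipulation lands on $\beta^{-(k+2)/2}$ directly. Both are elementary and both deliver the constant $c(k)$ and the decay $\exp(-\beta\nu_n^2/4)$ that are all that is needed downstream, so the difference is purely one of bookkeeping.
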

\begin{proof}
	We examine
	\begin{align*}
	\int_{\nu_n}^\infty t^k \exp(- \beta t^2 /2)\dt
	&\le \int_{\nu_n}^\infty t \, (t^2)^{\lceil\frac{k-1}{2}\rceil} \exp(- \beta t^2 /2)\dt\\
	&= \frac{\lceil\frac{k-1}{2}\rceil!}{(\beta/4 )^{\lceil\frac{k-1}{2}\rceil}} \int_{\nu_n}^\infty t \, \frac{(\beta t^2 /4)^{\lceil\frac{k-1}{2}\rceil}}{\lceil\frac{k-1}{2}\rceil!} \exp(- \beta t^2 /2)\dt\\
	&\le \frac{k!}{(\beta/4 )^{k/2}} \int_{\nu_n}^\infty t \Big(\sum_{l=0}^\infty \frac{(\beta t^2 /4)^{l}}{l!} \Big) \exp(- \beta t^2 /2)\dt\\
	&= \frac{k!\,2^k}{\beta^{k/2}} \int_{\nu_n}^\infty t \, \exp(\beta t^2 /4) \exp(- \beta t^2 /2)\dt\\
	&= \frac{k!\,2^{k+1}}{\beta^{(k+2)/2}} \exp( \beta \nu_n^2 /4).
	\end{align*}
\end{proof}

\begin{lemma}\label{l.ana1}
	Let $f:\R\to\R$ be a measurable function with $0\le f(x)\le1$ for all $x\in\R$ and let $\mu$ be a probability measure on $\R$. Assume that $\mu(\{x:f(x)<1\})>0$. Then
	\[
	\int_\R f(x)\dmu < 1.
	\]
\end{lemma}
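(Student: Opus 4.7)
The plan is to exploit the strict inequality $f<1$ on a set of positive measure by quantifying how much $f$ stays away from $1$ on a suitable subset. A single pointwise strict inequality is not enough, because one could have $f(x)$ arbitrarily close to $1$; however, countable subadditivity lets us localize to a set where $f$ is uniformly bounded away from $1$.

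Concretely, I would set $A:=\{x:f(x)<1\}$ and, for each integer $n\ge 1$, $A_n:=\{x:f(x)\le 1-1/n\}$. Since $f$ is measurable, both $A$ and each $A_n$ lie in the underlying $\sigma$-algebra, and $A=\bigcup_{n\ge 1}A_n$ with $A_n$ increasing in $n$. By continuity of the measure from below,
\[
\mu(A)=\lim_{n\to\infty}\mu(A_n).
\]
Since $\mu(A)>0$ by assumption, there exists some $n_0$ with $\mu(A_{n_0})>0$.

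The conclusion then follows from a direct split of the integral. Using $0\le f\le 1$ and $f\le 1-1/n_0$ on $A_{n_0}$, together with $\mu(\R)=1$,
\[
\int_{\R} f\dmu
=\int_{A_{n_0}} f\dmu+\int_{A_{n_0}^c} f\dmu
\le \Bigl(1-\tfrac{1}{n_0}\Bigr)\mu(A_{n_0})+\mu(A_{n_0}^c)
=1-\tfrac{\mu(A_{n_0})}{n_0}<1.
\]
There is no real obstacle here; the only subtlety is that one cannot conclude the strict inequality directly from the pointwise strict inequality $f<1$ on $A$ without first passing to a subset on which $f$ is bounded away from $1$, which is exactly what the sequence $(A_n)$ accomplishes.
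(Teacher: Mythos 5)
Your proof is correct and follows essentially the same route as the paper: both localize to sets $A_n$ on which $f$ is uniformly bounded away from $1$, invoke continuity of $\mu$ from below to find one such set of positive measure, and then split the integral to obtain the strict inequality. The only cosmetic difference is your use of $\le 1-1/n$ versus the paper's $<1-1/n$ in defining $A_n$, which changes nothing.
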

\begin{proof}
	For $n\in\N$ we define $A_n:=\{x:f(x)<1-1/n\}$, which are increasing for ${n\to \infty}$. By continuity from below,
	\[
	\lim_{n\to\infty}\mu(A_n)=\mu\big(\{x:f(x)<1\}\big)>0.
	\]
	Thus, there exists an $N\in\N$ such that $\mu(A_N)>0$ and therefore,
	\begin{align*}
	\int_\R f(x)\dmu
	=\int_{A_N} f(x)\dmu + \int_{A_N^c} f(x)\dmu
	<(1-\tfrac{1}{N}) \mu(A_N) + \mu(A_N^c)
	=1-\tfrac{1}{N} \mu(A_N)<1.
	\end{align*}
\end{proof}

\begin{lemma}\label{l.int-a/2}
Let $n\ge2$. Then
\begin{align*}
\int_{\frac{1}{2}\sqrt{n}}^{2\sqrt{n}} \big(1+(z-n^{1/2})^2\big)^{-a/2} \dz
\le \begin{cases}
c(0) \sqrt{n} , &\text{ if } a=0,\\
c(0) \log n , &\text{ if } a=1,\\
c(0) , &\text{ if } a=2.
\end{cases}
\end{align*}
\end{lemma}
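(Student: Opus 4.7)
The plan is to carry out the trivial substitution $u = z - n^{1/2}$, which maps the interval $[\tfrac12\sqrt{n}, 2\sqrt{n}]$ to $[-\tfrac12\sqrt{n}, \sqrt{n}]$, and then handle each of the three cases separately. After substitution, the task reduces to bounding $\int_{-\sqrt{n}/2}^{\sqrt{n}} (1+u^2)^{-a/2}\du$ for $a\in\{0,1,2\}$.

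For $a=0$, the integrand is identically one and the length of the interval is $\tfrac32 \sqrt{n}$, giving the claimed bound. For $a=2$, the integrand $(1+u^2)^{-1}$ is integrable over all of $\R$ with value $\pi$, so the integral is bounded by an absolute constant. Both of these cases are immediate.

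The only case requiring a small calculation is $a=1$. Here one uses the explicit antiderivative $\int (1+u^2)^{-1/2} \du = \arsinh(u) = \log\bigl(u+\sqrt{1+u^2}\bigr)$ and evaluates at the endpoints $u=-\sqrt{n}/2$ and $u=\sqrt{n}$. Both contributions are of the form $\log\bigl(\sqrt{n}+\sqrt{1+n}\bigr)$ up to constants, which is clearly $\cO(\log n)$ for $n\ge 2$. The result for $a=1$ thus follows.

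There is no real obstacle here; the lemma is purely computational and the main point is simply to record which exponent $a$ produces which growth rate (constant, logarithmic, or $\sqrt{n}$), as these three regimes are what feed into the bound for $K_3$ in the proof of \cref{t.llt} and into the integral estimates leading to \cref{p.L1-modulus}.
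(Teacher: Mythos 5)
Your proposal is correct and follows essentially the same route as the paper's proof: the substitution $u=z-n^{1/2}$, the trivial bound for $a=0$, $\arsinh$ for $a=1$, and the integrability of $(1+u^2)^{-1}$ (via $\arctan$) for $a=2$. Nothing is missing.
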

\begin{proof}
For $a=0$, the statement is trivial. Otherwise,
\begin{align*}
\int_{\frac{1}{2}\sqrt{n}}^{2\sqrt{n}} \big(1+(z-n^{1/2})^2\big)^{-a/2} \dz
=\int_{-\frac{1}{2}\sqrt{n}}^{\sqrt{n}} \big(1+z^2\big)^{-a/2} \dz
\le2\int_{0}^{\sqrt{n}} \big(1+z^2\big)^{-a/2} \dz.
\end{align*}
Now for $a=1$ and the inverse hyperbolic sine $\arsinh$,
\begin{align*}
\int_{0}^{\sqrt{n}} \big(1+z^2\big)^{-1/2} \dz
= \arsinh(\sqrt{n})
= \log(\sqrt{n} + \sqrt{1 + n})
\le c(0) \log(n),
\end{align*}
and for $a=2$
\begin{align*}
\int_{0}^{\sqrt{n}} \big(1+z^2\big)^{-1} \dz
=\arctan(\sqrt{n})
\le \tfrac \pi 2.
\end{align*}
\end{proof}

\begin{lemma}\label{Taylor_L}
For $L(x)=x\log x$ with $x>0$, 
	\begin{equation*}
	L(1+u+v)=L(1+u)+v+\theta_1 u v+\theta_2 v^2
	\end{equation*}
	for $|u|\le 1/4, |v|\le1/4$ and $|\theta_j|\le2$ depending on $u$ and $v$.
\end{lemma}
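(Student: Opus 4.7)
The plan is to Taylor expand $L(1+u+v)$ in the variable $v$ around the point $1+u$ and then separately bound the two resulting error terms.

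Since $|u|,|v|\le 1/4$, every point of the form $1+u+tv$ with $t\in[0,1]$ lies in $[1/2,3/2]$, so $L$ is smooth there. Taylor's theorem with integral (or Lagrange) remainder applied to $t\mapsto L(1+u+tv)$ between $t=0$ and $t=1$ yields
\begin{equation*}
L(1+u+v)=L(1+u)+vL'(1+u)+\tfrac{v^2}{2}L''(\xi)
\end{equation*}
for some $\xi$ between $1+u$ and $1+u+v$. Using $L'(x)=\log x+1$ and $L''(x)=1/x$, this becomes
\begin{equation*}
L(1+u+v)=L(1+u)+v+v\log(1+u)+\tfrac{v^2}{2\xi}.
\end{equation*}

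For the quadratic remainder, $\xi\ge 1/4$, so $|v^2/(2\xi)|\le 2v^2$, which supplies the term $\theta_2 v^2$ with $|\theta_2|\le 2$. For the cross term, set $\theta_1:=\log(1+u)/u$ when $u\ne 0$ (and $\theta_1:=1$ when $u=0$), so that $v\log(1+u)=\theta_1 uv$. It remains to check $|\theta_1|\le 2$ for $|u|\le 1/4$. This follows from the convergent series
\begin{equation*}
\frac{\log(1+u)}{u}=1-\frac{u}{2}+\frac{u^2}{3}-\frac{u^3}{4}+\cdots,
\end{equation*}
whose tail after the $1$ is bounded in absolute value by $\tfrac{|u|}{2}+\tfrac{|u|^2}{3}+\cdots\le\tfrac{|u|}{2(1-|u|)}\le 1/6$ for $|u|\le 1/4$; hence $|\theta_1|\le 1+1/6\le 2$. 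Combining the two pieces gives the claimed identity with $|\theta_1|,|\theta_2|\le 2$.

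The only slightly non-routine point is the first-order coefficient, which a naive Taylor expansion of $\log(1+u)$ around $u=0$ inside the formula would replace by $1+O(u)$; keeping it in the form $\theta_1 u v$ (rather than hiding part of it in a $v$-term) is what makes the statement clean, and the explicit bound $|\log(1+u)/u|\le 2$ for $|u|\le 1/4$ is the crucial estimate.
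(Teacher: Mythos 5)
Your proof is correct. You Taylor-expand $L$ itself to second order in $v$ around $1+u$, obtaining $L(1+u+v)=L(1+u)+v+v\log(1+u)+\tfrac{v^2}{2\xi}$, and then convert the cross term via $\theta_1=\log(1+u)/u$, bounded by the alternating series. The paper instead first splits the product as $(1+u+v)\log(1+u+v)=(1+u)\log(1+u+v)+v\log(1+u+v)$ and Taylor-expands the two pieces separately (the first in $v$ around $0$, the second in $u+v$ around $0$), so that $\theta_1$ arises as $(1+\xi_2(u+v))^{-1}$ and $\theta_2$ collects contributions from both pieces. Your route is the more direct one and yields sharper constants ($|\theta_1|\le 7/6$, and in fact $|\theta_2|\le 1$ since $\xi$ lies between $1+u\ge 3/4$ and $1+u+v\ge 1/2$, so your stated lower bound $\xi\ge 1/4$ is weaker than necessary but of course still valid); the paper's splitting avoids having to discuss the function $u\mapsto\log(1+u)/u$ and its removable singularity at $u=0$, which you handle by the explicit convention $\theta_1=1$ there. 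Both arguments are elementary and complete.
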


\begin{proof}	
	By Taylor expansion around $v=0$, 
	\begin{align*}
	(1+u)\log(1+u+v)
	&=(1+u)\Big(\log(1+u+0) + v (1+u+0)^{-1} \\*
	&\qquad\qquad+ \tfrac{v^2}{2} (-1) (1+u+\xi_1v)^{-2} \Big)\\
	&=(1+u)\log(1+u) + v + v^2 (1+u) (-\tfrac12) (1+u+\xi_1v)^{-2} 
	\end{align*}
	with $\xi_1\in[0,1]$ depending on $u$ and $v$ and by Taylor expansion around $u+v=0$, 
	\begin{align*}
	v\log(1+u+v)
	&=v\Big(\log(1+0) + (u+v) (1+\xi_2(u+v))^{-1} \Big)\\*
	&= (uv+v^2) (1+\xi_2(u+v))^{-1}
	\end{align*}
	with $\xi_2\in[0,1]$ depending on $u$ and $v$.
	Here,
	\begin{align*}
	(1+\xi_2(u+v))^{-1}
	&\le (1-1/2)^{-1}
	=2,\ \ \ 
	(1+\xi_2(u+v))^{-1}
	\ge (1+1/2)^{-1}
	= 2/3
	\end{align*}
	and
	\begin{align*}
	(1+u) \tfrac{1}{2} (1+u+\xi_1v)^{-2}
	&\le (1+u) \tfrac{1}{2} (1+u-1/4)^{-2}
	\le (3/4) \tfrac{1}{2} (1/2)^{-2}
	= 3/2,\\*
	(1+u) \tfrac{1}{2} (1+u+\xi_1v)^{-2}
	&\ge (1+u) \tfrac{1}{2} (1+u+1/4)^{-2}
	\ge (5/4) \tfrac{1}{2} (3/2)^{-2}
	= 5/18.
	\end{align*}
	Consequently,
	$
	(1+\xi_2(u+v))^{-1}-(1+u) \tfrac{1}{2} (1+u+\xi_1v)^{-2}
	\in [-5/6,31/18]$.

\end{proof}

\begin{lemma}\label{l.int-ax^2+b}
	Let $a,b,l>0$. Then
	\begin{align*}
	\int_{-l}^l (ax^2+b)^{-3/2}\dx = 2\big(al^2+b\big)^{-1/2}\,b^{-1}\,l.
	\end{align*}
\end{lemma}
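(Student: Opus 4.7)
The plan is to exhibit an explicit antiderivative of $(ax^2+b)^{-3/2}$ and evaluate it at the endpoints $\pm l$. The natural candidate is
\[
F(x) = \frac{x}{b\,\sqrt{ax^2+b}},
\]
which one verifies directly by differentiating: using the quotient/product rule,
\[
F'(x) = \frac{1}{b\sqrt{ax^2+b}} - \frac{x\cdot ax}{b(ax^2+b)^{3/2}}
     = \frac{(ax^2+b) - ax^2}{b(ax^2+b)^{3/2}}
     = \frac{1}{(ax^2+b)^{3/2}}.
\]
Hence $F$ is an antiderivative of the integrand on $\R$ (note $b>0$ keeps $F$ smooth).

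Since $F$ is an odd function of $x$, evaluating between $-l$ and $l$ yields
\[
\int_{-l}^{l}(ax^2+b)^{-3/2}\dx = F(l)-F(-l) = \frac{2l}{b\sqrt{al^2+b}},
\]
which is exactly the claimed identity. The only nuance is simply guessing (or deriving) the antiderivative; if one prefers to avoid guessing, the substitution $x=\sqrt{b/a}\,\tan\theta$ converts $(ax^2+b)^{-3/2}\dx$ into $b^{-1}\cos\theta\,\mathrm{d}\theta$, whose antiderivative $b^{-1}\sin\theta$ corresponds, via $\sin\theta = x/\sqrt{x^2+b/a}$, to the same $F(x)$ above.

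There is no genuine obstacle here: this is a straightforward one-variable calculus identity, and the only care needed is to check the sign (the integrand is even, so we get $2F(l)$ rather than a cancellation) and to record that the assumption $a,b,l>0$ ensures the antiderivative and the final expression are both well-defined.
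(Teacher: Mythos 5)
Your proof is correct and essentially matches the paper's: the paper first substitutes to normalize to $\int_0^{l\sqrt{a/b}}(x^2+1)^{-3/2}\dx$ and then uses the antiderivative $x/\sqrt{x^2+1}$, while you apply the same antiderivative directly in unnormalized form; the verification by differentiation and the final evaluation are both sound.
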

\begin{proof}
	\begin{align*}
	\int_{-l}^l (ax^2+b)^{-3/2}\dx
	&=2b^{-3/2} \int_{0}^l (\tfrac ab x^2+1)^{-3/2}\dx\\
	&=2b^{-3/2}\sqrt{b/a} \int_{0}^{l\sqrt{a/b}} (x^2+1)^{-3/2}\dx\\
	&=2b^{-3/2}\sqrt{b/a} \, \frac{l\sqrt{a/b}}{\sqrt{l^2a/b+1}} = 2\big(al^2+b\big)^{-1/2}\,b^{-1}\,l.
	\end{align*}
\end{proof}

\begin{lemma}\label{l.int-a-w^2}
	Let $a > 0$. Then
	\begin{align*}
	\int_{-\sqrt{a}}^{\sqrt{a}} (a-w^2)^{-1/2} \dw
	= \pi.
	\end{align*}
\end{lemma}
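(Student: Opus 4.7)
The plan is to evaluate the integral by a standard trigonometric substitution. First, I would substitute $w = \sqrt{a}\sin\theta$ for $\theta\in[-\pi/2,\pi/2]$, which gives $\mathrm{d}w = \sqrt{a}\cos\theta\,\mathrm{d}\theta$ and $a-w^2 = a\cos^2\theta$, so that $(a-w^2)^{-1/2} = (\sqrt{a}\cos\theta)^{-1}$ (note $\cos\theta\ge0$ on this interval). The $\sqrt{a}$ factors cancel, the integrand reduces to $1$, and the bounds transform to $\pm\pi/2$, yielding
\[
\int_{-\sqrt{a}}^{\sqrt{a}} (a-w^2)^{-1/2}\,\mathrm{d}w = \int_{-\pi/2}^{\pi/2} \mathrm{d}\theta = \pi.
\]

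Equivalently, one could simply recognize $w\mapsto \arcsin(w/\sqrt{a})$ as an antiderivative of $(a-w^2)^{-1/2}$ and evaluate at the endpoints to obtain $\arcsin(1)-\arcsin(-1)=\pi$. There is no real obstacle here; the only point requiring minimal care is that the integrand has integrable singularities at $w=\pm\sqrt{a}$, which the substitution handles automatically since the transformed integrand is bounded (indeed constant) on the closed interval $[-\pi/2,\pi/2]$.
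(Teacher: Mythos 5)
Your proof is correct and essentially matches the paper's: the paper rescales $w\mapsto w/\sqrt{a}$ and evaluates $\int_0^1(1-w^2)^{-1/2}\dw=\arcsin(1)$, which is the same arcsine antiderivative your substitution $w=\sqrt{a}\sin\theta$ produces. No issues.
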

\begin{proof}
	\begin{align*}
	\int_{-\sqrt{a}}^{\sqrt{a}} (a-w^2)^{-1/2}\dw
	&=2a^{-1/2} \int_{0}^{\sqrt{a}} (1-w^2/a)^{-1/2}\dw\\
	&=2a^{-1/2} a^{1/2} \int_{0}^{1} (1-w^2)^{-1/2}\dw\\
	&= 2 \arcsin(1)=\pi.
	\end{align*}
\end{proof}

\end{appendix}

\begin{acks}[Acknowledgments]
This work was supported by the DFG research units 1735 and 5381.
\end{acks}

\bibliographystyle{imsart-number} 
\bibliography{bibliography}  

\end{document}